\newtheorem{Proposition}{Proposition}[section]
\newtheorem{Definition}[Proposition]{Definition}
\newtheorem{Lemma}[Proposition]{Lemma}
\newtheorem{Theorem}[Proposition]{Theorem}
\newtheorem{Corollary}[Proposition]{Corollary}
\newtheorem{Remark}[Proposition]{Remark}
\DeclareMathOperator{\Val}{Val}
\DeclareMathOperator{\Curv}{Curv}
\DeclareMathOperator{\vol}{vol}
\DeclareMathOperator{\Kl}{Kl}
\DeclareMathOperator{\Gr}{Gr}
\DeclareMathOperator{\im}{Im}
\DeclareMathOperator{\re}{Re}
\DeclareMathOperator{\glob}{glob}
\DeclareMathOperator{\Sp}{Sp}
\DeclareMathOperator{\SO}{SO}
\DeclareMathOperator{\GL}{GL}
\DeclareMathOperator{\SU}{SU}
\DeclareMathOperator{\G}{G}
 \renewcommand{\S}{ S_\lambda}
 \renewcommand{\O}{\mathbb O}
\newcommand{\spann}{\mathrm{span}}
\newcommand{\R}{\mathbb{R}}
\newcommand{\C}{\mathbb{C}}
\newcommand{\calC}{\mathcal{C}}
\newcommand{\calK}{\mathcal{K}}
\newcommand{\calV}{\mathcal{V}}
\newcommand{\calP}{\mathcal{P}}
\newcommand \lcur{[\![}
\newcommand \rcur{]\!]}
\newcommand{\pd}{\mathrm{pd}}
\newcommand{\Spin}{\mathrm{Spin}}
\newcommand{\AGr}{\overline{\Gr}}
\newcommand{\largewedge}{\mbox{\Large $\wedge$}}
\newcommand{\tf}{\omega} 
\newcommand{\cf}{\varphi} 
\newcommand{\Cf}{\Phi} 
\newcommand{\kfdot}{\odot}
\title[]{Integral geometry of exceptional spheres} 
\author{Gil Solanes}
\author{Thomas Wannerer}
\email{solanes@mat.uab.cat}
\email{thomas.wannerer@uni-jena.de}
\address{Departament de Matem\`atiques, Universitat Aut\`onoma de Barcelona, 08193 Bellaterra, Spain}
\address{Fakult\"at f\"ur Mathematik und Informatik, Friedrich-Schiller-Universit\"at Jena, 07743 Jena, Germany}
\thanks{Gil Solanes is a Serra H\'unter fellow and was supported by FEDER/MINECO grants IEDI-2015-00634 and MTM2015-66165-P. Thomas Wannerer was supported by DFG grant WA 3510/1-1.}
\begin{document}

\begin{abstract}
The algebras of valuations on $S^6$ and $S^7$ invariant under the actions of $\mathrm G_2$ and $\Spin(7)$ are shown to be isomorphic 
to the algebra of translation-invariant valuations on the tangent space at a point invariant under the action of the isotropy group. This is in analogy 
with the cases of real and complex space forms, suggesting the possibility that the same phenomenon holds in all Riemannian isotropic spaces.
Based on the description of the algebras the full array of kinematic formulas for invariant valuations and curvature measures in $S^6$ and $S^7$ is computed.
A key technical point is an extension of the classical theorems of Klain and Schneider on
simple valuations.  
\end{abstract}

\maketitle

\section{Introduction}
Exploiting various algebraic structures introduced by S.~Alesker \cites{alesker_irred,alesker_product,alesker_hard_lefschetz} on the space of valuations,
a conceptual breakthrough in integral geometry was recently achieved by A.~Bernig and J.H.G.~Fu.
At the  core of their new approach lies the observation that the kinematic formulas encode a structure
which is equivalent to the product of valuations discovered by Alesker. Employing more of the algebraic machinery developed by
Alesker, Bernig and Fu  \cite{hig} succeeded in computing the 
principal kinematic formula for
the action of the affine unitary group on $\C^n$ and described a simple procedure to obtain the full array of kinematic formulas.

Since then, this new approach to integral geometry has developed quite rapidly. The reader might consult 
\cite{fu_bcn} or the recent results \cites{bernig_voide,bernig_hug,bernig_solanes,wannerer_module,wannerer_area} for what is currently known about kinematic formulas in flat isotropic spaces,  
\cites{fu_pokorny_rataj,prz} for the state of the art regarding the regularity needed for sets to satisfy kinematic formulas,
\cite{abardia_wannerer,abs, haberl_schuster,schuster_crofton} for isoperimetric-type inequalities arising in integral geometry, or
\cites{bernig_faifman,faifman,ludwig_reitzner} for integral geometry  under large non-compact groups.

More recently, Alesker has developed a theory of valuations
on manifolds generalizing the classical theory of valuations on convex bodies in affine spaces \cites{valmfdsI,valmfdsII,valmfdsIII,valmfdsIV,valmfdsIG}. In particular,  a natural algebra structure  has been found on the space of valuations on any smooth manifold.
In combination with previous results by Fu \cite{fu_indiana}, this theory provides
 a framework for investigating the integral geometry of any  Riemannian isotropic space 
(i.e.\ a Riemannian manifold $M$ with a Lie group of isometries $G$ acting transitively on the sphere bundle $SM$). These pairs $(M,G)$ with $G$ connected  were classified by Tits \cite{tits} and Wang \cite{wang} as follows.
If $M$ is simply connected, then $M=V$ is flat euclidean space under the action of the groups $H\ltimes V$ with $H$ from the list
$$\SO(n), \ \mathrm U(n),\ \SU(n),\ \Sp(n),\ \Sp(n)\mathrm U(1),\ \Sp(n)\Sp(1),$$
$$\mathrm G_2,\ \Spin(7),\ \Spin(9);$$
or  $M$ is a real, complex, quaternionic, or octonionic space form under the action of the identity component of the isometry group; or $M$ 
is $S^6$ or $S^7$ under the action of $\mathrm G_2$ or $\Spin(7)$ respectively. 
If $M$ is not simply connected, then $M$ is real projective space under action of the identity component of the isometry group or  $\R P^6$ or $\R P^7$ under the action of $\mathrm G_2$ or $\Spin(7)/\{\pm1\}$ respectively.

In all these spaces kinematic formulas exist in the following form. Let $\calV(M)^G$ denote the space of smooth $G$-invariant valuations on $M$ (cf. Definition \ref{def_val}). There exists a linear map $k\colon \calV(M)^G\to \calV(M)^G\otimes \calV(M)^G$ which encodes the kinematic formulas in the sense that
\[
 k(\varphi)(A,B)=\int_G \varphi(A\cap gB)\; dg,\qquad  \varphi\in\calV(M)^G.
\]
Here $dg$ is a suitably normalized Haar measure on $G$ and $A,B\subset M$ are sufficiently nice compact sets. There are also kinematic formulas at the local level:
for the space $\calC(M)^G$ of $G$-invariant smooth curvature measures (cf. Definition \ref{def_curv}) there is a linear map $K\colon \calC(M)^G\to \calC(M)^G\otimes \calC(M)^G$ such that
\[
 K(\Psi)(A,U,B,V)=\int_G \Psi(A\cap gB,U\cap gV)\;dg,\qquad  \Psi\in\calC(M)^G
\]
where $A,B$ are as before and $U,V\subset M$ are Borel sets.

Building on \cites{hig,ags} and the foundational work of Alesker, the global kinematic operator $k$ was computed in complex projective  $\C P^n$ and complex hyperbolic space $\C H^n$ by Bernig, Fu, and the first-named author in \cite{bfs}. This allowed also the computation of the local kinematic operator $K$ on $\C^n$, and on $\C P^n, \C H^n$ as well. 
In fact, by Howard's transfer principle \cite{howard,bfs} there is a natural identification between the invariant curvature measures of any pair of isotropic spaces with the same isotropy group, and the local kinematic operators correspond to each other under this identification.

Compared to the classical case of real space forms, the complex case turned out to be much richer,
displaying several unexpected phenomena  for which a geometric explanation is still missing. 
Among them, the most striking fact is that the algebras of invariant valuations on $\C P^n, \C H^n$ and $\C^n$ are isomorphic. 
For the real space forms, the same is true for trivial reasons, but in the complex case no canonical construction explaining the existence of an isomorphism is known.

It is then natural to ask the following question:

\bigskip

\textit{Given two isotropic spaces  with isomorphic isotropy groups, are their algebras
of invariant valuations isomorphic?}

\bigskip

In the present paper we answer this question in the affirmative for the exceptional spheres $S^6$ and $S^7$ under the respective actions of $\G_2$ and $\Spin(7)$. 

\begin{Theorem} \label{Theorem_isomorphism} There exist isomorphisms between the algebras of invariant valuations on 
$$(S^6, \mathrm G_2)\qquad \text{and}\qquad(\C^3, \overline{\SU(3)})$$
and also between the algebras of invariant valuations on
$$(S^7,\Spin(7)) \qquad \text{and}\qquad (\R^7,  \overline{\mathrm G_2})$$
where $\overline{H}=H\ltimes V$ stands for the group generated by $H$ and the translations of the vector space $V$ on which $H$ acts linearly.
\end{Theorem}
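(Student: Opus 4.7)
The plan is to exploit the fact that the isotropy representation of $\mathrm{G}_2$ acting on $S^6$ is $\SU(3)$ acting on $\R^6=\C^3$, and that the isotropy representation of $\Spin(7)$ acting on $S^7$ is $\mathrm{G}_2$ acting on $\R^7$. This puts the two sides in each pair of Theorem \ref{Theorem_isomorphism} into the framework of Howard's transfer principle, which supplies a canonical linear isomorphism between the corresponding spaces of invariant curvature measures. Composing this transfer with the globalization map $\glob\colon\calC(M)^G\to\calV(M)^G$ on each side produces a surjective linear map between the two valuation algebras and in particular shows that their graded dimensions agree. The content of the theorem is thus to promote this linear correspondence to an \emph{algebra} isomorphism in a suitable basis.

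To produce the multiplicative identification, I would pick a small set of geometrically natural generators on each side and match their products under the Alesker product. On the flat side the generators and relations for $(\C^3,\overline{\SU(3)})$ and $(\R^7,\overline{\mathrm{G}_2})$ have been determined in earlier work of Bernig, so the task reduces to: (i) exhibiting candidate generators on $S^6$ and $S^7$ whose Klain sections match those of the flat generators up to a term of strictly lower degree; (ii) computing the Alesker products of the candidates in the curved setting; (iii) showing that, after a triangular change of basis absorbing the lower-order terms, the resulting structure constants coincide with the flat ones. At the level of curvature measures it is expected that the transfer and globalization already realize the correspondence explicitly; the job then is to verify that the resulting map on valuations respects products.

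The technical workhorse is the extension of the classical theorems of Klain and Schneider announced in the abstract. In the flat setting, a translation-invariant valuation is determined by its Klain function on simple Grassmannians; in the Riemannian isotropic setting the analogous statement is subtler because globalization need not be injective on the curvature-measure level. The extended theorem should let us conclude that two invariant valuations on $S^6$ or $S^7$ are equal provided they agree on a finite family of test sets adapted to the isotropy representation, thereby reducing each product verification to a finite computation on the sphere bundle.

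The main obstacle I expect is controlling the curvature corrections introduced by working on $S^6$ and $S^7$ rather than on $\R^6$ and $\R^7$. Unlike in $\C P^n$, where a parallel K\"ahler form organizes the calculation, the sphere $S^6$ carries only a non-integrable $\mathrm{G}_2$-invariant almost complex structure (equivalently, the restriction of the associative calibration on $\R^7$), and $S^7$ carries only a non-parallel $\mathrm{G}_2$-structure coming from the Cayley form on $\R^8$. As a consequence, the globalization of a ``flat'' generator will pick up lower-degree contributions expressible in terms of these calibrations and the constant sectional curvature. Showing that these corrections conspire so that, after a triangular change of basis, the product table on the spherical side is identical to the flat one is where the essential work lies, and where the Klain--Schneider extension must be applied most carefully.
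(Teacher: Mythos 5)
Your high-level strategy --- transfer principle to identify curvature measures, Alesker's graded-algebra isomorphism to obtain the flat relations up to filtration corrections, then a triangular change of generators to absorb those corrections --- is the route the paper takes. But as written this is a plan rather than a proof: every point at which the theorem could actually fail is deferred (``where the essential work lies''), and it is precisely there that nontrivial input is required. Concretely: (a) multiplication by the degree-one generator must be controlled \emph{exactly}, not merely modulo the filtration; the paper does this with a Crofton-type identity (Proposition~\ref{prop_geodesic}, leading to Proposition~\ref{prop_t_action}) showing that for $\Phi\in\Curv^{\SU(3)}$ one has $t_\lambda\cdot\Phi=\bigl(t+\tfrac{\lambda}{8}t^3+\tfrac{3\lambda^2}{128}t^5\bigr)\cdot\Phi$, which is what makes the substitution $t\mapsto t_\lambda-\tfrac{3\lambda}{8}t_\lambda^3$ close up the relations $f_4,f_5$; (b) the corrections invisible to the graded isomorphism sit in the top filtration step, and while most are killed by Euler--Verdier parity (e.g.\ $\pd(u_\lambda^2,t_\lambda)=0$), one genuinely survives, namely $\pd(u_\lambda,u_\lambda)=\tfrac{18\lambda}{\pi^3}$, and must be computed by an explicit Rumin-differential calculation (Lemma~\ref{lemma du} and Proposition~\ref{u2}). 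Nothing in your outline produces these constants, and without them one cannot conclude that the curved structure constants agree with the flat ones after your proposed change of basis. Also, composing the transfer map with globalization does not yield a well-defined map between the two valuation algebras, since globalization has a nontrivial kernel on curvature measures; the correct statement (Lemma~\ref{lemma_basis}) is that curvature measures globalizing to a basis in the flat space globalize to a basis in the curved one.

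The second, more structural gap concerns the role you assign to the Klain--Schneider extension. You propose to use it as a ``finite test set'' uniqueness criterion for invariant valuations; that is not what Theorem~\ref{Theorem_simple_valuations} says, and it is not how it enters the argument. Its actual application is Corollary~\ref{Corollary_invariant_restriction}: for a group acting transitively on the unit sphere, restriction of invariant curvature measures to a hyperplane is injective modulo $\ker\glob_{n-2}^+\oplus\Curv_n$. This is what allows the $(S^7,\Spin(7))$ case to be reduced to the already-settled $(S^6,\G_2)$ case by restricting to a totally geodesic $S^6$, computing the restriction map explicitly, and reading off both the products and the kernel of globalization there. Your proposal treats the two cases symmetrically and in parallel, so this reduction --- the one place where the simple-valuation theorem is genuinely needed --- is missing, and with it the mechanism for verifying the relations $t^2u=0$ and $u^2+t^6=0$ on $S^7$.
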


In the flat spaces $(\C^n,\overline{\SU(n)})$  and $(\R^7,\overline{\G_2})$ the algebra structure of the invariant valuations has
been described by Bernig in \cites{bernig_sun, bernig_g2},
together with the global kinematic operator $k$.
Theorem \ref{Theorem_isomorphism} allows us to compute  explicitly not only  this operator in the exceptional spheres  
but  also  the local kinematic formulas which were not previously known (Theorems~\ref{Theorem_local_kinematic_SU3} and \ref{Theorem_local_kinematic_formulas_G2}).
We illustrate these results by bounding the mean intersection number of Lagrangian and associative submanifolds with arbitrary submanifolds of complementary
dimension (Corollaries~\ref{cor application G2} and \ref{cor application Spin}).

It turns out that the relation between curvature measures and valuations in $S^6$ and $S^7$ is  more subtle than in complex space forms. 
For instance, while every invariant curvature measure globalizing to zero in $\C^n$ has non-trivial globalization in $\C P^n$,  there are invariant curvature measures
that lie in the kernel of  the globalizations to $\C^3$ and to $S^6$. The same happens for $S^7$ and $\R^7$. It is also remarkable that in each of these geometries one encounters invariant
curvature measures which are odd. This is interesting, since all invariant valuations in isotropic affine spaces have been found to be even, but a geometric explanation of this fact is missing.

A key ingredient in our work is the fact that the algebra of valuations on a smooth manifold has an associated graded algebra
which is isomorphic to the space of sections of a certain bundle. 
More precisely, there is a natural filtration $\calV(M)=\calV_0(M)\supset\cdots \supset\calV_n(M)$ which is compatible with the product of valuations. 
Alesker \cite{valmfdsI}
constructed an isomorphism between the induced graded algebra $\bigoplus_i \calV_i/\calV_{i+1}$ and the space of smooth sections of the bundle $\Val(TM)$ of
translation-invariant valuations on the tangent spaces of $M$. In Section \ref{transfer} we define a similar filtration $\calC_0(M)\supset\cdots\supset\calC_n(M)$ 
on the space of curvature measures. This space has a natural module structure over the algebra of valuations which turns $\bigoplus_i\calC_i/\calC_{i+1}$ into a graded module over the graded algebra. 

\begin{Theorem} \label{Theorem_graded_modules}
 There exists a canonical isomorphism of graded modules 
 $$\bigoplus_{i=0}^{n} \calC_i/\calC_{i+1} \cong \Gamma(\Curv(TM)),$$
 where $\Curv(TM)$ is the bundle of translation-invariant smooth curvature measures  on the tangent spaces of $M$. 
\end{Theorem}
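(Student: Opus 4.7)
The plan is to adapt Alesker's construction of the graded algebra isomorphism $\bigoplus_i \calV_i/\calV_{i+1} \cong \Gamma(\Val(TM))$ \cite{valmfdsI} from valuations to curvature measures, and then verify compatibility with the module action. Recall that a smooth curvature measure is represented by a pair $(\omega,\phi) \in \Omega^n(M) \times \Omega^{n-1}(S^*M)$ through
$$\Psi(A,U) = \int_{A\cap U}\omega + \int_{N(A)\cap \pi^{-1}(U)}\phi,$$
where $\pi\colon S^*M\to M$ is the projection; the filtration $\calC_i(M)$ from Section \ref{transfer} measures the vertical degree of $\phi$ with respect to $\pi$.

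The first step is to construct a symbol map $\sigma_i\colon \calC_i(M)\to \Gamma(\Curv_i(TM))$ by freezing coefficients at each point $p$: restricting the top vertical-degree part of $\phi$ to $S^*_pM$ together with the value of $\omega$ at $p$ yields a translation-invariant curvature measure on $T_pM$. Well-definedness requires an explicit description of the kernel of the representation map, which parallels Alesker's in the valuations case. One then shows that $\ker \sigma_i = \calC_{i+1}(M)$---one inclusion is tautological, the other amounts to building a vertical primitive when the top-degree symbol vanishes---and that the induced map on the quotient is surjective. Surjectivity is established in a chart, where every translation-invariant curvature measure is realized by polynomial forms, and then extended globally via a partition of unity that preserves the filtration.

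The main obstacle is module compatibility. Alesker's product $\varphi\cdot\Psi$ of a valuation and a curvature measure is defined through differential operations on the representing forms involving the Rumin differential, and one must show both $\calV_i\cdot\calC_j\subset\calC_{i+j}$ and that $\sigma_{i+j}(\varphi\cdot\Psi)$ coincides with the pointwise product of $\sigma_i(\varphi)$ and $\sigma_j(\Psi)$ under Bernig and Fu's translation-invariant module structure. The key observation is that the product is a local operation on representing forms, so only the principal parts contribute to the symbol, reducing the verification to a calculation with polynomial forms on a vector space. While the analogy with the valuations case suggests no conceptual obstruction, a careful bookkeeping of the vertical degrees appearing after applying the Rumin differential is essential and constitutes the main technical content of the proof.
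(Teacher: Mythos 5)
Your construction of the isomorphism itself is essentially the paper's: the ``symbol map'' you describe (freeze the top vertical-degree part of the $(n-1)$-form at $p$ and read off a translation-invariant curvature measure on $T_pM$) is exactly what the paper's map $\Lambda_k'$ computes. The one organizational difference is that the paper defines $\Lambda_k'(\Psi)|_x$ intrinsically as $\lim_{t\to0}t^{-k}(\phi\circ h_t)^*\Psi$ for a local diffeomorphism $\phi\colon T_xM\to M$ and dilations $h_t$, and only afterwards identifies this limit with the frozen, $k$-homogeneous part of the pulled-back form. That ordering spares you the well-definedness issue you flag: the limit is defined on the curvature measure itself rather than on a representing pair (whose kernel, for curvature measures, is just the ideal generated by $\alpha$ and $d\alpha$), so only independence of $\phi$ needs checking. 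The identification $\ker\Lambda_k'=\calC_{k+1}$ and surjectivity then proceed as you anticipate.

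The module compatibility is where your plan would run into genuine trouble. You propose to verify $\calV_i\cdot\calC_j\subset\calC_{i+j}$ and the compatibility of symbols by ``bookkeeping of the vertical degrees appearing after applying the Rumin differential'' to representing forms. But on a general manifold the product of a smooth valuation with a curvature measure is not given by any direct Rumin-differential formula on representing forms: it is defined by $(\mu\cdot\Psi)_f=\mu\cdot\Psi_f$ together with the Alesker--Fu product of valuations on manifolds, for which no simple local form-level expression is available. The paper sidesteps all of this with a soft argument: the module product commutes with pullback by diffeomorphisms and is continuous, hence
$$\lim_{t\to0}t^{-i}(\phi\circ h_t)^*(\mu\cdot\Psi)_f=\Bigl(\lim_{t\to0}t^{-k}(\phi\circ h_t)^*\mu\Bigr)\cdot\Bigl(\lim_{t\to0}t^{k-i}\bigl((\phi\circ h_t)^*\Psi\bigr)_f\Bigr),$$
and the rescaling characterizations of the two filtrations then yield both $\calV_k\cdot\calC_l\subset\calC_{k+l}$ and $\Xi'(\varphi\cdot\Theta)=\Xi(\varphi)\cdot\Xi'(\Theta)$ in one stroke. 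To complete your proof you should replace the Rumin-differential bookkeeping by this naturality-plus-continuity argument; as stated, your route would first require establishing a local formula for the module product, which is a harder problem than the theorem itself.
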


Another key point  in our study of the integral geometry of $S^7$ is the  restriction of invariant valuations and curvature measures to $S^6$, viewed as a totally geodesic subsphere. 
This leads naturally  to the problem of describing simple valuations and  curvature measures. 
Recall that a  valuation $\psi$ on an $n$-dimensional vector space $V$  is called simple if  
$$ \psi(K) =0$$
for every convex body $K$ contained in an affine  hyperplane $E\subset V$.
 For translation-invariant and continuous valuations on $V$, a complete
 characterization of simple valuations is  provided by the classical theorems of Klain \cite{klain_short} and Schneider 
\cite{schneider}: for every such valuation the homogeneous components of degrees smaller than $n-1$ must vanish, and the degree $n-1$ component must be odd. 
In Section \ref{simple} we extend this characterization to simple valuations  that are smooth, but not necessarily translation-invariant. 
On a smooth manifold it does not make sense to speak of even and odd valuations, but one can decompose 
$\calV_i(M)= \calV_i^+(M)\oplus  \calV_i^-(M)$ into the eigenspaces of the Euler-Verdier involution $\sigma$ (see Section~\ref{Sect_Background}).

\begin{Theorem}\label{Theorem_simple_valuations}
For every smooth valuation $\psi\in \calV(V)$ on an $n$-dimensional vector space $V$ the following are equivalent:
\begin{enumerate}[i)]
 \item $\psi$ is simple
 \item $\psi \in \calV_{n-1}$ and $\sigma(\psi)=(-1)^n \psi$
 \item $\iota^*\psi=0$ for every smoothly embedded manifold $\iota\colon N\hookrightarrow V$ with $\dim N<n$.
\end{enumerate}
\end{Theorem}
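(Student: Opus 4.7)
I propose to establish the chain (ii) $\Rightarrow$ (iii) $\Rightarrow$ (i) $\Rightarrow$ (ii). The step (iii) $\Rightarrow$ (i) is immediate: every convex body $K$ contained in an affine hyperplane $E\subset V$ satisfies $\psi(K)=(\iota^*\psi)(K)=0$, where $\iota\colon E\hookrightarrow V$ denotes the inclusion.

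For (ii) $\Rightarrow$ (iii), I would use two structural properties of Alesker's pullback: it respects the filtration, $\iota^*\calV_i(V)\subset\calV_i(N)$, and it intertwines the Euler--Verdier involution, $\iota^*\circ\sigma=\sigma_N\circ\iota^*$. Given $\iota\colon N\hookrightarrow V$ with $m=\dim N<n$, one has $\iota^*\psi\in\calV_{n-1}(N)$, which vanishes identically when $m\le n-2$. In the remaining case $m=n-1$, the space $\calV_{n-1}(N)=\calV_m(N)$ consists of smooth measures on $N$, on which $\sigma_N$ acts as $(-1)^{m}=(-1)^{n-1}$. Combined with $\sigma\psi=(-1)^n\psi$, the intertwining relation yields $(-1)^{n-1}\iota^*\psi=(-1)^n\iota^*\psi$, forcing $\iota^*\psi=0$.

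The key direction (i) $\Rightarrow$ (ii) uses Alesker's symbol isomorphism $\calV_i/\calV_{i+1}\cong\Gamma(V,\Val_i(TV))$. Suppose $\psi$ is simple and $\psi\in\calV_i\setminus\calV_{i+1}$. For $p\in V$ and a convex body $L\subset T_pV$ contained in a linear hyperplane, the rescaled body $p+tL\subset V$ lies in an affine hyperplane for every $t>0$, so $\psi(p+tL)=0$. The scaling formula characterizing the symbol, $\psi(p+tL)\sim t^{i}\,\mathrm{gr}_i(\psi)(p)(L)$ as $t\to 0^+$, then implies that the pointwise symbol $\mathrm{gr}_i(\psi)(p)\in\Val_i(T_pV)$ is a translation-invariant \emph{simple} valuation. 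By the classical theorems of Klain and Schneider, this forces $i\in\{n-1,n\}$, with the degree-$(n-1)$ symbol pointwise odd; hence $\psi\in\calV_{n-1}$.

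For the Euler--Verdier part of (ii), I first note that $\sigma$ preserves simple valuations: the intertwining $\iota^*(\sigma\psi)=\sigma_E(\iota^*\psi)$ shows that $\iota^*\sigma\psi=0$ for every hyperplane inclusion whenever $\iota^*\psi=0$, and a smooth valuation on a vector space is determined by its values on convex bodies. Therefore $\eta:=\sigma\psi-(-1)^n\psi$ is simple. Since $\sigma$ acts by $(-1)^n$ on odd translation-invariant valuations of degree $n-1$, the degree-$(n-1)$ symbol of $\eta$ vanishes and $\eta\in\calV_n$; and on $\calV_n$ the involution $\sigma$ acts as $(-1)^n$ as well. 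Computing $\sigma\eta$ in two ways---from $\sigma|_{\calV_n}=(-1)^n$ and from $\sigma^2=\mathrm{id}$---and equating the resulting expressions yields $\sigma\psi=(-1)^n\psi$, as required. The main obstacle will be making the symbol-level Klain--Schneider argument fully rigorous: one needs a clean statement of the scaling formula for Alesker's symbol that remains valid on lower-dimensional convex bodies, together with the precise compatibility of the Euler--Verdier involution with the symbol grading. Both ingredients are essentially in Alesker's foundational work but must be transferred with care to the smooth, non-translation-invariant setting.
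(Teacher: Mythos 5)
Your proposal is correct, and the implications (ii)$\Rightarrow$(iii), (iii)$\Rightarrow$(i), and the Euler--Verdier half of (i)$\Rightarrow$(ii) (including the final trick of computing $\sigma\eta$ two ways for $\eta=\sigma\psi-(-1)^n\psi\in\calV_n$) coincide with the paper's argument. Where you genuinely diverge is the filtration half of (i)$\Rightarrow$(ii). The paper works multiplicatively: it first proves a lemma characterizing simple translation-invariant valuations as those annihilated by $\Val_1^{+,\infty}$ under the Alesker product (via Crofton representations of degree-one valuations and an approximation argument), then pairs $\psi$ with Crofton-type valuations $\phi(A)=\int\int\chi(A\cap(u^\perp+tu))f(u,t)\,dt\,du$, passes to the associated graded algebra using that $\Xi$ is an algebra isomorphism, and deduces from the lemma that each pointwise symbol $\Xi_k(\pi_k(\psi))|_p$ is simple. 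You instead read off simplicity of the symbol directly from the scaling formula $\Lambda_i\psi|_p(L)=\lim_{t\to0}t^{-i}\psi(p+tL)$, observing that $p+tL$ stays in an affine hyperplane. This is more elementary and avoids the product entirely; both routes then conclude with Klain--Schneider on each tangent space. The technical point you flag is genuine but easily closed: apply the limit formula to $L$ a polytope (or smooth body) in a linear hyperplane of $T_pV$ --- these lie in $\calP(T_pV)$, where \eqref{def_Lambda} is the definition --- and then use continuity and density of polytopes in $\calK(E)$ to conclude that $\Lambda_i\psi|_p$ vanishes on all convex bodies in hyperplanes. What your shortcut gives up is Lemma~\ref{lemma_simple} itself, which the paper reuses later (e.g.\ for the characterization of simple curvature measures); your remark that $\eta$ is simple is not actually needed for your argument and can be dropped.
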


As a corollary, we obtain that the restriction of invariant curvature measures from $S^7$ to $S^6$ is essentially injective.

\medskip\noindent\textbf{Acknowledgments.} We thank Semyon Alesker and Andreas Bernig for useful comments on a previous version of this paper.

\section{Background}

\subsection{Octonionic linear algebra} Let $\O$ denote the normed algebra of the octonions with 
euclidean inner product 
$$\langle x,y\rangle = \re \overline{x}y =\re x\overline{y},$$
where $\overline{x}$ denotes conjugation in $\O$. 
Readers not familiar with the octonions may wish to consult  Baez's excellent  survey article  \cite{baez} or Chapter~6 of \cite{harvey}.

Let $\im\O=1^\bot$ be the hyperplane of pure octonions. The associative 3-form $\phi\in \largewedge^3(\im \O)^*$ is given by

$$\phi(u,v,w)=\langle u,v\cdot w\rangle,\qquad u,v,w\in\im\O.$$
With respect to a suitable orthonormal basis $1=e_0, e_1,\ldots, e_7$ of $\O$, we have   
\begin{equation}\label{eq associative 3-form}
 \phi= e^{123} + e^{145} + e^{167} + e^{246} - e^{257} - e^{347} - e^{356},
\end{equation}
where we write $e^{ijk}$ for $e^i \wedge e^j \wedge e^k$, denoting by $e^0,\ldots, e^7$ the dual basis to $e_0,\ldots, e_7$. 

Observe that $e_i\cdot e_j= \phi(e_i,e_j,e_k) e_k$  (here and throughout the text we use the convention of summation over repeated indices). Thus the octonionic product is completely determined by the above expression of $\phi$. 
In particular, the following identities hold
\[
 e_1\cdot e_2=e_3,\qquad e_1\cdot e_4=e_5,\qquad e_1\cdot e_6=e_7,\qquad e_2\cdot e_4=e_6,
 \]\[
 e_2\cdot e_5=-e_7,\qquad e_3\cdot e_4=-e_7,\qquad e_3\cdot e_5=-e_6. 
 \] 
Moreover, $e_i\cdot e_j=-e_j\cdot e_i$ if $i\neq j$, and  $e_j\cdot e_k=e_i$ if $e_{i}\cdot e_j=e_k$ with $1\leq i,j,k\leq 7$. 

The Hodge dual $\psi=*\phi\in\largewedge^4( \im\O)^*$, is given by
$$\psi = e^{4567} + e^{2367} + e^{2345} + e^{1357} -  e^{1346} - e^{1256} - e^{1247}$$
where $e^{ijkl}=e^i\wedge e^j\wedge e^k\wedge e^l$. 

The triple cross-product in  $\O$ 
$$ y\times z \times w = \frac 1 2 \left( (y\overline z) w - (w \overline z)y\right) , \qquad x,y,z,w\in \O,$$
is alternating and thus defines a $4$-form 
$$\Phi(x,y,z,w) = \langle x, y\times z \times w\rangle, \qquad x,y,z,w\in \O$$
called the Cayley calibration.
An alternate expression  (cf.\ \cite{salamon_walpuski}*{Theorem 5.20}) is
\begin{equation}\label{eq Cayley associative}\Phi = e^0 \wedge \phi + \psi.\end{equation}
In particular, the Cayley calibration is self-dual, $*\Phi= \Phi$. 

On $\O=\C^4$ with complex structure given by left-multiplication by $e_1$ the standard K\"ahler form and the complex determinant  are
$$\omega= e^0\wedge e^1 + e^2 \wedge e^3 + e^4 \wedge e^5 + e^6 \wedge e^7\in \largewedge^2\O^*$$
and 
$$\Omega=  (e^0 + i e^1) \wedge  (e^2 + i e^3) \wedge  (e^4 + i e^5)\wedge  (e^6 + i e^7)\in \largewedge^4\O^*\otimes \C.$$
The Cayley calibration may be expressed as
 \begin{equation}\label{eq Cayley Kahler}\Phi= \frac 1 2 \omega \wedge \omega + \re  \Omega\end{equation}

The exceptional Lie group $\G_2$ may be defined as  the automorphism group of the octonions
$$ \G_2=\{ g\in \GL(\O)\colon g(xy)=g(x)g(y)\ \text{for all}\ x,y\in \O\},$$
where $\GL(\O)$ denotes the group of invertible $\R$-linear endomorphisms of $\O$. As usual $\Spin(7)$ denotes the double cover of $\SO(7)$. 
These groups  can alternatively be  described as those linear transformations fixing the associative form $\phi$ or the Cayley calibration $\Phi$, respectively.

\begin{Proposition}[\cite{harvey}*{Theorem~6.80} and \cite{salamon_walpuski}*{Lemma 9.2}] \label{Prop G2 Spin7}
 \begin{align*} \G_2 & =\{ g\in \GL(\im \O)\colon g^* \phi = \phi\}\\
   \Spin(7) &= \{ g\in \GL(\O)\colon g^* \Phi = \Phi\}\end{align*}
\end{Proposition}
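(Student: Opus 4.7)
The plan is to prove the two characterizations in parallel, exploiting that each of $\phi$ and $\Phi$ is a \emph{stable form} in the sense of Hitchin: the form determines a canonical geometric structure, and the group is then recovered as the group of symmetries of that structure.

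For the $\G_2$ characterization, the forward inclusion is the easy direction. Any algebra automorphism $g$ of $\O$ must fix $1$ (the unique multiplicative identity), hence preserves $\im \O = 1^\perp$; for $x \in \im \O$ one has $x^2 = -|x|^2$, so $g$ preserves the Euclidean norm on $\im \O$ and, by polarization, the inner product. Since $\phi(u,v,w) = \langle u, v\cdot w\rangle$ is built entirely from preserved data, $g^*\phi = \phi$. For the reverse direction I would invoke the classical reconstruction identity
\[
(\iota_u \phi)\wedge(\iota_v \phi)\wedge\phi \;=\; 6\,\langle u,v\rangle\,\vol
\]
(due to Bryant--Hitchin--Bonan), which shows that $\phi$ alone determines the inner product on $\im \O$ together with a compatible volume form. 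Hence $g^*\phi = \phi$ forces $g \in \SO(7)$. The octonionic cross product on $\im \O$ is then recovered from $\langle u\times v, w\rangle = \phi(u,v,w)$, and extending $g$ by $\tilde g(1)=1$ and $\tilde g|_{\im \O} = g$ yields an algebra automorphism, since multiplication in $\O$ can be written as $(a+u)(b+v) = ab - \langle u,v\rangle + av + bu + u\times v$.

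For the $\Spin(7)$ characterization, the forward inclusion is essentially the definition: $\Spin(7)$ embeds into $\GL(\O)$ via the $8$-dimensional spin representation, and $\Phi$ is (up to scale) the unique invariant $4$-form. The reverse inclusion is the substantive part. Using the decomposition \eqref{eq Cayley Kahler}, $\Phi = \tfrac{1}{2}\omega\wedge\omega + \re\Omega$, one sees immediately that the stabilizer of $\Phi$ contains $\SU(4)$, which fixes $\omega$ and $\re\Omega$ separately. To show that it is no larger than $\Spin(7)$, I would first construct from $\Phi$ an induced metric and orientation on $\O$ via $\GL(8)$-equivariant exterior-algebra identities analogous to those for $\phi$, thereby forcing the stabilizer into $\SO(8)$. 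The $\Spin(7)$-decomposition $\largewedge^2 \O^* = \Lambda^2_7 \oplus \Lambda^2_{21}$ is then canonically associated with $\Phi$ (the second factor identifying with $\mathfrak{spin}(7) \subset \mathfrak{so}(8)$), and any element preserving $\Phi$ must preserve this decomposition, cutting $\SO(8)$ down to $\Spin(7)$.

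The main obstacle is precisely this last step: unlike in the $\G_2$ case, where the metric is recovered from $\phi$ by a single clean polynomial identity in dimension $7$, the reconstruction of the $\Spin(7)$-structure from $\Phi$ requires more delicate representation-theoretic analysis of $\largewedge^2 \O^*$ and $\largewedge^4 \O^*$ as $\SO(8)$-modules. A useful sanity check is to verify that $\dim \GL(8) - \dim \Spin(7) = 64 - 21 = 43$ matches the dimension of the $\GL(8)$-orbit of $\Phi$ in $\largewedge^4 \O^*$; connectedness of the stabilizer can then be established by a path-connectedness argument inside $\SO(8)$ to rule out disconnected extensions.
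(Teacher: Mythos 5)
The paper offers no proof of this proposition: it is imported from Harvey (Theorem~6.80) and Salamon--Walpuski (Lemma~9.2), so your attempt has to be judged against the standard arguments in those references. Your $\G_2$ half is essentially correct and is that standard argument. Two small repairs: an automorphism preserves $\im\O$ not because it fixes $1$ (that alone does not give invariance of $1^\perp$) but because $\im\O$ is characterized algebraically as the set of $x$ with $x^2$ a nonpositive real multiple of $1$; and in the converse direction the identity $(\iota_u\phi)\wedge(\iota_v\phi)\wedge\phi=6\langle u,v\rangle\vol$ yields $\langle gu,gv\rangle=(\det g)^{-1}\langle u,v\rangle$, whence $(\det g)^9=1$, so $\det g=1$ and $g\in\SO(7)$; the reconstruction of octonion multiplication from the cross product then closes the argument exactly as you describe. (This identity is the one the paper itself later quotes from Salamon--Walpuski.)

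The $\Spin(7)$ half has a genuine gap, precisely at the step you flag as the main obstacle: you never establish that the stabilizer of $\Phi$ lies in $\SO(8)$. There are no $\GL(8)$-equivariant identities "analogous to those for $\phi$" valid for general $4$-forms in dimension $8$ --- your own count (a $43$-dimensional orbit inside the $70$-dimensional space $\largewedge^4\O^*$) shows $\Phi$ is not a stable form, so there is no open orbit and no generic reconstruction. What is actually needed is an identity special to the Cayley form, namely
\[
(\iota_u\iota_v\Phi)\wedge(\iota_u\iota_v\Phi)\wedge\Phi=6\,|u\wedge v|^2\,\vol,\qquad \Phi\wedge\Phi=14\,\vol;
\]
the second relation forces $\det g=1$, the first then gives $|gu\wedge gv|=|u\wedge v|$ for all $u,v$, and evaluating on an orthonormal eigenbasis of $g^{\mathsf T}g$ shows all eigenvalues equal $1$, i.e.\ $g\in\SO(8)$. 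Without this (or some equivalent device for proving compactness of the stabilizer) the rest of your argument has nothing to stand on. Two further points need attention. First, since the paper defines $\Spin(7)$ abstractly as the double cover of $\SO(7)$, the forward inclusion is not "essentially the definition": you must realize the spin representation concretely (e.g.\ as the subgroup of $\SO(8)$ generated by left multiplications $L_u$, $u\in S^6\subset\im\O$) and verify that it preserves $\Phi$; uniqueness of the invariant $4$-form does not by itself show that this particular $\Phi$ is invariant. Second, your reduction from $\SO(8)$ to $\Spin(7)$ via the splitting $\largewedge^2\O^*=\Lambda^2_7\oplus\Lambda^2_{21}$ only shows that $g$ normalizes $\mathfrak{spin}(7)$, so you additionally need that $\Spin(7)$ is its own normalizer in $\SO(8)$ (trivial outer automorphism group, centralizer $\{\pm\Id\}$ contained in $\Spin(7)$). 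The cited references avoid this by instead showing that the stabilizer acts transitively on $S^7$ with isotropy group $\G_2$ (the isotropy group of $e_0$ preserves $\iota_{e_0}\Phi=\phi$, so the first half of the proposition applies) and then identifying the resulting compact, simply connected, $21$-dimensional group.
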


The preceding proposition together with the identities \eqref{eq associative 3-form},  \eqref{eq Cayley associative}, and \eqref{eq Cayley Kahler} implies the inclusions
\begin{equation}\label{eq inclusions G2 Spin7} \SU(3)\subset \G_2\subset  \Spin(7)\qquad \text{and}\qquad \SU(4)\subset \Spin(7)\end{equation}
and the following 

\begin{Proposition}[\cite{salamon_walpuski}*{Theorems 8.1 and 9.1}]\label{prop_transitive}
$\G_2$ acts transitively on $S^6\subset \im \O$ with isotropy group $\SU(3)$, and $\Spin(7)$ acts transitively on $S^7\subset \O$ with isotropy group $\mathrm G_2$. 
\end{Proposition}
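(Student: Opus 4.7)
The plan is to establish transitivity in each case by identifying the isotropy group of a convenient base point and then invoking a dimension count together with compactness and connectedness of the spheres. I would take as known that $\G_2$ is compact connected of dimension $14$ and $\Spin(7)$ is compact connected of dimension $21$, and that both groups act orthogonally (on $\im\O$ and on $\O$, respectively), which follows from the fact that preserving $\phi$, respectively $\Phi$, forces preservation of the euclidean metric.

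For the action of $\G_2$ on $S^6$, since elements of $\G_2$ are algebra automorphisms of $\O$ they fix $1$ and preserve the norm, so $\G_2\subset \SO(\im\O)=\SO(7)$. Fix $e_1\in S^6$ and let $H$ be its stabilizer. Any $g\in H$ preserves both the associative subalgebra $\R\langle 1,e_1\rangle\cong\C$ and its orthogonal complement $V=\R\langle e_2,\ldots,e_7\rangle$. The endomorphism $Jv:=e_1\cdot v$ of $V$ squares to $-\Id$, so $J$ is a complex structure, and $g|_V$ is complex-linear by the automorphism property $g(e_1\cdot v)=e_1\cdot g(v)$. Combined with the preservation of the inner product this gives $H\subset \mathrm{U}(V,J)\cong \mathrm{U}(3)$, and the further constraint $g^*\phi=\phi$ (Proposition \ref{Prop G2 Spin7}) cuts $H$ down to $\SU(3)$. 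The reverse inclusion $\SU(3)\subset H$ follows from \eqref{eq inclusions G2 Spin7} together with the standard identification $\im\O=\R e_1\oplus \C^3$ in which $\SU(3)$ fixes $e_1$. A dimension count $14-8=6=\dim S^6$ shows the $\G_2$-orbit of $e_1$ is open in $S^6$; being also closed and $S^6$ connected, this orbit fills $S^6$.

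For the action of $\Spin(7)$ on $S^7$ the argument is parallel. Let $H'\subset\Spin(7)$ be the stabilizer of $1\in S^7$. Any $g\in H'$ preserves $\im\O=1^\perp$, hence $g^*e^0=e^0$; substituting into $\Phi=e^0\wedge\phi+\psi$ from \eqref{eq Cayley associative} and separating the summands that contain $e^0$ from those that do not yields $g^*\phi=\phi$ (and $g^*\psi=\psi$, which is automatic from $g\in\SO(\im\O)$ and $\psi=*\phi$). By Proposition \ref{Prop G2 Spin7} this gives $g|_{\im\O}\in \G_2$, so $H'\subset \G_2$. The reverse inclusion is \eqref{eq inclusions G2 Spin7} combined with the fact that algebra automorphisms fix $1$. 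The dimension count $21-14=7=\dim S^7$ then closes the argument exactly as before.

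The main obstacle in both cases lies in the stabilizer identification. For $S^6$ one needs to unpack how the restriction of $\phi$ to the hyperplane $V$ encodes exactly the data of an $\SU(3)$-structure on $(V,J)$---namely the Hermitian form and the complex volume form---so that the single equation $g^*\phi|_V=\phi|_V$ translates precisely into $g|_V\in \SU(3)$. For $S^7$ the key leverage is the clean split \eqref{eq Cayley associative}, which isolates $\G_2$-invariance on $\im\O$ out of $\Spin(7)$-invariance on $\O$. Once these identifications are in hand, transitivity is a purely formal dimension-count-plus-connectedness consequence.
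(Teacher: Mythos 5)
The paper does not prove this proposition at all: it is imported verbatim from Salamon--Walpuski (Theorems 8.1 and 9.1), so there is no internal argument to compare yours against. Your proof is correct and is the standard stabilizer-plus-dimension-count argument. The two stabilizer identifications go through as you describe: for $S^6$, writing $\phi=e^1\wedge\omega_V+\re\Upsilon$ with $\omega_V$ the K\"ahler form and $\Upsilon$ the complex volume form of $(V,J)$, a unitary $g$ satisfies $g^*\Upsilon=\det\nolimits_{\C}(g)\,\Upsilon$, and $\re(\zeta\Upsilon)=\re\Upsilon$ forces $\zeta=1$ because $\re\Upsilon$ and $\im\Upsilon$ are linearly independent real $3$-forms --- this is exactly the unpacking you promise in your last paragraph; for $S^7$, the separation of $e^0\wedge\phi+\psi$ into terms with and without $e^0$ is legitimate because $g$ preserves the splitting $\O=\R e_0\oplus\im\O$. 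The transitivity step (full-dimensional orbit is open, compact orbit is closed, sphere is connected) is sound, and the fact that the stabilizer was computed only at one base point causes no circularity. The only external inputs are $\dim\G_2=14$, $\dim\Spin(7)=21$, and the inclusions $\G_2\subset\mathrm O(\im\O)$, $\Spin(7)\subset\mathrm O(\O)$; these must be (and can be) established independently of the transitivity being proved, e.g.\ via the derivation algebra of $\O$ or the openness of the $\GL(7,\R)$-orbit of $\phi$, so citing them is acceptable. In short: where the paper cites, you have supplied a complete and correct proof along the same lines as the cited source.
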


\subsection{Valuations and curvature measures}
\label{Sect_Background}
\subsubsection{Valuations on vector spaces}
Let $V$ be a finite dimensional real vector space, and let $\mathcal K(V)$ denote the space of compact convex sets in $V$. 
A {\em valuation} on $V$ is a functional $\varphi\colon\mathcal K(V)\to\C$ such that 
\[
\varphi(A\cup B)=\varphi(A)+\varphi(B)-\varphi(A\cap B)
\]
whenever $A,B,A\cup B\in\mathcal K(V)$. 
The space of continuous (with respect to Hausdorff topology) translation-invariant valuations is denoted $\Val=\Val(V)$. The McMullen decomposition  of this space is (cf.\ \cite{mcmullen})
\begin{equation}\label{eq_mcmullen}
 \Val=\bigoplus_{k,\epsilon}\Val_k^\epsilon
\end{equation}
where the sum extends over $ \epsilon=\pm$, $k=0,\ldots,\dim V$ and $\Val_k^+$ (resp. $\Val_k^-$) denotes the subspace of even (resp. odd) $k$-homogeneous valuations: 
$\varphi\in \Val_k^\epsilon$ iff $\varphi(\lambda A)=\lambda^k\varphi(A)$ and $\varphi(-A)=\epsilon\varphi(A)$ for every $\lambda>0$ and $A\in\mathcal K(V)$.

Alesker introduced the  dense subspace $\Val^\infty\subset \Val$ of {\em smooth} valuations. 
One of the main features of this subspace is that it carries a number of  algebraic structures. Another salient feature is that a 
large part of these constructions extend to general smooth manifolds. Surprisingly, there exists also a rich theory of valuations without any continuity assumptions, see, e.g.,
\cites{ludwig_silverstein,ludwig_boroczky,mcmullen_lattice}.
\subsubsection{Klain functions}
A  homogeneous even  continuous translation-invariant valuation admits  a simple description in terms of its Klain function \cite{klain_even}. Given $\varphi\in\Val_k^+$, this is a function $\Kl_\varphi$ defined on the Grassmannian of $k$-planes $\Gr_k(V)$ by
\[
 \varphi(A)=\Kl_\varphi(E)\vol_k(A),\qquad E\in\Gr_k(V), A\in\mathcal K(E).
\]
Here  for simplicity we endow $V$ with an euclidean structure, and $\vol_k$ denotes the Lebesgue measure on $E\subset V$ with the induced euclidean structure. It was shown by Klain that $\varphi = \psi$ if and only if $\Kl_\varphi=\Kl_\psi$.

\subsubsection{Valuations on manifolds}

Recently, a theory of valuations on manifolds has been developed by Alesker \cites{valmfdsI,valmfdsII,valmfdsIII,valmfdsIV,valmfdsIG}. In the following, let $M$ be a smooth oriented manifold of dimension $n$
and let $\pi\colon SM\to M$ denote the cosphere bundle of $M$, that is the bundle over $M$ whose fiber over $x$ is $S_xM=(T^*_xM\setminus\{0\})/\R^+$.

Let  $\calP(M)$ denote  the space of compact submanifolds with corners (or smooth simple polyhedra) on $M$.
For $A\in\calP(M)$, the normal cycle $N(A)$ is an oriented Lipschitz submanifold of the cosphere bundle $SM$ (cf.\ \cite{valmfdsII}*{Definition 2.4.2}).
It consists of the oriented hyperplanes $[\xi]\in S_xM$ such that $\xi(v)\leq 0$ for all $v\in T_xA$. 
The normal cycle defines a closed Legendrian $(n-1)$-dimensional current on $SM$ 
(i.e.\ it vanishes on exact forms, and also on multiples of any contact form $\alpha\in\Omega^1(SM)$).

\begin{Definition}\label{def_val} Given complex-valued differential forms $\omega\in\Omega^{n-1}(SM)$, $\eta\in\Omega^n(M)$ the functional  $\lcur\omega,\eta\rcur$ defined by
 \begin{equation}\label{eq_def_val}
   \lcur\omega,\eta\rcur(A)=\int_{N(A)}\omega+\int_A\eta, \qquad A\in \calP(M)
 \end{equation}
is  a called a smooth valuation. The space of smooth valuations on $M$ is denoted by $\calV=\calV(M)$.
\end{Definition}
In case $M=V$ is a vector space, the space $\calV(V)^V$ of smooth translation invariant valuations coincides with $\Val^\infty(V)$. 

\subsubsection{Curvature measures}
\begin{Definition}\label{def_curv} Given complex-valued differential forms $\omega\in\Omega^{n-1}(SM)$, $\eta\in\Omega^n(M)$ the functional  $[\omega,\eta]$  which assigns to each $A\in \calP(M)$
the complex-valued Borel measure on $M$
 \begin{equation}\label{eq_def_curv}
  [\omega,\eta](A,U)=\int_{N(A)\cap \pi^{-1}(U)}\omega+\int_{A\cap U}\eta,\qquad U\subset M
 \end{equation}
 is called a smooth curvature measure.
The space of smooth  curvature measures on $M$ is denoted by  $\calC=\calC(M)$. 
\end{Definition}

We will denote by $\Curv=\Curv(V)$ the space of translation invariant smooth curvature measures, and by $\Curv_k$ its $k$-homogeneous component \cite{bfs}*{(2.1)}. We consider also the spaces $\Curv^+, \Curv^-$ defined by $\Phi\in \Curv^\varepsilon$ if and only $\Phi(-A,-U)=\epsilon\Phi(A,U)$.

Given a function $f\in C^\infty(M)$, to each curvature measure $\Psi$ we associate a  smooth valuation $\Psi_f$, also denoted $\Psi(f),\Psi(\cdot,f)$, by setting $\Psi_f(A)=\int_M f d\Psi(A,-)$.
In particular, for $f\equiv 1$ this defines the {\em globalization} map $$\glob:\calC(M)\to \calV(M)$$ The kernel of this map was described in \cite{bernig_brocker} in terms of the  Rumin differential \cite{rumin}. Given $\omega\in \Omega^{n-1}(SM)$, there exists an $(n-2)$-form $\xi$ such that
\begin{equation}
 D\omega:=d(\omega + \alpha\wedge\xi)
\end{equation}
is a multiple of the contact form $\alpha$. The $n$-form $D\omega$ is unique and is called the Rumin
differential of $\omega$. It was proved in \cite{bernig_brocker} that a curvature  measure $\Psi$ 
given by \eqref{eq_def_curv} satisfies $\glob\Psi=0$ if and only if
\[
 D\omega+\pi^*\eta=0\qquad \mbox{and}\qquad \int_{S_xM}\omega=0,\quad x\in M.  
\]

The spaces $\Omega^{n-1}(SM),\ \Omega^n(M)$ have a natural Fr\'echet topology. Following \cite{valmfdsII}*{Section 3.2}
we take on $\calV(M)$ and $\calC(M)$ the quotient topology defined by the maps $(\omega,\eta)\mapsto \lcur\omega,\eta\rcur$  and $(\omega,\eta)\mapsto [\omega,\eta]$. Since the kernel of these maps is closed, the spaces $\calC(M)$ and $\calV(M)$ are Fr\'echet.

\subsubsection{Pull-back}
If  $\iota\colon M_1\rightarrow M_2$ is an immersion then  the pull-back of valuations $\iota^*\colon \calV(M_2)\to \calV(M_1)$  and of curvature measures   $\iota^*\colon \calC(M_2)\to \calC(M_1)$  
is defined by (cf.\ \cite{valmfdsIG}*{Section 3.1} and \cite{bfs}*{Prop. 2.4})
\begin{align}
 &\iota^*\mu(A)=\mu(\iota(A)),\\ 
 &(\iota^*\Psi)_{\iota^*f}=\iota^*(\Psi_f),\label{pullback}
\end{align}
where $\mu\in\calV(M_2), \Psi\in\calC(M_2), A\in\mathcal P(M_1),$ and $f\in C^\infty(M_2)$. 

\subsubsection{Filtration and Euler-Verdier involution}
The space $\calV(M)$ is not naturally graded as in  \eqref{eq_mcmullen}, 
but there is a natural filtration $\calV(M)=\calV_0(M)\supset\cdots\supset\calV_n(M)$ defined by
$$\mu\in \calV_k \quad \Longleftrightarrow\quad \lim_{t\to 0}  \frac{1}{t^i} \mu(\phi(tA))=0,\quad i<k,\ A\in\calP(T_xM)$$
where $\phi:T_xM\rightarrow M$ is any local diffeomorphism with $\phi(0)=x$, cf.\ \cite{valmfdsII}.
Also, there is a decomposition of $\calV$, similar to the splitting of $\Val$ into even and odd parts,
induced by the  Euler-Verdier involution $\sigma\colon \calV\to\calV$ (cf.\ \cite{valmfdsII}). If $\varphi=\lcur \omega,\eta\rcur$, then  $\sigma \varphi=(-1)^n\lcur a^*\omega,\eta\rcur$ where $a\colon SM\to SM$ is the fiberwise antipodal map.
 This involution preserves the filtration. For $\varphi\in \Val_k$, one has $\sigma\varphi(A)=(-1)^k\varphi(-A)$. Similarly, there is a natural  involution $\sigma$ on $\calC(M)$ defined by $\sigma[\omega,\eta]=(-1)^n[a^*\omega,\eta]$.

\subsubsection{Algebra and module structures}
There is a product structure on $\calV(M)$, called the Alesker product, which turns this space into a commutative algebra with the Euler characteristic $\chi$ as unit.
\begin{Proposition}The Alesker product satisfies the following properties
\begin{enumerate}[i)]
 \item equivariance with respect to pull-backs: if $\iota\colon M_1\to M_2$ is an immersion, then $\iota^*(\mu_1\cdot\mu_2)=\iota^*(\mu_1)\cdot \iota^*(\mu_2)$.
 \item continuity, with respect to the natural topology on $\calV(M)$ (cf.\ \cite[Section 3.2]{valmfdsII}).
 \item compatibility with the filtration: $\calV_i\cdot\calV_j\subset \calV_{i+j}$
 \item compatibility with the Euler-Verdier involution: $\sigma(\mu_1\cdot\mu_2)=\sigma(\mu_1)\cdot\sigma(\mu_2)$.
\end{enumerate}
\end{Proposition}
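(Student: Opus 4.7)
The proposition collects four foundational properties of the Alesker product, all established in Alesker's series \cites{valmfdsI,valmfdsII,valmfdsIII,valmfdsIV}. The plan is to sketch each one using a concrete geometric description of the product on a dense subspace of $\calV(M)$, and to appeal to Alesker's deeper structural results only where unavoidable.

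The starting observation is that valuations of the form $\mu_N(A) = \chi(A \cap N)$, where $N \subset M$ is a smooth submanifold in sufficiently general position with respect to $A$, are smooth and span a dense subspace of $\calV(M)$. On such generators, when $N_1$ and $N_2$ intersect transversely, the Alesker product is given by $\mu_{N_1} \cdot \mu_{N_2} = \mu_{N_1 \cap N_2}$. Properties (i) and (ii) follow from this description together with density: for (i), if $\iota\colon M_1 \to M_2$ is an immersion transverse to $N_1$, $N_2$, and $N_1 \cap N_2$, then $\iota^* \mu_N = \mu_{\iota^{-1}(N)}$ and $\iota^{-1}(N_1 \cap N_2) = \iota^{-1}(N_1) \cap \iota^{-1}(N_2)$, which immediately yields $\iota^*(\mu_{N_1} \cdot \mu_{N_2}) = \iota^* \mu_{N_1} \cdot \iota^* \mu_{N_2}$, and continuity extends this to all smooth valuations; for (ii), continuity is essentially built into the construction, since at the level of defining pairs $(\omega,\eta) \in \Omega^{n-1}(SM) \oplus \Omega^n(M)$ the product is expressed through wedge products and the Rumin differential, both of which are continuous operations in the Fr\'echet topology, and the quotient topology on $\calV(M)$ preserves continuity.

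Property (iii), the filtration compatibility $\calV_i \cdot \calV_j \subset \calV_{i+j}$, reduces through Alesker's isomorphism $\bigoplus_k \calV_k/\calV_{k+1} \cong \Gamma(\Val(TM))$ to the corresponding statement for translation-invariant valuations on a single vector space, namely $\Val_i \cdot \Val_j \subset \Val_{i+j}$, which follows from the behavior of the product under rescaling. This is the hardest of the four properties and is where I expect the main obstacle to lie in a self-contained proof, since it requires both the construction of the Alesker isomorphism and the identification of the product on the associated graded with fiberwise multiplication of translation-invariant valuations on tangent spaces.

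Finally, for (iv), the Euler-Verdier involution $\sigma$ is induced on the defining pair by $(\omega,\eta) \mapsto (-1)^n(a^*\omega, \eta)$, where $a\colon SM \to SM$ is the fiberwise antipodal map. Since $a^*$ commutes with exterior derivative, with wedge product on $SM$, and with pull-back from $M$, it commutes with the Rumin differential and with all operations used in the construction of the product. A direct check then shows that $\sigma$ is multiplicative; on homogeneous translation-invariant valuations one recovers the sign $(-1)^k$ already encoded in $\sigma\varphi(A) = (-1)^k \varphi(-A)$, confirming consistency with the vector-space picture.
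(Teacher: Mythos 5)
The paper does not prove this proposition at all: it is stated as background and attributed to Alesker's papers \cites{valmfdsI,valmfdsII,valmfdsIII,valmfdsIV}, so there is no in-paper argument to compare against. Judged on its own merits, your sketch follows the standard lines of the literature for (i), (ii) and (iv), but two points need repair. First, a minor one: for a single fixed submanifold $N$, the functional $A\mapsto \chi(A\cap N)$ is not a smooth valuation (it is not even defined for all $A\in\calP(M)$ without a transversality hypothesis). The dense generating class consists of \emph{averages} $A\mapsto \int \chi(A\cap f_s(N))\,d\mu(s)$ over smooth families with a smooth measure; with that correction your transversality argument for (i) and the extension by continuity go through, and this is essentially the construction in \cite{valmfdsIII} and \cite{fu15}.

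The more serious issue is (iii). You propose to deduce $\calV_i\cdot\calV_j\subset\calV_{i+j}$ from the isomorphism $\bigoplus_k\calV_k/\calV_{k+1}\cong\Gamma(\Val^\infty(TM))$ of graded \emph{algebras}, but that isomorphism presupposes exactly the inclusion you are trying to prove: without $\calV_i\cdot\calV_j\subset\calV_{i+j}$ the associated graded space carries no product, so the statement "$\Xi$ is an algebra isomorphism" is not even well posed. The non-circular argument is the direct rescaling one, using only (i), (ii) and the characterization $\mu\in\calV_k\iff\lim_{t\to0}t^{-m}\mu(\phi(tA))=0$ for $m<k$: by pull-back equivariance and continuity,
\[
\lim_{t\to 0} t^{-m}(\phi\circ h_t)^*(\mu\cdot\nu)
=\Bigl(\lim_{t\to 0} t^{-i}(\phi\circ h_t)^*\mu\Bigr)\cdot\Bigl(\lim_{t\to 0} t^{-(m-i)}(\phi\circ h_t)^*\nu\Bigr)=0
\]
for $m<i+j$, since the second factor vanishes when $m-i<j$. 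This is precisely the argument the paper itself runs in Section~\ref{transfer} to prove the module analogue $\calV_k\cdot\calC_l\subset\calC_{k+l}$, and it is the one you should use here. Once (iii) is established this way, the identification of the induced product on the graded pieces with the fiberwise product on $\Gamma(\Val^\infty(TM))$ becomes a theorem (Theorem~\ref{graded_iso}) rather than an input.
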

The space $\calC(M)$  is naturally a module over the algebra $\calV(M)$ (cf.\ \cite[Proposition 2.3]{bfs}) with multiplication determined by
\[
 (\mu\cdot \Psi)_f=\mu\cdot \Psi_f,\qquad f\in C^\infty(M).
\]
It follows from \eqref{pullback} that
\begin{equation}\label{pullback_module}
 \iota^*(\mu\cdot \Psi)=\iota^*\mu\cdot\iota^*\Psi.
\end{equation}

\subsection{The frame bundle} Let $M$ be a  Riemannian manifold of dimension $n$.
The frame bundle $\pi\colon\mathcal F_{\mathrm O(n)}(M)\to M$ is a principal  bundle over $M$ with structure group $\mathrm O(n)$ whose fiber over each point $p\in M$ consists of all 
orthonormal bases of $T_pM$. The right action of $a\in \mathrm O(n)$ on $\mathcal F_{\mathrm O(n)}$ is denoted by $R_a$

Associated with the frame bundle  of $M$ are the $\R^n$-valued solder $1$-form $\tf$ and the $\mathfrak{so}(n)$-valued connection $1$-form $\cf$ on $\mathcal F_{\mathrm O(n)}(M)$.  At $e=(e_1,\ldots,e_n)\in\mathcal F_\mathrm{O(n)}(M)$ the solder form is defined by $d\pi_e=(e_1,\ldots, e_n)\cdot \omega_e$. 
The connection form is characterized by $d\tf=-\cf\wedge\tf$ (cf. \cite{bishop_crittenden}). Under the action of $\mathrm O(n)$ we have 
\begin{align}
 R_a^* \omega_i& = a_{ji}\, \omega_j \label{eq rg omega}\\
 R_a^* \varphi_{ij}& = a_{ki}\, a_{lj}\, \varphi_{kl} \label{eq rg varphi}
\end{align}

The solder and connection forms satisfy the  well-known structure equations 
\begin{align}
 d\tf_i&=-\cf_{ij}\wedge\tf_j \label{eq structure I}\\
d\cf_{ij}&=-\cf_{ik}\wedge\cf_{kj}+\Cf_{ij} \label{eq structure II}
\end{align}
where $1\leq i,j,k\leq n$, repeated indices are summed over,  and  the $\mathfrak{so}(n)$-valued curvature $2$-form $\Cf$ is
related to the curvature tensor $R(X,Y)Z=\nabla_{[X,Y]}Z-[\nabla_X,\nabla_Y]Z$ through
\[
 \Cf_{ij}= \frac1 2 \langle R(e_i,e_j)e_k,e_l\rangle\; \tf_k \wedge \tf_l.
\]

We will also make use of the following expressions for the pull-back of the solder and connection forms under a smooth map.

\begin{Lemma} \label{lemma pullback frame bundle} Let $N$ be a smooth manifold and let  $f\colon N \to \mathcal F_{\mathrm O(n)}(M)$ be a smooth map, $f(q)= (\pi(f(q)), e_1(q),\ldots, e_n(q))$. If $X$ is a tangent vector at $p\in N$, then 
\begin{align*}
f^*\tf_i(X)& = \langle e_i, d(\pi\circ f)(X)\rangle,\\
 f^* \cf_{ij} (X) & =  \langle e_i, \nabla_t ( e_j\circ \gamma)|_{t=0}\rangle,
\end{align*}
where $\nabla_t$ denotes the covariant derivative along any smooth curve $\gamma$ in $N$ with $\gamma(0)=p$ and $\dot\gamma(0)=X$. 
\end{Lemma}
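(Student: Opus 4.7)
The plan is to treat the two identities separately; both follow from the defining properties of $\omega$ and $\varphi$, but the second requires a slightly more careful comparison with parallel transport.

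For the first identity, the defining relation $d\pi_e = (e_1,\ldots,e_n)\cdot \omega_e$ is equivalent to $\omega_i(Y) = \langle e_i, d\pi_e(Y)\rangle$ for every tangent vector $Y\in T_e\mathcal F_{\mathrm O(n)}(M)$. Applying this with $Y = df_p(X)$ and invoking the chain rule $d(\pi\circ f) = d\pi\circ df$ gives the claim immediately.

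For the connection form identity I would reduce to a one-parameter calculation. Pick any curve $\gamma\colon(-\varepsilon,\varepsilon)\to N$ with $\gamma(0)=p$ and $\dot\gamma(0)=X$, and set $\eta(t) = f(\gamma(t))$; this is a curve in the frame bundle projecting to $c(t) = (\pi\circ f)(\gamma(t))$ in $M$, and $f^*\varphi_{ij}(X) = \varphi_{ij}(\dot\eta(0))$. The core step is to invoke the intrinsic description of $\varphi$ as the Levi-Civita connection form. Let $P_j(t)$ denote the parallel transport of $e_j(p)$ along $c$, and write $e_j(\gamma(t)) = A(t)_{kj}\,P_k(t)$ for the unique curve $A(t)\in \mathrm O(n)$ with $A(0)=I$. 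The horizontal lift of $c$ through $f(p)$ is precisely the frame $(c(t),P_1(t),\ldots,P_n(t))$, so the $\mathrm O(n)$-equivariance of $\varphi$ together with the definition of the fundamental vertical vector field yields $\varphi_{ij}(\dot\eta(0)) = \dot A_{ij}(0)$. On the other hand, differentiating $e_j\circ\gamma = A_{kj}P_k$ and using $\nabla_t P_k = 0$ gives $\nabla_t(e_j\circ\gamma)|_{t=0} = \dot A_{kj}(0)\,e_k(p)$, and taking the inner product with $e_i(p)$ recovers $\dot A_{ij}(0)$.

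The main bookkeeping point to watch is reconciling the sign in the structure equation $d\omega = -\varphi\wedge\omega$ and the right-action formulas \eqref{eq rg omega}--\eqref{eq rg varphi} with the identification $\varphi(\dot\eta(0)) = \dot A(0)$, so that neither a sign flip nor a transpose sneaks in; this is a matter of convention rather than a substantive obstacle. Independence of the resulting formula from the auxiliary choice of $\gamma$ is automatic, since the right-hand side depends only on the $1$-jet of $\gamma$ at $0$.
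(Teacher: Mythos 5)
The paper states this lemma without proof, so there is nothing to compare against; your argument is the standard one and it is correct. Both steps check out: the first identity is immediate from the definition $d\pi_e=(e_1,\dots,e_n)\cdot\omega_e$ and orthonormality, and for the second, the decomposition $\eta(t)=h(t)\cdot A(t)$ into the horizontal lift (the parallel frame) times a curve in $\mathrm O(n)$ with $A(0)=I$ gives $\cf_{ij}(\dot\eta(0))=\dot A_{ij}(0)=\langle e_i,\nabla_t(e_j\circ\gamma)|_{t=0}\rangle$ exactly as you say. The convention check you defer does work out: pulling back by a local section $s=(e_1,\dots,e_n)$ of the frame bundle, the formula $s^*\cf_{ij}(X)=\langle e_i,\nabla_Xe_j\rangle$ yields $d\theta^i=-\omega_{ij}\wedge\theta^j$ by torsion-freeness, matching the paper's normalization $d\tf=-\cf\wedge\tf$ rather than its transpose, and the equivariance \eqref{eq rg varphi} is likewise consistent.
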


\section{Filtrations and the Transfer Principle}\label{transfer}
Here we introduce a canonical filtration of $\calC(M)$ and prove Theorem~\ref{Theorem_graded_modules}. 
For a Riemannian manifold, Section~2.2.2 of \cite{bfs} describes a canonical isomorphism $\tau\colon \calC(M ) \to \Gamma(\Curv(T M ))$ called transfer map. Proposition~\ref{prop filtration transfer} clarifies 
the relation between the transfer map and the filtration.
Section~\ref{subsec transfer principle} contains applications of these ideas to the integral geometry of isotropic spaces. 

\subsection{Filtrations}
Given a smooth manifold $M$, let $\Val^\infty(TM)$ and $\Curv(TM)$ denote the Fr\'echet vector bundles over $M$ whose fibers over $x\in M$ are 
$\Val^\infty(T_xM)$ and  $\Curv(T_xM)$. 
We denote by $\Val^\infty_k(TM), \Curv_k(TM)$ the subbundles of $k$-homogeneous elements. 
Let $\glob\colon \Curv(TM)\to\Val^\infty(TM)$ denote the fiberwise globalization. The Alesker product and the module structure of $\Curv$ give rise to fiberwise products 
\begin{align*}
\Val^\infty(TM)\otimes\Val^\infty(TM)&\longrightarrow \Val^\infty(TM)\\
\Val^\infty(TM)\otimes\Curv(TM)&\longrightarrow\Curv(TM).
\end{align*}

Let us recall the construction of a canonical epimorphism $\Lambda_k\colon \calV_k\rightarrow \Gamma(\Val_k^\infty(TM))$ from \cites{valmfdsI,valmfdsII}.
If  $\mu\in\calV_k$ then  
\begin{equation}\label{def_Lambda}
\Lambda_k\mu|_x(A)=\lim_{t\to 0}\frac{1}{t^k}\mu(\phi(tA)), \qquad A\in\calP(T_xM), 
\end{equation}
where $\phi:T_xM\rightarrow M$ is any local diffeomorphism with $\phi(0)=x,\ d\phi_0=\mathrm{id}$, defines a translation-invariant valuation $\Lambda_k \mu|_x$ on $T_xM$. 
Since $\calV_{k+1}=\ker \Lambda_k$, this yields an isomorphism
 \begin{equation}\label{Xik}
  \Xi_k\colon\calV_k/\calV_{k+1}\rightarrow \Gamma(\Val_k^\infty(TM)).
 \end{equation}
Since the Alesker product is compatible with the filtration, the space $\bigoplus_k \calV_k/\calV_{k+1}$ is naturally a graded algebra.

\begin{Theorem}[\cite{valmfdsIV}]\label{graded_iso}
The graded map $\Xi:=\oplus_k\Xi_k$
\begin{equation}
\Xi\colon\bigoplus_k \calV_k/\calV_{k+1}\rightarrow \bigoplus_k \Gamma( \Val^\infty_k(TM))
\end{equation}
is an isomorphism of algebras. 
\end{Theorem}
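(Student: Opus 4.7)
The plan is to split Theorem~\ref{graded_iso} into two parts: (A) that each $\Xi_k$ is a vector-space isomorphism, and (B) that $\Xi=\bigoplus_k \Xi_k$ respects the product structure on the graded object, which is induced from the Alesker product thanks to the compatibility $\calV_i\cdot\calV_j\subset \calV_{i+j}$ listed in the Proposition preceding the statement. For (A), the injectivity of $\Xi_k$ is built into the definitions: an element $\mu\in\calV_k$ is sent to $0$ in $\Gamma(\Val^\infty_k(TM))$ iff $\Lambda_k\mu\equiv 0$, which by \eqref{def_Lambda} is exactly the defining condition for $\mu\in\calV_{k+1}$. Surjectivity of $\Lambda_k$ is the hard analytic content of the whole framework, and I would import it from Alesker's papers \cites{valmfdsI,valmfdsII}, where it is obtained by realizing any prescribed translation-invariant section as the leading term of a globally defined smooth valuation built from Crofton-type local models, glued with a partition of unity and then smoothed.

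For (B) the task reduces to checking, for $\mu\in\calV_k$ and $\nu\in\calV_l$ and each $x\in M$, the pointwise identity
\begin{equation*}
\Lambda_{k+l}(\mu\cdot\nu)\big|_x = \Lambda_k\mu\big|_x \cdot \Lambda_l\nu\big|_x
\end{equation*}
inside $\Val^\infty_{k+l}(T_xM)$. Fix $x$, choose a local diffeomorphism $\phi\colon T_xM\to M$ with $\phi(0)=x$ and $d\phi_0=\mathrm{id}$, and let $\lambda_t\colon T_xM\to T_xM$ denote scaling by $t>0$. Setting $\tilde\mu:=\phi^*\mu$, $\tilde\nu:=\phi^*\nu$, pull-back naturality (item (i) of the Proposition) applied first to $\phi$ and then to $\lambda_t$ gives
\begin{equation*}
(\lambda_t\circ\phi)^*(\mu\cdot\nu) = (\lambda_t^*\tilde\mu)\cdot(\lambda_t^*\tilde\nu).
\end{equation*}
I then divide by $t^{k+l}$ and let $t\to 0^+$: the left-hand side limits to $\Lambda_{k+l}(\mu\cdot\nu)|_x$, while the factors $t^{-k}\lambda_t^*\tilde\mu$ and $t^{-l}\lambda_t^*\tilde\nu$ tend respectively to $\Lambda_k\mu|_x$ and $\Lambda_l\nu|_x$. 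Joint continuity of the Alesker product (item (ii)) then yields the desired identity, modulo the observation that the Alesker product on translation-invariant valuations inside $\calV(T_xM)$ coincides with the classical product on $\Val^\infty(T_xM)$, which is a consequence of the naturality of Alesker's construction on vector spaces versus manifolds.

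The step I expect to be the main obstacle is the topological fine-print of that limit. The convergence $t^{-k}\lambda_t^*\tilde\mu\to\Lambda_k\mu|_x$ is immediate from \eqref{def_Lambda} only in the sense of pointwise evaluation on convex bodies, whereas the continuity statement for the Alesker product is formulated in the Fr\'echet topology on $\calV$ inherited from the generating forms $(\omega,\eta)$. Bridging this gap requires working at the level of representing forms and using the description of the product in terms of the Rumin differential and wedge products of pull-backs, from which both the scaling behaviour under $\lambda_t$ and the strengthening of convergence can be read off. Taking this foundational input for granted, the dilation scheme above delivers the algebra isomorphism.
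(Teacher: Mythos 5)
The paper does not actually prove Theorem~\ref{graded_iso}; it is quoted from Alesker \cite{valmfdsIV}, so there is no in-text proof to compare against. That said, your outline is correct and is essentially the standard argument, and in fact it coincides with the scheme the authors themselves use for the module analogue: part (A) is exactly how the paper sets things up (surjectivity of $\Lambda_k$ is imported from Alesker and $\calV_{k+1}=\ker\Lambda_k$ gives injectivity of $\Xi_k$), while your dilation-and-continuity argument for multiplicativity is precisely the computation \eqref{eq_mu_psi} used to prove $\calV_k\cdot\calC_l\subset\calC_{k+l}$. The Fr\'echet-versus-pointwise convergence issue you flag is real but is resolved exactly as you suggest: the form-level computation \eqref{comp_Lambda} shows that $t^{-k}(\phi\circ h_t)^*\mu$ converges already at the level of representing forms (the fiber directions are scale-invariant, so only the horizontal degree contributes powers of $t$), hence in the quotient Fr\'echet topology, after which continuity of the product applies. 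The only cosmetic slip is writing $(\lambda_t\circ\phi)^*$ for what should be $(\phi\circ\lambda_t)^*$.
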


Next we extend these constructions to the space of curvature measures. 
Let $\Omega^{n-1}_k(SM)$ denote the space of $\omega\in\Omega^{n-1}(SM)$ such that for all  $\xi\in SM$, one has $\omega_\xi|_E=0$ for any $(n-1)$-dimensional subspace $E\subset T_\xi SM$  such that $\dim E\cap \operatorname{ker}(d\pi) \geq n-k$. 
We introduce a filtration on $\calC(M)$  by
\begin{align*}\calC_k&=\{[ \omega,\eta]\colon\omega\in\Omega^{n-1}_k(SM),\eta\in\Omega^{n}(M)\},\quad k<n\\ \calC_n&=\{[ 0,\eta]\colon\eta\in\Omega^{n}(M)\}.\end{align*}
Notice that $\calV_k=\glob(\calC_k)$ by Proposition 5.2.5 of \cite{valmfdsI}.

Let us now construct a canonical map  $\Lambda_k'\colon \calC_k\to \Gamma(\Curv_k(TM))$. 
Let $x\in M$, and fix a local diffeomorphism $\phi:T_xM\to M$ with $\phi(0)=x,\ d\phi_0=\mathrm{id}$. 
For $t\in\R$ let $h_t(y)=ty, y\in T_xM$. For $\Psi\in\calC_k$ set
$$\Lambda_k'(\Psi)|_x:=\lim_{t\to 0}\frac{1}{t^k} (\phi\circ h_t)^*\Psi.$$

\begin{Proposition}\mbox{}\begin{enumerate}[i)]
                    \item  $\Lambda_k'\colon \calC_k\to \Gamma(\Curv_k(TM))$ is well-defined, independent of $\phi$ and surjective.
                    \item $\Psi\in \calC_k$ if and only if  at every $x\in M$ one has
                    \begin{equation}\label{eq_Ck}\lim_{t\to0} \frac{1}{t^{i}}  (\phi\circ h_t)^*\Psi =0, \qquad \text{for } i<k\end{equation}
                    for a local diffeomorphism $\phi:T_xM\to M$ with $\phi(0)=x,\ d\phi_0=\mathrm{id}$.
                    \item $\ker \Lambda_k'= \calC_{k+1}$
                   \end{enumerate}
\end{Proposition}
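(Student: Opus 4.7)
The proof closely follows Alesker's construction of the analogous map $\Lambda_k$ on valuations in \cite{valmfdsI,valmfdsII}, with additional routine bookkeeping for the $\eta$-piece of a curvature measure. The starting point is a local asymptotic expansion: working in the chart $\phi$ and writing $\tilde\phi_t\colon S(T_xM)\to SM$ for the lift of $\phi\circ h_t$ to cosphere bundles, I first establish that for $\omega\in\Omega^{n-1}_k(SM)$ and $\eta\in\Omega^n(M)$,
$$(\tilde\phi_t)^*\omega = t^k\,\omega_t, \qquad (\phi\circ h_t)^*\eta = t^n\,\eta_t,$$
where $\omega_t,\eta_t$ depend smoothly on $t$ near $0$ and $\omega_0,\eta_0$ are translation-invariant on $T_xM$. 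The factor $t^k$ reflects that $\Omega^{n-1}_k(SM)$ is locally generated by products containing at least $k$ factors from $\pi^*\Omega^1(M)$, and each such factor picks up a factor of $t$ under pullback since $d(\phi\circ h_t)_y = t\cdot d\phi_{ty}$.

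From this expansion most of the statement follows directly. It gives at once the vanishing $t^{-i}(\phi\circ h_t)^*\Psi\to 0$ in $\calC(T_xM)$ whenever $\Psi\in\calC_k$ and $i<k$ (the forward direction of (ii)), the convergence of $t^{-k}(\phi\circ h_t)^*\Psi$ to a translation-invariant curvature measure $\Lambda_k'\Psi|_x\in\Curv_k(T_xM)$ depending smoothly on $x$ (well-definedness in (i)), and the easy inclusion $\calC_{k+1}\subset\ker\Lambda_k'$ in (iii). Independence of $\phi$ follows because for two such charts $\phi_1,\phi_2$, the rescaled transition $\psi_t:=h_{1/t}\circ\phi_2^{-1}\circ\phi_1\circ h_t$ converges smoothly to the identity of $T_xM$ as $t\to 0$, so its pullback acts trivially on the $t=0$ limit. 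Surjectivity of $\Lambda_k'$ is obtained by a partition-of-unity argument: local representing forms in $\Omega^{n-1}_k(SU_\alpha)$ (or in $\Omega^n(U_\alpha)$ if $k=n$) realizing a prescribed smooth section of $\Curv_k(TM)$ are glued together to a global element of $\calC_k$ with the correct image, using that $\Omega^{n-1}_k(SM)$ is a $C^\infty(SM)$-submodule of $\Omega^{n-1}(SM)$.

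The remaining, substantive content is the inclusion $\ker\Lambda_k'\subset\calC_{k+1}$; once this is in hand, the reverse direction of (ii) follows by induction on $k$, since if the limits vanish for all $i<k$ then $\Psi\in\calC_{k-1}$ inductively and $\Lambda_{k-1}'\Psi=0$ by the $i=k-1$ coefficient. To prove the inclusion, I take $\Psi=[\omega,\eta]\in\calC_k$ with $\Lambda_k'\Psi=0$ and analyze the leading form $\omega_0$ produced in the first paragraph. It is a translation-invariant $(n-1)$-form on $S(T_xM)$ of horizontal degree $k$, and the hypothesis forces $[\omega_0,0]=0$ as a curvature measure on $T_xM$ (and additionally $\eta_0=0$ when $k=n$). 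Fiberwise this is a linear-algebraic condition that, via the Rumin-type description of the kernel of the map $[\cdot,\cdot]$, allows one to rewrite $\omega_0$ modulo this kernel as a form of horizontal degree at least $k+1$. The main obstacle is to globalize this fiberwise adjustment: using a partition of unity subordinate to charts centered at varying points of $M$, one constructs a global pair $(\omega',\eta')$ in the kernel of $[\cdot,\cdot]$ so that $(\omega+\omega',\eta+\eta')$ represents the same curvature measure $\Psi$ but with first component in $\Omega^{n-1}_{k+1}(SM)$. This is the curvature-measure analogue of the global argument underlying Alesker's result for valuations and carries the bulk of the technical work.
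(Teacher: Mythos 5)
Your proof is correct and follows essentially the same route as the paper: the same asymptotic expansion identifying the limit with $\pi_k\bigl((\bar\phi^*\omega)^{tr}\bigr)$, and the same key step of using the description of the kernel of $(\omega,\eta)\mapsto[\omega,\eta]$ via the ideal generated by $\alpha$ and $d\alpha$ to raise the horizontal degree of a representing form. The only difference is organizational — you establish the inclusion $\ker\Lambda_k'\subset\calC_{k+1}$ first and deduce the converse of (ii) by induction, whereas the paper proves the converse of (ii) directly (reducing to $M=\R^n$, where your gluing step is unnecessary) and obtains (iii) as an immediate consequence.
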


\begin{proof}
Let $\bar\phi,\bar h:ST_xM\to SM$  be the maps induced by $\phi,h$ on the cosphere bundle. 
Given $\Psi=[\omega,\eta]$ with $\omega\in\Omega_k^{n-1}(SM)$, $k<n$, and $A\in\mathcal P(T_xM), f\in C^\infty(T_xM)$ we have
\begin{align}
  \Lambda_k'(\Psi)_x(A,f)&=\lim_{t\to 0}\frac{1}{t^k}\int_{N(A)}f\cdot \bar h_t^*\bar\phi^*\omega +\frac{1}{t^k}\int_A h_t^*\phi^*\eta\notag\\
  &=\int_{N(A)}f\cdot  \pi_k(\bar\phi^*\omega)^{tr},\label{comp_Lambda}
\end{align}
where $(\bar\phi^*\omega)^{tr}$ denotes the translation-invariant form on $ST_xM$ which 
coincides with $\bar\phi^*\omega$ on every point of $\{0\}\times S_xM\subset ST_xM$, and $\pi_k$ 
takes the $k$-homogeneous part. In particular we see that $\Lambda_k'\Psi_x\in\Curv_k(T_xM)$.
Moreover, it is not difficult to check, e.g.\ in local coordinates, that $\pi_k(\bar\phi^*\omega^{tr})$ does not depend on $\phi$ and that $\Lambda'_k$ is surjective.

Repeating the argument leading to \eqref{comp_Lambda} shows that $\Psi\in \calC_k$ implies \eqref{eq_Ck}. Moreover, it is enough to prove the converse for $M=\R^n$. 
Let $\Psi=[\omega, 0]$ satisfy \eqref{eq_Ck} and let $i$ be maximal subject to $\omega\in \Omega^{n-1}_i$. If $i<k$, then repeating the computation leading to \eqref{comp_Lambda}
shows $[\pi_i(\omega^{tr}),0]=0$. Thus  $\omega$  can be modified by multiples of $\alpha$ and $d\alpha$ to yield a form $\tilde \omega\in \Omega^{n-1}_{i+1}$ 
with $\Psi=[\tilde \omega, 0]$;  a contradiction.

Item (iii) is an immediate consequence of (ii).
\end{proof}

It follows that $\Lambda_k'$ induces an isomorphism
\[
 \Xi'_k\colon \calC_k/\calC_{k+1}\rightarrow \Gamma(\Curv_k(TM)).
\]

\begin{Proposition}
 $\calV_k\cdot \calC_l\subset \calC_{k+l}$
\end{Proposition}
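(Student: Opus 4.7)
The plan is to verify membership of $\mu\cdot\Psi$ in $\calC_{k+l}$ via the scaling criterion (ii) of the preceding proposition. Fix $x\in M$ and a local diffeomorphism $\phi\colon T_xM\to M$ with $\phi(0)=x$, $d\phi_0=\mathrm{id}$, and set $\mu^{(t)}:=(\phi\circ h_t)^*\mu$, $\Psi^{(t)}:=(\phi\circ h_t)^*\Psi$. By the equivariance \eqref{pullback_module} of the module action under diffeomorphisms,
\[(\phi\circ h_t)^*(\mu\cdot\Psi)=\mu^{(t)}\cdot\Psi^{(t)}.\]

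The next step extracts the leading behaviour of $\mu^{(t)}$ and $\Psi^{(t)}$ from their defining forms. By Alesker's form characterization of $\calV_k$ (\cite{valmfdsI}*{Proposition 5.2.5}) one may write $\mu=\lcur\omega_\mu,\eta_\mu\rcur$ with $\omega_\mu\in\Omega^{n-1}_k(SM)$; by definition, $\Psi=[\omega_\Psi,\eta_\Psi]$ with $\omega_\Psi\in\Omega^{n-1}_l(SM)$. A local computation on the cosphere bundle shows that the differential $d(\phi\circ h_t)=t\cdot d\phi|_{h_t(\cdot)}$ produces one factor of $t$ for every horizontal factor of a pulled-back form, while the induced map in the vertical (spherical) direction extends smoothly to the identity as $t\to 0^+$. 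Combined with the vanishing conditions defining $\Omega^{n-1}_k$ and $\Omega^{n-1}_l$, this yields smooth families of forms $\tilde\omega_\mu(t),\tilde\eta_\mu(t),\tilde\omega_\Psi(t),\tilde\eta_\Psi(t)$ with
\[(\widetilde{\phi\circ h_t})^*\omega_\mu=t^k\tilde\omega_\mu(t),\qquad (\phi\circ h_t)^*\eta_\mu=t^n\tilde\eta_\mu(t),\]
and analogously for $\omega_\Psi,\eta_\Psi$. Descending through the (continuous) quotient maps from forms gives smooth families
\[\mu^{(t)}=t^k\hat\mu(t)\in\calV(T_xM),\qquad \Psi^{(t)}=t^l\hat\Psi(t)\in\calC(T_xM)\]
with $\hat\mu(0)=\Lambda_k(\mu)|_x$ and $\hat\Psi(0)=\Lambda'_l(\Psi)|_x$.

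The joint continuity of the module action $\calV(T_xM)\otimes\calC(T_xM)\to\calC(T_xM)$ in the Fr\'echet topologies then gives
\[\mu^{(t)}\cdot\Psi^{(t)}=t^{k+l}\,\hat\mu(t)\cdot\hat\Psi(t)=t^{k+l}\bigl[\Lambda_k(\mu)|_x\cdot\Lambda'_l(\Psi)|_x+o(1)\bigr],\]
so $\lim_{t\to 0^+}t^{-i}(\phi\circ h_t)^*(\mu\cdot\Psi)=0$ for every $i<k+l$. Criterion (ii) of the preceding proposition then yields $\mu\cdot\Psi\in\calC_{k+l}$.

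I expect the main technical hurdle to be the lift of the form-level $t^k$ and $t^l$ expansions to Fr\'echet-convergent expansions of $\mu^{(t)}$ and $\Psi^{(t)}$ in $\calV(T_xM)$ and $\calC(T_xM)$: while the pullback of differential forms under $\phi\circ h_t$ is manifestly smooth in $t$, passing to the quotients requires using the continuity of the quotient maps from forms to valuations and curvature measures, which in turn ensures both the leading-order convergence of $\hat\mu(t),\hat\Psi(t)$ and the absence of anomalous behaviour under multiplication.
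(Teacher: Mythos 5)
Your proof is correct and follows essentially the same route as the paper's: both rest on the equivariance \eqref{pullback_module}, the continuity of the Alesker module product, and the scaling criterion (ii) of the preceding proposition. The paper simply splits $\lim_{t\to 0}t^{-i}(\phi\circ h_t)^*\mu\cdot((\phi\circ h_t)^*\Psi)_f$ as the product of $\lim t^{-k}(\phi\circ h_t)^*\mu$ (which exists) and $\lim t^{k-i}((\phi\circ h_t)^*\Psi)_f$ (which vanishes for $i<k+l$), avoiding the explicit $t$-expansions of the pulled-back forms that you carry out; those expansions are in any case already implicit in the computation \eqref{comp_Lambda}.
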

\begin{proof}
 Let $\mu\in\calV_k, \ \Psi\in \calC_l$. By \eqref{pullback_module} and the continuity of the Alesker product, we have for every $f\in C^\infty(T_xM)$ and $i\leq k+l$
\begin{align}\notag
\lim_{t\to 0} t^{-i}(\phi\circ h_t)^*(\mu\cdot\Psi)(f) &=\lim_{t\to 0} {t^{-i}} (\phi\circ h_t)^*\mu\cdot ((\phi\circ h_t)^*\Psi)_f\\
 &=\lim_{t\to 0} {t^{-k}} (\phi\circ h_t)^*\mu\cdot \lim_{t\to 0} t^{k-i} ((\phi\circ h_t)^*\Psi)_f. \label{eq_mu_psi}
 \end{align}
 If $i<k+l$, then the last factor vanishes, which shows $\mu\cdot \Psi\in\calC_{k+l}$.
\end{proof}

The graded space $\bigoplus_i\calC_i/\calC_{i+1}$ is thus naturally a module over $\bigoplus_i\calV_i/\calV_{i+1}$. 
This structure is compatible with the map $\Xi'=\oplus_k\Xi'_k$ in the following sense.
\begin{Proposition}Given $\varphi\in\bigoplus_i\calV_i/\calV_{i+1}$ and $\Theta\in\bigoplus_i\calC_i/\calC_{i+1}$
 \[
  \Xi'(\varphi\cdot\Theta)=\Xi(\varphi)\cdot\Xi'(\Theta).
 \]
\end{Proposition}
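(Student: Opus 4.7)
The plan is to reduce to homogeneous components and unpack both sides as limits of pullbacks under the scaling diffeomorphisms $\phi\circ h_t$, reading off the identity from the critical degree $i=k+l$ of the calculation that appeared in the proof of the preceding proposition.

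By bilinearity and the grading it suffices to take representatives $\mu\in\calV_k$ of $\varphi$ and $\Psi\in\calC_l$ of $\Theta$. Then $\mu\cdot\Psi\in\calC_{k+l}$ by the previous proposition, so $\Xi'(\varphi\cdot\Theta)|_x=\Lambda'_{k+l}(\mu\cdot\Psi)|_x$, while $(\Xi(\varphi)\cdot\Xi'(\Theta))|_x=\Lambda_k\mu|_x\cdot\Lambda'_l\Psi|_x\in\Curv_{k+l}(T_xM)$, the right-hand side using the natural module action of $\Val^\infty(T_xM)$ on $\Curv(T_xM)$. A translation-invariant curvature measure on $T_xM$ is determined by its pairings $(\cdot)_f$ against arbitrary $f\in C^\infty(T_xM)$, so it suffices to check the equality after applying one such pairing. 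Fixing a local diffeomorphism $\phi:T_xM\to M$ with $\phi(0)=x$ and $d\phi_0=\mathrm{id}$, the combination of the pullback--product compatibility \eqref{pullback_module} with the module identity $(\mu\cdot\Psi)_f=\mu\cdot\Psi_f$ produces, for every $t>0$, the identity of valuations
\begin{equation*}
 t^{-(k+l)}\bigl((\phi\circ h_t)^*(\mu\cdot\Psi)\bigr)_f=\bigl(t^{-k}(\phi\circ h_t)^*\mu\bigr)\cdot\bigl(t^{-l}((\phi\circ h_t)^*\Psi)_f\bigr).
\end{equation*}

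Letting $t\to 0$, the first factor on the right converges to $\Lambda_k\mu|_x$ by construction of $\Lambda_k$, while the second factor converges to $(\Lambda'_l\Psi|_x)_f$ because $t^{-l}(\phi\circ h_t)^*\Psi\to\Lambda'_l\Psi|_x$ in $\Curv(T_xM)$ and the pairing $\Psi\mapsto\Psi_f$ is continuous. Continuity of the Alesker product then identifies the limit of the right-hand side with $\Lambda_k\mu|_x\cdot(\Lambda'_l\Psi|_x)_f=(\Lambda_k\mu|_x\cdot\Lambda'_l\Psi|_x)_f$, the last step using the module identity in the fiber, while the left-hand side converges to $(\Lambda'_{k+l}(\mu\cdot\Psi)|_x)_f$ by definition of $\Lambda'_{k+l}$ and continuity of $(\cdot)_f$. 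Equality for all $f$ gives the required identity of curvature measures at every $x\in M$, proving the proposition. The only technical point is to verify that the three limits exist in topologies in which the Alesker product and the pairing $\Psi\mapsto\Psi_f$ are continuous, but this is exactly the continuity input that was already required in the proof of the preceding proposition, and I do not anticipate any substantive obstacle beyond it.
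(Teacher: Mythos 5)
Your proposal is correct and follows essentially the same route as the paper: the paper's proof consists precisely of invoking the identity \eqref{eq_mu_psi} from the proof of the preceding proposition at the critical degree $i=k+l$, which is exactly the limit computation you carry out. The only difference is that you spell out the steps (testing against $f$, continuity of the product, convergence of each factor) that the paper leaves implicit.
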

\begin{proof}Let $\varphi=\sum_i\mu_i$, and  $\Theta=\sum_j\Psi_j$ with $\mu_i\in\calV_i/\calV_{i+1}, \Psi_j\in\calC_j/\calC_{j+1}$. Equality \eqref{eq_mu_psi} gives 
\[
 \Xi_{i+j}'(\mu_i\cdot \Psi_j)=\Xi_i(\mu_i)\cdot\Xi_j'(\Psi_j),
\]
which directly yields the statement.
\end{proof}

Let now  $M$ be endowed with a connection (e.g.\ the Levi-Civita connection if $M$ is Riemannian).
Proposition 2.22 in \cite{bfs} defines a map $\tau:\calC(M)\to\Gamma(\Curv(TM))$ called {\em transfer map}, 
which associates to each $\Phi\in\calC(M)$ a section $\tau(\Phi)$ of $\Curv(TM)$ as follows. 
If $\Psi=[\omega,\eta]\in \calC(M)$, then $\tau(\Psi)|_x=[\omega^{tr},\eta^{tr}]$ 
where $\omega^{tr}\in\Omega^{n-1}(ST_xM)$ and $\eta^{tr}\in\Omega^n(T_xM)$ are the translation-invariant 
forms such that $\eta^{tr}_x=\eta_x$  and $\omega^{tr}_\xi= \omega_\xi$ for all $\xi\in{S_xM}$. 
Here we identified $T_\xi(SM)$ with $T_\xi(ST_xM)$ via the connection (cf.\ \cite{bfs}*{(2.9), (2.10)}).

\begin{Proposition}\label{prop filtration transfer}Let $\Psi\in\calC_k(M)$.
Then
 \[
  \Lambda_k'(\Psi)=\pi_k\circ \tau(\Psi),
 \]where $\pi_k\colon\Gamma(\Curv(TM))\to\Gamma(\Curv_k(TM))$ is the canonical projection.
\end{Proposition}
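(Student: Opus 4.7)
The plan is to choose a particular local diffeomorphism $\phi$ in the definition of $\Lambda_k'$ for which the translation-invariant form $(\bar\phi^*\omega)^{tr}$ appearing in the proof of the previous proposition agrees with the form $\omega^{tr}$ used to define $\tau$. Since the previous proposition shows that $\pi_k((\bar\phi^*\omega)^{tr})$ is independent of $\phi$, a single well-chosen $\phi$ suffices for the comparison. I take $\phi=\exp_x\colon T_xM\to M$, the Riemannian exponential map, which is a local diffeomorphism with $\phi(0)=x$ and $d\phi_0=\mathrm{id}$. Working in normal coordinates at $x$ and trivializing $SM$ by the corresponding coordinate frame, the induced map $\bar\phi\colon ST_xM\to SM$ becomes the identity in coordinates, so its differential at any $(0,\xi)$ in the zero fiber is the canonical identification of $T_{(0,\xi)}ST_xM=T_xM\oplus T_\xi S_xM$ with $T_{(x,\xi)}SM=T_xM\oplus T_\xi S_xM$.

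The key step is to verify that this canonical identification coincides with the one provided by the Levi-Civita connection which enters the definition of $\tau$ through \cite{bfs}*{(2.9), (2.10)}. In the connection splitting, the horizontal lift at $(x,\xi)$ of $v\in T_xM$ is the tangent vector at $\xi$ to the parallel transport of $\xi$ along any curve through $x$ with initial tangent $v$; since the Christoffel symbols vanish at the origin of a normal coordinate chart, this horizontal lift is simply $(v,0)$ in the chosen trivialization. Both identifications of $T_{(x,\xi)}SM$ with $T_{(0,\xi)}ST_xM$ therefore agree at every $\xi\in S_xM$. As $(\bar\phi^*\omega)^{tr}$ and $\omega^{tr}$ are the translation-invariant forms on $ST_xM$ determined by $\omega|_{S_xM}$ under their respective identifications, we conclude that $(\bar\phi^*\omega)^{tr}=\omega^{tr}$.

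To finish, write $\Psi=[\omega,\eta]$. For $k<n$, formula \eqref{comp_Lambda} from the proof of the previous proposition yields $\Lambda_k'(\Psi)|_x=[\pi_k(\bar\phi^*\omega)^{tr},0]=[\pi_k\omega^{tr},0]$, the $\eta$-contribution vanishing because $h_t^*\phi^*\eta$ scales as $t^n$. Since any translation-invariant $n$-form on $T_xM$ contributes only to the $n$-homogeneous component of $\Curv(T_xM)$, we have $[\pi_k\omega^{tr},0]=\pi_k[\omega^{tr},\eta^{tr}]=\pi_k\tau(\Psi)|_x$. The case $k=n$ (where $\omega=0$) is immediate, since $t^{-n}h_t^*\phi^*\eta$ converges to the translation-invariant form $\eta^{tr}|_{T_xM}$. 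The only delicate point is the comparison of the two tangent-space identifications carried out in the second paragraph, but this is a standard consequence of the vanishing of the Christoffel symbols at the origin of a normal chart.
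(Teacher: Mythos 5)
Your proposal is correct and follows essentially the same route as the paper: take $\phi=\exp_x$, invoke \eqref{comp_Lambda}, and observe that $(d\bar\phi)_\xi$ at points of the zero fiber coincides with the connection-induced identification used to define $\tau$. Your normal-coordinate justification of that last point (vanishing Christoffel symbols at the origin) is exactly the standard argument the paper leaves implicit.
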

\begin{proof} For $k=n$ the proof is immediate. For $k<n$, let $\Psi=[\omega,\eta]$, and take $\phi=\exp_x$, the exponential map, in the definition of $\Lambda_k'$. By \eqref{comp_Lambda}, we see 
\[
 (\Lambda'_k\Psi)_x=[\pi_k(\bar\phi^*\omega)^{tr},0].
\]
But $(\bar\phi^*\omega)^{tr}$ is the translation-invariant form $\omega^{tr}$ defining $\tau(\Psi)$, since in this case the differential $(d\bar\phi)_\xi:T_\xi ST_xM\to T_\xi SM$ at  $\xi\in S_xM$ coincides with the identification mentioned before.
\end{proof}

\begin{Corollary}\label{coro_diagram}
The following diagram commutes
 \begin{displaymath}
 \xymatrix{\calC_k/\calC_{k+1} \ar[r]^-{\pi_k\circ\tau} \ar[d]_{\glob} &\Gamma(\Curv_k(TM))  \ar[d]^{\glob}
\\  \calV_k/\calV_{k+1}\ar[r]^-{\Xi_k}  & 
\Gamma(\Val_k^\infty(TM)) 
}
\end{displaymath}
\end{Corollary}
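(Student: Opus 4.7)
The plan is to lift the diagram from the quotients to $\calC_k$ and $\calV_k$ themselves. By the preceding Proposition, $\pi_k\circ\tau=\Lambda'_k$ on $\calC_k$, and by construction $\Xi_k$ descends from $\Lambda_k$ on $\calV_k$. So it suffices to verify the pointwise identity
\[
\glob\bigl(\Lambda'_k(\Psi)\bigr)=\Lambda_k\bigl(\glob(\Psi)\bigr),\qquad \Psi\in\calC_k,
\]
in $\Gamma(\Val^\infty_k(TM))$; the commutativity of the diagram then follows on passing to the quotients $\calC_k/\calC_{k+1}$ and $\calV_k/\calV_{k+1}$.

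First I would record the naturality of globalization with respect to pullback. Setting $f\equiv 1$ in \eqref{pullback} gives $(\iota^*\Psi)_1=\iota^*(\Psi_1)$ for any immersion $\iota$, i.e.
\[
\glob\circ \iota^*=\iota^*\circ\glob.
\]
Applied to the composition $\phi\circ h_t\colon T_xM\to M$ (which, for a local diffeomorphism $\phi$ and small $t>0$, serves as an immersion onto a neighborhood of $x$), this yields $\glob\bigl((\phi\circ h_t)^*\Psi\bigr)=(\phi\circ h_t)^*\glob(\Psi)$.

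Next I would interchange $\glob$ with the limit in the definition of $\Lambda'_k$. Because both $\calV(M)$ and $\calC(M)$ carry the quotient Fr\'echet topologies induced by the form level, the globalization $\glob=\lcur\omega,\eta\rcur\leftarrow[\omega,\eta]$ is continuous, and fiberwise globalization of a translation-invariant curvature measure is given by the same forms. Hence, for each $x\in M$,
\begin{align*}
\glob\bigl(\Lambda'_k(\Psi)|_x\bigr)
&=\glob\!\left(\lim_{t\to 0}\tfrac{1}{t^k}(\phi\circ h_t)^*\Psi\right)
=\lim_{t\to 0}\tfrac{1}{t^k}\glob\!\bigl((\phi\circ h_t)^*\Psi\bigr)\\
&=\lim_{t\to 0}\tfrac{1}{t^k}(\phi\circ h_t)^*\glob(\Psi)
=\Lambda_k\bigl(\glob(\Psi)\bigr)|_x,
\end{align*}
where in the last step I used $(\phi\circ h_t)^*\mu(A)=\mu(\phi(tA))$ together with the definition \eqref{def_Lambda} of $\Lambda_k$.

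The only point requiring any care is justifying the commutation of $\glob$ with the defining limit of $\Lambda'_k$; this is immediate from the continuity of $\glob$ in the natural Fr\'echet topologies, combined with the fact, already established, that $\Lambda'_k$ takes values in $\Gamma(\Curv_k(TM))$. No further input beyond the previous Proposition and the naturality of $\glob$ under pullback is needed.
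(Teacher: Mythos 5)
Your proof is correct and is precisely the argument the paper intends: it leaves the Corollary without an explicit proof, relying on Proposition~\ref{prop filtration transfer} together with the identity $\glob\circ\Lambda_k'=\Lambda_k\circ\glob$, which you verify via $\glob\circ\iota^*=\iota^*\circ\glob$ applied to $\phi\circ h_t$. The interchange of $\glob$ with the limit is even more immediate than you suggest, since by \eqref{comp_Lambda} the limit defining $\Lambda_k'(\Psi)|_x$ is evaluated pointwise in $(A,f)$ and globalization is just evaluation at $f\equiv 1$.
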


Suppose now further that $M$ is Riemannian.
The following lemma contains two basic properties that will be useful later.
\begin{Lemma}[cf. \cite{fu_bcn}*{eq. (2.4.8)}]\label{lemma_transfer}\mbox{}
\begin{enumerate}
 \item[i)]Let $\Psi\in \calC(M)$ such that $\tau(\Psi)\in\Gamma(\Curv_k(TM))$. Let $P\subset M$ be a  compact totally geodesic submanifold with boundary. If $\dim P\neq k$, then 
 $\Psi(P, \;\cdot\;)=0$ on $P\setminus\partial P$. 
 \item[ii)]Let $\Psi\in\calC_k(M)$. Let $P\subset M$ be a compact $k$-dimensional submanifold with boundary. Then
 \[
  \Psi(P,U)=\int_{P\cap U} \Kl_{\glob_x(\Psi)}(T_xP)dx, \qquad U\subset M,
 \]
where $\glob_x(\Psi)=\Lambda_k(\glob\Psi)_x\in\Val^\infty_k(T_xM)$, and $dx$ is the volume element on $P$. If $\dim P <k$, then $ \glob\Psi(P)=0$. 
\end{enumerate}
\end{Lemma}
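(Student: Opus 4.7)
The plan is to compute the density $\rho$ of the measure $\Psi(P,\cdot)|_{P\setminus\partial P}$ with respect to $\vol_m$ ($m=\dim P$) and identify it with the right-hand side in each case. Writing $\Psi=[\omega,\eta]$, this density arises from fiber integration of $\omega$ along the conormal sphere $S_y^\perp P$ of $P$ at $y\in P\setminus\partial P$, so $\rho(y)$ depends only on the values of $\omega$ on the fiber $S_yM$ and on the embedding of $TN(P)$ at $\xi$.

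Two easy cases can be dispatched by dimension counting. For (ii) with $\dim P<k$, the vertical subspace $T_\xi N(P)\cap\ker d\pi$ has dimension $n-\dim P-1\geq n-k$, so the defining condition of $\Omega_k^{n-1}(SM)$ forces $\omega|_{TN(P)}=0$; together with $\eta|_P=0$ for the lower-dimensional $P$, this yields $\Psi(P,U)=0$ for every $U$, and in particular $\glob\Psi(P)=0$. For (i) with $\dim P=n$, the assumption $\tau(\Psi)\in\Gamma(\Curv_k(TM))$ with $k<n$ forces $\eta=0$ pointwise (only the $n$-homogeneous piece of $\tau(\Psi)$ can come from $\eta$), while $N(P)\cap\pi^{-1}(U)=\emptyset$ for $U$ in the interior of the open set $P$.

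For the remaining cases I would pass to the tangent space via the exponential chart $\phi=\exp_y$. For (i) with $\dim P=m$ and $k<m<n$, the totally geodesic assumption ensures that $\phi$ identifies $N(P)$ locally with $N(T_yP)$, and since $\phi^*\omega$ equals $\omega^{tr}$ on the fiber over $0$, the density $\rho(y)$ coincides with the interior density $c$ of the translation-invariant curvature measure $\tau(\Psi)|_y$ applied to $T_yP\cap B$ (for any large ball $B\subset T_yM$). The $k$-homogeneity of $\tau(\Psi)|_y$ combined with the scale-invariance of the subspace $T_yP$ forces this density to vanish: the identity
\[
\tau(\Psi)|_y(T_yP\cap rB,rV)=r^k\tau(\Psi)|_y(T_yP\cap B,V)=c\,r^m\vol_m(V)
\]
for interior $V$ demands $cr^m=cr^k$ for all $r>0$, hence $c=0$ whenever $m\neq k$, completing (i).

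For (ii) with $\dim P=k$, the filtration $\omega\in\Omega_k^{n-1}(SM)$ together with the fact that $T_\xi N(P)$ has exactly $k$ horizontal directions implies that only the pure $(k,n-k-1)$-component of $\omega$ contributes to $\omega|_{TN(P)}$, and its evaluation depends only on the values of $\omega$ on $S_yM$ and on the tangent space $T_yP$---not on the second fundamental form of $P$. Thus $\rho(y)$ equals the constant interior density on $T_yP$ of the $k$-homogeneous curvature measure $\pi_k(\tau(\Psi)|_y)(T_yP\cap B,\cdot)$. By Corollary~\ref{coro_diagram}, the globalization of $\pi_k(\tau(\Psi)|_y)$ is $\glob_y\Psi\in\Val_k^\infty(T_yM)$, whose value on the $k$-dimensional convex body $T_yP\cap B$ is $\Kl_{\glob_y\Psi}(T_yP)\vol_k(T_yP\cap B)$. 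Equating this with the integral of the curvature measure over $T_yP\cap B$ then yields $\rho(y)=\Kl_{\glob_y\Psi}(T_yP)$, \emph{provided} that the boundary contribution $\pi_k(\tau(\Psi)|_y)(T_yP\cap B,\partial(T_yP\cap B))$ vanishes. I expect this vanishing to be the main obstacle; it should follow from a second application of the filtration condition, since at $\xi$ over $\partial(T_yP\cap B)$ the conormal cone to $T_yP\cap B$ jumps by one dimension, so that the vertical part of $T_\xi N(T_yP\cap B)$ now has dimension $n-k$ and the defining vanishing of $\Omega_k^{n-1}$ triggers.
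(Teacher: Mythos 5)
Your proof is correct; the genuine divergence from the paper is in part (i). There the paper argues pointwise on the normal cycle: since $P$ is totally geodesic, $T_\xi N(P)$ splits as $H_P\oplus V_P$ with $H_P$ horizontal of dimension $\dim P$ and $V_P$ vertical, while $\omega_\xi$ has pure bidegree $(k,n-k-1)$, so $\omega_\xi|_{T_\xi N(P)}=0$ outright whenever $\dim P\neq k$ --- no transfer to the tangent space and no scaling argument is needed. Your route instead identifies the density at $y$ with the interior density of $\tau(\Psi)|_y$ on the cone $T_yP$ and kills it by playing the $k$-homogeneity of the measure against the $m$-homogeneity of $\vol_m$; this is equally valid (and uniformly handles the case $k=n$, $\dim P<n$, which the paper dismisses as clear), at the price of having to justify that the density depends only on fiber data, which your total-geodesicity remark supplies. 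In part (ii) you are essentially reproducing the paper's argument in coordinates: the paper encodes the independence of the second fundamental form in a double fibration $G\leftarrow F\to SM$, observing that $(\pi_1)_*\pi_2^*\omega$ is a horizontal $k$-form on the Grassmann bundle, and then identifies the resulting function with the Klain function via the limit defining $\Lambda_k$; your bidegree expansion of $\omega_\xi$ on $T_\xi N(P)$ is the same computation. The boundary term you flag as the main obstacle does vanish exactly as you predict (over the relative boundary the vertical part of the tangent space to the normal cycle has dimension at least $n-k$, so the defining condition of $\Omega^{n-1}_k$ applies); the paper needs the same observation, implicitly, to discard the boundary strata of $N(\phi(tA))$ when matching $f$ with $\Kl_{\glob_x(\Psi)}$. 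Two cosmetic points: in your displayed scaling identity the middle term should read $r^k\,c\,\vol_m(V)$ rather than being equated to $c\,r^m\vol_m(V)$ (the intended comparison is between the two evaluations of the left-hand side), and the case heading ``$k<m<n$'' should be ``$m\neq k$, $m<n$'', which is what your argument actually covers.
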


\begin{proof}
 $i)$  For $k=n$ we clearly have $\Psi(P,\cdot)=0$ if $\dim P<n$. Suppose $k<n$, and let $\xi\in N(P)$ be such that $\pi(\xi)\in P\setminus \partial P$. Let $T_\xi SM=H\oplus V$  be the decomposition into horizontal and vertical parts induced by the connection. By assumption, $\Psi=[\omega,0]$  with
 \[
  \omega_\xi\in \largewedge^k H^*\otimes\largewedge^{n-k-1} V^*,
 \]
and $T_\xi N(P)=H_P\oplus V_P$ with $H_P\subset H$, $V_P\subset V$ and $\dim H_P=\dim P\neq k$. The statement follows.

$ii)$ Let $G, F$ be the bundles over $M$ whose fibers over $x$ are respectively $$G_x=\Gr_k(T_xM),\quad F_x=\{(\xi,E)\in S_xM\times \Gr_k(T_xM)\colon \xi\bot E\}.$$ Consider the double fibration $G\stackrel{\pi_1}{\leftarrow}F\stackrel{\pi_2}{\rightarrow} SM$ where $\pi_1,\pi_2$ are the obvious maps. 

Let $\Psi=[\omega,0]$. Given $P$ a $k$-dimensional submanifold, we consider $TP$ as a submanifold of $G$. Then 
$ N(P)\setminus \pi^{-1}\partial P=(\pi_2)_*\pi_1^*(TP)$ and thus
\[
\Psi(P,U)=\int_{TP|_U}(\pi_1)_*\pi_2^*\omega=\int_{P\cap U}f(T_xM) dx, 
\]
where $f$ is a smooth function on $G$.

Given $\phi$ as in \eqref{def_Lambda}, $E\in\Gr_k(T_xM)$ and $A\in\mathcal{K}(E)^{sm}$, the smoothness of $f$ implies
\[
 \glob(\Psi)(\phi(tA))=t^k\vol_k(A) f(E) +O(t^{k+1}).
\]
We conclude that $f=\Kl_{\glob_x(\Psi)}$, which proves the Lemma.
\end{proof}

\subsection{The transfer principle}\label{subsec transfer principle} A  Riemannian isotropic space $(M, G)$ is a Riemannian manifold $M$ with a Lie group of isometries $G$ acting transitively on the sphere bundle $SM$.
The spaces $\calV(M)^G, \calC(M)^G$ of $G$-invariant smooth valuations and curvature measures of $M$ are finite dimensional.

\subsubsection{Local, semi-local, global kinematic operators} We briefly recall these from Section~2.3.2 of \cite{bfs}. Respectively, they are the maps $K,\bar k, k$
\[
 \xymatrix{
 \calC^G \ar[r]^-{K} \ar[d]_{\mathrm{id}} &\calC^G\otimes \calC^G  \ar[d]^{\glob\otimes\mathrm{id}}\\ 
 \calC^G \ar[r]^-{\bar k} \ar[d]_{\glob} &\calV^G\otimes \calC^G  \ar[d]^{\glob\otimes\glob}\\ 
 \calV^G\ar[r]^-{k}  &  \calV^G\otimes \calV^G 
}
\]
uniquely determined by the commutativity of the diagrams and the defining relation:
\[
 K(\Psi)(A,U,B,V)=\int_G \Psi(A\cap gB,U\cap gV)\;dg
\]
for all $A,B\in\calP(M)$ and all Borel sets $U,V\subset M$. Here and in the following $dg$ 
is the Haar measure on $G$, normalized so that $dg\{g\in G\colon o\in g(U)\}=\vol(U)$
for any $o\in M$ and any Borel set $U\subset M$.

The space $\calV^G$ is spanned by finitely many valuations of the form
\[
 \mu_A^G=\int_G\chi(\;\cdot\;\cap gA)\;dg,\qquad A\in\calP(M).
\]
The module product of such a valuation with a curvature measure $\Psi\in\calC(M)$ is given by  (see \cite{bfs}*{Corollary 2.16})
\[
 \mu_A^G\cdot\Psi(B,U)=\int_{G}\Psi(B \cap gA,U)\;dg.
\]
If $M$ is compact, there is a pairing $\pd\colon\calV^G\otimes\calV^G\to \R$ called Alesker-Poincar\'e duality and given by
\begin{equation}\label{eq_pd}
 \pd(\varphi,\mu)=\frac{(\varphi\cdot\mu)(M)}{\vol(M)}.
\end{equation} This pairing was extended to non-compact spaces in \cite{bfs}. In all cases, $\pd$ is perfect and so defines an isomorphism $\pd\colon\calV^G\to{\calV^G}^*$.

The  {\em fundamental theorem of algebraic integral geometry} \cite{hig, bfs} states that the Alesker product and the global kinematic operator are dual to
each other under $\pd$; i.e.
\begin{equation}\label{ftaig}
(\pd\otimes\pd)\circ k=m^*\circ \pd 
\end{equation}
where $m^*$ is the adjoint of the Alesker product $m\colon\calV^G\otimes \calV^G\to\calV^G$. Equivalently, $k(\chi)=\pd^{-1}$ as elements in $\mathrm{Hom}((\calV^G)^*,\calV^G)$, and $k$ is multiplicative in the  sense that 
\[
k(\mu)=(\chi\otimes\mu)k(\chi)=(\mu\otimes\chi)k(\chi)
\]
for each $\mu\in\calV^G$. Similarly, for $\mu\in\calV^G$ and $\Psi\in \calC^G$, one has
\begin{align}
 K(\mu\cdot\Psi)&=(\chi\otimes\mu)\cdot K(\Psi)=(\mu\otimes\chi)\cdot K(\Psi),\\
 \bar k(\mu\cdot\Psi)&=k(\mu)\cdot(\chi\otimes\Psi).\label{semi_local_multiplicative}
\end{align}

\subsubsection{The transfer principle} Let $(M,G)$ be a Riemannian isotropic space, fix a representative point $o\in M$ and let $H$ denote its stabilizer.
When restricted to $G$-invariant curvature measures, 
the transfer map $\tau$ defines an isomorphism of vector spaces $\tau\colon \calC(M)^G\to\Curv(T_oM)^H$. 
The transfer principle (cf.\ \cite{howard,bfs}) states that the local kinematic operators in $(M,G)$ and $(T_oM,\overline H)$ correspond 
to each other through $\tau$. We will often omit $\tau$ and identify implicitly $\calC(M)^G$ with $\Curv(T_oM)^H$.

\begin{Lemma}\label{lemma_basis}Let $\glob\colon \Curv(T_oM)^H\cong\calC(M)^G\to \calV(M)^G$ be the globalization map in $M$. For each $k$, suppose that $\Psi_{k,1},\ldots,\Psi_{k,m_k}\in\Curv_k^H$ globalize
in $T_oM$ to a basis of $\Val_k^H$. Then $\{\glob(\Psi_{k,q})\colon 0\leq k\leq n, 1\leq q\leq m_k\}$ is a basis of $\calV(M)^G$.
\end{Lemma}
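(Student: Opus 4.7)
The plan is to work through the filtration $\calV(M)^G = \calV_0^G \supset \cdots \supset \calV_n^G$ one graded piece at a time, using the isomorphism $\calV_k^G/\calV_{k+1}^G \cong \Val_k^H$ coming from Theorem~\ref{graded_iso} together with the compatibility of globalization in $M$ and in $T_oM$ provided by Corollary~\ref{coro_diagram}.

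Concretely, I would first view each $\Psi_{k,q} \in \Curv_k^H$ as the $G$-invariant curvature measure $\tilde\Psi_{k,q} := \tau^{-1}(\Psi_{k,q}) \in \calC(M)^G$ via the transfer isomorphism, and check that in fact $\tilde\Psi_{k,q} \in \calC_k^G$. This uses characterization (ii) of the filtration with $\phi = \exp_o$: Proposition~\ref{prop filtration transfer} combined with the purity of $\Psi_{k,q}$ as a $k$-homogeneous element would give $\lim_{t\to 0} t^{-i}(\phi\circ h_t)^*\tilde\Psi_{k,q} = 0$ at $o$ for $i<k$, and $G$-invariance would then spread this condition to every point of $M$. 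Granted this, Corollary~\ref{coro_diagram} applied to $\tilde\Psi_{k,q}$ yields
\[
\Xi_k\bigl(\glob(\tilde\Psi_{k,q}) \bmod \calV_{k+1}^G\bigr) = \glob\bigl(\pi_k\circ\tau(\tilde\Psi_{k,q})\bigr) = \glob(\Psi_{k,q}),
\]
with the globalizations on the right referring to $T_oM$. Since $\{\glob(\Psi_{k,q})\}_q$ is a basis of $\Val_k^H$ by hypothesis and $\Xi_k$ restricts to an isomorphism $\calV_k^G/\calV_{k+1}^G \xrightarrow{\sim} \Val_k^H$, the classes $\{\glob(\tilde\Psi_{k,q}) \bmod \calV_{k+1}^G\}_q$ form a basis of the $k$-th graded piece.

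To assemble the pieces, I would run the usual filtration argument: if $\sum_{k,q} c_{k,q}\,\glob(\tilde\Psi_{k,q}) = 0$ and $k_0$ is minimal with some $c_{k_0,q} \neq 0$, reducing modulo $\calV_{k_0+1}^G$ kills the higher-$k$ terms (which already lie in $\calV_{k_0+1}^G$) and contradicts the previous step. This gives linear independence, and the dimension count $\dim \calV(M)^G = \sum_k \dim(\calV_k^G/\calV_{k+1}^G) = \sum_k m_k$, equal to the cardinality of the proposed set, shows it is a basis. The main obstacle will be the first step, namely verifying $\tau^{-1}(\Psi_{k,q}) \in \calC_k^G$: it requires carefully combining the pointwise definition of the transfer map at $o$ with $G$-invariance to propagate the filtration condition throughout $M$; once granted, the remainder is essentially a diagram chase plus standard filtration bookkeeping.
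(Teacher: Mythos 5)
Your proposal is correct and follows essentially the same route as the paper's (much terser) proof: identify the images of the $\Psi_{k,q}$ in the graded pieces via Corollary~\ref{coro_diagram} and then run the standard filtration induction. The only difference is that you spell out the preliminary verification that $\tau^{-1}(\Psi_{k,q})\in\calC_k^G$, a point the paper leaves implicit in its appeal to Corollary~\ref{coro_diagram}.
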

\begin{proof} By Corollary \ref{coro_diagram}, the restriction of $\glob$ to the space spanned by $\Psi_{k,1},\ldots,\Psi_{k,m_k}$ defines an isomorphism with $\calV_k^G/\calV_{k+1}^G$. It follows by induction that $\{ \glob(\Psi_{k,q})\}_{k,q}$ is a basis of $\calV(M)^G$.
\end{proof}

Consider now the euclidean vector space $V=T_oM$ under the  action of the affine group $\overline H=H\ltimes V$ generated by $H$ and translations. 
We denote by $dh$ the Haar probability measure on $H$. The following generalization of Proposition~5.2 of \cite{bfs} is a consequence of the transfer principle.

\begin{Proposition}\label{prop_geodesic}
Let $P\subset M$ be a compact totally geodesic submanifold containing $o$,  and suppose that the stabilizer of $P$ acts transitively on $P$. Consider the valuations $\mu_P\in\mathcal V(M)^{G}$ and $ \mu_{P_o}\in \Val^H(V)$ defined by
\begin{align*}
 \mu_{P}&={\mathrm{vol}(P)}^{-1}\int_G\chi(\,\cdot\,\cap gP)\,dg\\
 \mu_{P_o}&=\int_{H}\int_{hP_o^\bot} \chi(\, \cdot \,\cap (y+hP_o))\,dy\, dh
\end{align*}where $P_o=T_oP$.
After identifying  $\calC(M)^G$ and $\Curv^H$, we have for every $\Phi\in \Curv^H$
$$\mu_{P} \cdot \Phi = \mu_{P_o}\cdot \Phi.$$

\end{Proposition}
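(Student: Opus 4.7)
The proof combines the module product formulas in $(M,G)$ and $(V,\overline H)$ with a careful application of the transfer map $\tau$. By the module formula recalled above the statement,
$$(\mu_P\cdot \Phi)(B,U)=\vol(P)^{-1}\int_G \Phi(B\cap gP,U)\,dg,$$
while the analogous (regularized) formula in $(V,\overline H)$ together with the defining integral for $\mu_{P_o}$ gives
$$(\mu_{P_o}\cdot \Phi)(K,W)=\int_H\int_{hP_o^\perp}\Phi(K\cap(y+hP_o),W)\,dy\,dh.$$
The claim therefore reduces to showing $\tau(\mu_P\cdot \Phi_M)=\mu_{P_o}\cdot \Phi$ in $\Curv^H$, where $\Phi_M=\tau^{-1}(\Phi)\in\calC(M)^G$; both objects are $H$-invariant translation-invariant curvature measures on $V=T_oM$, so they can be compared by evaluation on smooth convex bodies $K\in\calK(V)^{sm}$ and Borel sets $W\subset V$.

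To unpack the left-hand side I would parameterize the Haar measure on $G$ via the fibration $G\to M$, $g\mapsto g\cdot o$, whose fiber is $H$. Picking a measurable section $s\colon M\to G$ and writing $g=s(x)h$, one has $dg=dx\,dh$ with the normalization used in the text. The stabilizer transitivity hypothesis ensures that each totally geodesic translate of $P$ is parameterized $\vol(P)$ times by $x$ running along it, and therefore
$$(\mu_P\cdot \Phi_M)(B,U)=\vol(P)^{-1}\int_M\int_H\Phi_M(B\cap s(x)hP,U)\,dh\,dx,$$
where, for each $h\in H$, the submanifold $hP$ is a compact totally geodesic submanifold through $o$ with tangent space $hP_o$, and $s(x)hP$ is its translate through $x$.

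To identify $\tau(\mu_P\cdot \Phi_M)$ I would decompose $\Phi$ by homogeneity and reduce to $\Phi\in\Curv_l^H$. Then $\Phi_M\in\calC_l(M)^G$ (compatibly with the filtration), $\mu_P\cdot \Phi_M\in\calC_{l+n-p}(M)^G$ with $p=\dim P$, and by Proposition~\ref{prop filtration transfer} the transferred measure is the $(l+n-p)$-homogeneous limit
$$\tau(\mu_P\cdot \Phi_M)(K,W)=\lim_{t\to 0^+}t^{-(l+n-p)}(\mu_P\cdot \Phi_M)(\exp_o(tK),\exp_o(tW)).$$
Substituting $x=\exp_o(ty)$, the submanifold $s(x)hP$ linearizes at $o$ to the affine subspace $y+hP_o$ in $T_oM$ to leading order in $t$, the Riemannian volume $dx$ becomes $t^n\,dy$, and the $l$-homogeneous $\Phi_M$ scales by $t^l$. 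Splitting $y=y_\parallel+y_\perp$ with $y_\parallel\in hP_o$ and $y_\perp\in hP_o^\perp$, the parallel integration concentrates on a region of $hP_o$ of mass $t^p\vol(P)$, which exactly cancels the normalization $\vol(P)^{-1}$; the perpendicular integration over $hP_o^\perp$ then yields precisely $(\mu_{P_o}\cdot \Phi)(K,W)$.

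The main obstacle will be to justify the geometric linearization $s(x)hP\approx y+hP_o$ rigorously inside the integral, and to show that the subleading corrections arising from the deviation of the exponential map from the identity, the curvature of $M$, and the particular choice of section $s$ contribute only to higher filtration pieces $\calC_{l+n-p+1}$ and therefore vanish in the leading-order transfer. A clean route is to write $\Phi_M=[\omega,\eta]$, pull the defining forms back along $s\circ\exp_o$, and invoke the structure equations of the frame bundle together with Lemma~\ref{lemma pullback frame bundle} to identify the leading-order term explicitly.
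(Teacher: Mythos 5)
Your approach is genuinely different from the paper's, but it has a gap that goes to the heart of what the Proposition actually asserts. The scaling limit you invoke,
$$\lim_{t\to 0^+}t^{-(l+n-p)}(\mu_P\cdot \Phi_M)(\exp_o(tK),\exp_o(tW)),$$
is not $\tau(\mu_P\cdot\Phi_M)$ but only $\Lambda'_{l+n-p}(\mu_P\cdot\Phi_M)=\pi_{l+n-p}\circ\tau(\mu_P\cdot\Phi_M)$, i.e.\ the lowest homogeneous component of the transfer (Proposition~\ref{prop filtration transfer}). Since $\mu_P\cdot\Phi_M\in\calC_{l+n-p}$, its transfer lies in $\bigoplus_{j\geq l+n-p}\Curv_j$ and may a priori have nonzero components in degrees $j>l+n-p$; the Proposition claims these all vanish, because $\mu_{P_o}\cdot\Phi$ is purely $(l+n-p)$-homogeneous. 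Your argument, even if the linearization of $s(x)hP$ were carried out rigorously, establishes only the identity in the associated graded module $\bigoplus_i\calC_i/\calC_{i+1}$ — and that weaker statement already follows from Section~\ref{transfer} (compatibility of $\Xi'$ with module products together with $\Xi_{n-p}(\mu_P)|_o=\mu_{P_o}$), so the essential content is missed. Dismissing the subleading corrections because they lie in $\calC_{l+n-p+1}$ does not help: such terms are killed by $\Lambda'_{l+n-p}$ but not by $\tau$. That the exact equality is not automatic is illustrated by Proposition~\ref{prop_t_action}, where the analogous product $t_\lambda\cdot\Phi$ does carry higher-order terms.

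The paper's proof sidesteps the issue by never touching the transfer of the product directly. It writes $\mu_P\cdot\Phi=\langle(\mathrm{id}\otimes\pd\circ\glob)K(\Phi),\mu_P\rangle$ using \cite{bfs}*{Cor.~2.20}, evaluates each pairing $\langle\pd\circ\glob\Psi_j,\mu_P\rangle=\vol(P)^{-1}\glob(\Psi_j)(P)$ via Lemma~\ref{lemma_transfer} as $\lim_{R\to\infty}\vol(B_R)^{-1}\glob_0(\Psi_j)(B_R)$ over large balls $B_R\subset P_o$, and then uses Howard's transfer principle — the local kinematic operator $K$ is literally the same for $(M,G)$ and $(T_oM,\overline H)$ — together with the multiplicativity \eqref{semi_local_multiplicative} to recognize the resulting expression as $\mu_{P_o}\cdot\Phi$ on the nose. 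The transfer principle for $K$ is precisely the exact, all-orders input that your leading-order geometric linearization cannot supply; to repair your argument you would either have to control every homogeneous component of $\tau(\mu_P\cdot\Phi_M)$ (not just the first) or reintroduce the transfer principle in some form.
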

\begin{proof} Let $\glob_0\colon \Curv^H\to \Val^H$ and $k_0\colon \Val^H\to \Val^H\otimes \Val^H$ denote 
the globalization and global kinematic operator in $V=T_oM$.
By \cite{bfs}*{Prop. 2.17}, for every $\phi\in\mathcal V(M)^{G}$,
\[
 \langle\mathrm{pd} \phi, \mu_{P}\rangle = {\mathrm{vol}(P)}^{-1} \phi(P).
\]
By Lemma \ref{lemma_transfer}, given $\Phi\in\Curv^H$ we have 
\[
 \glob (\Phi)(P)=\mathrm{vol}(P)\Kl_{\glob_0\circ\pi_d\Phi}(P_o),
\]where $\pi_d\colon \Curv\rightarrow \Curv_d$ is the projection onto the homogeneous component of degree $d=\dim P$. Furthermore,
$$\Kl_{\glob_0\circ\pi_d\Phi}(P_o)=\lim_{R\to\infty}\frac{1}{\vol(B_R)}\glob_{0} (\Phi)(B_R),$$
where $B_R\subset P_o$ is a ball of radius $R$.

Now by \cite{bfs}*{Corollary 2.20}, writing  $K(\Phi)=\sum_{i,j}c_{ij}\Psi_i\otimes\Psi_j$, we have
\begin{align*}
 \mu_{P}\cdot \Phi&=\langle (\mathrm{id}\otimes(\mathrm{pd}\circ\mathrm{glob}))\circ K(\Phi),\mu_{P}\rangle\\
&=\sum_{i,j} c_{ij} \langle \mathrm{pd}\circ\glob \Psi_j,\mu_{P}\rangle\Psi_i\\
&=\sum_{i,j} c_{ij}\Kl_{\glob_0\circ\pi_d\Psi_j}(P_o)\Psi_i\\
& =\lim_{R\to\infty}\frac{1}{\vol(B_R)} (\mathrm{id}\otimes\glob_0)K(\Phi)(\;\cdot\;, B_R)\\
 & =   \left(\lim_{R\to\infty}\frac{1}{\vol(B_R)} k_0(\chi)(\;\cdot\;, B_R)\right) \cdot \Phi,
\end{align*}where the last equality uses \eqref{semi_local_multiplicative}.
Since 
$$ \lim_{R\to\infty}\frac{1}{\vol(B_R)} k_0(\chi)(\;\cdot\;, B_R)= \mu_{P_o},$$
the proof is complete. 
\end{proof}

\section{Simple valuations and curvature measures}\label{simple}

Recall that a  valuation $\psi$ on a vector space $V$  is called \emph{simple} if for every affine  hyperplane $E\subset V$. 
$$ \psi(K) =0, \qquad K\in \calK(E).$$

For translation-invariant and continuous valuations on $V$, the following characterization of simple valuations
was obtained by Klain \cite{klain_short} and Schneider 
\cite{schneider}.

\begin{Theorem}
If  $\mu\in \Val(\R^n)$ is simple then  there exist a constant  $c\in \R$  and an odd function $f\in C(S^{n-1})$ such that 
 $$\mu(K)= \int_{S^{n-1}} f \; dS_{n-1}(K)+ c\vol_n(K)$$
 for every $K\in \calK(\R^n)$. Conversely, every valuation of this form is simple.
\end{Theorem}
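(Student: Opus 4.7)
The plan is to treat the two directions separately. For the converse, if $K\in\calK(E)$ with $E$ an affine hyperplane of unit normal $u$, then $S_{n-1}(K,\cdot)$ is concentrated on $\{u,-u\}$ with equal masses $\vol_{n-1}(K)$, so $\int_{S^{n-1}}f\,dS_{n-1}(K)=\vol_{n-1}(K)(f(u)+f(-u))$ vanishes whenever $f$ is odd; the volume term clearly vanishes on lower-dimensional bodies. Both summands are therefore simple.

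For the direct implication, I would first apply the McMullen decomposition $\mu=\sum_{k=0}^{n}\mu_k$ and observe that each $\mu_k$, and consequently each $\mu_k^\pm$, is again simple: if $K\subset E$ for an affine hyperplane $E$ then $\lambda K\subset E$ for every $\lambda>0$, and the identity $0=\mu(\lambda K)=\sum_k\lambda^k\mu_k(K)$ forces $\mu_k(K)=0$ for each $k$. Since $\dim\Val_n=1$, the top component must be $\mu_n=c\vol_n$ for some constant $c$.

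For the even components of intermediate degree, Klain's injectivity theorem gives a direct argument: every $\mu_k^+$ with $k<n$ is uniquely determined by its Klain function on $\Gr_k(\R^n)$, and every $k$-plane sits inside some hyperplane, so simplicity forces $\Kl_{\mu_k^+}\equiv 0$ and hence $\mu_k^+=0$. For the odd $(n-1)$-homogeneous component I would appeal to the classical representation of continuous translation-invariant $(n-1)$-homogeneous valuations as $K\mapsto\int_{S^{n-1}}g\,dS_{n-1}(K)$ with $g\in C(S^{n-1})$ well-defined modulo linear functions (translation-invariance makes the first moment of $S_{n-1}(K,\cdot)$ vanish); extracting the odd part $f$ of $g$ then produces the desired formula.

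The main obstacle is showing that $\mu_k^-=0$ for every $0\leq k<n-1$, which is precisely the content of Schneider's theorem. The difficulty is that odd valuations have vanishing Klain function, so the restriction-to-subspaces argument used in the even case is unavailable. The strategy I would follow is Schneider's original one: evaluate $\mu_k^-$ on polytopes written as sums over faces weighted by an odd function of the outer normal cone, use simplicity to reduce the contributing weights to those supported on $k$-dimensional faces, and then propagate the vanishing from hyperplane-contained bodies to all of $\calK(\R^n)$ via a carefully chosen family of test bodies built from segments and prisms, together with translation-invariance. A more modern alternative would invoke the Alesker--Fourier transform, which carries $\Val_k^-$ isomorphically to $\Val_{n-k}^-$ and reduces the general low-degree case to the top-degree situation already settled.
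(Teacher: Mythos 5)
The paper does not prove this statement at all: it is quoted verbatim as the classical Klain--Schneider theorem with references to \cite{klain_short} and \cite{schneider}, so there is no in-paper argument to compare yours against. Judged on its own terms, your proposal gets the overall architecture right and correctly disposes of the easy parts: the converse via the structure of $S_{n-1}(K,\cdot)$ for bodies in a hyperplane; the reduction to homogeneous (and parity) components via McMullen's decomposition; $\mu_n=c\vol_n$ from $\dim\Val_n=1$; the vanishing of $\mu_k^+$ for $k<n$ from Klain's embedding theorem; and the representation of the odd $(n-1)$-component via McMullen's description of $\Val_{n-1}$ together with the behaviour of $S_{n-1}$ under $K\mapsto -K$. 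One caveat: the standard proof of Klain's injectivity theorem itself proceeds by restricting to $(k+1)$-dimensional subspaces and invoking the characterization of simple \emph{even} valuations, so your even-case argument is logically sound only if you treat the embedding theorem as an independently established black box rather than as a consequence of the statement being proved.

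The genuine gap is the step you yourself flag: the vanishing of $\mu_k^-$ for $1\leq k\leq n-2$, which is the entire substance of Schneider's contribution. What you offer there is a description of what Schneider's proof does, not a proof -- the "carefully chosen family of test bodies built from segments and prisms" is precisely where all the work lies, and nothing in your sketch shows why the propagation from hyperplane-contained bodies to general bodies succeeds. The proposed Alesker--Fourier alternative does not close the gap either: that transform is constructed on \emph{smooth} valuations, the theorem concerns merely continuous ones, and it is not explained how mapping $\Val_k^-$ to $\Val_{n-k}^-$ reduces the claim to "the top-degree situation" (the image of a simple valuation under the transform is not obviously of a form already handled). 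As it stands the hard half of the theorem is cited rather than proved, so the proposal should be regarded as a correct reduction plus an appeal to Schneider's paper for the decisive step.
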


Here we establish an extension of the Klain-Schneider characterization to valuations that are not necessarily translation-invariant. We start by relating 
the notion of simplicity to the Alesker product of valuations.

\begin{Lemma}\label{lemma_simple}
 A valuation $\psi\in \Val^{\infty}(V)$  is  simple if and only if
 $$\phi\cdot \psi = 0$$
 for every $\phi\in \Val^{+,\infty}_1(V)$. 
\end{Lemma}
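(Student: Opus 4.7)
The plan is to handle the two implications separately: the forward direction will reduce to a short degree-and-parity computation, while the converse will rely on Alesker's formula for the product of a Crofton valuation with an arbitrary smooth valuation, together with a density argument.

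For the forward implication, I begin by observing that a simple $\psi \in \Val^\infty(V)$ is in particular continuous, so the Klain--Schneider theorem recalled just above forces $\psi$ to lie in $\Val_n^\infty(V) \oplus \Val_{n-1}^{-,\infty}(V)$; the smoothness passes to the two components because the McMullen projections are continuous. Given $\phi \in \Val_1^{+,\infty}(V)$, compatibility of the Alesker product with the filtration yields $\phi \cdot \Val_n^\infty(V) \subset \Val_{n+1}(V) = 0$, and compatibility with the Euler--Verdier involution implies that $\phi \cdot \Val_{n-1}^{-,\infty}(V)$ lands in the odd part of $\Val_n(V)$, which is trivial since $\Val_n$ is spanned by the (even) volume. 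Hence $\phi \cdot \psi = 0$.

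For the converse, I exploit the fact that every smooth signed measure $m$ on the affine Grassmannian $\AGr_{n-1}(V)$ of hyperplanes defines a Crofton valuation
$$ \phi_m(K) = \int_{\AGr_{n-1}(V)} \chi(K \cap E)\, dm(E) $$
in $\Val_1^{+,\infty}(V)$, together with Alesker's product formula
$$ (\phi_m \cdot \psi)(K) = \int_{\AGr_{n-1}(V)} \psi(K \cap E)\, dm(E). $$
The hypothesis forces this integral to vanish for every smooth $m$ and every $K \in \calK(V)$. Since $\psi$ is continuous, $E \mapsto \psi(K \cap E)$ is a continuous function on $\AGr_{n-1}(V)$, and since smooth densities are weak-$*$ dense among Radon measures, it must vanish identically. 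Specialising $K$ to a convex body $L$ already contained in an affine hyperplane $E$ yields $\psi(L) = 0$, which is precisely simplicity.

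The one ingredient that needs non-trivial justification is Alesker's product formula for Crofton valuations; it is a foundational result in the theory of smooth translation-invariant valuations and enters the argument as a black box. With it in place, the forward direction is a bookkeeping of degree and parity, and the backward direction is an elementary density argument.
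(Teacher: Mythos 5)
Your forward implication is correct, though it runs opposite to the paper's: the paper deduces $\phi\cdot\psi=0$ directly from the Crofton representation of $\phi$ (since $K\cap E$ lies in a hyperplane, $\psi(K\cap E)=0$ pointwise), whereas you first invoke Klain--Schneider to place $\psi$ in $\Val_n\oplus\Val_{n-1}^-$ and then kill the product by degree and parity. Both work; yours is heavier but equally valid, and there is no circularity since Klain--Schneider concerns translation-invariant valuations while the theorem being built on this lemma concerns non-translation-invariant ones.

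The converse, however, has a genuine gap. The function $E\mapsto \psi(K\cap E)$ is \emph{not} continuous on $\AGr_{n-1}(V)$: continuity of $E\mapsto K\cap E$ in the Hausdorff metric holds only at hyperplanes meeting the interior of $K$ (this is exactly the content of the Schneider theorem the paper cites), and it fails at supporting hyperplanes and, worse, at the unique hyperplane containing a lower-dimensional body. Your density argument therefore only yields $\psi(K\cap E)=0$ for almost every $E$, and hence for all $E$ meeting $\operatorname{int}K$. But when you then specialise $K$ to a body $L$ contained in a hyperplane $E_0$, the value you need, $\psi(L\cap E_0)=\psi(L)$, is attained at the single point $E=E_0$ of the affine Grassmannian --- a null set at which the function is discontinuous (for $E\neq E_0$ nearby, $L\cap E$ is at most $(n-2)$-dimensional). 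So nothing can be concluded about $\psi(L)$ from the vanishing of integrals against smooth densities. This is precisely the difficulty the paper's proof is designed to circumvent: it replaces $L\subset u^\perp$ by the prism $L\times[0,u]$, concentrates the Crofton measure on directions $v$ near $u$, and uses continuity of $v\mapsto\psi(L\times[0,u]\cap(v^\perp+tv))$ at $v=u$ for $t\neq 0,1$ together with translation invariance ($\psi(L+tu)=\psi(L)$) to recover $\psi(L)$ in the limit. Some such approximation through full-dimensional sections is unavoidable; without it your converse does not close.
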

\begin{proof}  By \cite{alesker.bernstein}, every  $\phi\in \Val^{+,\infty}_1(V)$ can be written as 
  $$\phi(K) =  \int_{\AGr_{n-1}(V)} \chi(K\cap E) \; d\mu(E)$$
  with some smooth measure $\mu$ on the affine Grassmannian $\AGr_{n-1}(V)$. If $\psi$ is simple then 
  $$\phi\cdot \psi(K) = \int_{\AGr_{n-1}(V)} \psi(K\cap E) \; d\mu(E)=0.$$
  
  Conversely, assume that  $\phi\cdot \psi = 0$ for every $\phi\in \Val^{+,\infty}_1(V)$.  Let us  fix a euclidean structure on $V$. Let $u\in S(V)$ be arbitrary
  but fixed. We have to show that $\psi(K)=0$ for every convex body $K$ contained in $u^\perp$.  
  Choose  a family $(\rho_\varepsilon)_{\varepsilon\geq0}$ of non-negative functions
  on $S(V)$ with support shrinking to $u$  as $\varepsilon \to 0$ and $\int \rho_\varepsilon =1$. 
  Put  
  $$\phi_\varepsilon(L) = \int_{S(V)} \int_\R \chi(K\cap (v^\perp + t v)) \, dt\, \rho_\varepsilon(v)dv,\qquad L\in\calK(V).$$
  Theorem~1.3.4 of \cite{SchneiderBM} implies that if $t\neq 0,1$ then $v\mapsto\psi(K\times [0,u] \cap (v^\perp + t v))$ is continuous at $u$.  
  Thus 
  \begin{align*}0&= \phi_\varepsilon\cdot \psi (K\times [0,u]) \\
   & =  \int_{S(V)} \int_\R \psi(K\times [0,u] \cap (v^\perp + t v)) \, dt\, \rho_\varepsilon(v)dv \to \psi(K)
  \end{align*}
  as $\varepsilon \to 0$. 
  This completes the proof.  
\end{proof}

\begin{proof}[Proof of Theorem~\ref{Theorem_simple_valuations}]
To prove that $i)$ implies $ii)$ we fix some euclidean structure on $V$. Let $\phi\in \calV(V)$ be given by
$$\phi(A) = \int_{S(V)}\int_{\R} \chi(K \cap (u^\perp + tu)) f(u,t)\, dt\, du,$$
where $f$ is a smooth function. Since $\psi$ is simple, the same proof as in the translation-invariant case shows  $\phi\cdot \psi = 0$.

Since $\phi(\{p\})=0$ for $p\in V$, we have $\phi\in \calV_1$. Suppose $\psi \in \calV_k$ for $k<n-1$. As elements of the associated graded algebra 
$$\pi_k(\psi)\cdot \pi_1(\phi)= \pi_{k+1}(\psi \cdot \phi)=0.$$
Since $\Xi=\sum_k\Xi_k$ is an isomorphism of algebras, we have 
$$
 0  = \Xi_{k+1} (\pi_k(\psi)\cdot \pi_1(\phi))  = \Xi_k(\pi_k(\psi))\cdot \Xi_1(\pi_1(\phi)). $$
Now
$$\Xi_1(\pi_1(\phi))|_p (K) = \int_{S(V)} \vol_1(K|u)  f(u,\langle p, u\rangle )\; du,$$
where $\vol_1(K|u)$ denotes the length of the projection of $K$ on the subspace spanned by $u$.
Since $f$ was arbitrary, Lemma~\ref{lemma_simple} implies that $\Xi_k(\pi_k(\psi))$ is simple. By the Klain-Schneider characterization
of simple valuations, we must have $\Xi_k(\pi_k(\psi))=0$ and hence $\psi\in \calV_{k+1}$. 
We conclude that $\psi\in \calV_{n-1}$ and that $\Xi_{n-1}(\pi_{n-1}(\psi))$ is odd.

Since the Euler-Verdier involution is compatible with $\Xi$,  we see that $$\pi_{n-1}(\sigma\psi)=  (-1)^n \pi_{n-1}(\psi).$$
Since $\sigma$ is an involution, and it
acts on $\calV_n$ by multiplication by $(-1)^n$, we conclude that $\sigma(\psi)=(-1)^n\psi$. 

To show $ii)\Rightarrow iii)$ we note that pull-back preserves the filtration, so $\iota^* \psi \in \calV_{n-1}(N)$. If $\dim N<n-1$ we get $\iota^*\psi=0$. If $\dim N=n-1$, then $\sigma$ acts on $\calV_{n-1}(N)$ by multiplication by $(-1)^{n-1}$. Since $\sigma\iota^*\psi=\iota^*\sigma\psi=(-1)^n\iota^*\psi$, we deduce $\iota^*\psi=0$.

Since $iii)$ trivially implies $i)$, the proof is complete.
\end{proof}

\begin{Remark}
Alesker considered in \cite{valmfdsI} a filtration $\calV(V)=\gamma_0(V)\supset\cdots\supset\gamma_n(V)\supset \gamma_{n+1}(V)=\{0\}$ 
on $\calV(V)$ defined as follows: $\psi\in\gamma_i(V)$ if and only if $\psi|_E=0$ for all affine subspaces $E\subset V$ dimension $i-1$.
 The previous proofs can be easily adapted to show that $\psi\in\gamma_i(V)$ if and only if $\psi\in \calV_{i-1}(V)$ and $\Xi_{i-1}(\psi)_x\in \Val_{i-1}^-$ for all $x\in V$.  
\end{Remark}

We call a valuation $\psi\in \calV(M)$ on a manifold $M$  simple if $\iota^*\psi=0$ 
for every smoothly embedded manifold $\iota\colon N\hookrightarrow M$ with $\dim N<\dim M$.
\begin{Corollary}
A valuation $\psi$ on a manifold $M$ of dimension $n$  is simple if and only if $\psi\in \calV_{n-1}(M)$ and $\sigma(\psi)=(-1)^n\psi$.
\end{Corollary}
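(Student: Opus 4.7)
The plan is to adapt the proof of Theorem~\ref{Theorem_simple_valuations} to the manifold setting, reducing each step fiberwise to the classical Klain--Schneider theorem via the associated graded isomorphism $\Xi$ and the naturality of the Euler--Verdier involution. Both directions are essentially transcriptions of the arguments already given for vector spaces, with the filtration $\calV_k(M)$ and the graded structure playing the central role.

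For the forward direction, I would fix $\psi\in\calV(M)$ simple, a point $x\in M$, and a local diffeomorphism $\phi\colon T_xM\to M$ with $\phi(0)=x$, $d\phi_0=\mathrm{id}$. If $A\in\calP(T_xM)$ is contained in a proper linear subspace $E\subsetneq T_xM$, then for small $t>0$ the set $\phi(tA)$ lies in the smoothly embedded submanifold $\phi(E\cap U)$ of dimension $\dim E<n$, so simplicity of $\psi$ forces $\psi(\phi(tA))=0$ and therefore $\Lambda_k\psi|_x(A)=0$. Hence $\Lambda_k\psi|_x\in\Val_k^\infty(T_xM)$ is a simple translation-invariant valuation, and the Klain--Schneider theorem gives $\Lambda_k\psi|_x=0$ for $k<n-1$ (so $\psi\in\calV_{n-1}$) and $\Lambda_{n-1}\psi|_x\in\Val_{n-1}^-(T_xM)$. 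Since $\Xi_{n-1}$ intertwines $\sigma$ with the fiberwise Euler--Verdier involution on $\Val_{n-1}^\infty$, which acts as $(-1)^n$ on odd elements, this yields $\sigma\psi\equiv(-1)^n\psi\pmod{\calV_n}$. Combined with the fact that $\sigma$ acts as $(-1)^n$ on $\calV_n=\{\lcur 0,\eta\rcur\}$ (directly from $\sigma\lcur\omega,\eta\rcur=(-1)^n\lcur a^*\omega,\eta\rcur$), a short manipulation using $\sigma^2=\mathrm{id}$ upgrades this congruence to the full identity $\sigma\psi=(-1)^n\psi$.

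For the converse, assume $\psi\in\calV_{n-1}(M)$ and $\sigma\psi=(-1)^n\psi$, and let $\iota\colon N\hookrightarrow M$ be an embedding with $m=\dim N<n$. Since pull-back preserves the filtration, $\iota^*\psi\in\calV_{n-1}(N)$. If $m<n-1$, this sits in $\calV_{m+1}(N)$, which vanishes because the associated graded $\bigoplus_k\calV_k(N)/\calV_{k+1}(N)\cong\Gamma(\Val_k^\infty(TN))$ is zero in degrees $k>m$. If $m=n-1$, then $\iota^*\psi\in\calV_m(N)$, on which $\sigma$ acts as $(-1)^m=(-1)^{n-1}$; invoking the naturality $\sigma\iota^*=\iota^*\sigma$ gives $(-1)^{n-1}\iota^*\psi=\sigma\iota^*\psi=\iota^*\sigma\psi=(-1)^n\iota^*\psi$, which forces $\iota^*\psi=0$.

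The main technical inputs are the naturality of $\sigma$ with respect to pull-backs under embeddings (which follows from the defining formula and the fact that embeddings lift to cosphere bundles commuting with the fiberwise antipodal map $a$) and the vanishing of $\calV_{m+1}(N)$ on $m$-dimensional manifolds (a standard consequence of Alesker's graded-algebra theorem). Both were already used without further comment in the proof of Theorem~\ref{Theorem_simple_valuations}, so I do not expect any substantially new difficulty to arise in the manifold corollary.
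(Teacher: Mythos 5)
Your proof is correct, and the converse direction coincides with the paper's, which simply refers back to the last part of the proof of Theorem~\ref{Theorem_simple_valuations}. For the forward direction the paper takes a shorter route: it restricts $\psi$ to open sets $U\subset M$ diffeomorphic to a vector space, notes that $\psi|_U$ is then simple in the sense of condition (iii) of Theorem~\ref{Theorem_simple_valuations}, and invokes that theorem to obtain $\psi|_U\in\calV_{n-1}(U)$ and $\sigma\psi|_U=(-1)^n\psi|_U$, which localize to the global statements. You instead re-run the graded argument directly on $M$: since the manifold notion of simplicity already gives vanishing on all lower-dimensional embedded submanifolds, the simplicity of each $\Lambda_k\psi|_x$ follows at once from the geometric observation that $\phi(tA)$ lies in $\phi(E\cap U)$, so you never need the Alesker-product characterization of simplicity (Lemma~\ref{lemma_simple}) that drives the hard implication in the vector-space theorem. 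This makes your argument self-contained modulo the classical Klain--Schneider theorem, at the cost of redoing the filtration induction and the $\sigma^2=\mathrm{id}$ upgrade that the paper gets by citation. Two small points to make explicit: the passage from ``$\Lambda_k\psi|_x=0$ for all $x$'' to ``$\psi\in\calV_{k+1}$'' should be phrased as an induction on $k$, since $\Lambda_k$ is only defined on $\calV_k$; and vanishing of $\Lambda_k\psi|_x$ on bodies in \emph{linear} hyperplanes must be promoted to \emph{affine} hyperplanes via translation invariance before Klain--Schneider applies.
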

\begin{proof}
If $\psi$ is simple, then so is $\psi|_U$ for every open set $U\subset M$. 
Hence $\psi|_U\in\calV_{n-1}(U)$ and $\sigma\psi|_U=(-1)^n\psi|_U$ for every $U$  diffeomorphic to a vector space. This immediately 
implies $\psi\in\calV_{n-1}(M)$ (cf. \cite[Definition 3.1.1]{valmfdsII}) and $\sigma\psi=(-1)^n\psi$.

To show the converse, we can argue as in the last part of the proof of the previous theorem.
\end{proof}

We call a curvature measure $\Psi\in \Curv(V)$  simple if  $\iota_E^* \Psi =0 $ for every affine hyperplane $E\subset V$.
 To characterize simple curvature measures, the following lemma will be useful. The proof was communicated to us by Andreas Bernig.
\begin{Lemma}
        If $\Psi\in \Curv(V)$ is such that $\Psi_f=0$ for every affine function $f$ then $\Psi=0$.
       \end{Lemma}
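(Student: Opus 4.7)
The plan is to proceed by induction on $n = \dim V$, using the pull-back compatibility \eqref{pullback} to reduce to lower dimensions.

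For the base case $n = 1$, every translation-invariant smooth curvature measure on $\R$ takes the form
\[
\Psi([a,b], U) = c_+\,\delta_b(U) + c_-\,\delta_a(U) + c\cdot |[a,b]\cap U|, \qquad c_\pm, c\in \R,
\]
and the vanishing of $\Psi_1$ and $\Psi_x$ successively forces $c = 0$, $c_+ + c_- = 0$, and then $c_+(b-a) = 0$ for every interval, so $c_+ = c_- = 0$.

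For the inductive step in dimension $n > 1$, let $\iota_E\colon E\hookrightarrow V$ be any affine hyperplane. The pull-back $\iota_E^*\Psi\in\Curv(E)$ inherits translation invariance under the translations of $E$, and since every affine function on $E$ is the restriction of an affine function $f$ on $V$, the compatibility \eqref{pullback} gives $(\iota_E^*\Psi)_{\iota_E^* f} = \iota_E^*(\Psi_f) = 0$. Hence $\iota_E^*\Psi$ satisfies the hypothesis of the lemma on $E$, and by induction $\iota_E^*\Psi = 0$. Thus $\Psi$ is simple.

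To conclude, I would decompose $\Psi = \sum_k \Psi_k$ into its homogeneous components and observe that $\Psi \mapsto \Psi_1$ preserves the degree while $\Psi\mapsto\Psi_\ell$ raises it by one, so the hypothesis (and simplicity) passes to each $\Psi_k$ separately. The case $k = n$ is immediate: $\Psi_n$ is a multiple of Lebesgue measure, killed by $\glob\Psi_n = 0$. For $k<n$, combining the simplicity of $\Psi_k$ with $\glob\Psi_k = 0$ and Lemma \ref{lemma_transfer}(i)--(ii) shows that $\Psi_k(P,\;\cdot\;) = 0$ on every affine subspace $P\subset V$, irrespective of its dimension. The main obstacle is the final step: promoting the vanishing of $\Psi_k$ on all affine subspaces, together with the vanishing of every first moment $(\Psi_k)_\ell$, to $\Psi_k\equiv 0$. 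I would handle this by writing $\Psi_k = [\omega, 0]$ with a translation-invariant form $\omega\in\Omega^{n-1}(SV)$ of filtration degree $k$ and translating the remaining conditions into explicit constraints on $\omega$ whose only solution is $\omega = 0$.
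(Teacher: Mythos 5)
Your base case and the inductive step establishing $\iota_E^*\Psi=0$ for every affine hyperplane $E$ are correct, but the argument stops exactly where the lemma's actual content begins. After reducing to a homogeneous component $\Psi_k$, $k<n$, which is simple, globalizes to zero, and has vanishing first moments, you only announce that you ``would'' translate these conditions into constraints on a defining form $\omega$ forcing $\omega=0$; this step is never carried out, and it is not routine. Note that simplicity is far from sufficient: by Corollary~\ref{simple_curvature_meas} (whose proof in the paper in fact \emph{uses} the present lemma, so it cannot be invoked here) the simple translation-invariant curvature measures form the large space $\ker(\glob_{n-2}^+)\oplus\Curv_{n-1}^-\oplus\Curv_n$. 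Even adding $\glob\Psi_k=0$ still leaves nonzero candidates, e.g.\ in degree $n-1$ the measures $A\mapsto\int_{\partial A\cap U}\langle\nu,v\rangle\,d\mathcal{H}^{n-1}$, which are simple and globalize to zero and are excluded only by the first-moment condition. So the entire burden of the proof rests on the unexecuted final step; moreover the conditions $\glob\Psi_k=0$ and $(\Psi_k)_\ell=0$ do not translate into pointwise algebraic constraints on $\omega$ but into conditions on its Rumin differential, which is a different and harder kind of statement. As it stands, the proposal essentially reduces the lemma to itself on each homogeneous component.

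The paper's proof is entirely different and works in the $U$-variable rather than the $A$-variable: for a continuous piecewise linear function $g$ subordinate to a locally finite triangulation $\{T_i\}$ one has $\Psi_g(K\cap T_i)=\Psi_{f_i}(K\cap T_i)=0$, where $f_i$ is the affine function agreeing with $g$ on $T_i$ (the measure $\Psi(K\cap T_i,\cdot)$ is supported in $T_i$); the inclusion-exclusion property of $A\mapsto\Psi_g(A)$ then gives $\Psi_g(K)=0$, and uniform approximation of continuous functions by piecewise linear ones yields $\Psi(K,\cdot)=0$. The idea you are missing is to exploit the valuation property in $A$ to enlarge the class of admissible test functions $f$, rather than to dissect $\Psi$ by degree and parity.
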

       \proof
       Let $g$ be a continuous piecewise linear function: i.e. $g|_{T_i}$ is affine on every simplex $T_i$ of some locally finite triangulation of $V$. 
       Given a convex body $K$, we have
       $\Psi_g(K\cap T_i)=0$, and hence  $\Psi_g(K)=0$ by the  inclusion-exclusion principle (see, e.g., \cite{klain-rota}).
       Since every continuous function can be uniformly approximated by piecewise linear functions on compact domains, it follows that $\Psi=0$.
       \endproof

Let $\glob_k^{\pm} \colon \Curv_k^\pm(V) \to \Val_k^\pm(V)$ denote the globalization map.

\begin{Corollary} \label{simple_curvature_meas}
 A curvature measure $\Psi\in \Curv(V)$ is simple if and only if 
  $$\Psi \in \ker( \glob_{n-2}^+)\oplus   \Curv_{n-1}^-(V)\oplus \Curv_n(V). $$
\end{Corollary}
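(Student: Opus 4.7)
The plan is to combine three ingredients: the corollary of Theorem~\ref{Theorem_simple_valuations} (a smooth valuation is simple iff it lies in $\calV_{n-1}$ with Euler--Verdier eigenvalue $(-1)^n$), the lemma just proved ($\Psi$ is determined by $\{\Psi_f : f \text{ affine}\}$), and the elementary observation that $\Psi$ is simple iff $\Psi_f$ is a simple valuation for every $f\in C^\infty(V)$. This last fact is immediate, since $\Psi(K,\cdot)$ is a signed Borel measure detected by integration against arbitrary test functions.

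For the direction $(\Longleftarrow)$, I would check each summand separately. The case $\Curv_n$ is trivial since $\int_{K}\eta=0$ whenever $\dim K<n$. For $\Psi\in\Curv_{n-1}^-$ and $\Psi\in\ker\glob_{n-2}^+$ I would verify that $\Psi_f$ is a simple valuation for every $f$: because $\Psi\in\Curv_k$ gives $\Psi_f\in\calV_k$ with leading quotient $\Xi_k\pi_k(\Psi_f)|_x=f(x)\cdot\glob\Psi$, in both situations $\Psi_f\in\calV_{n-1}$ (either directly from $k=n-1$, or because $\glob\Psi=0$ raises the filtration by one); the Euler--Verdier equation $\sigma(\Psi_f)=(-1)^n\Psi_f$ follows from the identity $\sigma(\Psi_f)=(\sigma\Psi)_f$ together with the compatible parity $\sigma\Psi=(-1)^n\Psi$ of each of these summands.

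For the direction $(\Longrightarrow)$, assume $\Psi$ is simple. Since pullback to an affine hyperplane respects both degree and geometric parity, each component $\Psi_k^\epsilon$ in the decomposition $\Psi=\Psi_n+\sum_{k<n,\,\epsilon=\pm}\Psi_k^\epsilon$ is itself simple. Applying the characterization to the affine functions $f=1$ and $f=x_i$, and invoking the Klain--Schneider theorem, one obtains two constraints:
\begin{enumerate}[(i)]
\item $\glob\Psi_k^\epsilon\in\Val_k^\epsilon$ is a translation-invariant simple valuation, so it vanishes unless $(k,\epsilon)\in\{(n-1,-),(n,+)\}$;
\item once $\glob\Psi_k^\epsilon=0$, a direct change-of-variables computation shows $(\Psi_k^\epsilon)_{x_i}$ is a translation-invariant valuation in $\Val_{k+1}^{-\epsilon}$, and being simple forces it to vanish unless $(k,\epsilon)=(n-2,+)$.
\end{enumerate}
For every $(k,\epsilon)$ outside the triple $\{(n,+),(n-1,-),(n-2,+)\}$, both constraints apply, so $(\Psi_k^\epsilon)_f=0$ for every affine $f$, and the lemma just proved forces $\Psi_k^\epsilon=0$.

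The main obstacle is the parity and translation-invariance bookkeeping for the moments $(\Psi_k^\epsilon)_{x_i}$. Here one needs the two identities $\tau_v^{*}(\Psi_{x_i})=\Psi_{x_i}+v_i\glob\Psi$ (so translation invariance is exactly $\glob\Psi=0$) and $\Psi_{x_i}(-A)=-\epsilon\,\Psi_{x_i}(A)$ (giving parity $-\epsilon$), both of which follow from unpacking the definition of $\Curv^\epsilon$ and a direct substitution in $\int x_i\,d\Psi(-A,\cdot)$. With these in hand, the final assembly into the three admissible summands is routine.
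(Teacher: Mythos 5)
Your argument is correct and follows essentially the same route as the paper: both reduce to the characterization of simple valuations applied to $\Psi_f$ for constant and affine $f$, invoke the lemma that $\Psi$ is determined by $\{\Psi_f\colon f \text{ affine}\}$, and then sort out degree and parity (you via the explicit moments $\Psi_{x_i}$, the paper via the filtration and the Euler--Verdier involution, with the converse left implicit there). One harmless imprecision: in your constraint (ii), for $(k,\epsilon)=(n-1,-)$ the moment lands in $\Val_n^+$, which does contain simple valuations, so (ii) does not force vanishing in that case either --- but since $(n-1,-)$ lies inside your admissible triple this does not affect the conclusion.
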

\begin{proof}
 If $f$ is a smooth function on $V$, then 
 $ \iota^*_E \Psi_f = (\iota_E^* \Psi)_{\iota^* f}=0$  for every affine hyperplane $E\subset V$, so $\Psi_f$ is simple.
The characterization of simple valuations implies that $\Psi_f\in \calV_{n-1}$ and that $\sigma(\Psi_f)=(-1)^n \Psi_f$. 
 
 If a curvature measure $\Phi\in \Curv_k(V)$ does not globalize to zero, then $\Phi_f\in \calV_k(V)\setminus \calV_{k+1}(V)$ for some $f$; if $\Phi\neq 0$ globalizes to zero, then 
 $\Phi_f\in \calV_{k+1}(V)\setminus \calV_{k+2}(V)$  for some $f$. In the first assertion it is enough to take $f$ constant. To check the second one, 
 take $f$ affine and note that $\Phi_f(tA)=t^{k+1}\Phi_{f-f(0)}(A)=t^{k+1}\Phi_f(A)$. It follows that $\Phi_f\in\calV_{k+1}$ and
 $\Phi_f=0$ if $\Phi_f\in  \calV_{k+2}.$ Since $\Phi\neq 0$, the previous lemma implies that there exists an affine $f$ such 
 that $\Phi_f\notin \calV_{k+2}$.
 
 Applying the preceding paragraph to $\Psi$, we obtain $$\Psi\in  \ker(\glob_{n-2})\oplus \Curv_{n-1}\oplus \Curv_n.$$ 
Decomposing $\Psi=\Psi^++\Psi^-$ into  its even and odd parts and assuming that $\Psi$ is homogeneous of degree $k$, we have $\sigma(\Psi_f)=(-1)^{k} \Psi^+_f +(-1)^{k+1} \Psi_f^-$ for every smooth function $f$ . 
Thus the identity  $\sigma(\Psi_f)=(-1)^n\Psi_f$ fixes the parity of $\Psi$. 
\end{proof}

Suppose now further that $V$ is euclidean.  
\begin{Corollary} \label{Corollary_invariant_restriction}
Let $G$ be a subgroup of $\SO(V)$ acting transitively on the sphere $S(V)$. Let $E\in \Gr_{n-1}( V)$ be a hyperplane and let $H$ be the stabilizer of $E$. The natural restriction map $\iota_E^*\colon \Curv^{G}\to \Curv^{H}$  has kernel $\ker\iota_E^*=(\ker\glob_{n-2}^+\oplus\Curv_n)\cap \Curv^G$. 
\end{Corollary}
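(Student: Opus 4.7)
The plan is to prove the two inclusions separately. The inclusion $\supseteq$ is direct: every $\Psi\in\ker\glob_{n-2}^+$ is simple by Corollary~\ref{simple_curvature_meas}, so in particular $\iota_E^*\Psi=0$; and every $\Psi=[0,\eta]\in\Curv_n$ satisfies $\iota_E^*\Psi=0$ because for any $A\in\calP(E)$ and any $f\in C^\infty(V)$, $\iota_E^*(\Psi_f)(A)=\int_{\iota_E(A)}f\eta=0$ as the $n$-form $\eta$ vanishes on the $(n-1)$-dimensional set $\iota_E(A)$.

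For the reverse inclusion, let $\Psi\in\Curv^G$ with $\iota_E^*\Psi=0$. The first step is to upgrade this to simplicity of $\Psi$. By $G$-equivariance of the pullback, $\iota_{gE}^*\Psi=0$ for every $g\in G$; since $G$ acts transitively on $S(V)$, it acts transitively on linear hyperplanes in $V$, and together with the translation-invariance built into $\Curv(V)$ this yields $\iota_F^*\Psi=0$ for every affine hyperplane $F\subset V$. Corollary~\ref{simple_curvature_meas} then places $\Psi$ in $\ker\glob_{n-2}^+\oplus\Curv_{n-1}^-\oplus\Curv_n$. Since $G\subset\SO(V)$ commutes with the antipodal involution, this decomposition is $G$-equivariant, so the $\Curv_{n-1}^-$ component of $\Psi$ is itself $G$-invariant.

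The main obstacle is to show that $\Curv_{n-1}^{-,G}(V)=0$. Let $\Psi$ denote this component. By the lemma immediately preceding Corollary~\ref{simple_curvature_meas}, which asserts that a translation-invariant curvature measure with $\Psi_f=0$ for every affine $f$ must vanish, it suffices to prove $\Psi_f=0$ for every affine $f\in C^\infty(V)$. For constant $f$, $\Psi_f$ is a scalar multiple of $\glob\Psi\in\Val_{n-1}^{-,G}(V)$; Schneider's description of $\Val_{n-1}^-(V)$ as odd smooth densities on $S(V)$ modulo restrictions of linear functionals, combined with the transitivity of $G$ on $S(V)$ (which forces any $G$-invariant smooth function on $S(V)$ to be constant, and any $G$-invariant linear functional on $V$ to vanish), yields $\Val_{n-1}^{-,G}(V)=0$ and hence $\glob\Psi=0$. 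For a linear $f(x)=\langle v,x\rangle$, a direct scaling argument using the $(n-1)$-homogeneity of $\Psi$ and $\glob\Psi=0$ shows $\Psi_f$ is translation-invariant and $n$-homogeneous, so $\Psi_f=c(v)\vol$ with $c$ linear in $v$; the $G$-invariance of $\Psi$ forces $c$ to be a $G$-invariant linear functional on $V$, which must vanish since transitivity of $G$ on $S(V)$ precludes nonzero $G$-fixed vectors. Thus $\Psi_f=0$ for every affine $f$, whence $\Psi=0$, and the proof is complete.
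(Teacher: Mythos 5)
Your proof is correct, and the first half follows the paper's route exactly: transitivity plus translation-invariance upgrades $\iota_E^*\Psi=0$ to simplicity, and Corollary~\ref{simple_curvature_meas} places $\Psi$ in $\ker\glob_{n-2}^+\oplus\Curv_{n-1}^-\oplus\Curv_n$. Where you genuinely diverge is in killing the $\Curv_{n-1}^-$ component. The paper disposes of it in one line by citing an external fact (Proposition~3.1 of the Bernig--Solanes quaternionic paper) that $\Curv_{n-1}^G\cong\Curv_0^G$ is one-dimensional, hence spanned by the even measure $\Delta_{n-1}$. You instead prove $\Curv_{n-1}^{-,G}=0$ from scratch: you feed affine test functions into the lemma preceding Corollary~\ref{simple_curvature_meas}, handle constants via the Klain--Schneider description of $\Val_{n-1}^-$ together with transitivity of $G$ on $S(V)$ (so $\Val_{n-1}^{-,G}=0$ and $\glob\Psi=0$), and handle linear $f$ by observing that $\glob\Psi=0$ makes $\Psi_f$ translation-invariant and $n$-homogeneous, hence $c(v)\vol$ with $c$ a $G$-invariant linear functional, which vanishes because a group transitive on $S(V)$ fixes no nonzero vector. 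This is longer but self-contained and elementary, relying only on results already established in the paper (plus Hadwiger's characterization of $n$-homogeneous translation-invariant valuations); the paper's version buys brevity at the cost of an external citation. Your explicit treatment of the easy inclusion $\supseteq$, which the paper leaves implicit, is also fine.
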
 
Note that $H$  need not be transitive on $S(E)$. An analogous statement at the level of valuations was proved in \cite[Corollary 3.5]{bernig_g2}. 
\begin{proof}Let $\Psi\in \Curv^G$ such that $\iota^*_E\Psi=0$. By transitivity and translation-invariance, $\Psi$ is simple. 
By Corollary~\ref{simple_curvature_meas} we have $\Psi\in \ker\glob_{n-2}^+\oplus\Curv_{n-1}^{-}\oplus\Curv_n$. But $\Curv_{n-1}^G$ contains only even elements, 
as $\Curv_0^G\cong \Curv^G_{n-1}$ is one-dimensional, cf.\ \cite{bernig_solanes}*{Proposition 3.1}.  
\end{proof}

\section{Integral geometry of $S^6$ under $\G_2$}

For $\lambda>0$ we  denote  by $\S^6$ the sphere of radius $\lambda^{-1/2}$ centered at the origin of $\im\O$. 
The first main result of this section is Theorem~\ref{first_part} below which states that $\calV^6_\lambda:=\calV(\S^6)^{\mathrm G_2}$ and
$\calV^6_0:=\Val(\C^3)^{\SU(3)}$ are isomorphic as filtered algebras. 
This proves the  first part of Theorem~\ref{Theorem_isomorphism}.
Our second main result is Theorem~\ref{Theorem_local_kinematic_SU3}, which establishes
 in explicit form the full array of local kinematic formulas for $\SU(3)$.

\subsection{Differential forms}
The map
\[
 J(v)=\sqrt{\lambda}\ p\cdot v,\qquad v\in T_{p}\S^6
\]
defines an almost-complex structure on $S_\lambda^6$, which is an isometry on each tangent space. Since the action of 
$\mathrm G_2$ preserves this complex structure it is called the homogeneous almost-complex structure on  $\S^6$ (and sometimes also the nearly K\"ahler structure).
Clearly, the associated hermitian form is 
\begin{equation}\label{eq Kahler form}\omega=\sqrt\lambda\, \iota_p \phi\end{equation}
where $\phi$ is the associative $3$-form  \eqref{eq associative 3-form}. Note that 
\begin{equation} \label{eq_omegae_not_closed} d\omega= 3 \sqrt{\lambda} \phi.\end{equation}

The restriction 
of $\phi$ to $\S^6$  is the real part of a complex volume form $\Upsilon$ on $T_p\S^6$. This may be seen by evaluating at the 
representative point $e_1/\sqrt\lambda$ and using \eqref{eq associative 3-form}.
Thus the stabilizer $H$ of the action of $\mathrm G_2$ on $S^6$ is contained in $\SU(3)$. 
In fact, from Proposition \ref{prop_transitive} we know that $(\S^6,\G_2)$ is an isotropic space and $H=\SU(3)$.

Let  $\mathcal F_{\SU(3)}\subset \mathcal F_{\mathrm O(6)}\S^6$ be the bundle whose fiber over $p\in \S^6$ consists of tuples 
$$(u_1, Ju_1,u_2, Ju_2, u_3, Ju_3)$$
where $u_1,u_2,u_3$ is a complex orthonormal basis of $T_p\S^6$ with 
\[
 \phi(u_1,u_2,u_3)=1.
\]
Clearly, $\mathcal F_{\SU(3)}$ is a principal bundle with structure group $\SU(3)\subset \mathrm{O}(6)$. We denote the restrictions of the solder $1$-form $\omega$ and the connection form $\varphi$
to $\mathcal F_{\SU(3)}$ by the same symbol, but we will use $1,\bar1,2,\bar 2, 3,\bar3$  as indices for the standard basis of $\R^6$ instead of 
$1,2,3,4,5,6$. 

It is natural to consider all the spaces $\S^6$ and $\C^3$ together, as a one-parameter family indexed
by the curvature $\lambda\geq0$. By means of a $\SU(3)$-frame at an arbitrary point $p\in\S^6$  we will identify $S^6_0=\C^3\cong T_p\S^6$.

The restriction of the connection $1$-form $\varphi$ to $\mathcal F_{\SU(3)}$ does not take values in $\mathfrak{su}(3)\subset \mathfrak{so}(6)$.

\begin{Proposition}\label{Prop S6 SU3}The restriction of the  connection $1$-form $\varphi$ to $\mathcal F_{\SU(3)}$ satisfies the following relations
\begin{align}
\cf_{1,\bar2}+\cf_{\bar1,2}&=-\sqrt{\lambda}\, \tf_3\label{first_rel}\\
\cf_{1,2}-\cf_{\bar1,\bar2}&=-\sqrt{\lambda}\,\tf_{\bar3}\label{second_rel}
\end{align}
and those obtained by cyclic permutation of the indices $1,2,3$.
Moreover
\begin{equation}
 \cf_{1,\bar1}+\cf_{2,\bar2}+\cf_{3,\bar3}=0\label{seventh_rel}
\end{equation}

\end{Proposition}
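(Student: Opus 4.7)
Plan: The strategy is to verify the relations at the reference point $p_0=e_1/\sqrt\lambda\in\S^6$ equipped with the $\SU(3)$-frame $u_j=e_{2j}$, $Ju_j=e_{2j+1}$ (for $j=1,2,3$), which satisfies $\phi(u_1,u_2,u_3)=1$ via \eqref{eq associative 3-form}, and then to extend all identities to $\mathcal F_{\SU(3)}$ by the transitive $\G_2$-action. The central input is the nearly-K\"ahler torsion identity on $\S^6\subset\im\O$: since the Levi-Civita connection is the tangential projection of the flat connection on $\im\O$ and $JV=\sqrt\lambda\,p\cdot V$, the octonion Leibniz rule applied to $(JV)(t)=\sqrt\lambda\,\gamma(t)\cdot V(t)$, combined with $p\cdot p = -1/\lambda$, yields after a short computation
$$(\nabla_X J)V = \sqrt\lambda\,(X\times V)^T,$$
where $X\times V=\im(X\cdot V)$ is the imaginary octonion cross product and $(\cdot)^T$ denotes tangential projection at $p$.

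For the first two relations, take a curve $\gamma$ in $\mathcal F_{\SU(3)}$ with $d\pi(\dot\gamma(0))=X$. Lemma~\ref{lemma pullback frame bundle} gives $\cf_{i,j}(X)=\langle e_i,\nabla_t e_j\rangle$; splitting $\nabla_t(Ju_j)=(\nabla_X J)u_j+J(\nabla_t u_j)$ and using the antisymmetry of $J$ to cancel (resp.\ combine) the $J(\nabla_t u_j)$-terms yields
\begin{align*}
\cf_{1,\bar 2}(X)+\cf_{\bar 1,2}(X) &= \langle u_1,(\nabla_X J)u_2\rangle = \sqrt\lambda\,\phi(u_1,X,u_2),\\
\cf_{1,2}(X)-\cf_{\bar 1,\bar 2}(X) &= -\langle Ju_1,(\nabla_X J)u_2\rangle = -\sqrt\lambda\,\phi(Ju_1,X,u_2).
\end{align*}
At $p_0$, direct contraction of \eqref{eq associative 3-form} gives $\iota_{e_4}\iota_{e_2}\phi=e^6$ and $\iota_{e_4}\iota_{e_3}\phi=-e^7$, so these expressions simplify to $-\sqrt\lambda\,\tf_3(X)$ and $-\sqrt\lambda\,\tf_{\bar 3}(X)$; the two cyclic permutations follow identically from contractions such as $\iota_{e_6}\iota_{e_4}\phi=e^2$.

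For the trace relation, use the complex volume form $\Upsilon$ on $\S^6$, whose real part is $\phi|_{T\S^6}$ and whose imaginary part $\psi'$ satisfies $\psi'_p=-\sqrt\lambda\,\iota_p\psi|_{T_p\S^6}$; the $\SU(3)$-frame condition is equivalent to $\Upsilon(u_1,u_2,u_3)\equiv 1$ on $\mathcal F_{\SU(3)}$. Differentiating this identity along $\gamma$, expanding $\nabla_t u_j$ in the moving frame, and using $\Upsilon(Ju_j,\cdot,\cdot)=i\,\Upsilon(u_j,\cdot,\cdot)$ together with $\cf_{\bar j,j}=-\cf_{j,\bar j}$ produces
$$(\nabla_X\Upsilon)(u_1,u_2,u_3)-i\sum_j\cf_{j,\bar j}(X)=0.$$
The term $(\nabla_X\Upsilon)(u_1,u_2,u_3)$ vanishes: its real part $(\nabla_X\phi)(u_1,u_2,u_3)$ is a sum of terms each involving a K\"ahler-form factor of the form $\omega(u_i,u_j)=0$, while a computation parallel to the $J$-case gives $(\nabla_X\psi')(Y,Z,W)=-\sqrt\lambda\,\psi(X,Y,Z,W)$, which evaluates to zero at $p_0$ since no monomial of $\psi$ contains $e^2\wedge e^4\wedge e^6$.

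The main technical subtlety is tracking the second-fundamental-form corrections $\mathrm{II}(X,V)=-\lambda\langle X,V\rangle p$ when differentiating the non-associative octonion product; once the torsion formula $(\nabla_X J)V=\sqrt\lambda(X\times V)^T$ is secured, everything else reduces to elementary contractions of $\phi$ and $\psi$ that follow directly from \eqref{eq associative 3-form} and its Hodge dual.
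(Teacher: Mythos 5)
Your proof is correct and follows essentially the same route as the paper: both rest on the nearly-K\"ahler torsion identity $\langle(\nabla_XJ)Y,Z\rangle=\sqrt\lambda\,\phi(X,Y,Z)$ (your cross-product formulation $(\nabla_XJ)V=\sqrt\lambda\,(X\times V)^T$ is equivalent) together with differentiating an algebraic identity satisfied by the $\SU(3)$-frame. The only cosmetic differences are that you evaluate at a fixed point and frame and invoke $\G_2$-transitivity of $\mathcal F_{\SU(3)}$ where the paper computes with a general local frame, and that for the trace relation you differentiate $\Upsilon(u_1,u_2,u_3)=1$ — carefully verifying that $\nabla\Upsilon$ contributes nothing, a point the paper's differentiation of $\phi(u_{\bar1},u_2,u_3)=0$ passes over silently — so your argument is, if anything, slightly more complete.
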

\begin{proof} Since $\nabla$, the Levi-Civita connection on $\S^6$, is induced by the covariant derivative of $\R^7$, one checks easily 
\begin{equation}
 \langle (\nabla_X J)Y,Z\rangle =\sqrt\lambda\, \iota_X\phi(Y,Z)
\end{equation}
with $(\nabla_X J)Y=\nabla_X (JY)-J\nabla_XY$.

If $u_1,u_{\bar1},\ldots, u_3,u_{\bar3}$ is a local $\SU(3)$-frame, then
\begin{align*}
 \langle u_1, \nabla_X u_{\bar 2}\rangle + \langle u_{\bar 1}, \nabla_X u_2\rangle & =\langle u_1, \nabla_X (Ju_{ 2})-J \nabla_X u_2\rangle\\
&=-\sqrt{\lambda}\,\phi(u_1,u_2,X)\\
&=-\sqrt{\lambda}\, \langle u_3, X\rangle ,
\end{align*}
and relation \eqref{first_rel} follows. Relation \eqref{second_rel} is proved similarly.
Differentiating the identity $\phi(u_{\bar1},u_2,u_3)=0$ yields
\begin{align*}
 0&=\phi(\nabla u_{\bar1},u_2,u_3)+\phi(u_{\bar1},\nabla u_2,u_3)+\phi(u_{\bar1},u_2,\nabla u_3)\\
&=\langle \nabla u_{\bar1},u_1\rangle-\langle \nabla u_2,u_{\bar2}\rangle-\langle\nabla u_3,u_{\bar3}\rangle,
\end{align*}
and proves \eqref{seventh_rel}.
\end{proof}

Let $\SU(2)\subset \SU(3)$ denote  the stabilizer of the first basis vector and consider in  $\mathcal{F}_{\SU(3)}$ the $1$-forms 
$$\alpha=\tf_1,\quad \beta=\tf_{\bar1},\quad \gamma=\cf_{\bar1,1}$$
and the $2$-forms 
\begin{align}
\theta_0&=\cf_{2,1}\wedge\cf_{\bar2,1}+\cf_{3,1}\wedge\cf_{\bar3,1}\label{def_theta0} \\
 \theta_1&=\tf_2\wedge\cf_{\bar2,1}-\tf_{\bar2}\wedge\cf_{2,1}+\tf_3\wedge\cf_{\bar3,1}-\tf_{\bar3}\wedge\cf_{3,1}\\
 \theta_2&=\tf_{2}\wedge\tf_{\bar 2}+\tf_{3}\wedge\tf_{\bar3}\\
\theta_s&=\tf_2\wedge\cf_{2,1}+\tf_{\bar2}\wedge\cf_{\bar2,1}+\tf_3\wedge\cf_{3,1}+\tf_{\bar3}\wedge\cf_{\bar3,1}\\
\chi_0&=(\cf_{2,1}\wedge\cf_{3,1}-\cf_{\bar 2,1}\wedge\cf_{\bar3,1})+\sqrt{-1}(\cf_{2,1}\wedge\cf_{\bar3,1}+\cf_{\bar2,1}\wedge\cf_{3,1})\\
\chi_1&=\tf_2\wedge\cf_{3,1}-\tf_{\bar2}\wedge\cf_{\bar3,1}-\tf_3\wedge\cf_{2,1}+\tf_{\bar3}\wedge\cf_{\bar2,1})\\
&\phantom{=}+\sqrt{-1}(\tf_{\bar2}\wedge\cf_{3,1}+\tf_2\wedge\cf_{\bar3,1}-\tf_3\wedge\cf_{\bar2,1}-\tf_{\bar3}\wedge\cf_{2,1})\\
 \chi_2&=(\tf_{2}\wedge\tf_{3}-\tf_{\bar2}\wedge\tf_{\bar3})+\sqrt{-1}(\tf_{2}\wedge\tf_{\bar3}+\tf_{\bar2}\wedge\tf_{3}).\label{def_chi2}
\end{align}
By \eqref{eq rg omega} and \eqref{eq rg varphi}, these forms are $\SU(2)$-invariant.
They are also horizontal for the bundle $ \mathcal{F}_{\SU(3)}\to S\S^6$ given by projection to the first basis vector. Thus they descend to the sphere bundle 
of $\S^6$. In the following we will make no notational distinction between the forms on the sphere bundle and those on the frame bundle trusting the reader 
to keep in mind where the identities are taking place.

For $\lambda=0$, these forms coincide with the ones introduced by Bernig in \cite{bernig_sun}.

\begin{Proposition}\label{differential} With the abbreviations  $\chi_{k,R}=\re(\chi_k)$ and $\chi_{k,I}=\im(\chi_k)$, the following identities hold:
\begin{align*}
d\alpha& =-\beta\wedge\gamma-\theta_s\\
d\beta& =\alpha\wedge\gamma+\theta_1-2\sqrt{\lambda}\chi_{2,R}\\
d\gamma&=2\theta_0-\sqrt{\lambda}\chi_{1,R}-\lambda\alpha\wedge\beta\\
d\theta_0&=-\sqrt{\lambda}\big(-\alpha\wedge\chi_{0,R}+\beta\wedge\chi_{0,I}+\gamma\wedge\chi_{1,I}\big)-\lambda\alpha\wedge\theta_1\\
d\theta_1&=2\alpha\wedge\theta_0+\gamma\wedge\theta_s-\sqrt{\lambda}\big(-\alpha\wedge\chi_{1,R}+2\beta\wedge\chi_{1,I}+2\gamma\wedge\chi_{2,I}\big)-2\lambda\alpha\wedge\theta_2\\
d\theta_2&=\alpha\wedge\theta_1+\beta\wedge\theta_s+ \sqrt{\lambda}\big(\alpha\wedge\chi_{2,R} - 3\beta\wedge\chi_{2,I}\big)\\
d\theta_s&=2\beta\wedge\theta_0-\gamma\wedge\theta_1 - \sqrt{\lambda}\big(\beta\wedge\chi_{1,R}-2\gamma\wedge\chi_{2,R}\big)\\
d\chi_0&=3\sqrt{-1}\gamma\wedge\chi_0+\sqrt{\lambda}\big((-\alpha+\sqrt{-1}\beta)\wedge\theta_0+\gamma\wedge(\theta_s+\sqrt{-1}\theta_1)\big)-\lambda\alpha\wedge\chi_1\\
d\chi_1&=2(\alpha+\sqrt{-1}\beta)\wedge\chi_0+2\sqrt{-1}\gamma\wedge\chi_1\\ &\phantom{=}+\sqrt{\lambda}\big((-\alpha+2\sqrt{-1}\beta)\wedge\theta_1
+\beta\wedge\theta_s+2\sqrt{-1}\gamma\wedge\theta_2\big)-2\lambda\alpha\wedge\chi_2\\
d\chi_2&=(\alpha+\sqrt{-1}\beta)\wedge\chi_1+\sqrt{-1}\gamma\wedge\chi_2-\sqrt{\lambda}(\alpha-3\sqrt{-1}\beta)\wedge\theta_2
\end{align*}

\end{Proposition}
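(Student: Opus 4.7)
The plan is a direct computation: each of the 1-forms and 2-forms in question is a polynomial in the solder and connection forms on $\mathcal F_{\SU(3)}$, so $d$ of any of them expands via the structure equations \eqref{eq structure I}, \eqref{eq structure II} and the Leibniz rule, after which I simplify using the skew-symmetry $\cf_{ij}=-\cf_{ji}$, the trace relation \eqref{seventh_rel}, and the $\SU(3)$-reduction identities \eqref{first_rel}, \eqref{second_rel} together with their two cyclic permutations in the indices $1,2,3$. Since $\S^6$ has constant sectional curvature $\lambda$, the curvature 2-form is $\Cf_{ij}=\lambda\,\tf_i\wedge\tf_j$; this is the sole source of the $\lambda^1$-terms on the right-hand sides. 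By contrast, every $\sqrt{\lambda}$-term arises when an off-diagonal connection form such as $\cf_{\bar 1,2}$ is rewritten by Proposition~\ref{Prop S6 SU3} as a diagonal-block connection form plus a $\sqrt{\lambda}$-multiple of a solder form. The $\lambda$-free part of each identity therefore coincides with the known flat $\SU(3)$-computation of Bernig \cite{bernig_sun}.

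To illustrate, for $d\alpha$ the expansion $-\cf_{1,j}\wedge\tf_j$ contains no off-diagonal $\cf_{\bar 1,\cdot}$ term, so no Proposition~\ref{Prop S6 SU3} substitution is needed: the $j=\bar 1$ summand gives $-\beta\wedge\gamma$ and the four summands with $j\in\{2,\bar 2,3,\bar 3\}$ pack into $-\theta_s$. For $d\beta$ the analogous expansion produces $\cf_{\bar 1,k}$ with $k\in\{2,\bar 2,3,\bar 3\}$, each of which Proposition~\ref{Prop S6 SU3} splits into a $\cf_{\bar k,1}$ (resp.\ $\cf_{k,1}$) term plus a $\sqrt{\lambda}$-multiple of a solder form; collecting the two halves separately yields $\theta_1$ and $-2\sqrt{\lambda}\chi_{2,R}$. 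For $d\gamma$ the curvature contributes $\Cf_{\bar 1,1}=-\lambda\,\alpha\wedge\beta$, the quadratic piece $-\cf_{\bar 1,k}\wedge\cf_{k,1}$ with $k\in\{2,\bar 2,3,\bar 3\}$ produces $2\theta_0$ after using \eqref{seventh_rel} to handle the trace, and the Proposition~\ref{Prop S6 SU3} correction terms assemble into $-\sqrt{\lambda}\chi_{1,R}$.

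For the seven 2-form identities the procedure is the same, but the bookkeeping is heavier. Each of $\theta_0,\ldots,\chi_2$ is a sum of wedge products of two 1-forms from $\{\tf_i,\cf_{ij}\}$; Leibniz and the structure equations produce a list of triple products, to which I apply (i) Proposition~\ref{Prop S6 SU3} to eliminate every remaining off-diagonal connection component, (ii) \eqref{seventh_rel} to eliminate the residual diagonal traces, and (iii) the definitions \eqref{def_theta0}--\eqref{def_chi2} to recognize the result as a combination of $\alpha\wedge\theta_k,\beta\wedge\theta_k,\gamma\wedge\theta_k$ and $\alpha\wedge\chi_k,\ldots$ listed in the statement. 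Terms separate cleanly according to their $\sqrt{\lambda}$-degree (0, 1, or 2), so these three homogeneous components of each identity can be verified independently.

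The main obstacle is purely organizational: the complex forms $\chi_k$ combine four monomials each, and the identities for $d\chi_0, d\chi_1, d\chi_2$ mix real and imaginary parts of several $\chi_k$ and $\theta_k$, so the rearrangement after expansion requires care. Two observations cut the work substantially. First, the forms defined in \eqref{def_theta0}--\eqref{def_chi2} and the relations in Proposition~\ref{Prop S6 SU3} are invariant under the cyclic $\mathbb{Z}/3$-action on the indices $\{1,2,3\}$, so only one orbit representative need be computed. Second, setting $\lambda=0$ in the final identities must reproduce the structure equations of Bernig in \cite{bernig_sun} for the flat $\SU(3)$-case; this furnishes a reliable sanity check on the $\lambda$-free part of every formula.
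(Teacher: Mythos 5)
Your proposal is correct and follows essentially the same route as the paper, whose proof is exactly the direct verification you describe: expand via the structure equations, insert the constant-curvature identity $\Phi_{ij}=\lambda\,\omega_i\wedge\omega_j$, and eliminate the off-diagonal connection components using the relations of Proposition~\ref{Prop S6 SU3}. Your additional organizational remarks (cyclic symmetry, separation by $\sqrt{\lambda}$-degree, and the $\lambda=0$ check against Bernig's flat computation) are sound and only make the verification more systematic.
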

\begin{proof}
 Direct verification using  the structure equations \eqref{eq structure I} and \eqref{eq structure II}, the identity 
 $\Phi_{ij} = \lambda \omega_i\wedge \omega_j$, and the relations of  Proposition~\ref{Prop S6 SU3}.
\end{proof}

 \begin{Proposition} The algebra of  $\mathrm G_2$-invariant forms on the sphere bundle of $\S^6$ is generated by the forms $\alpha,\beta,\gamma,
  \theta_0,\theta_1,\theta_2,\theta_s, \chi_0,\overline\chi_0,\chi_1,\overline\chi_1,\chi_2,  \overline\chi_2$. 
 \end{Proposition}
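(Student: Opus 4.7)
The plan is to reduce the statement to the analogous description of $\overline{\SU(3)}$-invariant forms on $S\mathbb C^3$ proved by Bernig in \cite{bernig_sun}. By Proposition~\ref{prop_transitive}, the action of $\mathrm G_2$ on the sphere bundle of $\S^6$ is transitive with stabilizer $\SU(2)$. Consequently, evaluation at a fixed point $\xi_0\in S\S^6$ establishes a bijection between the space of $\mathrm G_2$-invariant differential forms on $S\S^6$ and the $\SU(2)$-invariant part of the exterior algebra $\Lambda^*(T_{\xi_0}S\S^6)^*$. The same recipe applied to $(\mathbb C^3, \overline{\SU(3)})$ identifies $\overline{\SU(3)}$-invariant forms on $S\mathbb C^3$ with the $\SU(2)$-invariant part of $\Lambda^*(T_{(0,e_1)}S\mathbb C^3)^*$.

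The next step is to observe that these two tangent spaces are isomorphic as $\SU(2)$-representations. Writing $V_1=\mathbb C^2$ for the standard $4$-dimensional real $\SU(2)$-representation, the splitting $\mathfrak g_2=\mathfrak{su}(3)\oplus T_p\S^6$ together with the decompositions $\mathfrak{su}(3)/\mathfrak{su}(2)\cong\mathbb R\oplus V_1$ (the tangent space of $\SU(3)/\SU(2)=S^5$ at $e_1$) and $T_p\S^6=\mathbb C^3\cong\mathbb R^2\oplus V_1$ yields $\mathfrak g_2/\mathfrak{su}(2)\cong \mathbb R^3\oplus V_1\oplus V_1$. In the flat case one finds the identical decomposition $T_{(0,e_1)}S\mathbb C^3=\mathbb C^3\oplus T_{e_1}S^5\cong\mathbb R^3\oplus V_1\oplus V_1$.

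Once an $\SU(3)$-frame over $\xi_0$ is fixed, the forms $\alpha,\beta,\gamma,\theta_*,\chi_*$ evaluate to specific $\SU(2)$-invariant elements in this common representation. Since their defining expressions \eqref{def_theta0}--\eqref{def_chi2} are polynomial in the components of $\omega$ and $\varphi$ and do not involve $\lambda$, they coincide pointwise with their flat-case counterparts from \cite{bernig_sun}. The generation statement proved there by Bernig therefore transfers directly to $S\S^6$, which completes the argument. The only delicate point in carrying this out cleanly is verifying that under the chosen identification of tangent spaces the curved-case forms really are identified with Bernig's flat-case generators; but this is essentially tautological, since the forms are defined by the same algebraic expressions on the frame bundle, and the relations of Proposition~\ref{Prop S6 SU3} (which do depend on $\lambda$) do not enter the definitions of $\alpha,\beta,\gamma,\theta_*,\chi_*$ themselves.
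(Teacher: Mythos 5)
Your proposal is correct and follows essentially the same route as the paper: the paper's proof likewise reduces the statement to Bernig's description of the $\overline{\SU(3)}$-invariant forms on $S\C^3$ via the identification $\Omega(S\S^6)^{\G_2}\cong\Omega(S\C^3)^{\overline{\SU(3)}}$, which is exactly the point-evaluation/$\SU(2)$-representation argument you spell out (and which the paper carries out explicitly in the analogous $S^7$ case). Your additional checks --- that the two tangent spaces agree as $\SU(2)$-representations and that the generators correspond under the frame-induced identification because their defining expressions in $\tf$ and $\cf$ are $\lambda$-independent --- are details the paper leaves implicit.
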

\begin{proof} These forms are $\mathrm G_2$-invariant by construction. The statement follows from the identification $\Omega(S\S^6)^{\G_2}\cong\Omega(S\C^3)^{\overline{\SU(3)}}$ and the 
description of translation and $\SU(3)$-invariant differential forms on $S\C^3$ given in \cite{bernig_sun}.
\end{proof}

\subsection{Curvature measures}
Next we describe the space $\Curv^{\SU(3)}$ of translation and $\SU(3)$-invariant curvature measures. 
By analogy with the case of valuations, we say that a $\SU(n)$-invariant curvature measure $\Phi\in\Curv^{\SU(n)}$ has weight $l$ if 
\[
 g^*\Phi=\det(g)^l\Phi\qquad \text{for}\ g\in \mathrm U(n).
\]
In particular $\Phi\in \Curv^{U(n)}$ iff $\Phi$ has weight $0$. Clearly, if $\Phi$ has weight 
$l$, then $\overline\Phi$ has  weight $-l$. Since $g^* \chi_i= \det(g)\chi_i$ while all other basic invariant forms are 
$\mathrm U(n)$-invariant, only curvature measures with weight $0,\pm 1,\pm 2$ exist.

Recall from \cite{bfs} that a basis of $\Curv^{\mathrm U(3)}$,  the space of curvature measures of weight $0$, is given by 
\begin{align}
 & \Delta_{0,0} \notag\\
 & \Delta_{1,0}, N_{1,0}\notag\\
 & \Delta_{2,0}, \Delta_{2,1}, N_{2,0} \label{eq_def_Delta_curv}\\
 & \Delta_{3,0}, \Delta_{3,1}, N_{3,1}\notag\\
 & \Delta_{4,1}, \Delta_{4,2}\notag\\
 & \Delta_{5,2}\notag\\
 & \Delta_{6,3}\notag
\end{align}
 Let us recall their explicit construction. Let $\omega_i$  be the volume of the $i$-dimensional unit ball, and let $c_{k,q}=(q!(3-k+q)!(k-2q)!\omega_{6-k})^{-1}$. Take
\begin{align*}
 B_{k,q}&=c_{k,q}[\beta\wedge\theta_0^{3-k+q}\wedge\theta_1^{k-2q-1}\wedge\theta_2^q,0]\\
 \Gamma_{k,q}&=\frac{c_{k,q}}{2}[\gamma\wedge\theta_0^{2-k+q}\wedge\theta_1^{k-2q}\wedge\theta_2^q,0]
\end{align*}
Then, the elements in \eqref{eq_def_Delta_curv} are given by
\[
 \Delta_{k,q}=\frac{1}{6-k}(2(3-k+q)\Gamma_{k,q}+(k-2q)B_{k,q}),\quad N_{k,q}=\frac{2(3-k+q)}{6-k}(\Gamma_{k,q}-B_{k,q}), 
\]
except for $\Delta_{6,3}(A,U)$ which is $\vol_6(A\cap U)$. 

The kernel of the globalization map on $\Curv^{\mathrm U(3)}$ for $\lambda=0$ is spanned by $N_{1,0},N_{2,0},N_{3,1}$. 

To find the  curvature measures of weight $1$ recall from \cite{bernig_sun}*{Proposition 3.4} that for every $i,j\in\{0,1,2\}$ one has
\[
\theta_i\wedge \chi_j\equiv c\ \theta_1\wedge \chi_1   \qquad\mod \theta_s
\]
for some constant  $c\in\C$. Hence, a curvature measure of weight $1$ must be a linear combination of the curvature measures 
\begin{equation} \label{eq_def_Psi_curv}
  \Psi_2:=\frac{1}{2\pi^2} [ \gamma\wedge \theta_1\wedge \chi_1, 0 ], \qquad \Psi_3:= \frac{1}{4\pi} [ \beta\wedge \theta_1\wedge \chi_1, 0 ].
\end{equation}
of degree $2$ and $3$.  
These curvature measures globalize to zero in the flat case since there are no valuations of weight $1$ in $\Val^{\SU(3)}$ (see \cite{bernig_sun}).

To find the  curvature measures of weight $2$, we note that
\[
 \chi_i\wedge \chi_j=0\qquad \mbox{ if } i+j\neq 2
\]
and that $\chi_1^2=-2\chi_0\wedge \chi_2$.
Thus, a curvature measure of weight $2$  must be a linear combination of 
\begin{equation}\label{eq_def_Phi_curv}
 \Phi_2:=\frac{1}{2\pi^2}[ \gamma\wedge \chi_0\wedge \chi_2, 0 ], \qquad \Phi_3:= -\frac{1}{4\pi} [ \beta\wedge \chi_0\wedge \chi_2, 0 ]
\end{equation}
which have degree $2$ and $3$ respectively. 
The curvature measure $\Phi_2$ globalizes to zero in the flat case since there is no valuation of weight $2$ in $\Val_2^{\SU(3)}$ (see \cite{bernig_sun}). We denote by $\phi_3=[\Phi_3]_0$ the globalization of $\Phi_3$ in $\C^3$. This valuation was denoted $\phi_2$ in \cite{bernig_sun}. 
By \cite{bernig_sun}*{Section 5.1}, its Klain function is 
\begin{equation}\label{klain_phi}
 \Kl_{\phi_3}(E)={\det}_\C(w_1,w_2,w_3)^2
\end{equation}
where $w_1,w_2,w_3$ is an orthonormal basis of $E\in\Gr_3(\C^3)$. We differ from \cite{bernig_sun} by a factor in the differential form defining this valuation. This is due to an error in Proposition 14 c) of \cite{bernig_sun} which should be $\Theta(W^\bot)=(-1)^{n^2/2}\Theta(W)$. 

Thus we have proved the following
\begin{Lemma}  A basis of $\Curv^{\SU(3)}$ is given by
\begin{align*}
 & \Delta_{0,0} \notag\\
 & \Delta_{1,0}, N_{1,0}\notag\\
 & \Delta_{2,0}, \Delta_{2,1}, N_{2,0},\Psi_2, \overline{\Psi_2},\Phi_2, \overline{\Phi_2} \label{eq_def_Delta_curv}\\
 & \Delta_{3,0}, \Delta_{3,1}, N_{3,1},\Psi_3,\overline{\Psi_3},\Phi_3,\overline{\Phi_3}\\
 & \Delta_{4,1}, \Delta_{4,2}\notag\\
 & \Delta_{5,2}\notag\\
 & \Delta_{6,3}\notag
\end{align*}
\end{Lemma}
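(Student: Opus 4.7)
The plan is to decompose $\Curv^{\SU(3)}$ according to the characters of the circle $\mathrm U(1)\cong\mathrm U(3)/\SU(3)$ and verify the claim weight by weight. Among the $\SU(2)$-invariant horizontal forms on $SM$ listed in \eqref{def_theta0}--\eqref{def_chi2}, the forms $\alpha,\beta,\gamma,\theta_0,\theta_1,\theta_2,\theta_s$ all have weight $0$, while $\chi_j$ and $\overline{\chi_j}$ carry weights $+1$ and $-1$. Hence only weights $l\in\{-2,-1,0,1,2\}$ can occur, giving a direct-sum decomposition into finite-dimensional weight eigenspaces. Moreover, if $[\omega,\eta]$ represents a curvature measure of weight $l\neq 0$, then one may assume $\eta=0$, since $\eta\in\Omega^{6}(\C^3)$ is automatically $\mathrm U(1)$-invariant.

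For the weight-$0$ piece $\Curv^{\mathrm U(3)}$ the claimed basis is precisely the one recalled from \cite{bfs}. For weight $+1$ a representing $5$-form must contain exactly one factor among $\chi_0,\chi_1,\chi_2$; the relation $\theta_i\wedge\chi_j\equiv c_{ij}\,\theta_1\wedge\chi_1\pmod{\theta_s}$ from \cite{bernig_sun}*{Prop.~3.4}, together with the fact that $\alpha$ is the contact form and may be dropped modulo the equivalence defining a curvature measure, reduces $\omega$ to a linear combination of $\gamma\wedge\theta_1\wedge\chi_1$ and $\beta\wedge\theta_1\wedge\chi_1$, i.e.\ of $\Psi_2$ and $\Psi_3$. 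The weight-$+2$ case is analogous: using $\chi_i\wedge\chi_j=0$ for $i+j\neq 2$ and $\chi_1^{\,2}=-2\chi_0\wedge\chi_2$ one is led to $\gamma\wedge\chi_0\wedge\chi_2$ and $\beta\wedge\chi_0\wedge\chi_2$, i.e.\ to $\Phi_2$ and $\Phi_3$. Complex conjugation delivers the weight $-1$ and $-2$ pieces.

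Linear independence is then immediate across distinct weights and distinct homogeneity degrees. Within each non-zero-weight $(l,k)$-component only one candidate appears, and non-triviality is checked by direct evaluation: for $\Phi_3$ this is the Klain-function computation \eqref{klain_phi} yielding $\phi_3\neq 0$; for $\Psi_2,\Psi_3,\Phi_2$, which globalize to zero in the flat case, non-triviality follows by evaluating the measure $(A,U)\mapsto\int_{N(A)\cap\pi^{-1}U}\omega$ on an explicit pair such as a small ball with its boundary sphere patch. Independence inside the weight-$0$ piece is settled in \cite{bfs}.

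The main obstacle is the reduction step for weights $\pm 1$ and $\pm 2$: one must rule out any further $\SU(2)$-invariant horizontal $5$-forms on $S\C^3$ of those weights surviving modulo contact forms and Rumin differentials. This is essentially a finite-dimensional linear-algebra computation within the space of $\SU(2)$-invariant tensors of the relevant bidegree and weight, carried out in \cite{bernig_sun}; the identities in Proposition \ref{differential} specialized to $\lambda=0$ encode the modifications available through $d\alpha$ and exact forms, and ensure that the spanning sets above are in fact complete.
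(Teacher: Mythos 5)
Your proposal is correct and follows essentially the same route as the paper: the paper likewise decomposes $\Curv^{\SU(3)}$ by the weight of the $\det$-character, quotes the $\Curv^{\mathrm U(3)}$ basis from \cite{bfs} for weight $0$, and uses the relations $\theta_i\wedge\chi_j\equiv c\,\theta_1\wedge\chi_1 \pmod{\theta_s}$, $\chi_i\wedge\chi_j=0$ for $i+j\neq 2$, and $\chi_1^2=-2\chi_0\wedge\chi_2$ to reduce the weight $\pm1$ and $\pm2$ pieces to $\Psi_2,\Psi_3$ and $\Phi_2,\Phi_3$ and their conjugates. The only difference is that you make explicit the non-triviality and independence checks that the paper leaves implicit (and which are in effect confirmed later by the evaluations on geodesic balls in Lemma~\ref{lemma_balls}).
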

 
Via the transfer principle identify $\calC_\lambda^6:=\calC(\S^6)^{\mathrm G_2}$ and $\Curv^{\SU(3)}$.  The globalization of a curvature measure $\Phi \in \Curv^{\SU(3)}$
in $\S^6$ will be denoted by $\glob_\lambda(\Phi)= [\Phi]_\lambda$.

\begin{Lemma}\label{lemma_euler_verdier} The Euler-Verdier involution on $\Curv^{\SU(3)}$ is given by 
$$\sigma \Phi = (-1)^k \Phi$$
for  $\Phi\in\Curv_k^{\mathrm U(3)}$ and 
\begin{align*}
  \sigma \Psi_2  & =    -\Psi_2 \\
  \sigma \Phi_2 & = \Phi_2 \\
  \sigma \Psi_3  & =    \Psi_3 \\
  \sigma \Phi_3  & =  -\Phi_3.\end{align*}
In particular $\Psi_2,\Psi_3\in \Curv^-$.
\end{Lemma}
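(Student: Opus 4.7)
The plan is to apply directly the definition of the Euler-Verdier involution: for a curvature measure $\Psi=[\omega,\eta]$ on a manifold of dimension $n=6$, one has $\sigma\Psi=(-1)^n[a^*\omega,\eta]=[a^*\omega,\eta]$, where $a\colon SM\to SM$ denotes the fiberwise antipodal map. All of the curvature measures listed in the lemma are of the form $[\omega,0]$ (except for $\Delta_{6,3}$, which is clearly fixed by $\sigma$) with $\omega$ a polynomial in the $\SU(2)$-invariant basic forms $\alpha,\beta,\gamma,\theta_0,\theta_1,\theta_2,\theta_s,\chi_0,\chi_1,\chi_2$ which descend from $\mathcal F_{\SU(3)}$ to the sphere bundle. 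Everything thus reduces to computing the parity under $a^*$ of these ten generators.

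To do so I would lift $a$ to $\mathcal F_{\SU(3)}$. Since the sphere bundle is obtained from $\mathcal F_{\SU(3)}$ by projecting onto the first basis vector, $a$ is induced at the frame level by $u_1\mapsto -u_1$. A lift must preserve the $\SU(3)$-reduction, i.e.\ the condition $\det_{\C}(u_1,u_2,u_3)=1$; a convenient choice is
\[
(u_1,Ju_1,u_2,Ju_2,u_3,Ju_3)\longmapsto (-u_1,-Ju_1,u_3,Ju_3,u_2,Ju_2),
\]
for which $\det_{\C}(-u_1,u_3,u_2)=1$. Any two such lifts differ by an $\SU(2)$-action fixing $u_1$, hence they yield the same pullback on the $\SU(2)$-invariant generators above.

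Using Lemma~\ref{lemma pullback frame bundle} and the defining formulas \eqref{def_theta0}--\eqref{def_chi2}, a direct sign count then shows that $\alpha,\beta,\theta_1,\theta_s,\chi_0,\chi_2$ all change sign under $a^*$, while $\gamma,\theta_0,\theta_2,\chi_1$ are preserved. Multiplying the signs in the respective defining expressions yields $\sigma B_{k,q}=(-1)^{k-2q}B_{k,q}=(-1)^k B_{k,q}$ and $\sigma\Gamma_{k,q}=(-1)^k\Gamma_{k,q}$, and therefore $\sigma\Delta_{k,q}=(-1)^k\Delta_{k,q}$ and $\sigma N_{k,q}=(-1)^k N_{k,q}$. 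The same tally gives $\sigma\Psi_2=(+)(-)(+)\Psi_2=-\Psi_2$, $\sigma\Psi_3=(-)(-)(+)\Psi_3=\Psi_3$, $\sigma\Phi_2=(+)(-)(-)\Phi_2=\Phi_2$, and $\sigma\Phi_3=(-)(-)(-)\Phi_3=-\Phi_3$.

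The only real obstacle is setting up the antipodal lift carefully, after which the proof is pure bookkeeping. Alternatively, since the transfer map $\tau$ intertwines the Euler-Verdier involutions on $\calC(\S^6)^{\G_2}$ and on $\Curv^{\SU(3)}$, one can equivalently perform the entire computation in the flat model $\C^3$, where the basic forms are translation-invariant and the formulas for $a^*$ on the generators match those obtained from the lift above.
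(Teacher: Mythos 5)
Your proposal is correct and follows essentially the same route as the paper: lift the fiberwise antipodal map to $\mathcal F_{\SU(3)}$, record the signs of $a^*$ on the basic invariant forms (your sign table agrees with the paper's, which uses the diagonal lift $u_i\mapsto(-1)^iu_i$, $u_{\bar i}\mapsto(-1)^iu_{\bar i}$ rather than your swap of $u_2,u_3$, but as you note any lift of $a$ gives the same pullback on basic forms), and multiply signs in the defining expressions. The final tally matches the lemma in every case.
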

\begin{proof}
The assignment 
$$ a(u_i)= (-1)^{i}u_i, \quad  a(u_{\bar i})=(-1)^{i}u_{\bar i}$$
 for  $i=1,2, 3$ defines a lift of  the fiberwise antipodal map 
$a\colon S \S^6\to S\S^6$ to the frame bundle $\mathcal{F}_{\SU(3)} $. 
One immediately verifies  that 
$$  a^* \tf_i = (-1)^i\tf_i, \quad  a^* \tf_{\bar i} = (-1)^i\tf_{\bar i} $$
and  
$$  a^* \cf_{i,1} = (-1)^{i+1}\cf_{i,1}, \quad   a^* \cf_{\bar i,1} = (-1)^{i+1}\cf_{\bar i,1} $$
 for  $i=1,2, 3$.
 
We conclude that 
$$  a^* \beta  =-\beta, \quad   a^* \gamma =\gamma,\quad   a^* \theta_i = (-1)^i \theta_i,\quad   a^* \chi_i  = (-1)^{i+1} \chi_i$$
for $i=0,1,2$. 
\end{proof}

\begin{Lemma}\label{lemma_conjugation}
The antipodal map $A\colon \S^6\rightarrow \S^6$ is anti-holomorphic {\em(}i.e. ${dA(Jv)=-J(dA(v))}${\em )}. Its action on 
$\Curv^{\SU(3)}$ is the following
\begin{equation}\label{eq_conjugation}
 A^*(\Delta_{k,q})=\Delta_{k,q},\quad A^*(N_{k,q})=N_{k,q},\quad A^*\Psi_i=\overline{\Psi_i},\quad A^*\Phi_i=\overline{\Phi_i}.
\end{equation}
\end{Lemma}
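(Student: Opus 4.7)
The plan is to verify the anti-holomorphicity by direct computation in $\im\O$, and then determine the action on $\Curv^{\SU(3)}$ by constructing an explicit lift of $A$ to the $\SU(3)$-frame bundle $\mathcal F_{\SU(3)}$ and tracking its effect on the basic $\SU(2)$-invariant forms $\alpha,\beta,\gamma,\theta_i,\theta_s,\chi_k$ that build up the given basis of $\Curv^{\SU(3)}$.

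The first claim is immediate: since $A$ is the restriction to $\S^6$ of $-\mathrm{id}_{\R^7}$ we have $dA\,v=-v$ for every $v\in T_p\S^6$, and the formula $J_q w=\sqrt{\lambda}\,q\cdot w$ gives $dA(J_p v)=-\sqrt{\lambda}\,p\,v$ while $J_{-p}(dA\,v)=\sqrt{\lambda}(-p)(-v)=\sqrt{\lambda}\,p\,v$, so that $dA\circ J=-J\circ dA$.

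For the action on curvature measures I use that $J_{-p}=-J_p$ and $\phi(-u_1,-u_2,u_3)=\phi(u_1,u_2,u_3)=1$, so that the assignment
\[
\widetilde A\colon(u_1,u_{\bar 1},u_2,u_{\bar 2},u_3,u_{\bar 3})_p\longmapsto(-u_1,u_{\bar 1},-u_2,u_{\bar 2},u_3,-u_{\bar 3})_{-p}
\]
defines a valid $\SU(3)$-lift of $A$ whose descent to $S\S^6$ is the natural isometric lift $(p,v)\mapsto(-p,-v)$. Combining Lemma~\ref{lemma pullback frame bundle} with $dA=-\mathrm{id}$, a routine calculation then gives
\[
\widetilde A^*\alpha=\alpha,\ \widetilde A^*\beta=-\beta,\ \widetilde A^*\gamma=-\gamma,\ \widetilde A^*\theta_i=-\theta_i\ (i=0,1,2),\ \widetilde A^*\theta_s=\theta_s,\ \widetilde A^*\chi_k=-\overline{\chi_k},
\]
with the last identity being the manifestation of anti-holomorphicity at the level of forms. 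Because $A=-\mathrm{id}$ on the even-dimensional sphere $\S^6$ reverses orientation, the lift of $A$ carries the normal cycle of a domain to minus that of its image, contributing the extra sign $A^*[\omega,0]=-[\widetilde A^*\omega,0]$. Substituting above: in $\Gamma_{k,q}$ and $B_{k,q}$ the $\theta$-exponents always sum to $2$ (as the total degree is $5$), so the $\theta$-factors contribute $(-1)^2=+1$ and only the $\gamma$ or $\beta$ contributes $-1$, which is cancelled by the outer $-1$; this yields $A^*\Delta_{k,q}=\Delta_{k,q}$ and $A^*N_{k,q}=N_{k,q}$. Replacing $\chi_k$ by $-\overline{\chi_k}$ in the analogous sign count gives $A^*\Psi_i=\overline{\Psi_i}$ and $A^*\Phi_i=\overline{\Phi_i}$.

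The delicate point is the sign bookkeeping: among the many possible $\SU(3)$-lifts of $A$ one must pick one that descends to the isometric rather than the fiber-antipodal lift on $S\S^6$, and one must include the extra minus in $A^*[\omega,0]=-[\widetilde A^*\omega,0]$ arising from the orientation-reversing character of $A$ on $\S^6$; once these two signs are correctly tracked, the remaining computation is mechanical.
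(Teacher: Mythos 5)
Your proof is correct and follows essentially the same route as the paper: both verify anti-holomorphicity directly from $J_q w=\sqrt{\lambda}\,q\cdot w$, lift $(A,dA)$ to the $\SU(3)$-frame bundle (your lift differs from the paper's by an element of the stabilizer $\SU(2)$, so it induces the same pullbacks on the $\SU(2)$-invariant forms, namely $A^*\beta=-\beta$, $A^*\gamma=-\gamma$, $A^*\theta_i=-\theta_i$, $A^*\chi_i=-\overline{\chi_i}$), and then invoke the orientation reversal of the normal cycle under $A$. Your explicit bookkeeping of the two signs in the last step is in fact more detailed than the paper's, which simply asserts that the identities follow.
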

\begin{proof}
 For $v\in T_p\S^6$ we have
 \[
  dA(Jv)=dA(\sqrt{\lambda}p\cdot v)=-\sqrt{\lambda}p\cdot v.\]
 On the other hand, since $dA(v)\in T_{-p}\S^6$,
  \[
  J(dA(v))=J(-v)=(-\sqrt{\lambda}p)\cdot(-v)=\sqrt{\lambda}p\cdot v.
\]
We may lift the induced map $(A,dA)\colon S\S^6\rightarrow S\S^6$ to a map $A\colon \mathcal F_{\SU(3)}\rightarrow \mathcal F_{\SU(3)}$ as follows
\[
 A(p; u_1,u_{\bar 1},u_2,u_{\bar 2},u_3,u_{\bar 3})=(-p; -u_1,u_{\bar 1},u_2,-u_{\bar 2},-u_3,u_{\bar 3})
\]
One checks easily that
\[
A^*\omega_i=(-1)^{i+1}\omega_i,\quad A^*\omega_{\bar i}=(-1)^{i}\omega_{\bar i},
\]
and 
\[
A^*\varphi_{i,1}=(-1)^{i+1}\varphi_{i,1},\quad A^*\varphi_{\bar i,1}=(-1)^{i}\varphi_{\bar i,1}.
\]
Hence, 
\[
 A^*\beta=-\beta,\quad A^*\gamma=-\gamma,\quad A^*\theta_i=-\theta_i,\quad A^*\chi_i=-\overline{\chi_i},\qquad i=0,1,2.
\]Since $(A,dA)$ maps the normal cycle $N(K)$ to $N(A(K))$ with the orientation reversed, the identities \eqref{eq_conjugation} follow.
\end{proof}

\subsection{The Rumin differential}

The notion of weight, which was defined for curvature measures, can be also defined for differential forms on $S \S^6$ via transfer.  
It will become clear from the computation below, that this weight is not preserved by the Rumin differential. However, a certain pattern can be observed. 
Let $\Omega_{\epsilon}^{k,d-k}\subset \Omega^d(S\mathbb C^3)^{\SU(3)}\cong \Omega^d(S\S^6)^{G_2}$ denote the space of  invariant forms of bidegree $(k,d-k)$ and weight $\epsilon$. By Proposition \ref{differential}, if $\omega\in \Omega^{k,d-k}_{\epsilon}$, then
 \[
  d\omega=\eta_1+\sqrt{\lambda}\eta_2+\lambda\eta_3
 \]
where
\begin{align}\label{eta1}
  &\eta_1\in\Omega_{\epsilon}^{k,d-k+1},\\ \label{eta2}
  &\eta_2\in\Omega_{\epsilon-1}^{k+1,d-k}\oplus \Omega_{\epsilon+1}^{k+1,d-k},\\
  &\eta_3\in\Omega_{\epsilon}^{k+2,d-k-1}.\label{eta3}
 \end{align} Note also that $\eta_3$ is a multiple of $\alpha$.

\begin{Proposition}\label{prop_rumin_weight}
 For every $\omega\in\Omega^{k,n-k-1}_{\epsilon}$ we have
 \[
  D\omega=\rho_1+\sqrt{\lambda}\rho_2+\lambda\rho_3
 \]
with
\begin{align*}
& \rho_1\in \Omega^{k,n-k}_{\epsilon},\\
&\rho_2\in \Omega^{k+1,n-k-1}_{\epsilon-1}\oplus  \Omega^{k+1,n-k-1}_{\epsilon+1},\\
&\rho_3\in \bigoplus_{j=-2,0,2}\Omega^{k+2,n-k-2}_{\epsilon+j}.
\end{align*}
\end{Proposition}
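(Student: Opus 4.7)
By definition of the Rumin differential, $D\omega=d(\omega+\alpha\wedge\xi)$ for some $(n-2)$-form $\xi\in\Omega^{n-2}(S\S^6)$. Since $D\omega$ is unchanged when $\xi$ is modified by a form $\xi'$ with $\alpha\wedge\xi'=0$, and since $\omega$ is $\G_2$-invariant, I may assume $\xi$ is $\G_2$-invariant. The strategy is first to determine the bidegree and weight components of $\xi$ from the defining congruence, and then to expand $D\omega=\lambda\,\eta_3-\alpha\wedge d\xi$ by applying Proposition~\ref{differential} to $\xi$ term by term.

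Writing $d\omega=\eta_1+\sqrt\lambda\,\eta_2+\lambda\,\eta_3$ as in \eqref{eta1}--\eqref{eta3}, with $\eta_3$ already a multiple of $\alpha$, the condition that $d(\omega+\alpha\wedge\xi)$ be a multiple of $\alpha$ becomes, modulo $\alpha$,
\[
 d\alpha\wedge\xi\equiv -\eta_1-\sqrt\lambda\,\eta_2\pmod\alpha.
\]
Since $d\alpha=-\beta\wedge\gamma-\theta_s$ has bidegree $(1,1)$ and weight $0$, the operator $d\alpha\wedge(\cdot)$ preserves weight and shifts both horizontal and vertical degree by one. The right-hand side lies in $\Omega^{k,n-k}_{\epsilon}\oplus\sqrt\lambda\,(\Omega^{k+1,n-k-1}_{\epsilon-1}\oplus\Omega^{k+1,n-k-1}_{\epsilon+1})$, so I can split the equation along the bidegree/weight decomposition and take
\[
 \xi=\xi_0+\sqrt\lambda\,\xi_1,\qquad \xi_0\in\Omega^{k-1,n-k-1}_{\epsilon},\quad \xi_1\in\Omega^{k,n-k-2}_{\epsilon-1}\oplus\Omega^{k,n-k-2}_{\epsilon+1},
\]
any remaining components of $\xi$ satisfying the homogeneous equation mod $\alpha$ and thus being absorbable into the $\alpha$-kernel.

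Substituting into $D\omega=d\omega+d\alpha\wedge\xi-\alpha\wedge d\xi$ yields $D\omega=\lambda\,\eta_3-\alpha\wedge d(\xi_0+\sqrt\lambda\,\xi_1)$, where the $\eta_3$-type contribution in each of $d\xi_0,d\xi_1$ vanishes upon wedging with $\alpha$. Applying \eqref{eta1}--\eqref{eta3} to $\xi_0$ and $\xi_1$ and sorting by powers of $\sqrt\lambda$ gives: a $\lambda^0$-part $-\alpha\wedge\eta_1(\xi_0)\in\Omega^{k,n-k}_{\epsilon}$ as required for $\rho_1$; a $\sqrt\lambda$-part $-\alpha\wedge\bigl(\eta_2(\xi_0)+\eta_1(\xi_1)\bigr)\in\Omega^{k+1,n-k-1}_{\epsilon-1}\oplus\Omega^{k+1,n-k-1}_{\epsilon+1}$ as required for $\rho_2$; and a $\lambda$-part $\eta_3-\alpha\wedge\eta_2(\xi_1)$ in which $\eta_2$ applied to $\xi_1$ of weight $\epsilon\pm1$ produces weights $(\epsilon\pm1)\pm1\in\{\epsilon-2,\epsilon,\epsilon+2\}$, matching the claimed decomposition of $\rho_3$. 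The main technical obstacle is the first step, namely verifying that $\xi$ can be chosen in the stated bidegree/weight slots; an entirely explicit alternative would be to check the conclusion on a basis of $\G_2$-invariant $(n-1)$-forms built from $\alpha,\beta,\gamma,\theta_0,\theta_1,\theta_2,\theta_s,\chi_0,\chi_1,\chi_2$ and their conjugates using Proposition~\ref{differential}, which is routine but lengthy.
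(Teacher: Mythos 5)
Your argument is correct and follows essentially the same route as the paper: determine the bidegree/weight components of $\xi$ from the congruence $d\alpha\wedge\xi\equiv-\eta_1-\sqrt{\lambda}\,\eta_2\pmod\alpha$ (using that $d\alpha$ has bidegree $(1,1)$ and weight $0$), then expand $D\omega=d\omega+d\alpha\wedge\xi-\alpha\wedge d\xi$ and sort terms by powers of $\sqrt{\lambda}$ and by bidegree and weight. The only blemish is the intermediate identity $D\omega=\lambda\eta_3-\alpha\wedge d\xi$, which omits the multiple of $\alpha$ by which $d\omega+d\alpha\wedge\xi$ differs from $\lambda\eta_3$; since that omitted term lies in $\Omega^{k,n-k}_{\epsilon}\oplus\sqrt{\lambda}\bigl(\Omega^{k+1,n-k-1}_{\epsilon-1}\oplus\Omega^{k+1,n-k-1}_{\epsilon+1}\bigr)$, it is absorbed into $\rho_1$ and $\rho_2$ and the conclusion is unaffected.
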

\begin{proof}
Indeed, $D\omega=d(\omega+\alpha\wedge\xi)$, so 
\[
-d\alpha\wedge\xi\equiv d\omega\equiv \eta_1+\sqrt{\lambda}\eta_2
\] modulo $\alpha$, with $\eta_1,\eta_2$ as in \eqref{eta1},\eqref{eta2}. Hence, $\xi=\xi_1+\sqrt{\lambda}\xi_2$ with 
\[
\xi_1\in\Omega_{\epsilon}^{k-1,n-k-1},\qquad  \xi_2\in\Omega_{\epsilon-1}^{k,n-k-2}\oplus \Omega_{\epsilon+1}^{k,n-k-2}.
\]
Therefore
\[
 D\omega=d\omega+d\alpha\wedge\xi_1+\sqrt\lambda d\alpha\wedge\xi_2-\alpha\wedge d\xi_1-\sqrt\lambda\alpha\wedge d\xi_2,
\]
and the claim follows.
\end{proof}

We will need to compute explicitly the Rumin differential of a differential form underlying $\Delta_{2,1}$. In fact, the following information will be sufficient.

\begin{Lemma} \label{lemma du} Let $\omega=\gamma\wedge\theta_0\wedge\theta_2$. The bidegree $(4,2)$ part of $D\omega$ is given by 
\begin{multline*}\lambda \alpha\wedge \beta\wedge \left(4\chi_{1,I}^2-2\chi_{1,R}^2-\theta_0\wedge\theta_2-4\theta_1^2+\frac12\theta_s^2 \right) \\
 +\lambda\alpha\wedge \gamma\wedge  \left(4\chi_{1,I}\wedge\chi_{2,I}+4\chi_{1,R}\wedge\chi_{2,R}-5\theta_1\wedge\theta_2 \right)\end{multline*}
\end{Lemma}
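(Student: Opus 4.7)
The plan is a direct computation, organized by the bidegree--order bookkeeping of \eqref{eta1}--\eqref{eta3} and Proposition~\ref{prop_rumin_weight}, with all differentials evaluated from Proposition~\ref{differential}. Since $\omega \in \Omega^{2,3}_0$, \eqref{eta1}--\eqref{eta3} decompose
\[
d\omega = \eta_1 + \sqrt{\lambda}\,\eta_2 + \lambda\,\eta_3,
\]
with $\eta_1,\eta_2,\eta_3$ of bidegrees $(2,4),(3,3),(4,2)$ respectively and $\eta_3$ a multiple of $\alpha$. Expanding $d\omega$ by Leibniz and collecting only the $\lambda$-terms of $d\gamma, d\theta_0, d\theta_2$ from Proposition~\ref{differential} immediately yields
\[
\eta_3 = -\alpha\wedge\beta\wedge\theta_0\wedge\theta_2 - \alpha\wedge\gamma\wedge\theta_1\wedge\theta_2.
\]

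Next I would fix a primitive $\xi = \xi_1 + \sqrt{\lambda}\,\xi_2$ of degree $4$, with $\xi_1 \in \Omega^{1,3}_0$ and $\xi_2 \in \Omega^{2,2}_{-1}\oplus\Omega^{2,2}_{+1}$ modulo $\alpha$, such that $D\omega = d(\omega + \alpha\wedge\xi)$ is a multiple of $\alpha$; no $\lambda$-term in $\xi$ is required because $\eta_3$ is already a multiple of $\alpha$. Combining $D\omega = d\omega + d\alpha\wedge\xi - \alpha\wedge d\xi$ with the fact that $d\alpha$ has bidegree $(1,1)$, and with the observation that the bidegree-$(3,2)$ part of $d\xi_1$ is again a multiple of $\alpha$ by \eqref{eta3} applied to $\xi_1$, shows that the only contribution to $(D\omega)_{(4,2)}$ beyond $\lambda\,\eta_3$ comes from the $\sqrt{\lambda}$-coefficient of the bidegree-$(3,2)$ piece of $d\xi_2$:
\[
(D\omega)_{(4,2)} = \lambda\,\eta_3 - \lambda\,\alpha\wedge X,\qquad X := \tfrac{1}{\sqrt{\lambda}}\bigl(d\xi_2\bigr)_{(3,2)}.
\]

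To determine $X$ I would first solve for $\xi_2$ modulo $\alpha$ from the bidegree-$(3,3)$ component of the defining congruence $d\omega + d\alpha\wedge\xi \equiv 0 \pmod{\alpha}$, namely $(\beta\wedge\gamma + \theta_s)\wedge\xi_2 \equiv \eta_2 \pmod{\alpha}$, where $\eta_2$ is explicitly computable from Proposition~\ref{differential}. Substituting the resulting $\xi_2$ back into Proposition~\ref{differential} and reading off the bidegree-$(3,2)$, order-$\sqrt{\lambda}$ component gives $X$, and the claimed formula follows after simplification using the algebraic identities $\chi_i\wedge\chi_j = 0$ for $i+j\ne 2$ and $\chi_1^2 = -2\,\chi_0\wedge\chi_2$ recalled in the subsection on invariant curvature measures.

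The main obstacle is the explicit inversion step for $\xi_2$: solving $(\beta\wedge\gamma + \theta_s)\wedge\xi_2 \equiv \eta_2 \pmod{\alpha}$ on the weight-$\pm 1$ part of $\Omega^{2,2}/(\alpha)$ requires selecting a convenient spanning set and carrying out careful bookkeeping of all invariant monomials in the $\chi_i$, $\theta_j$, and $\alpha,\beta,\gamma$. The specific numerical coefficients appearing in the lemma arise entirely from this inversion and the subsequent application of the $\chi$-relations; all remaining steps are either formal bidegree arguments or mechanical substitutions from the structure equations.
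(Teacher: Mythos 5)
Your structural reductions are all correct and match the paper's strategy (which follows the algorithm of \cite{bernig_solanes}*{Section 6}): one solves the congruence $d\omega\equiv -d\alpha\wedge\xi \pmod\alpha$ for a primitive $\xi=\xi_1+\sqrt\lambda\,\xi_2$ and then differentiates. Your computation of $\eta_3=-\alpha\wedge\beta\wedge\theta_0\wedge\theta_2-\alpha\wedge\gamma\wedge\theta_1\wedge\theta_2$ is right, and your observation that only the $\sqrt\lambda$-part $\xi_2$ can contribute to the bidegree-$(4,2)$ component of $D\omega$ (since $d\alpha$ has bidegree $(1,1)$ and the $\lambda$-term of $d\xi_1$ is a multiple of $\alpha$) is a genuine, valid streamlining of the paper's computation, which solves for the full $\xi$.

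The problem is that the proposal stops exactly where the content of the lemma begins. For a statement whose entire assertion is a list of numerical coefficients, "solve $(\beta\wedge\gamma+\theta_s)\wedge\xi_2\equiv\eta_2\pmod\alpha$, substitute back, and simplify" is a plan, not a proof: you have not computed $\eta_2$, not exhibited $\xi_2$ (the paper finds $\xi_2=2\,\theta_1\wedge\chi_{1,I}+\tfrac32\beta\wedge\gamma\wedge\chi_{1,R}-\tfrac12\theta_s\wedge\chi_{1,R}$), and not evaluated $d\xi_2$, so not a single coefficient in the statement is verified. Moreover, the identities you cite for the simplification ($\chi_i\wedge\chi_j=0$ for $i+j\neq2$ and $\chi_1^2=-2\chi_0\wedge\chi_2$) are not the ones that make the inversion work; the decisive inputs are the mixed $\theta$--$\chi$ relations from \cite{bernig_sun}*{eqs.\ (11)--(14)}, e.g.\ $\chi_1\wedge\theta_1\wedge\theta_s=0$, $\chi_1\wedge(\theta_s^2-\theta_1^2)=0$, and the expressions of $\theta_2\wedge\chi_{0,I}$ and $\theta_0\wedge\chi_{2,I}$ in terms of $\theta_1\wedge\chi_{1,I}$ and $\theta_s\wedge\chi_{1,R}$, together with $\theta_0^2\wedge\theta_2=-\theta_0\wedge\theta_s^2$. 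Without carrying out that linear algebra over the ring of invariant forms, the lemma is not established.
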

\begin{proof}Our strategy mimics the algorithm described in \cite{bernig_solanes}*{Section 6}.
We need to solve for $\xi$ in 
\[
d\omega\equiv -d\alpha\wedge\xi,\qquad \mod \alpha.
\]
Using Proposition \ref{differential}, we get the following equivalence modulo $\alpha$
\begin{equation}\label{du}
 d\omega\equiv2\theta_0^2\wedge\theta_2+\beta\wedge\gamma\wedge\theta_0\wedge\theta_s
 -\sqrt{\lambda}(\beta\wedge\gamma\wedge\theta_2\wedge\chi_{0,I}+3\beta\wedge\gamma\wedge\theta_0\wedge\chi_{2,I}+\theta_0\wedge\theta_2\wedge\chi_{1,R})
\end{equation}
We want to express this form as a multiple of $d\alpha$. A simple computation shows $\theta_0^2\wedge\theta_2=-\theta_0\wedge\theta_s^2$. Hence, by the first equality in Proposition \ref{differential},
\begin{align*}
 2\theta_0^2\wedge\theta_2+\beta\wedge\gamma\wedge\theta_0\wedge\theta_s&=-2\theta_0\wedge(d\alpha+\beta\wedge\gamma)^2-\beta\wedge\gamma\wedge\theta_0\wedge d\alpha\\
 &=-5\beta\wedge\gamma\wedge\theta_0\wedge d\alpha-2\theta_0\wedge d\alpha^2\\
 &=d\alpha\wedge(-3\beta\wedge\gamma\wedge\theta_0+2\theta_0\wedge\theta_s).
\end{align*}

Combining equations (13) and (14) in \cite{bernig_sun} with $k=1,l=2$ yields 
\begin{align}
  \chi_1\wedge\theta_1\wedge\theta_s&=0\label{chi1_theta1_primitive}\\                                                                
  \chi_1\wedge(\theta_s^2-\theta_1^2)&=0 \label{chi1_theta1_squared}                                                                         
\end{align}  
From equations (11) and (12) of the same paper, we have
\begin{align*}
\theta_2\wedge\chi_{0,I}&=-\frac12\theta_1\wedge\chi_{1,I}+\frac12\theta_s\wedge\chi_{1,R}\\
\theta_0\wedge\chi_{2,I}&=-\frac12\theta_1\wedge\chi_{1,I}-\frac12\theta_s\wedge\chi_{1,R}.
\end{align*}
Hence, by \eqref{chi1_theta1_primitive}
\begin{align*}
 \beta\wedge\gamma\wedge\theta_2\wedge\chi_{0,I}&=-\frac{1}{2}\beta\wedge\gamma\wedge\theta_1\wedge\chi_{1,I}+\frac12\beta\wedge\gamma\wedge\theta_s\wedge\chi_{1,R}\\
 &=\frac12 d\alpha\wedge\theta_1\wedge\chi_{1,I}-\frac12\beta\wedge\gamma\wedge d\alpha\wedge\chi_{1,R}
\end{align*}
and similarly
\[
 \beta\wedge\gamma\wedge\theta_0\wedge\chi_{2,I}=\frac12d\alpha\wedge\theta_1\wedge\chi_{1,I}+\frac12\beta\wedge \gamma\wedge d\alpha\wedge\chi_{1,R}
\]
For the last term in \eqref{du}, we use again equations (11) and (12) of \cite{bernig_sun}, together with \eqref{chi1_theta1_squared}, to deduce
\[
 \theta_0\wedge\theta_2\wedge\chi_1=\frac14\chi_1\wedge(\theta_s^2+\theta_1^2)=\frac12\chi_1\wedge\theta_s^2.
\]
Hence,
\[
 \theta_0\wedge\theta_2\wedge\chi_{1,R}=\frac12\chi_{1,R}\wedge\theta_s^2=\frac12\chi_{1,R}\wedge(d\alpha^2+2\beta\wedge\gamma\wedge d\alpha)=\frac12 d\alpha\wedge\chi_{1,R}\wedge(\beta\wedge\gamma-\theta_s).
\]
Therefore, we can take
\[
 \xi=3\beta\wedge\gamma\wedge\theta_0-2\theta_s\wedge\theta_0 -\sqrt{\lambda}(-2\theta_1\wedge\chi_{1,I}-\frac32\beta\wedge\gamma\wedge\chi_{1,R}+\frac12\theta_s\wedge\chi_{1,R}).
\]
Using Proposition \ref{differential} to compute $ D\omega= d(\omega+\alpha\wedge\xi)$ completes the proof.
\end{proof}

\subsection{The algebra of invariant valuations} Let $\calV^6_\lambda:=\calV(\S^6)^{\G_2}$ if $\lambda>0$, and $\calV^6_0:=\Val(\C^3)^{\SU(3)}$. In this subsection we show that these algebras are isomorphic. We begin by introducing a convenient 
basis.

We will denote by $\glob_\lambda\colon \Curv^{\SU(3)} \cong \calC_\lambda^6 \to \calV_\lambda^6$ the globalization map. We will also use the notation 
$$[\Psi]_\lambda:= \glob_\lambda(\Psi).$$

\begin{Definition}
 For $\lambda\geq 0$ and $\max\{0,k-3\}\leq q\leq k/2\leq 3$ we set
 $$\mu_{k,q}^\lambda = [\Delta_{k,q}]_\lambda$$
 and 
 $$\phi_3^\lambda=[\Phi_3]_\lambda.$$
\end{Definition}
For $\lambda=0$ we  often write  $\mu_{k,q}, \phi_3$ instead of $\mu_{k,q}^0, \phi_3^0$. In this case, the classical intrinsic volumes are $\mu_k=\sum_q\mu_{k,q}$.

\begin{Corollary}
 For every $\lambda\geq0 $ the valuations $$\mu_{k,q}^\lambda, \phi_3^\lambda, \overline{\phi_3^\lambda}$$ with $\max\{0,k-3\}\leq q\leq k/2\leq 3$ constitute a basis
  of $\calV_\lambda^6$. 
\end{Corollary}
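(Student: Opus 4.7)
The strategy is to invoke Lemma~\ref{lemma_basis} with $(M,G) = (\S^6, \G_2)$ and $H = \SU(3)$, using the previously established identification $\calC_\lambda^6 \cong \Curv^{\SU(3)}$. According to that lemma, it suffices to exhibit, for each degree $k$, curvature measures in $\Curv_k^{\SU(3)}$ whose \emph{flat} globalizations (i.e.\ images under $\glob_0\colon \Curv^{\SU(3)} \to \Val^{\SU(3)}(\C^3)$) form a basis of $\Val_k^{\SU(3)}$. The proposed basis elements $\mu_{k,q}^\lambda = [\Delta_{k,q}]_\lambda$ and $\phi_3^\lambda = [\Phi_3]_\lambda$, $\overline{\phi_3^\lambda} = [\overline{\Phi_3}]_\lambda$ are tailored precisely to this prescription.

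The plan is to read off, degree by degree, what the list of curvature measures produced earlier in the section globalizes to in the flat case. The $\mathrm U(3)$-invariant part of the list consists of the $\Delta_{k,q}$'s together with the three measures $N_{1,0}, N_{2,0}, N_{3,1}$; the latter lie in the kernel of $\glob_0$ by the discussion following the definition in \eqref{eq_def_Delta_curv}, while $\glob_0(\Delta_{k,q}) = \mu_{k,q}$ yields, by Bernig--Fu, the hermitian intrinsic volume basis of $\Val^{\mathrm U(3)}$. For the weight-$\pm 1$ measures $\Psi_2, \Psi_3$ (and their conjugates), we noted that there are no nonzero weight-$\pm1$ valuations in $\Val^{\SU(3)}$, so they globalize to zero in $\C^3$. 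For weight $\pm 2$, $\Phi_2$ also globalizes to zero in the flat case (no weight-$2$ valuation of degree $2$ exists), while $\Phi_3$ globalizes to the Bernig valuation $\phi_3$ with Klain function given by \eqref{klain_phi}, and $\overline{\Phi_3}$ to $\overline{\phi_3}$.

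Collecting these facts, for each $k$ the flat globalizations of
\[
 \{\Delta_{k,q} : \max\{0,k-3\} \leq q \leq k/2\} \cup \begin{cases} \{\Phi_3, \overline{\Phi_3}\}, & k=3 \\ \emptyset, & k\neq 3 \end{cases}
\]
are precisely the basis of $\Val_k^{\SU(3)}$ described by Bernig in \cite{bernig_sun}, all other generating curvature measures being in the kernel of $\glob_0$. Lemma~\ref{lemma_basis} then immediately implies that the corresponding globalizations in $\S^6$ form a basis of $\calV_\lambda^6$ for every $\lambda > 0$; the case $\lambda = 0$ is just the content of \cite{bernig_sun}.

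There is no serious obstacle here: the only input one must trust is Bernig's classification of $\SU(3)$-invariant translation-invariant valuations on $\C^3$ and the identification, already carried out in the text, of which basic $\SU(3)$-invariant curvature measures globalize to zero in $\C^3$. The power of Lemma~\ref{lemma_basis}, which is a consequence of Corollary~\ref{coro_diagram} and an induction along the filtration, is that it transports a basis of $\Val^H$ at the tangent space directly into a basis of the global invariant valuations on $M$, uniformly in the curvature parameter~$\lambda$.
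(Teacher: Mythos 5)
Your proposal is correct and follows exactly the paper's route: the paper's proof is the one-line observation that the claim holds for $\lambda=0$ by Bernig's classification of $\Val^{\SU(3)}$, whence Lemma~\ref{lemma_basis} applies. You have merely unpacked the hypothesis of Lemma~\ref{lemma_basis} in detail (identifying which basic curvature measures globalize to zero in $\C^3$ and which yield the Bernig basis), which is a faithful elaboration of the same argument.
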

\begin{proof}
This is the case for $\lambda=0$ by \cite{bernig_sun}, and hence Lemma~\ref{lemma_basis} applies. 
\end{proof}

\bigskip 
For $\lambda>0$, we define the valuation $t_\lambda\in \calV^6_\lambda$ as 
the restriction of the first intrinsic volume $\mu_1$ (suitably normalized) on $\im\O$  to $\S^6$,
$$t_\lambda: = \left.\frac{2}{\pi} \mu_1\right|_{\S^6.}$$ For $\lambda=0$  we just take $t_0=t=\frac{2}{\pi}\mu_1$.

Let $\S^{5}$ be a totally geodesic subsphere of codimension one in $\S^6$, and consider the valuation
\[
 \varphi=\frac{2}{\sqrt\lambda}\frac{1}{\vol(\S^6)} k(\chi)(\,\cdot\,,\S^5).
\]
Let $\Delta_k=\sum_q\Delta_{k,q}$, and
\begin{equation}\label{eq tau} \tau_k =\frac{k!\omega_k}{\pi^k}[\Delta_k]_\lambda=\frac{k!\omega_k}{\pi^k} \sum_q \mu_{k,q}^\lambda, \end{equation}
where $\omega_k$ is the volume of the $k$-dimensional unit ball.
In particular $\tau_k=t^k$ if ${\lambda=0}$. Recall from the recast \cite{fu_bcn}*{\textsection 2.4.6} of classical spherical integral geometry into the language of the Alesker product
 that 
\begin{align}
 t^k_\lambda  & = \varphi^k \left(1- \frac{\lambda\varphi^2}{4} \right)^{-\frac k2}\label{eq tk_lambda},\\
 \varphi^k & = \sum_{j=0}^{\lfloor\frac{6-k}{2}\rfloor}  \left(\frac \lambda 4 \right)^j  \tau_{k+2j}, \label{eq phik}
\end{align}where the right hand side of \eqref{eq tk_lambda} should be understood as the truncated series expansion of the given function of $\varphi$.

Next we express the powers of $t_\lambda$ in terms of our basis of $\calV_\lambda^6$.

\begin{Lemma}\label{lemma_t_powers}

\begin{align}
\chi & = \mu^\lambda_{0,0}+\frac{\lambda}{2\pi}(\mu^\lambda_{2,0}+\mu^\lambda_{2,1}) \label{eq Euler characteristic}
+\frac{3\lambda^2}{4\pi^2}(\mu^\lambda_{4,1}+\mu^\lambda_{4,2})+\frac{15\lambda^3}{8\pi^3}\mu^\lambda_{6,3}\\
  t_\lambda&=\frac{2}{\pi}\mu^\lambda_{1,0}+\frac{3\lambda}{\pi^2}(\mu^\lambda_{3,0}+\mu^\lambda_{3,1})+\frac{15\lambda^2}{2\pi^3}\mu^\lambda_{5,2},\notag\\
 t_\lambda^2&=\frac{2}{\pi}(\mu^\lambda_{2,0}+\mu^\lambda_{21})+\frac{6\lambda}{\pi^2}(\mu^\lambda_{4,1}+\mu^\lambda_{4,2})+\frac{45\lambda^2}{2\pi^3}\mu^\lambda_{6,3},\notag\\
 t_\lambda^3&=\frac{8}{\pi^2}(\mu^\lambda_{3,0}+\mu^\lambda_{3,1})+\frac{40\lambda}{\pi^3}\mu^\lambda_{5,2},\notag\\
 t_\lambda^4&=\frac{12}{\pi^2}(\mu^\lambda_{4,1}+\mu^\lambda_{4,2})+\frac{90\lambda}{\pi^3}\mu^\lambda_{6,3},\label{t4}\\
 t_\lambda^5&=\frac{64}{\pi^3}\mu^\lambda_{5,2},\label{t5}\\
 t_\lambda^6&=\frac{120}{\pi^3}\mu^\lambda_{6,3}.\label{t6}
\end{align}
\end{Lemma}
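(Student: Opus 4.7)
All seven identities follow by a uniform substitution procedure from the recast of classical spherical integral geometry given by \eqref{eq tk_lambda} and \eqref{eq phik}, combined with the normalization \eqref{eq tau} and the standard values $\omega_0=1,\omega_1=2,\omega_2=\pi,\omega_3=\tfrac{4\pi}{3},\omega_4=\tfrac{\pi^2}{2},\omega_5=\tfrac{8\pi^2}{15},\omega_6=\tfrac{\pi^3}{6}$. The plan is therefore essentially an exercise in power-series bookkeeping: expand, substitute, and simplify.

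For the Euler characteristic identity \eqref{eq Euler characteristic}, I would first observe that $\chi=\varphi^0$, so the formula \eqref{eq phik} applied at $k=0$ (which is the spherical Gauss--Bonnet formula for smooth bodies in $\S^6$, and which is included in the recast of \cite{fu_bcn}*{\textsection 2.4.6}) gives
\[
\chi=\sum_{j=0}^{3}(\lambda/4)^{j}\,\tau_{2j}.
\]
Substituting \eqref{eq tau} for each $\tau_{2j}$ and plugging in the explicit values of $\omega_{2j}$ produces exactly the coefficients $1,\tfrac{\lambda}{2\pi},\tfrac{3\lambda^2}{4\pi^2},\tfrac{15\lambda^3}{8\pi^3}$ in \eqref{eq Euler characteristic}.

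For each $k\in\{1,\dots,6\}$, I would start from \eqref{eq tk_lambda} and expand $(1-\lambda\varphi^2/4)^{-k/2}$ as a binomial series in $\lambda\varphi^2/4$. Since \eqref{eq phik} shows that $\varphi^{k+2l}$ equals a sum over $j$ with $k+2l+2j\le 6$, the series truncates automatically once $k+2l>6$. I would then substitute \eqref{eq phik} term by term, collect the coefficient of each $\tau_i$ arising, and finally apply \eqref{eq tau} to convert each $\tau_i$ to $\tfrac{i!\omega_i}{\pi^i}\sum_q \mu^\lambda_{i,q}$. For example, $t_\lambda=\varphi+\tfrac{\lambda}{8}\varphi^3+\tfrac{3\lambda^2}{128}\varphi^5$ becomes $\tau_1+\tfrac{3\lambda}{8}\tau_3+\tfrac{15\lambda^2}{128}\tau_5$ after inserting \eqref{eq phik}, and the subsequent conversion yields the stated expression; the extreme cases $t_\lambda^5=\varphi^5=\tau_5$ and $t_\lambda^6=\varphi^6=\tau_6$ collapse immediately to \eqref{t5} and \eqref{t6}, and the intermediate $t_\lambda^2,t_\lambda^3,t_\lambda^4$ work in the same manner.

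The only genuine difficulty is arithmetic: each binomial coefficient $\binom{-k/2}{l}$ and each combinatorial factor in \eqref{eq tau} is elementary, but tracking them simultaneously across six expansions of varying length invites off-by-one errors. Beyond this bookkeeping there is no conceptual content to prove, since the full statement of the lemma is already implicit in the three cited identities \eqref{eq tau}, \eqref{eq tk_lambda}, and \eqref{eq phik}. The only step with any flavour of novelty is the treatment of $\chi$, where one must either extend \eqref{eq phik} to the case $k=0$ by appealing to spherical Gauss--Bonnet, or equivalently argue that $\varphi^0$ is the algebra identity $\chi$ and then identify both sides by Klain-type evaluation on totally geodesic subspheres of each dimension.
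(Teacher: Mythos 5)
Your proposal is correct and follows exactly the paper's argument, which simply states that the identities follow from \eqref{eq tau}, \eqref{eq tk_lambda}, and \eqref{eq phik}; your binomial expansions and the resulting coefficients (e.g. $t_\lambda=\tau_1+\tfrac{3\lambda}{8}\tau_3+\tfrac{15\lambda^2}{128}\tau_5$) all check out. The treatment of $\chi$ via the $k=0$ case of \eqref{eq phik} (spherical Gauss--Bonnet) is also the intended reading.
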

\begin{proof}  These identities  follow immediately from \eqref{eq tau}, \eqref{eq tk_lambda}, and \eqref{eq phik}.
\end{proof}

Recall we are identifying $\calC_\lambda^6$ with $\Curv^{\SU(3)}$. 
There is a simple expression for the action of $t_\lambda$ on  $\Curv^{\SU(3)}$ 
in terms of the action of $t\in\Val^{\SO(6)}$.

\begin{Proposition}\label{prop_t_action}
Suppose $\Phi\in \Curv^{\SU(3)}$. Then 
$$t_\lambda \cdot \Phi = \left( t + \frac{\lambda}{8} t^3  + \frac{3\lambda^2}{128} t^5\right) \cdot \Phi.$$
\end{Proposition}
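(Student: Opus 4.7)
The strategy is to exploit the series expansion \eqref{eq tk_lambda} together with the filtration. Since $\varphi$ has filtration degree $1$ in the six-dimensional $\S^6$, we have $\varphi^j = 0$ for $j \geq 7$, and \eqref{eq tk_lambda} truncates to
\[ t_\lambda = \varphi\left(1 - \tfrac{\lambda}{4}\varphi^2\right)^{-1/2} = \varphi + \tfrac{\lambda}{8}\varphi^3 + \tfrac{3\lambda^2}{128}\varphi^5. \]
Accordingly, the proposition reduces to the single claim
\[ \varphi \cdot \Phi = t \cdot \Phi \qquad \text{for every } \Phi \in \Curv^{\SU(3)}. \]
Indeed, iterating this equality and observing that $t \cdot \Phi$ again lies in $\Curv^{\SU(3)}$ (as $t$ is $\SO(6)$-invariant, hence in particular $\SU(3)$-invariant) yields $\varphi^k \cdot \Phi = t^k \cdot \Phi$ for all $k$, and the formula follows by linearity.

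To prove the reduced claim, I would apply Proposition~\ref{prop_geodesic} to a totally geodesic equator $P = \S^5 \subset \S^6$ through the base point $o$. The stabilizer in $\G_2$ of $P$ contains the isotropy group $\SU(3)$ at the pole of $P$, and this $\SU(3)$ acts transitively on $P \cong S^5 \subset \C^3$, so the hypotheses of Proposition~\ref{prop_geodesic} hold. Since $k(\chi)(\,\cdot\,, \S^5) = \vol(\S^5)\,\mu_{\S^5}$, the valuation $\varphi$ is an explicit scalar multiple of $\mu_{\S^5}$; the $\lambda$-dependence of the volume ratio $\vol(\S^5)/\vol(\S^6)$ cancels the factor $1/\sqrt\lambda$ in the definition of $\varphi$, producing a $\lambda$-independent constant. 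Proposition~\ref{prop_geodesic} then rewrites $\varphi \cdot \Phi$ as a corresponding multiple of $\mu_{P_o} \cdot \Phi$, where $P_o \subset \C^3$ is any real hyperplane.

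It remains to identify $\mu_{P_o} \in \Val(\C^3)^{\SU(3)}$ as a multiple of $\mu_1$. Because $\SU(3)$ acts transitively on unit vectors of $\C^3$, the $\SU(3)$-averaged Crofton integral over affine hyperplanes parallel to $hP_o$ coincides with the fully isotropic Crofton measure on affine hyperplanes in $\R^6$; by the Cauchy mean-width formula $\int_{S^5} w(K,u)\,du = 2\omega_5\,\mu_1(K)$ this yields a concrete multiple of $\mu_1$. Substituting the explicit values of $\vol(\S^5)/\vol(\S^6)$ and using $\omega_5/\omega_7 = 7/(2\pi)$, the accumulated constants collapse to exactly $2/\pi$, so $\varphi \cdot \Phi = (2/\pi)\mu_1 \cdot \Phi = t \cdot \Phi$, as required. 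The only real obstacle is the bookkeeping of these normalizations (of the Haar measure on $\G_2$ specified after Proposition~\ref{prop_geodesic}, and of the Crofton formula in $\R^6$); the structural content of the argument rests entirely on Proposition~\ref{prop_geodesic}.
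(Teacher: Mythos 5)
Your proposal is correct, and its backbone --- reducing via \eqref{eq tk_lambda} to the single identity $\varphi\cdot\Phi=t\cdot\Phi$ and proving that identity with Proposition~\ref{prop_geodesic} applied to a totally geodesic $\S^5$ --- is exactly the paper's. The two arguments part ways only in how the normalizing constant is pinned down. The paper sidesteps all measure bookkeeping: Proposition~\ref{prop_geodesic} gives $\varphi\cdot\Phi=c\,t\cdot\Phi$ with $c$ independent of $\Phi$, and $c=1$ then follows by testing on the Lipschitz--Killing measures --- the classical spherical formula $\varphi\cdot[\Delta_i]_\lambda=[\Delta_{i+1}]_\lambda$ together with the injectivity of $\glob_\lambda$ on $\Curv^{\SO(6)}$ yields $\varphi\cdot\Delta_i=\Delta_{i+1}$, which matches $t\cdot\Delta_i=\Delta_{i+1}$ in the flat space. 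You instead evaluate the Crofton integral directly. That route does close: with the paper's normalization one finds $\varphi=\tfrac{15}{8}\mu_{\S^5}$ (the factor $\lambda^{1/2}$ coming from $\vol(\S^5)/\vol(\S^6)$ cancels the $\lambda^{-1/2}$ in the definition, as you say), and since $\SU(3)$ is transitive on $S^5$ the Cauchy formula gives $\mu_{P_o}=\tfrac{2\omega_5}{6\omega_6}\mu_1=\tfrac{16}{15\pi}\mu_1$, whence $\varphi\cdot\Phi=\tfrac{2}{\pi}\mu_1\cdot\Phi=t\cdot\Phi$. Two blemishes in your write-up: the ratio $\omega_5/\omega_7$ you quote plays no role (the relevant constants are $\vol(S^5)/\vol(S^6)=15/16$ and $\omega_5/(3\omega_6)=16/(15\pi)$), and the final collapse to $2/\pi$ is asserted rather than computed --- which is the one place your argument actually carries content, since everything else is shared with the paper. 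The paper's detour through the $\Delta_i$ buys exactly the freedom from this bookkeeping; your version is more self-contained but stands or falls with that arithmetic, which you should therefore write out.
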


\begin{proof}
By Proposition \ref{prop_geodesic},
\[
 \varphi\cdot \Phi=c\ t\cdot\Phi,
\]
for some constant $c$ independent of $\Phi\in\Curv^{\SU(3)}$. Since $\glob_\lambda$ is injective on $\Curv^{\SO(6)}$,  equation (2.4.29) in \cite{fu_bcn} gives
\[
 \varphi\cdot\Delta_i=\Delta_{i+1}
\]
while obviously $t\cdot\Delta_i=\Delta_{i+1}$. This implies  $c=1$.
Invoking \eqref{eq tk_lambda} completes the proof.
\end{proof}

We set  $$u_\lambda:=\frac{2}{\pi}\mu^\lambda_{2,1}=\frac{2}{\pi} [\Delta_{2,1}]_\lambda.$$
 For $\lambda=0$ we  simply write $u$ instead of $u_0$, which agrees with \cites{hig, bfs}. 
 Using Proposition \ref{prop_t_action} and \cite[Proposition 5.10]{bfs} we obtain
\begin{align} 	
t_\lambda u_\lambda&=\frac{8}{3\pi^2}\mu^\lambda_{3,1}+\frac{8 \lambda}{5\pi^3}\mu^\lambda_{5,2},\label{tu}\\
t_\lambda^2 u_\lambda & = \frac{2}{\pi^2}  \mu_{4,1}^\lambda+\frac{4}{\pi^2} \mu_{4,2}^\lambda + \frac{6\lambda}{\pi^3}  \mu_{6,3}^\lambda,\label{t2u}\\
t_\lambda^3 u_\lambda& =\frac{64}{5\pi^3}\mu_{5,2}^\lambda, \label{t3u} \\
t_\lambda^4 u_\lambda&=\frac{24}{\pi^3}\mu_{6,3}^\lambda.\label{t4u}
\end{align}

\begin{Corollary}
The valuations $t_\lambda, u_\lambda, \phi_3^\lambda,\overline\phi_3^\lambda$ generate the algebra $\calV_\lambda$. Moreover the subalgebra generated 
by $t_\lambda, u_\lambda$ coincides with $\spann\{\mu_{k,q}^\lambda\}$. 
\end{Corollary}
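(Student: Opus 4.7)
The plan is to establish the two claims separately, beginning with the ``moreover'' part, which states $\langle t_\lambda, u_\lambda\rangle = \spann\{\mu_{k,q}^\lambda\}$. Once this is granted, the generation claim follows immediately: the twelve basis elements $\{\mu_{k,q}^\lambda\}\cup\{\phi_3^\lambda,\overline{\phi_3^\lambda}\}$ of $\calV_\lambda^6$ all lie in the subalgebra generated by $t_\lambda, u_\lambda, \phi_3^\lambda, \overline{\phi_3^\lambda}$.

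First, I would prove the inclusion $\spann\{\mu_{k,q}^\lambda\} \subseteq \langle t_\lambda, u_\lambda\rangle$ by a top-down extraction from the explicit multiplication tables. Equation~\eqref{t6} shows $\mu_{6,3}^\lambda$ is a nonzero multiple of $t_\lambda^6$, and \eqref{t5} shows $\mu_{5,2}^\lambda$ is a nonzero multiple of $t_\lambda^5$. With these in hand, \eqref{t4} and \eqref{t2u} form a $2\times 2$ linear system for $(\mu_{4,1}^\lambda, \mu_{4,2}^\lambda)$ whose coefficient matrix is easily checked to be nonsingular, so both lie in the subalgebra. Analogously, \eqref{tu} together with the $t_\lambda^3$-line of Lemma~\ref{lemma_t_powers} determines $\mu_{3,0}^\lambda$ and $\mu_{3,1}^\lambda$; the $t_\lambda^2$-line combined with $u_\lambda=\tfrac{2}{\pi}\mu_{2,1}^\lambda$ yields $\mu_{2,0}^\lambda$; and finally $t_\lambda$ and the formula \eqref{eq Euler characteristic} for $\chi$ give $\mu_{1,0}^\lambda$ and $\mu_{0,0}^\lambda$ after the higher-degree contributions have been subtracted off.

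Second, for the reverse inclusion $\langle t_\lambda, u_\lambda\rangle \subseteq \spann\{\mu_{k,q}^\lambda\}$ I would use the associated graded algebra. By Theorem~\ref{graded_iso} the isomorphism $\Xi\colon\bigoplus_k\calV_k/\calV_{k+1}\to\Val^{\SU(3)}(\C^3)$ is an isomorphism of graded algebras; by inspection of the leading terms in Lemma~\ref{lemma_t_powers} and the definition of $u_\lambda$, $\Xi$ sends the symbols of $t_\lambda$ and $u_\lambda$ to the flat-case valuations $t$ and $u$. Since the Alesker product is compatible with the filtration, the image of $\mathrm{gr}(\langle t_\lambda, u_\lambda\rangle)$ in $\Val^{\SU(3)}(\C^3)$ is the subalgebra $\langle t, u\rangle$, which by Bernig~\cite{bernig_sun} equals $\Val^{\mathrm U(3)}(\C^3)$, of dimension $10$. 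Since the dimension of a finite-dimensional filtered vector space coincides with that of its associated graded, $\dim\langle t_\lambda, u_\lambda\rangle = 10$, which together with the first step forces equality with $\spann\{\mu_{k,q}^\lambda\}$.

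The main obstacle is the second step, where the flat-case generation result of Bernig must be transferred across the filtration and one must verify that the symbols of $t_\lambda, u_\lambda$ really correspond to $t, u$ under $\Xi$; the first step is little more than a triangular linear-algebra exercise once the explicit multiplication formulas of Lemma~\ref{lemma_t_powers} and \eqref{tu}--\eqref{t4u} are at hand.
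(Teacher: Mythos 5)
Your first step is correct and is precisely the argument the paper has in mind: the system formed by Lemma~\ref{lemma_t_powers} and \eqref{tu}--\eqref{t4u} is triangular with respect to the filtration, so working downward from $\mu_{6,3}^\lambda$ and $\mu_{5,2}^\lambda$ one recovers every $\mu_{k,q}^\lambda$ as a polynomial in $t_\lambda,u_\lambda$ (the two $2\times2$ coefficient matrices you exhibit are indeed nonsingular). This already yields the first assertion of the corollary, since $\phi_3^\lambda,\overline{\phi_3^\lambda}$ complete the basis of $\calV_\lambda^6$.

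Your second step, however, has a genuine gap. Write $A=\langle t_\lambda,u_\lambda\rangle$ with the induced filtration $A_k=A\cap\calV_k$. Compatibility of the Alesker product with the filtration only gives the inclusion $\Xi\bigl(\bigoplus_k A_k/A_{k+1}\bigr)\supseteq\langle t,u\rangle=\Val^{\mathrm U(3)}$, i.e.\ $\dim A\ge 10$; the opposite inclusion is exactly what is at stake, because in general the associated graded of a subalgebra generated by given elements may be strictly larger than the subalgebra generated by their symbols (a polynomial $p(t_\lambda,u_\lambda)$ whose filtration jumps could a priori contribute a symbol outside $\langle t,u\rangle$, say with a nonzero $\phi_3$-component in degree $3$). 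So your dimension count runs in the wrong direction and, combined with step one, gives nothing beyond $A\supseteq\spann\{\mu_{k,q}^\lambda\}$. The missing inclusion $A\subseteq\spann\{\mu_{k,q}^\lambda\}$ is in fact immediate without any graded detour: the monomials $\chi,\,t_\lambda,\,t_\lambda^2,\,u_\lambda,\,t_\lambda^3,\,t_\lambda u_\lambda$ of filtration at most $3$ lie in the span by \eqref{eq Euler characteristic}, Lemma~\ref{lemma_t_powers}, \eqref{tu} and the definition of $u_\lambda$, while every monomial $t_\lambda^a u_\lambda^b$ with $a+2b\ge4$ lies in $\calV_4\cap\calV_\lambda^6=\spann\{\mu_{4,1}^\lambda,\mu_{4,2}^\lambda,\mu_{5,2}^\lambda,\mu_{6,3}^\lambda\}$, the point being that $\Xi_3(\phi_3^\lambda)=\phi_3\ne0$ by Corollary~\ref{coro_diagram}, so $\phi_3^\lambda,\overline{\phi_3^\lambda}\notin\calV_4$ and no element of $\calV_4\cap\calV_\lambda^6$ has a $\phi_3^\lambda$- or $\overline{\phi_3^\lambda}$-component.
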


The final ingredient  we need to determine the algebra structure of $\calV_\lambda^6$ is the following.

\begin{Proposition}\label{u2}
 $\pd(u_\lambda, u_\lambda)=\frac{18\lambda}{\pi^3}$.
\end{Proposition}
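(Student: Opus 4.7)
The plan is to reduce $\pd(u_\lambda,u_\lambda)$ to a single scalar — the coefficient of $\mu_{6,3}^\lambda$ in the basis expansion of $u_\lambda^2$ — and then to compute this scalar by an explicit differential-form calculation using the Rumin differential prepared in Lemma~\ref{lemma du}.

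First, since $\S^6_\lambda$ is a closed Riemannian manifold, its normal cycle vanishes as a current in the cosphere bundle. Thus for any smooth curvature measure $\Psi=[\omega,\eta]$ we have $\glob_\lambda(\Psi)(\S^6_\lambda)=\int_{\S^6_\lambda}\eta$. All the defining forms of the $\Delta_{k,q}$ with $k<6$ have trivial $\eta$-part, so $\mu_{k,q}^\lambda(\S^6_\lambda)=0$ for $k<6$, while $\mu_{6,3}^\lambda(\S^6_\lambda)=\vol(\S^6_\lambda)$. Because $u_\lambda\in\calV_2^\lambda$, we have $u_\lambda^2\in\calV_4^\lambda$, hence
\[
u_\lambda^2=a\,\mu_{4,1}^\lambda+b\,\mu_{4,2}^\lambda+c\,\mu_{5,2}^\lambda+d\,\mu_{6,3}^\lambda
\]
for some constants. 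The preceding remarks yield $\pd(u_\lambda,u_\lambda)=d$.

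Next I would compute $d$ by exploiting the module structure: $u_\lambda^2=\tfrac{2}{\pi}\glob_\lambda(u_\lambda\cdot\Delta_{2,1})$, so $d\cdot\vol(\S^6_\lambda)=\tfrac{2}{\pi}\int_{\S^6_\lambda}\eta_*$, where $\eta_*$ is the pulled-back $6$-form part of the curvature measure $u_\lambda\cdot\Delta_{2,1}$. Writing $\Delta_{2,1}=\tfrac{1}{2\pi^2}[\gamma\wedge\theta_0\wedge\theta_2,0]$ and $u_\lambda=\tfrac{1}{\pi^3}[\gamma\wedge\theta_0\wedge\theta_2]_\lambda$, Alesker's formula for the module product expresses $\eta_*$ as the bidegree-$(6,0)$ (pure horizontal, pulled back from $\S^6_\lambda$) component of a form built from the defining form of $\Delta_{2,1}$ and its Rumin differential, obtained via Lemma~\ref{lemma du}. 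Crucially, only the $\lambda$-dependent terms of $D(\gamma\wedge\theta_0\wedge\theta_2)$ contribute to a nonzero $\eta_*$: the $\lambda=0$ pieces correspond to the flat Alesker product $u\cdot u\in\Val^{\SU(3)}(\C^3)$, which lies entirely in $\Val_4$ and cannot contribute to the top-degree $\mu_{6,3}^\lambda$.

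After substituting the bidegree-$(4,2)$ expression from Lemma~\ref{lemma du} and using the differential identities of Proposition~\ref{differential} together with standard relations among $\alpha,\beta,\gamma,\theta_i,\chi_i$ (in particular $\alpha\wedge\beta\wedge\theta_2^2$ and $\alpha\wedge\gamma\wedge\theta_1\wedge\theta_2$ reducing to invariant volume densities on $\S^6_\lambda$), the resulting integral, normalized by $\vol(\S^6_\lambda)=\tfrac{16\pi^3}{15\lambda^3}$, yields $d=\tfrac{18\lambda}{\pi^3}$.

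The main obstacle is the explicit differential-form computation in the last step. Alesker's module product does not admit a simple closed-form expression in terms of the defining forms on a general manifold, and one must carefully track which pieces of the wedge product of the defining forms (modified by the Rumin differential) contribute to the horizontal $(6,0)$-bidegree. The key simplification that makes the calculation tractable is the a priori reduction from Step~1: only the $\lambda$-dependent corrections to the flat product can contribute to $d$, so only the curvature-induced terms from Lemma~\ref{lemma du} need to be tracked through the computation, and signs/normalization constants can be fixed by comparison with the flat-limit identity $u^2\in\spann\{\mu_{4,1},\mu_{4,2}\}$.
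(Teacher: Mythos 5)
Your first step is correct: since $N(\S^6)=0$, only the $\eta$-parts of the defining forms survive evaluation at the full sphere, so $\mu_{k,q}^\lambda(\S^6)=0$ for $k<6$ and $\pd(u_\lambda,u_\lambda)$ equals the coefficient $d$ of $\mu_{6,3}^\lambda$ in $u_\lambda^2$. The genuine gap is in how you propose to compute $d$. You invoke ``Alesker's formula for the module product'' to extract the top-degree part of $u_\lambda\cdot\Delta_{2,1}$ from the defining forms and the Rumin differential, while simultaneously conceding that no such closed-form expression exists on a general manifold. That concession is exactly the problem: as written, the last step is an assertion rather than a computation. The missing ingredient is Bernig's product formula for the Alesker--Poincar\'e pairing \cite{bernig_product}: for valuations $\varphi_i=\lcur\omega_i,0\rcur$ on a compact Riemannian $M^n$,
$$\pd(\varphi_1,\varphi_2)=\frac{1}{\vol(M)}\int_{SM}\omega_1\wedge a^*D\omega_2,$$
with $a$ the fiberwise antipodal map. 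This is what the paper uses: taking $\omega_1=\omega_2=\frac{1}{\pi^3}\gamma\wedge\theta_0\wedge\theta_2$, which has vertical/horizontal bidegree $(2,3)$, only the bidegree-$(4,2)$ part of $D\omega_2$ can pair nontrivially --- and that part is precisely the output of Lemma~\ref{lemma du}. The pairing then reduces to a handful of explicit wedge products such as $\alpha\wedge\beta\wedge\gamma\wedge\theta_0\wedge\theta_2\wedge\chi_{1,R}^2=4\,d\!\vol_{S\S^6}$, together with $\vol(S\S^6)=\pi^3\vol(\S^6)$, yielding $18\lambda/\pi^3$. Without this (or an equivalent) formula your plan cannot be executed.

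A secondary point: your argument that ``only the $\lambda$-dependent terms contribute because the flat product lies in $\Val_4$'' is not self-supporting. Theorem~\ref{graded_iso} and Corollary~\ref{coro_diagram} only determine $u_\lambda^2$ modulo $\calV_5$, i.e.\ the coefficients of $\mu_{4,1}^\lambda$ and $\mu_{4,2}^\lambda$; they say nothing a priori about the coefficients of $\mu_{5,2}^\lambda$ (which the paper kills by an Euler--Verdier parity argument in Corollary~\ref{coro_u2}) or of $\mu_{6,3}^\lambda$, which is the quantity at stake. The correct reason that only the $\lambda$-proportional terms of Lemma~\ref{lemma du} matter is the bidegree count inside Bernig's integral, so this part of your reduction also rests on the formula you are missing.
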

\begin{proof}
Recall  from \cite{bernig_product} that if $M$ is a compact riemannian manifold of dimension $n$ and   $\varphi_i\in \calV(M)$ is represented by $\omega_i \in\Omega^{n-1}(S M)$, 
$i=1,2$, then 
\[
 \pd(\varphi_1,\varphi_2)=\frac{1}{\vol(M)}\int_{SM} \omega_1\wedge a^*D\omega_2,
\]
where $a$ is the fiberwise antipodal map. 

Now $u_\lambda$ is represented by $\frac{1}{\pi^3} \gamma  \theta_0\theta_2$. The relevant part of the 
Rumin differential was already computed in Lemma~\ref{lemma du}.
A straightforward computation shows
\begin{align*}
 \alpha\wedge\beta\wedge\gamma\wedge\theta_0\wedge\theta_2\wedge\chi_{1,R}^2&=\alpha\wedge\beta\wedge\gamma\wedge\theta_0\wedge\theta_2\wedge\chi_{1,I}^2\\
 &=\alpha\wedge\beta\wedge\gamma\wedge\theta_0^2\wedge\theta_2^2=4\ d\!\vol_{S\S^6}\\
 \alpha\wedge\beta\wedge\gamma\wedge\theta_0\wedge\theta_2\wedge\theta_1^2&=\alpha\wedge\beta\wedge\gamma\wedge\theta_0\wedge\theta_2\wedge\theta_s^2=-4\ d\!\vol_{S\S^6}
\end{align*}
where $d\vol_{S\S^6}=d \vol_{\S^6}\wedge d\vol_{S^5}$ with $d\vol_{\S^6}=\frac12\alpha\wedge\beta\wedge\theta_2^2$ and $d\vol_{S^5}=\frac12\gamma\wedge\theta_0^2$. Using $\vol(S\S^6)=\pi^3\vol(\S^6)$ gives the result. 
\end{proof}

\begin{Corollary} \label{coro_u2}
\begin{align*}u_\lambda^2 & =  \frac{12}{\pi^2} \mu_{4,2}^\lambda + \frac{18\lambda}{\pi^3} \mu_{6,3}^\lambda\\
t_\lambda u_\lambda^2 & = \frac{64}{ 5 \pi^3} \mu_{5,2}^\lambda\\
t_\lambda^2u_\lambda^2 &= \frac{24}{\pi^3}\mu_{6,3}^\lambda.
\end{align*}

\end{Corollary}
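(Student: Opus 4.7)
My plan is to determine each of the three products by expanding it in the basis of its filtration piece and then fixing the coefficients using three ingredients: the graded algebra isomorphism of Theorem~\ref{graded_iso}, the Euler-Verdier involution, and Proposition~\ref{u2}.

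First, I would observe that since $u_\lambda\in\calV_2^\lambda$, one has $u_\lambda^2\in\calV_4^\lambda$, $t_\lambda u_\lambda^2\in\calV_5^\lambda$, and $t_\lambda^2 u_\lambda^2\in\calV_6^\lambda$. Because these products lie in the subalgebra generated by $t_\lambda$ and $u_\lambda$, which equals $\spann\{\mu_{k,q}^\lambda\}$, they admit short expansions in the corresponding filtration bases: $\{\mu_{4,1}^\lambda,\mu_{4,2}^\lambda,\mu_{5,2}^\lambda,\mu_{6,3}^\lambda\}$, $\{\mu_{5,2}^\lambda,\mu_{6,3}^\lambda\}$, and $\{\mu_{6,3}^\lambda\}$ respectively. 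The leading coefficient of each product (i.e.\ its class modulo the next filtration piece) is then supplied by Theorem~\ref{graded_iso} from the corresponding flat-case identities $u^2=\tfrac{12}{\pi^2}\mu_{4,2}$, $tu^2=\tfrac{64}{5\pi^3}\mu_{5,2}$, and $t^2u^2=\tfrac{24}{\pi^3}\mu_{6,3}$ in $\Val^{\SU(3)}(\C^3)$, known from \cites{hig,bfs} (or obtainable from \eqref{tu}--\eqref{t4u}, \eqref{t5}, \eqref{t6} at $\lambda=0$ together with $u^2=\tfrac{12}{\pi^2}\mu_{4,2}$). The third identity is thereby already complete.

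Next, I would dispose of the ``wrong parity'' correction terms using the Euler-Verdier involution. Lemma~\ref{lemma_euler_verdier} together with the commutation $\sigma\circ\glob_\lambda=\glob_\lambda\circ\sigma$ (both sides descend from the fiberwise antipodal map on the cosphere bundle) give $\sigma\mu_{k,q}^\lambda=(-1)^k\mu_{k,q}^\lambda$; in particular $u_\lambda=\tfrac{2}{\pi}\mu_{2,1}^\lambda$ is $\sigma$-fixed and, by the expansion of $t_\lambda$ in Lemma~\ref{lemma_t_powers} as a combination of $\mu_{k,q}^\lambda$ with $k$ odd, $\sigma t_\lambda=-t_\lambda$. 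Multiplicativity of $\sigma$ then forces $\sigma u_\lambda^2=u_\lambda^2$ and $\sigma(t_\lambda u_\lambda^2)=-t_\lambda u_\lambda^2$, which kills the $\mu_{5,2}^\lambda$-coefficient in $u_\lambda^2$ (since $\sigma\mu_{5,2}^\lambda=-\mu_{5,2}^\lambda$) and the $\mu_{6,3}^\lambda$-coefficient in $t_\lambda u_\lambda^2$ (since $\sigma\mu_{6,3}^\lambda=+\mu_{6,3}^\lambda$). The one remaining coefficient $d$ of $\mu_{6,3}^\lambda$ in $u_\lambda^2$ is pinned down by evaluating on $\S^6$: since $\S^6$ has no boundary its normal cycle is trivial, whence $\mu_{k,q}^\lambda(\S^6)=0$ for $k<6$ and $\mu_{6,3}^\lambda(\S^6)=\vol(\S^6)$; combining this with the definition of $\pd$ and Proposition~\ref{u2}, $d=\pd(u_\lambda,u_\lambda)=\tfrac{18\lambda}{\pi^3}$.

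I do not anticipate any real obstacle in executing this plan: the analytic content is already concentrated in Proposition~\ref{u2}, and what remains is purely formal bookkeeping with the filtration, the $\sigma$-parity of the relevant generators, and the evaluation of globalizations on the ambient sphere.
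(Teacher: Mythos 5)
Your proposal is correct and follows essentially the same route as the paper: the leading coefficients come from Theorem~\ref{graded_iso} and the flat-case identities, the "wrong parity" terms are killed by the Euler--Verdier involution, and the $\mu_{6,3}^\lambda$-coefficient of $u_\lambda^2$ is fixed by Proposition~\ref{u2} via evaluation on $\S^6$. The only (cosmetic) difference is that you read off the vanishing coefficients directly from $\sigma(u_\lambda^2)=u_\lambda^2$ and $\sigma(t_\lambda u_\lambda^2)=-t_\lambda u_\lambda^2$, whereas the paper packages the same parity argument as the vanishing of the pairing $\pd(u_\lambda^2,t_\lambda)$.
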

\begin{proof}
By Theorem~\ref{graded_iso} the identity $u^2=\frac{12}{\pi^2}\mu_{4,2}$ (cf.\ \cite[(37), (39)]{hig}) implies 
$$u_\lambda^2\equiv \frac{12}{\pi^2}\mu_{4,2}^\lambda   \mod  \calV_5(\S^6)$$
In fact, this identity holds even modulo $\calV_6$ as can be seen from 
$$\pd(u_\lambda^2, t_\lambda)=0$$
which follows from the Euler-Verdier involution. Indeed, by Lemma \ref{lemma_euler_verdier},
\begin{align*}
u_\lambda^2t_\lambda (\S^6)&= -\sigma(u_\lambda)^2\cdot\sigma(t_\lambda) (\S^6)= -\sigma(u_\lambda^2t_\lambda) (\S^6)=-u_\lambda^2t_\lambda (\S^6),
\end{align*}
 since ${\sigma(\mu_{6,3}^\lambda)=\mu_{6,3}^\lambda}$.

Finally, we use the previous lemma to get the constant in front of $\mu_{6,3}^\lambda$. The proof of the second and third identities is even simpler. 
\end{proof}

The algebra structure of $\Val^{\mathrm U (n)}$ was determined by Fu in  \cites{fu_unitary}.
For $n=3$  
$$\Val^{\mathrm U(3)}\cong\C[t,u]/(f_4,f_5)$$
with $f_4(t,u)=t^4-6t^2u+u^2$ and $f_5(t,u)= t^5 -10 t^3u + 5tu^2$ where the isomorphism is given by 
$\frac{2}{\pi}\mu_{10}\mapsto t$ and $\frac{2}{\pi} \mu_{21}\mapsto u$.

\begin{Corollary}There is an algebra monomorphism $\Val^{\mathrm U(3)} \to \calV_\lambda^6$ with 
\begin{align} 
 t & \mapsto  t_\lambda - \frac{3\lambda}{8} t_\lambda^3\label{eq t image}\\
 u &\mapsto u_\lambda \label{eq u image}
\end{align}

\end{Corollary}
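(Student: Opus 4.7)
The strategy is to verify that the assignment extends to an algebra homomorphism by checking that the defining relations $f_4, f_5$ of $\Val^{\mathrm U(3)} \cong \C[t,u]/(f_4,f_5)$ hold for the images, and then to deduce injectivity by looking at the induced map on the associated graded algebras.

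Setting $T = t_\lambda - \tfrac{3\lambda}{8} t_\lambda^3$ and $U = u_\lambda$, I would verify that $T^4 - 6 T^2 U + U^2 = 0$ and $T^5 - 10 T^3 U + 5 T U^2 = 0$. Since all valuations of filtration degree greater than $\dim \S^6 = 6$ vanish, many mixed terms drop out: $t_\lambda^7 = 0$, $t_\lambda^5 u_\lambda = 0$, and $t_\lambda^3 u_\lambda^2 = 0$. The powers of $T$ thereby collapse to $T^3 = t_\lambda^3 - \tfrac{9\lambda}{8} t_\lambda^5$, $T^4 = t_\lambda^4 - \tfrac{3\lambda}{2} t_\lambda^6$, and $T^5 = t_\lambda^5$, which reduces $f_5(T,U) = 0$ to $t_\lambda^5 - 10 t_\lambda^3 u_\lambda + 5 t_\lambda u_\lambda^2 = 0$. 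Expanding both relations in the basis $\{\mu_{k,q}^\lambda\}$ by means of Lemma~\ref{lemma_t_powers}, Corollary~\ref{coro_u2}, and equations \eqref{tu}--\eqref{t4u}, it remains to check that the coefficients of $\mu_{4,1}^\lambda, \mu_{4,2}^\lambda, \mu_{6,3}^\lambda$ cancel in $f_4$ and that the coefficient of $\mu_{5,2}^\lambda$ cancels in $f_5$. In fact the specific constant $-\tfrac{3\lambda}{8}$ is forced by requiring the $\mu_{6,3}^\lambda$ coefficient to vanish in $f_4$; the $\mu_{4,1}^\lambda$ and $\mu_{4,2}^\lambda$ coefficients cancel independently of the correction since those contributions lie in lower filtration.

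For injectivity, I would pass to the associated graded. The filtered assignment sends $t \in \Val_1^{\mathrm U(3)}$ and $u \in \Val_2^{\mathrm U(3)}$ to elements of $\calV_1^6$ and $\calV_2^6$ respectively; the correction $-\tfrac{3\lambda}{8} t_\lambda^3$ lies in $\calV_3^6$ and is therefore invisible to the symbol. By Theorem~\ref{graded_iso} together with the transfer principle, $\bigoplus_k \calV_k^6/\calV_{k+1}^6 \cong \Val^{\SU(3)}$, and the induced graded map is precisely the natural inclusion $\Val^{\mathrm U(3)} \hookrightarrow \Val^{\SU(3)}$. Since a filtered map whose associated graded is injective is itself injective, the constructed homomorphism is a monomorphism.

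The main obstacle is the bookkeeping in the $f_4$ relation, where five separate terms contribute to the coefficient of $\mu_{6,3}^\lambda$, each carrying a factor of $\lambda$ from the curvature-dependent corrections in the powers of $t_\lambda$. Conceptually the algebra structure is inherited from the flat model $\Val^{\mathrm U(3)}$, and the deformation by curvature only forces the single cubic correction needed to absorb the extra $\mu_{6,3}^\lambda$ contributions; there is no surprise at the level of the relations $f_4, f_5$, only an explicit coefficient check.
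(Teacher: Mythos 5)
Your proposal is correct and follows essentially the same route as the paper: both verify $f_4(T,U)=f_5(T,U)=0$ by expanding in the basis $\{\mu_{k,q}^\lambda\}$ via Lemma~\ref{lemma_t_powers}, Corollary~\ref{coro_u2} and \eqref{tu}--\eqref{t4u} (and your coefficient bookkeeping checks out: $90-180-36+108+18=0$ for $\mu_{6,3}^\lambda$ in $f_4$, with the degree-$4$ and degree-$5$ cancellations inherited from the flat relations). The only divergence is in the injectivity step: you pass to the associated graded and invoke injectivity of $\Val^{\mathrm U(3)}\hookrightarrow\Val^{\SU(3)}$ together with separatedness of the filtration, whereas the paper observes that $g$ surjects onto the subalgebra generated by $t_\lambda,u_\lambda$, namely $\spann\{\mu_{k,q}^\lambda\}$, and compares dimensions; both are immediate from what is already established, so this is a cosmetic rather than substantive difference.
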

\begin{proof}
Define a morphism 
$g\colon\C[t,u]\to \calV_\lambda^6$ by \eqref{eq t image}, \eqref{eq u image}.
Using  equations \eqref{t4}, \eqref{t5}, \eqref{t6}, \eqref{t2u}, \eqref{t3u}, \eqref{t4u} and  Corollary~\ref{coro_u2},  we obtain
$$g(t^4-6t^2u+u^2)= g(  t^5 -10 t^3u + 5tu^2)=0.$$ Hence $g$
descends to a morphism $\Val^{U(3)}\to \calV_\lambda^6$. Since $g$ maps onto the subalgebra generated by
$t_\lambda$ and $u_\lambda$ and the dimensions agree,  $g$ must be injective.
\end{proof}

We can prove now the first part of Theorem \ref{Theorem_isomorphism}.

\begin{Theorem} \label{first_part} The filtered $\C$-algebra $\calV^6_\lambda$ is   isomorphic to 
$$\C[t,u,\phi_3,\overline{\phi_3}]/I$$
where $I$ is the ideal generated by 
$$f_4(t,u),f_5(t,u), t\phi_3,t\overline\phi_3,u\phi_3,u\overline\phi_3, \phi_3^2,\overline{\phi_3}^2,\phi_3\overline{\phi_3}+\frac{\pi^4 }{8}t^6.
$$ 
and the generators $t,u,\phi_3,\overline{\phi_3}$ have degrees $1,2$, and $3$ respectively. 
\end{Theorem}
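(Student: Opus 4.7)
The plan is to extend the monomorphism of the previous corollary to a surjection $g\colon \C[t,u,\phi_3,\overline{\phi_3}]\twoheadrightarrow\calV_\lambda^6$ by declaring $g(\phi_3)=\phi_3^\lambda$ and $g(\overline{\phi_3})=\overline{\phi_3^\lambda}$, to verify that each generator of $I$ lies in $\ker g$, and to conclude via a dimension count. Surjectivity of $g$ is immediate from the earlier corollary asserting that $t_\lambda, u_\lambda, \phi_3^\lambda, \overline{\phi_3^\lambda}$ generate $\calV_\lambda^6$. Since $\dim\C[t,u,\phi_3,\overline{\phi_3}]/I = \dim\Val^{\mathrm U(3)} + 2 = 12 = \dim\calV_\lambda^6$, once every generator of $I$ is shown to vanish under $g$ the equality $\ker g = I$ follows automatically.

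The relations $f_4(t,u)$ and $f_5(t,u)$ are already in $\ker g$ by the previous corollary. For $t\phi_3, u\phi_3, \phi_3^2$ and their complex conjugates, I would iterate the graded-algebra isomorphism of Theorem~\ref{graded_iso}. Each Alesker product lies a priori in the appropriate $\calV_m^\lambda$ by compatibility of the product with the filtration, and its class in $\calV_m^\lambda/\calV_{m+1}^\lambda \cong \Val_m^{\SU(3)}$ equals the corresponding flat-space product. By Bernig's description of $\Val^{\SU(3)}$ in \cite{bernig_sun}, no nonzero weight-$\pm 2$ or weight-$\pm 4$ element occurs in $\Val_k^{\SU(3)}$ for $k\geq 4$ (the weight-$\pm 2$ part is concentrated in degree $3$, and no weight-$\pm 4$ elements exist at all). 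Hence the graded class vanishes in every piece, and iteration through all filtration levels forces each product into $\calV_7^\lambda = 0$.

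The delicate relation is $\phi_3^\lambda\overline{\phi_3^\lambda} + \tfrac{\pi^4}{8}(g(t))^6 = 0$, which is the only one whose right-hand side has weight zero. A direct expansion using $t_\lambda^k = 0$ for $k>6$ collapses $(g(t))^6 = (t_\lambda - \tfrac{3\lambda}{8}t_\lambda^3)^6$ to $t_\lambda^6 = \tfrac{120}{\pi^3}\mu_{6,3}^\lambda$ by Lemma~\ref{lemma_t_powers}, so both terms lie in the one-dimensional space $\calV_6^\lambda = \C\mu_{6,3}^\lambda$ and only the leading coefficient needs to be compared. Via Theorem~\ref{graded_iso} the graded image of $\phi_3^\lambda\overline{\phi_3^\lambda}$ in $\Val_6^{\SU(3)}$ is the flat product $\phi_3\overline{\phi_3}$, so the relation reduces to its flat-case counterpart $\phi_3\overline{\phi_3} = -\tfrac{\pi^4}{8}t^6$, which is part of Bernig's determination of $\Val^{\SU(3)}$. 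An independent verification would proceed as in the proof of Proposition~\ref{u2} by computing $\pd(\phi_3^\lambda,\overline{\phi_3^\lambda})$ as an integral over the sphere bundle $S\S^6$ of $-\tfrac{1}{4\pi}\beta\chi_0\chi_2$ against $a^\ast D\bigl(-\tfrac{1}{4\pi}\beta\overline{\chi_0}\overline{\chi_2}\bigr)$; Proposition~\ref{prop_rumin_weight} controls the weight shifts in the Rumin differential, ensuring that only weight-$(-2)$ terms pair nontrivially with the weight-$2$ form $\omega_1$, and the $\lambda$-dependent contributions must cancel since the pairing is $\lambda$-independent by the graded-algebra reduction. The main obstacle is precisely this extraction of the constant $-\tfrac{\pi^4}{8}$: either one imports it from \cite{bernig_sun} or one performs the weight-controlled Poincar\'e-pairing calculation just sketched, tracking the contributions of the curvature corrections to the Rumin differential.
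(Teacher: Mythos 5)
Your overall skeleton (extend $g$ by $\phi_3\mapsto\phi_3^\lambda$, check that the generators of $I$ die, count dimensions) is exactly the paper's, and your treatment of the degree-$6$ relations $\phi_3^2$, $\overline{\phi_3}^2$, $\phi_3\overline{\phi_3}+\frac{\pi^4}{8}t^6$ is correct and coincides with the paper's: since these land in $\calV_6$ and $\calV_7=0$, the single application of Theorem~\ref{graded_iso} already determines them, and the constant $-\pi^4/8$ is indeed imported from the flat case. (You call this the delicate relation; it is in fact the automatic one.)

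The genuine gap is in your argument for $t\phi_3$ and $u\phi_3$. Theorem~\ref{graded_iso} controls only the \emph{leading} graded piece of a product: it gives $t_\lambda\phi_3^\lambda\in\calV_5$ and $u_\lambda\phi_3^\lambda\in\calV_6$, but says nothing about the classes of these elements in $\calV_5/\calV_6$ and $\calV_6$, which are not products of graded classes. Your "iteration" rests on assigning a weight to these lower-order classes and invoking the absence of weight-$\pm2$ elements in $\Val_k^{\SU(3)}$ for $k\geq4$; but no weight grading exists on $\calV(\S^6)^{\G_2}$ (there is no circle action on $\S^6$ commuting with $\G_2$), and the paper's own computations show that weight is not respected by curved-space operations: the Rumin differential shifts weight by $\pm1$ at order $\sqrt\lambda$ (Proposition~\ref{prop_rumin_weight}), globalization mixes weights (Proposition~\ref{prop_ker_glob}), and products acquire lower-order corrections invisible to the graded algebra, e.g.\ $u_\lambda^2=\frac{12}{\pi^2}\mu_{4,2}^\lambda+\frac{18\lambda}{\pi^3}\mu_{6,3}^\lambda$ in Corollary~\ref{coro_u2}. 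So a priori $t_\lambda\phi_3^\lambda$ could have a nonzero component proportional to $\sqrt{\lambda}\,\mu_{5,2}^\lambda$, and $u_\lambda\phi_3^\lambda$ one proportional to $\lambda^{3/2}\vol$. The paper excludes these by two separate arguments you would need to supply: for $t\phi_3$, the \emph{module} product $t\cdot\Phi_3$ in $\Curv^{\SU(3)}$ (where weight is genuinely defined and preserved) vanishes because $\Curv_4^{\SU(3)}$ has no weight-$2$ part, and Proposition~\ref{prop_t_action} converts this into $t_\lambda\cdot\phi_3^\lambda=0$; for $u\phi_3$, one kills the remaining $\calV_6$-component via $\pd(u_\lambda,\phi_3^\lambda)=0$, which follows from the Euler--Verdier parities $\sigma(u_\lambda)=u_\lambda$, $\sigma(\phi_3^\lambda)=-\phi_3^\lambda$ of Lemma~\ref{lemma_euler_verdier}.
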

\begin{proof}For $\lambda=0$ this is proved in \cite{bernig_sun}. 
Let us extend the morphism $g$ from the previous proof to a morphism from $\C[t,u,\phi_3,\overline{\phi_3}]$ to $\calV_\lambda^6$  
by $\phi_3\mapsto \phi_3^\lambda,\overline{\phi_3}\mapsto \overline{\phi_3^\lambda}$. 
To show that this descends to an isomorphism $\C[t,u,\phi_3,\overline{\phi_3}]/I\to\calV_\lambda^6$ 
what remains to be proven is
$t_\lambda\cdot\phi_3^\lambda=t_\lambda\cdot\overline{\phi_3^\lambda}=0$ and $u_\lambda\cdot\phi_3^\lambda=u_\lambda\cdot\overline{\phi_3^\lambda}=0$; 
the relations 
$$(\phi_3^\lambda)^2=(\overline{\phi_3^\lambda})^2=\phi_3^\lambda\overline{\phi_3^\lambda} +\frac{\pi^4}{8} t^6_\lambda=0$$
are by Theorem~\ref{graded_iso} and Corollary~\ref{coro_diagram} automatically satisfied.

Since $\Curv_4^{\SU(3)}$ contains no element of weight $2$, the module product $t\cdot\Phi_3$ must vanish. By Proposition \ref{prop_t_action}
we deduce $t_\lambda \cdot \phi_3^\lambda=0$. It remains only to show that $u_\lambda \cdot  \phi_3^\lambda=0$. Since $u\cdot\phi_3=0$ in $\Val^{\SU(3)}$, 
 we have  by Theorem \ref{graded_iso} and Corollary~\ref{coro_diagram}
$$ u_\lambda  \cdot \phi_3^\lambda\equiv 0  \mod \calV_6(S^6(\lambda)).$$
Since  $ \pd\left(u_\lambda, \phi_3^\lambda \right) =0$ by Euler-Verdier involution, the claim follows.
\end{proof}
It is interesting to note that the isomorphism $\calV_\lambda^6\cong  \Val^{\SU(3)}$ induced by the previous 
theorem preserves the filtration and Euler-Verdier involution.

\subsection{Global kinematic formulas} By the fundamental theorem of algebraic integral geometry \eqref{ftaig}, the full array of global kinematic formulas in $\S^6$ under $\mathrm G_2$
can be deduced from the  structure of the algebra of invariant valuations. We illustrate this by providing an explicit expression for the principal kinematic 
formula. We apply this result to bound the mean intersection number of Lagrangian submanifolds and arbitrary $3$-dimensional manifolds.

\begin{Theorem}\label{thm_pkf}
Writing $a\kfdot b=\frac12(a\otimes b+b\otimes a)$, the principal kinematic formula in $\S^6$ is given by
\begin{align*}
k_{\mathrm G_2}(\chi)&=2\mu^\lambda_{6,3}\kfdot\mu^\lambda_{0,0}+\frac{32}{15\pi}\mu^\lambda_{5,2}\kfdot\mu^\lambda_{1,0}
+\frac{5}{12}\mu^\lambda_{4,1}\kfdot\mu^\lambda_{2,0}\\
 &+\frac13\mu^\lambda_{4,1}\kfdot\mu^\lambda_{2,1}+\frac13\mu^\lambda_{4,2}\kfdot\mu^\lambda_{2,0}+\frac23\mu^\lambda_{4,2}\kfdot\mu^\lambda_{2,1}
 +\frac{2}{3\pi}\mu^\lambda_{3,0}\kfdot\mu^\lambda_{3,0}\\ & +\frac{8}{9\pi}\mu^\lambda_{3,0}\kfdot\mu^\lambda_{3,1}
 +\frac{16}{27\pi}\mu^\lambda_{3,1}\kfdot\mu^\lambda_{3,1}-\frac{2}{15\pi}\phi_3^\lambda\kfdot\overline{\phi_3^\lambda}\\
&+\frac{\lambda}{\pi}\mu^\lambda_{6,3}\kfdot(\mu^\lambda_{2,0}+\mu^\lambda_{2,1})+\frac{32\lambda}{15\pi^2}\mu^\lambda_{5,2}\kfdot(\mu^\lambda_{3,0}+\mu^\lambda_{3,1})\\
&+\frac{7\lambda}{24\pi}\mu^\lambda_{4,1}\kfdot\mu^\lambda_{4,1}
 +\frac{2\lambda}{3\pi}\mu^\lambda_{4,1}\kfdot\mu^\lambda_{4,2}+\frac{\lambda}{6\pi}\mu^\lambda_{4,2}\kfdot\mu^\lambda_{4,2}\\
 &+\frac{3\lambda^2}{2\pi^2}\mu^\lambda_{6,3}\kfdot(\mu^\lambda_{4,1}+\mu^\lambda_{4,2})+\frac{32\lambda^2}{15\pi^3}\mu^\lambda_{5,2}\kfdot\mu^\lambda_{5,2}
 +\frac{15\lambda^3}{8\pi^3}\mu^\lambda_{6,3}\kfdot\mu^\lambda_{6,3}. 
\end{align*}
\end{Theorem}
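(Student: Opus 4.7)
The plan is to invoke the fundamental theorem of algebraic integral geometry \eqref{ftaig}, which characterizes $k_{\G_2}(\chi)$ as the tensor representing the inverse of Alesker--Poincar\'e duality: for any basis $\{b_i\}$ of $\calV_\lambda^6$ with pairing matrix $P_{ij}=\pd(b_i,b_j)$, one has
\[
k_{\G_2}(\chi)=\sum_{i,j}(P^{-1})_{ij}\,b_i\otimes b_j.
\]
I will use the basis $\{\mu_{k,q}^\lambda\}\cup\{\phi_3^\lambda,\overline{\phi_3^\lambda}\}$, so the task reduces to computing $P$ and inverting it.

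The crucial simplification comes from Lemma~\ref{lemma_transfer}(i) applied to $\S^6$ viewed as a totally geodesic submanifold of itself. Since $\tau(\Delta_{k,q})\in\Gamma(\Curv_k(T\S^6))$ and similarly $\Phi_3$ is of homogeneous degree $3$, the lemma forces $\mu_{k,q}^\lambda(\S^6)=0$ for all $k<6$ and $\phi_3^\lambda(\S^6)=\overline{\phi_3^\lambda}(\S^6)=0$, while $\mu_{6,3}^\lambda(\S^6)=\vol(\S^6)=16\pi^3/(15\lambda^3)$. Consequently $\pd(b_i,b_j)$ is precisely the coefficient of $\mu_{6,3}^\lambda$ in the basis expansion of the Alesker product $b_i\cdot b_j$.

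Next I would compute all such products using Theorem~\ref{first_part}. The $\{\phi_3^\lambda,\overline{\phi_3^\lambda}\}$-block follows directly from the ideal relations: $t_\lambda\phi_3^\lambda=u_\lambda\phi_3^\lambda=(\phi_3^\lambda)^2=0$ and $\phi_3^\lambda\cdot\overline{\phi_3^\lambda}=-\frac{\pi^4}{8}t_\lambda^6=-15\pi\,\mu_{6,3}^\lambda$ by \eqref{t6}. This subblock is antidiagonal with entry $-15\pi$ and decouples from the rest; its inverse produces the $-\frac{2}{15\pi}\phi_3^\lambda\kfdot\overline{\phi_3^\lambda}$ term in the formula. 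For the remaining $10\times 10$ block on $\spann\{\mu_{k,q}^\lambda\}$, the corollary following Proposition~\ref{u2} says this is the subalgebra generated by $t_\lambda,u_\lambda$, so every product $\mu_{k,q}^\lambda\cdot\mu_{k',q'}^\lambda$ reduces to a polynomial in $t_\lambda,u_\lambda$ which can then be expanded in the basis by Lemma~\ref{lemma_t_powers}, equations \eqref{tu}--\eqref{t4u}, and Corollary~\ref{coro_u2}. Reading off the $\mu_{6,3}^\lambda$-coefficient in each product produces the remaining entries of $P$.

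The main obstacle is the $10\times 10$ matrix inversion, but its structure tames the task. The filtration property $b_i\cdot b_j\in\calV_{\deg b_i+\deg b_j}$ forces $P_{ij}=0$ whenever $\deg b_i+\deg b_j>6$, so after ordering by filtration degree $P$ is antitriangular. The antidiagonal entries ($\deg+\deg=6$) are $\lambda$-independent and reproduce Bernig's flat $\SU(3)$-pairing from \cite{bernig_sun}, yielding the $\lambda^0$ cluster of terms in the claimed formula, while the strictly below-antidiagonal entries are polynomial in $\lambda$ and come from the $\lambda$-corrections in \eqref{eq tk_lambda}--\eqref{eq phik}, \eqref{tu}--\eqref{t4u}, and Corollary~\ref{coro_u2}. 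The standard triangular inversion algorithm applied to $P$ then produces successively the $\lambda, \lambda^2$ and $\lambda^3$ correction clusters in the formula. As a consistency check, the $\lambda\to 0$ limit recovers the known flat-case formula, while Euler--Verdier parity (Lemma~\ref{lemma_euler_verdier}) immediately sets to zero all pairings of odd filtration parity — in particular $\pd(u_\lambda,t_\lambda^{2j+1}\cdot\text{anything})$ and the $\phi_3^\lambda\otimes\mu_{k,q}^\lambda$ entries — drastically thinning the matrix.
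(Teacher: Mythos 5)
Your proposal is correct and follows essentially the same route as the paper: both invoke $k(\chi)=\pd^{-1}$ from the fundamental theorem of algebraic integral geometry, assemble the full Poincar\'e pairing matrix from the algebra structure established in Theorem~\ref{first_part}, Lemma~\ref{lemma_t_powers}, equations \eqref{tu}--\eqref{t4u} and Corollary~\ref{coro_u2}, and then invert. The only (harmless) difference is bookkeeping --- the paper computes the pairings in the monomial basis $\chi,t,\dots,t^6,\phi_3,\overline{\phi_3}$ and converts back at the end, whereas you work directly in the $\mu^\lambda_{k,q}$ basis after observing via Lemma~\ref{lemma_transfer} that $\pd(b_i,b_j)$ is the $\mu^\lambda_{6,3}$-coefficient of $b_i\cdot b_j$.
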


\begin{proof}
Recall that $k(\chi)=\pd^{-1}$ as elements in $\mathrm{Hom}((\calV_\lambda^6)^*,\calV_\lambda^6)$. 
Using Lemma \ref{lemma_t_powers}, Corollary \ref{coro_u2},   Theorem~\ref{first_part} and equations \eqref{tu}--\eqref{t4u} one gets all the pairings of elements from the basis $\chi,t,t^2,u,t^3,tu,\phi_3,\overline{\phi_3},t^4,t^2u,t^5,t^6$. 
Inverting the corresponding matrix we obtain $k(\chi)$ in terms of this basis. Using again Lemma \ref{lemma_t_powers} and equations \eqref{tu}--\eqref{t4u} yields the stated formula.  
\end{proof}

Given $E\in \Gr_3(\C^3)$ there exists $\theta\in [0,\frac\pi2]$  called the K\"ahler angle  of $E$ such that the hermitian form $\omega$ and the complex volume form $\Upsilon$ with $\re\Upsilon=\phi$ fulfill (cf. \cite{bernig_sun}*{(8)} or \cite{harvey_lawson}*{Theorem~III.1.7})
 \begin{equation}\label{kahler_angle}
 |\omega|_E|=\cos\theta,\qquad |\Upsilon|_E|=\sin\theta.
\end{equation}

A $3$-dimensional submanifold $L\subset \S^6$ is called \emph{Lagrangian} if 
$$JX \perp T_pL $$
for every $p\in L$ and every $X\in T_pL$.
Equivalently, Lagrangian submanifolds can be defined through one of the two conditions 
\begin{equation}\label{eq lagrangian}\omega|_L=0, \qquad 
 |\Upsilon|_L|=1.\end{equation}
Further,  every Lagrangian in $\S^6$ has constant phase in the sense  
\begin{equation} \label{eq slag}(\re \Upsilon) |_L=\phi|_L=0
\end{equation}
as $\omega, d\omega$ vanish on $L$ and so does $\phi$ by \eqref{eq_omegae_not_closed}.

\begin{Corollary} \label{cor application G2}
 Let $L\subset S^6$ be a compact Lagrangian submanifold.
 For every  $3$-dimensional  compact submanifold  with boundary $M\subset S^6$ the 
 following inequalities hold:
\begin{equation}\label{eq_application_G2}  \frac{4}{9\pi }\vol(L) \vol(M)  \leq   \int_{\mathrm G_2} \#( M\cap gL)\, dg < \frac{4}{5 \pi }\vol(L) \vol(M) .\end{equation}
 Equality holds on the left if and only if each tangent space of $M$ contains a complex line.
\end{Corollary}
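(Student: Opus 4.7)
The plan is to apply the principal kinematic formula of Theorem~\ref{thm_pkf} to the pair $(M,L)$ and extract the contribution from the degree-$3$ factors. Since $\dim M = \dim L = 3$, for generic $g\in\mathrm G_2$ the intersection is transverse and $0$-dimensional, so $\#(M\cap gL)=\chi(M\cap gL)$ and
\[
\int_{\mathrm G_2}\#(M\cap gL)\,dg = k_{\mathrm G_2}(\chi)(M,L).
\]
By Lemma~\ref{lemma_transfer}(ii), any $\mu^\lambda_{k,q}$ or $\phi^\lambda_3$ evaluated on a $3$-dimensional submanifold vanishes unless $k=3$, so only the bidegree-$(3,3)$ terms from the formula for $k_{\mathrm G_2}(\chi)$ survive.

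Next I would compute the relevant valuations on $L$. By Lemma~\ref{lemma_transfer}(ii), each such value is an integral over $L$ of the Klain function of the corresponding element of $\Val^{\mathrm U(3)}_3$ applied to the tangent plane. The Klain functions of $\mu^{\mathrm U(3)}_{3,0}$ and $\mu^{\mathrm U(3)}_{3,1}$ on a $3$-plane $E\in\Gr_3(\C^3)$ of K\"ahler angle $\theta$ are $\sin^2\theta$ and $\cos^2\theta$ respectively (they sum to $\Kl_{\mu_3}=1$, and the individual values follow from the identity $\mu_{3,1}=\tfrac{3\pi^2}{8}tu$ of the previous subsection combined with the Klain function $\cos^2\theta$ of $\mu^{\mathrm U(3)}_{2,1}$ on $2$-planes). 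By \eqref{klain_phi}, $\Kl_{\phi_3^\lambda}(E)=\sin^2\theta\cdot e^{2\sqrt{-1}\psi}$, where $\psi$ is the phase of $E$. On the Lagrangian $L$ one has $\sin\theta\equiv 1$ and, by \eqref{eq slag}, $\re\Upsilon|_L=0$, which forces $e^{\sqrt{-1}\psi}=\pm\sqrt{-1}$ and hence $\Kl_{\phi_3^\lambda}(T_xL)=-1$. Therefore
\[
\mu^\lambda_{3,0}(L)=\vol(L),\qquad \mu^\lambda_{3,1}(L)=0,\qquad \phi^\lambda_3(L)=\overline{\phi^\lambda_3}(L)=-\vol(L).
\]

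Substituting into Theorem~\ref{thm_pkf}, and using for $M$ the analogous pointwise identities $\mu^\lambda_{3,0}(M)=\int_M\sin^2\theta\,dx$, $\mu^\lambda_{3,1}(M)=\int_M\cos^2\theta\,dx$, $\re\phi^\lambda_3(M)=\int_M\sin^2\theta\cos(2\psi)\,dx$, a direct computation collapses everything to
\[
\int_{\mathrm G_2}\#(M\cap gL)\,dg=\frac{\vol(L)}{\pi}\int_M\left[\frac{4}{9}+\frac{2\sin^2\theta(x)}{45}\bigl(5+3\cos 2\psi(x)\bigr)\right]dx.
\]
Since $\sin^2\theta\in[0,1]$ and $\cos 2\psi\in[-1,1]$, the bracket takes values in $[\tfrac{4}{9},\tfrac{4}{5}]$, proving both inequalities. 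Equality on the left holds iff $\sin^2\theta\equiv 0$ on $M$, i.e.\ every $T_xM$ has K\"ahler angle $0$, which by \eqref{kahler_angle} is equivalent to containing a complex line. The upper bound is never attained: the only way to reach $\tfrac{4}{5}$ at a point is $\sin\theta=1$ and $\cos 2\psi=1$, but $\sin\theta\equiv 1$ on $M$ would make $M$ Lagrangian and then \eqref{eq slag} gives $\phi|_M=0$, forcing $\cos 2\psi\equiv -1$ instead; if $\theta$ is not identically $\pi/2$ then continuity yields an open subset of $M$ on which the integrand is strictly below $\tfrac{4}{5}$. The main technical hurdle I anticipate is the clean identification of $\Kl_{\mu^{\mathrm U(3)}_{3,0}}$ and $\Kl_{\mu^{\mathrm U(3)}_{3,1}}$ in terms of the K\"ahler angle; granted this, the remainder is bookkeeping against Theorem~\ref{thm_pkf}.
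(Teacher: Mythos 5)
Your proof is correct and follows essentially the same route as the paper: restrict the principal kinematic formula of Theorem~\ref{thm_pkf} to the bidegree-$(3,3)$ terms, evaluate $\mu^\lambda_{3,0},\mu^\lambda_{3,1},\phi_3^\lambda$ via Klain functions and Lemma~\ref{lemma_transfer}, and bound the resulting integrand pointwise — your bracket $\frac{4}{9}+\frac{2\sin^2\theta}{45}(5+3\cos 2\psi)$ agrees with the paper's integrand $\frac{2}{3\pi}\left(\sin^2\theta+\frac23\cos^2\theta+\frac15\re\Upsilon^2\right)$, and your equality/strictness analysis matches. One small imprecision: Lemma~\ref{lemma_transfer}(ii) gives vanishing of $\mu^\lambda_{k,q}(M)$ only for $k>3$, not for $k<3$ (e.g.\ $\mu^\lambda_{0,0}(M)$ is generally nonzero on a $3$-dimensional $M$); the reduction to the $(3,3)$ terms nevertheless holds because every other term $a\kfdot b$ appearing in Theorem~\ref{thm_pkf} has a factor of degree at least $4$, which annihilates both summands of the symmetrization when evaluated on the pair $(M,L)$.
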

\begin{proof}
Since $L$ is Lagrangian, by \cite{hig}*{eqs. (35) and (36)} and \eqref{klain_phi},
$$ \mu^\lambda_{3,0} (L) =  \vol(L) ,\qquad \mu_{3,1}^\lambda (L)=0, \qquad  \phi_3^\lambda (L) = -\vol(L).$$
Indeed, we have  
$$ \Kl_{\glob(\Delta_{3,0})_p}(E) = \sin^2\theta, \qquad \Kl_{\glob(\Delta_{3,1})_p}(E) = \cos^2\theta, \qquad \Kl_{\glob(\Phi_3)_p}(E) = \Upsilon(E)^2$$
where $\theta$ is the K\"ahler angle of $E\in \Gr_3(T_p S^6)$ and $\Upsilon(E)^2=\Upsilon(w_1,w_2,w_3)^2$ for some orthonormal basis of $E$.
The above identities follow now from Lemma~\ref{lemma_transfer} and equations \eqref{eq lagrangian}, and \eqref{eq slag}.

Plugging these values into the principal kinematic formula and using again Lemma~\ref{lemma_transfer} yields 
\begin{align*}\frac{1}{\vol(L)}\int_{\mathrm G_2} \#( M\cap gL)\, dg & 
 ={2}\left(\frac{1}{3\pi}\mu^\lambda_{3,0} +\frac{2}{9\pi} \mu^\lambda_{3,1} 
+{\frac{1}{15\pi}}\operatorname{Re} \phi_3^\lambda\right)(M)\\
& = \frac{2}{3\pi} \int_M  \sin^2 \theta +\frac{2}{3} \cos^2 \theta  
+{\frac{1}{5}} \re\Upsilon^2
\end{align*}
where $\theta= \theta(T_xM)$ denotes the K\"ahler angle  of $T_xM\subset T_x S^6$ and $\Upsilon^2=\Upsilon^2(T_xM )$.
Since $|\Upsilon|^2=\sin^2 \theta$, and $\re \Upsilon\equiv 0$ if $\theta\equiv \frac\pi2$, the inequalities follow.\end{proof}

Note that there are many 3-dimensional submanifolds $M\subset \S^6$ fulfilling the equality conditions in \eqref{eq_application_G2}. For instance, one may take a
non-singular almost complex curve $C$ in $\S^6$ (cf. \cite{complex_curves}) and a smooth family $\{g_t\}\subset \G_2$ such that $M=\bigcup_t g_t(C)\subset \S^6$ is a
$3$-dimensional  compact submanifold with boundary. Since $M$ is foliated by complex curves, every tangent space $T_pM$ contains a complex direction.

Note also that the upper bound in \eqref{eq_application_G2} is optimal. 
Indeed, let $p$ be a point on a 3-submanifold $N$ such that $\omega_p=0$ and $\Upsilon(T_pN)=1$ 
(e.g., $p=e_1$ and $T_pN=\spann(e_2,e_4,e_6)$), and take a sequence of relatively compact regions $M_r$ in $N$ converging to $p$. Then the previous proof shows
$$\lim_r\frac{1}{\vol(L)\vol(M_r)}\int_{\mathrm G_2} \chi( M_r\cap gL)\, dg = \frac{4}{5 \pi }.$$

\subsection{The kernel of the globalization map}

The goal of this section is to describe the kernel of the globalization map from $\Curv^{\SU(3)}$ onto $\calV_\lambda^6$.
We will use it to obtain a complete description of the local kinematic formulas for $\SU(3)$.

\begin{Corollary}
 $[N_{3,1}]_\lambda=0$.
\end{Corollary}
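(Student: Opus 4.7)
The plan is to pin down $[N_{3,1}]_\lambda$ to a one-parameter family via the filtration and the Euler--Verdier involution, and then eliminate the free parameter by multiplying by $t_\lambda$.

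First I would apply Corollary~\ref{coro_diagram} to $N_{3,1}\in\calC_3^{\G_2}$. Since $N_{3,1}$ lies in the kernel of the flat globalization (as recorded after the definition of the $\Delta_{k,q}$), its fiberwise flat globalization $\glob\circ\pi_3\circ\tau(N_{3,1})$ vanishes in $\Gamma(\Val_3^\infty(T\S^6))$. Commutativity of the diagram forces $[N_{3,1}]_\lambda$ to vanish in $\calV_3/\calV_4$, i.e. $[N_{3,1}]_\lambda\in\calV_4$. By Lemma~\ref{lemma_euler_verdier} one has $\sigma N_{3,1}=-N_{3,1}$, and since globalization commutes with $\sigma$ (both being built from the fiberwise antipodal map), the valuation $[N_{3,1}]_\lambda$ sits in the $(-1)$-eigenspace of $\sigma$ inside $\calV_\lambda^6\cap\calV_4$. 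That subspace is spanned by $\{\mu_{4,1}^\lambda,\mu_{4,2}^\lambda,\mu_{5,2}^\lambda,\mu_{6,3}^\lambda\}$, on which $\sigma$ is diagonal with eigenvalues $(-1)^k$; only $\mu_{5,2}^\lambda$ is odd, so $[N_{3,1}]_\lambda=c\,\mu_{5,2}^\lambda$ for some $c\in\C$.

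The key step is to show $t_\lambda\cdot N_{3,1}=0$ in $\Curv^{\SU(3)}\cong\calC_\lambda^6$. By Proposition~\ref{prop_t_action} it suffices to check $t^j\cdot N_{3,1}=0$ in the flat module for $j=1,3,5$. The case $j=5$ is trivial for dimensional reasons. For $j=1,3$, the product $t^j\cdot N_{3,1}$ lies in $\Curv_{3+j}^{\SU(3)}$, which by the explicit basis of $\Curv^{\SU(3)}$ established above is spanned solely by $\Delta$-type measures (no $N$, $\Psi$, or $\Phi$ appears in degrees $4$ or $6$); the flat globalization is therefore injective on this subspace, and $\glob_0(t^j\cdot N_{3,1})=t^j\cdot\glob_0(N_{3,1})=0$ forces $t^j\cdot N_{3,1}=0$.

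Globalizing the equation $t_\lambda\cdot N_{3,1}=0$ gives $t_\lambda\cdot[N_{3,1}]_\lambda=0$. On the other hand, combining \eqref{t3u} and \eqref{t4u} one computes $t_\lambda\cdot\mu_{5,2}^\lambda=\tfrac{15}{8}\mu_{6,3}^\lambda\neq 0$, whence $c=0$ and $[N_{3,1}]_\lambda=0$. The main obstacle is the second step: everything hinges on the vanishing $t\cdot N_{3,1}=0$ in the flat module, which itself rests on the (slightly subtle) observation that the classification of $\Curv^{\SU(3)}$ leaves no room in degree~$4$ for a curvature measure annihilated by flat globalization.
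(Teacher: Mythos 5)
Your argument is correct. The first half coincides with the paper's: both use Corollary~\ref{coro_diagram} (together with $[N_{3,1}]_0=0$) to place $[N_{3,1}]_\lambda$ in $\calV_4$, and the Euler--Verdier involution to cut the remaining possibilities down to $c\,\mu_{5,2}^\lambda$. Where you diverge is in killing the constant $c$. The paper evaluates on a totally geodesic $\S^5\subset\S^6$: by Lemma~\ref{lemma_transfer}\,$i)$ a curvature measure transferring into $\Curv_3$ vanishes on a boundaryless $5$-dimensional totally geodesic submanifold, so $[N_{3,1}]_\lambda(\S^5)=0$, while $[\Delta_{5,2}]_\lambda(\S^5)\neq0$. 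You instead argue algebraically: $t\cdot N_{3,1}$ lands in $\Curv_4^{\SU(3)}=\spann\{\Delta_{4,1},\Delta_{4,2}\}$, on which flat globalization is injective, so $t\cdot N_{3,1}=0$; Proposition~\ref{prop_t_action} then gives $t_\lambda\cdot N_{3,1}=0$, and since $t_\lambda\cdot\mu_{5,2}^\lambda=\tfrac{15}{8}\mu_{6,3}^\lambda\neq0$ you conclude $c=0$. (A small simplification: once $t\cdot N_{3,1}=0$ the cases $j=3,5$ in Proposition~\ref{prop_t_action} are automatic, so you need not treat them separately.) Both routes are sound and of comparable length; the paper's is more geometric and self-contained within Lemma~\ref{lemma_transfer}, whereas yours trades the evaluation on a test submanifold for the structural observation that $\ker\glob_0$ meets $\Curv_4^{\SU(3)}$ trivially — an observation that is also implicitly exploited elsewhere in the paper (e.g.\ in Lemma~\ref{lemma_action_of_phi_S7}), so your method fits naturally into the authors' toolkit.
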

\begin{proof}Since $[N_{3,1}]_0=0$, by Corollary \ref{coro_diagram} and Euler-Verdier involution, 
we conclude that $[N_{3,1}]_\lambda$ is a multiple of $[\Delta_{5,2}]_\lambda$.  For a totally geodesic $\S^5\subset\S^6$, item $i)$ 
in Lemma \ref{lemma_transfer}, yields $[N_{3,1}]_\lambda(\S^5)=0$, while $[\Delta_{5,2}]_\lambda(\S^5)\neq 0$. The statement follows.
\end{proof}

To determine the kernel of the globalization map it will be sufficient to evaluate our invariant valuations 
on  geodesic balls of dimensions $4$ and $6$.

\begin{Lemma} \label{lemma_lagrangian_restriction}
If $\S^4\subset \S^6$ is a totally geodesic $4$-sphere, then
$$ [\Delta_{4,1}]_\lambda(\S^4) = \frac{32\pi^2}{15\lambda^2} \qquad \text{and}\qquad  [\Delta_{4,2}]_\lambda(\S^4) = \frac{8\pi^2}{15\lambda^2}.$$
If $B(r)\subset S_\lambda^4=  \S^6 \cap \langle e_1,\ldots, e_5\rangle$ is the geodesic ball of radius $0\leq r\leq \pi/\sqrt\lambda$ centered at $e_1$, then
  $$ \left [\Delta_{41} - 4 \Delta_{42} \right]_\lambda (B(r))  = -\frac{2\pi^2}{\lambda^2} \sin(\sqrt\lambda r)^4\, \cos(\sqrt\lambda r).$$
\end{Lemma}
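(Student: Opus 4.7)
The plan is to apply Lemma~\ref{lemma_transfer}(ii) to $\Delta_{4,q}\in\Curv_4^{\SU(3)}\cong \calC_4(\S^6)^{\G_2}$ and to the $4$-dimensional totally geodesic sets $\S^4$ and $B(r)\subset\S^4$. This reduces the evaluation of $[\Delta_{4,q}]_\lambda$ on such a set to an integral of the Klain function of the flat globalization $\mu_{4,q}=[\Delta_{4,q}]_0\in\Val_4^{\SU(3)}$ against the tangent planes. Any $4$-plane $E\subset\C^3$ has $\dim(E\cap JE)\geq 2$, so its K\"ahler angles are $(0,\theta)$ with $\theta\in[0,\pi/2]$. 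The normalization $\mu_{4,2}^{\C^3}|_{\C^2}=\vol_4$ together with $\tfrac12\omega^2|_E=\cos\theta_1\cos\theta_2\,\vol_E$ yields $\Kl_{\mu_{4,2}}(E)=\cos^2\theta$; since $\mu_4=\mu_{4,1}+\mu_{4,2}$, also $\Kl_{\mu_{4,1}}(E)=\sin^2\theta$, whence $\Kl_{\mu_{4,1}-4\mu_{4,2}}(E)=1-5\cos^2\theta$.

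The core step is to identify the K\"ahler angle $\theta(x)$ of $T_x\S^4$ as a function of $x$. Writing $y=\sqrt\lambda\, x=(a_1,\dots,a_5)$, a unit vector in $V=\spann(e_1,\dots,e_5)$, I claim $\cos^2\theta(x)=a_1^2$. By \eqref{eq Kahler form}, $\omega_x=\iota_y\phi$, and \eqref{eq associative 3-form} yields $\phi|_V=e^{123}+e^{145}=e^1\wedge\alpha$ with $\alpha=e^{23}+e^{45}$, so
$$(\iota_y\phi)|_V = a_1\alpha - e^1\wedge\iota_y\alpha.$$
Using $\alpha^2=2\,e^{2345}$, direct expansion gives
$$(\iota_y\phi)^2|_V = a_1^2\alpha^2 - 2a_1\, e^1\wedge\alpha\wedge\iota_y\alpha = 2a_1\,\iota_y\bigl(e^1\wedge e^2\wedge e^3\wedge e^4\wedge e^5\bigr).$$
Restricting to $W=T_x\S^4=y^\perp\cap V$ yields $\tfrac12\omega_x^2|_W=a_1\,\vol_W$; comparison with $\tfrac12\omega^2|_E=\cos\theta_1\cos\theta_2\,\vol_E$ and $\theta_1=0$ gives $\cos^2\theta(x)=a_1^2$.

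The remaining integrations are routine. By Lemma~\ref{lemma_transfer}(ii) and spherical symmetry,
$$[\Delta_{4,2}]_\lambda(\S^4) = \int_{\S^4}a_1^2\,dx = \frac{1}{\lambda^2}\cdot\frac{\vol(S^4)}{5} = \frac{8\pi^2}{15\lambda^2},$$
so $[\Delta_{4,1}]_\lambda(\S^4) = \vol(\S^4) - \tfrac{8\pi^2}{15\lambda^2} = \tfrac{32\pi^2}{15\lambda^2}$. For $B(r)$, geodesic polar coordinates around $e_1/\sqrt\lambda$ on $\S^4$ give $a_1=\cos(\sqrt\lambda\rho)$ and $dx=\lambda^{-3/2}\sin^3(\sqrt\lambda\rho)\,d\rho\,d\sigma_{S^3}$. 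With $\vol(S^3)=2\pi^2$ and $\frac{d}{dt}(-\sin^4 t\cos t)=(1-5\cos^2 t)\sin^3 t$,
$$[\Delta_{4,1}-4\Delta_{4,2}]_\lambda(B(r)) = \frac{2\pi^2}{\lambda^2}\int_0^{\sqrt\lambda r}(1-5\cos^2 t)\sin^3 t\,dt = -\frac{2\pi^2}{\lambda^2}\sin^4(\sqrt\lambda r)\cos(\sqrt\lambda r).$$

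The only real obstacle is the K\"ahler-angle identity of the second paragraph, which relies crucially on the fact that $\phi|_V$ is of the decomposable form $e^1\wedge\alpha$; the remaining ingredients are classical spherical integration.
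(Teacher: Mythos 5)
Your proof is correct and follows essentially the same route as the paper: both reduce the evaluation to an integral over $\S^4$ of the Klain functions $\Kl_{\mu_{4,1}}=\sin^2\theta$, $\Kl_{\mu_{4,2}}=\cos^2\theta$ (from \cite{hig}) of the flat globalizations, via Lemma~\ref{lemma_transfer}(ii), and then perform the same elementary spherical integrals. The only (immaterial) difference is how the K\"ahler angle of $T_x\S^4$ is identified: you compute $\tfrac12\omega^2$ on $T_x\S^4$ using the decomposability of $\phi|_{\langle e_1,\dots,e_5\rangle}=e^1\wedge(e^{23}+e^{45})$, whereas the paper uses the $\SU(2)$-symmetry to reduce to the points $\lambda^{-1/2}(\cos\theta\, e_1+\sin\theta\, e_2)$ and evaluates $\omega$ on the normal $2$-plane $\langle e_6,e_7\rangle$ — both yield $\cos^2\theta=a_1^2$.
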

\begin{proof}
Since $\mathrm G_2$ acts transitively on the Grassmannians of $2$ and $5$-dimensional spaces of $\im\mathbb O$, it makes no difference which $4$-dimensional totally geodesic sphere we choose. 
Suppose $\S^4 = \S^6 \cap \langle e_1,\ldots, e_5\rangle$. Since $\SU(2)\subset \mathrm G_2$
acts on $\S^4$ fixing $e_1$ and is transitive on $\S^4\cap \langle e_2,\ldots,e_5\rangle$  
it will be enough to consider the points $p=\lambda^{-1/2}(\cos\theta e_1 + \sin\theta e_2)$ on $\S^4$. With respect to the dual basis $e^6,e^7$ for the  basis $e_6,e_7$ of  $E^\bot=(T_p\S^4)^\bot$ we have $\omega|_{E^\bot}=\cos\theta\, e^6\wedge e^7$. Hence, by \cite[eqs. (35) and (36)]{hig},
\[
 \Kl_{\glob(\Delta_{4,1})_p}(E)= \sin^2\theta,\qquad \Kl_{\glob(\Delta_{4,2})_p}(E)= \cos^2\theta. 
\]
Finally,
\begin{align*}
 [\Delta_{4,1}]_\lambda (B(r))&=\vol( S^3)\int_0^{\sqrt\lambda r} \sin^2\theta \frac{\sin^3\theta}{\lambda^2}d\theta, \\
 [\Delta_{4,2}]_\lambda (B(r))&=\vol( S^3)\int_0^{\sqrt\lambda r} \cos^2\theta \frac{\sin^3\theta}{\lambda^2}d\theta. 
\end{align*}
\end{proof}

\begin{Lemma}\label{lemma_balls} 
Let  $\S^4= \S^6\cap \langle e_1,\ldots,e_5\rangle$ be a totally geodesic sphere and let 
$B(r)\subset S_\lambda^4$ be the geodesic ball of radius $0\leq r\leq \pi/\sqrt\lambda$ centered at $e_1$. 
 Then 
 \begin{align*}
 [N_{1,0}]_\lambda(B(r)) &  = -\frac{\pi}{\lambda^{1/2}}\cos(\sqrt\lambda r)^2\sin(\sqrt\lambda r)^3\\
 [N_{2,0}]_\lambda(B(r)) &  =  -\frac{\pi}{\lambda}\cos(\sqrt\lambda r) \sin(\sqrt\lambda r)^4\\
   [\Phi_{2}]_\lambda(B(r)) & =\frac{\pi}{ \lambda} \cos(\sqrt\lambda r)\sin(\sqrt\lambda r)^4\\
      [\Psi_2]_\lambda(B(r)) &  =\frac{4\pi}{3\lambda}\cos(\sqrt\lambda r)^2\sin(\sqrt\lambda r)^3\\
     [\Phi_{3}]_\lambda(B(r)) & =-\frac{\pi^2}{ \lambda^{3/2}} \sin(\sqrt\lambda r)^5  \\
  [\Psi_{3}]_\lambda(B(r)) & =\frac{\pi^2}{ \lambda^{3/2}}\cos(\sqrt\lambda r) \sin(\sqrt\lambda r)^4 \\
  [\Delta_{3,0}]_\lambda (B(r)) & = \frac{\pi^2}{ \lambda^{3/2}} \sin(\sqrt\lambda r)^5 \\
    [\Delta_{3,1}]_\lambda (B(r)) & = \frac{ \pi^2}{ \lambda^{3/2}} \cos(\sqrt\lambda r)^2\sin(\sqrt\lambda r)^3 
\end{align*}
\end{Lemma}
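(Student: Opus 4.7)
My plan is direct integration on the normal cycle. Since $B(r)$ has codimension $2$ in $\S^6$, and $e_6, e_7$ are tangent to $\S^6$ at every point of $\S^4$ (as $p\perp e_6, e_7$ for $p\in\S^4$), the normal $2$-plane to $\S^4$ in $\S^6$ is the constant subspace $\langle e_6, e_7\rangle$. Hence $N(B(r))\subset S\S^6$ decomposes as $N_{\mathrm{int}}\cup N_\partial$: $N_{\mathrm{int}}$ fibers over $\mathrm{int}\, B(r)$ by the unit circle in $\langle e_6, e_7\rangle$, while $N_\partial$ fibers over $\partial B(r)\cong S^3$ by the $2$-dimensional hemisphere of outward unit covectors.

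The first key observation is that $N_{\mathrm{int}}$ contributes zero for each of the eight curvature measures. Using that $e_6, e_7$ are parallel along $\S^4$, one checks that the tangent space to $N_{\mathrm{int}}$ splits via the Riemannian connection into $4$ horizontal and $1$ vertical direction. Each defining $(n-1)$-form of the eight curvature measures has vertical bidegree at least $2$, so its pullback to $N_{\mathrm{int}}$ vanishes identically. The entire integral is therefore carried by $N_\partial$.

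Next, exploiting the $\SU(2)\subset \mathrm G_2$ subgroup that stabilizes $e_1$ and $e_2$, I fix $\theta_0 = \sqrt\lambda\, r$ and the representative boundary point $p_0 = \lambda^{-1/2}(\cos\theta_0\, e_1 + \sin\theta_0\, e_2)$. The outward hemisphere at $p_0$ is parametrized by
\[
\xi(s,\varphi) = \cos s\cdot \nu_0 + \sin s\cdot \eta(\varphi), \qquad s\in[0,\pi/2],\ \varphi\in[0,2\pi],
\]
with $\nu_0 = -\sin\theta_0\, e_1 + \cos\theta_0\, e_2$ and $\eta(\varphi) = \cos\varphi\, e_6 + \sin\varphi\, e_7$. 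I lift to $\mathcal{F}_{\SU(3)}$ by setting $\tilde e_1 = \xi$ and completing the frame via $J = \sqrt\lambda\, p\cdot$ and the associative $3$-form $\phi$; the octonionic multiplication table yields explicit expressions for $\tilde e_{\bar 1}, \tilde e_2, \tilde e_{\bar 2}, \tilde e_3, \tilde e_{\bar 3}$ as combinations of $e_3, e_4, e_5, e_6, e_7$, and Lemma \ref{lemma pullback frame bundle} then delivers the pullbacks of the solder and connection forms (hence of $\alpha, \beta, \gamma, \theta_j, \chi_j$).

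In each of the eight cases, the integrand factors as a trigonometric polynomial in $(\theta_0, s, \varphi)$ times $dv_{S^3}\wedge ds\wedge d\varphi$, and the claimed formulas follow by elementary integration over $s\in[0,\pi/2]$, $\varphi\in[0,2\pi]$, and the $S^3$-orbit (of volume $2\pi^2\sin^3\theta_0/\lambda^{3/2}$), together with the normalization factors appearing in the definitions of $\Delta_{k,q}, N_{k,q}, \Psi_k, \Phi_k$. The main obstacle I anticipate is the frame-lifting step: the map $(s,\varphi)\mapsto (\tilde e_2, \tilde e_3)$ is nonlinear, and the non-K\"ahlerness of $\S^6$ encoded by Proposition \ref{Prop S6 SU3} contributes extra $\sqrt\lambda$-terms to the pullback of the connection forms. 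These corrections are precisely what produces the curvature-dependent factors $\cos(\sqrt\lambda r)$ in most of the eight formulas, and account for the differing orders of vanishing as $\lambda\to 0$, consistent with $[\Phi_2]_0, [\Psi_2]_0, [\Psi_3]_0$ lying in the kernel of globalization while $[\Phi_3]_0 = \phi_3$ evaluated on a $4$-disc in the standard $\C^2$ vanishes because $\Upsilon$ restricts to zero there.
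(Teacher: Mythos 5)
Your plan coincides with the paper's proof: the paper likewise discards the part of the normal cycle over the interior of $B(r)$ (by citing Lemma~\ref{lemma_transfer}(i) with $\dim B(r)=4\neq k$, which is exactly the horizontal/vertical bidegree count you re-derive), uses the $\SU(2)$-symmetry to work at one boundary point, parametrizes the outward hemisphere there, lifts to an explicit $\SU(3)$-frame, and pulls back the solder and connection forms via Lemma~\ref{lemma pullback frame bundle} before integrating. The only small inaccuracy is attributional: the $\cos(\sqrt\lambda r)$ factors come from the second fundamental form of $\partial B(r)$ entering the pulled-back connection forms, not specifically from the nearly-K\"ahler correction terms of Proposition~\ref{Prop S6 SU3}.
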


\begin{proof} Let $S_+^2=\{v\in S^2\colon v_1\geq 0 \}$ and consider   the map $F\colon S^3\times S_+^2\to N(B(r)) \cap  \pi^{-1} (\partial B(r) )$ given by 
$$F(u_1,u_2,u_3, u_4; v_1,v_2,v_3)=(q,\xi)$$ 
with 
\begin{align*} q & = \lambda^{-1/2}\left( \cos\rho\, e_1 + \sin\rho\, \sum_{i=1}^4 u_i e_{i+1} \right) \\
\xi & =v_1 \left( -\sin\rho\, e_1 + \cos\rho\, \sum_{i=1}^4 u_i e_{i+1} \right) +  v_2e_6 + v_3 e_7.
\end{align*}
where $\rho = \sqrt\lambda r$ for brevity. 

Let $\Phi\in \Curv_k^{\SU(3)}$ be represented by $\omega\in\Omega^5( S\C^3) \cong \Omega^5( S\S^6)$. By Lemma~\ref{lemma_transfer}, if $k\neq 4$, then 
$$[\Phi]_\lambda (B(r))= \int_{S^3\times S_+^2} F^*\omega.$$

Since $\SU(2)$ acts transitively on $\partial B(r)$ it is enough to compute $F^*\omega$ at a point $(u,v)$ with
\begin{equation}\label{eq special point} u_1=1, u_2=u_3=u_4=0 \qquad \text{and}  \qquad v_3=0.\end{equation}
Locally around this point choose a  lift $\bar F(u,v)= (b_1, Jb_1, b_2,Jb_2,b_3,Jb_3)$ of $F$ with values in $\mathcal{F}_{\SU(3)} (\S^6)$ such that
at \eqref{eq special point} we have 
\begin{align*}
b_1&=v_1 (-\sin\rho\,  e_1 + \cos\rho\,  e_2) + v_2 e_6\\
J b_1&  =  v_2 (- \sin\rho\,  e_4 + \cos\rho\,  e_7 ) +  v_1 e_3\\
b_2& =  v_1 (- \sin\rho\,  e_4 + \cos\rho\,  e_7 ) -  v_2 e_3\\
J b_2&= v_2 (-\sin\rho\,  e_1 + \cos\rho\,  e_2) - v_1 e_6\\
b_3&=e_5\\
Jb_3& = -\cos\rho\,  e_4 - \sin\rho\, e_7\end{align*}
Using Lemma~\ref{lemma pullback frame bundle} we  compute
\begin{align*}
 \bar F^* \omega_{\bar1} & = \lambda^{-1/2}\sin\rho\, (-v_2\sin\rho\, du_3 + v_1du_2)\\
 \bar F^* \omega_{2}  & = \lambda^{-1/2}\sin\rho\, (-v_1\sin\rho\, du_3 - v_2du_2)\\
 \bar F^* \omega_{\bar2} & =0\\
  \bar F^* \omega_{3}  & = \lambda^{-1/2}\sin\rho\, du_4\\
 \bar F^* \omega_{\bar3} & = - \lambda^{-1/2}\cos\rho\,\sin\rho\, du_3
\end{align*}
and 
\begin{align*}
 \bar F^*\varphi_{\bar1,1} & =\cos\rho\, v_2 dv_3 + v_1 \cos\rho\, (-\sin\rho\, v_2 du_3 +v_1 du_2) \\ 
 \bar F^*\varphi_{2,1}  & =v_1 \cos\rho\, dv_3 +  v_1 \cos\rho\, (-\sin\rho\, v_1 du_3 -v_2 du_2)\\
 \bar F^*\varphi_{\bar 2,1} & =  v_2 dv_1 -v_1 dv_2 \\
 \bar F^*\varphi_{3,1}  & =v_1\cos\rho\, du_4\\
 \bar F^*\varphi_{\bar3,1} & = -\sin\rho\, dv_3 -v_1 \cos^2 \rho\, du_3
\end{align*}

Let us use these relations to compute for example $[\Delta_{3,0}]_\lambda(B(r))$. Substituting 
$$v_1=\cos\varphi,\quad v_2 = \sin\varphi, \quad dv_3= \sin\varphi d\theta$$
we obtain 
\begin{align*}[\Delta_{3,0}]_\lambda(B(r)) & = \frac{1}{8\pi} \int_{S^3\times S_+^2}  \bar F^*( \beta \wedge  \theta_1 \wedge \theta_1)\\
& = \frac{1}{8\pi}\vol(S^3)\int_0^{2\pi}\int_0^{\frac{\pi}{2}} \frac{2}{\lambda^{\frac32}}\sin(\sqrt\lambda r)^5\sin\varphi   d\varphi d\theta,  
\end{align*}which yields the stated value. The other identities follow similarly. 
\end{proof}

\begin{Lemma} \label{lemma_6-balls}
Let $B(r)\subset \S^6$ be a 6-dimensional geodesic ball. Then
the curvature measures $N_{1,0}$, $\Psi_2$, $\Phi_3$ and $\Delta_{3,0}-\frac23\Delta_{3,1}$ vanish identically on $B(r)$.
\end{Lemma}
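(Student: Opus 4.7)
The plan is to use the $\SU(3)$-symmetry of $B(r)$ to reduce the problem to the vanishing of an explicit top form on $S^5$ at a single point. Each of the four curvature measures has degree $k<6$ and is represented by a differential form with zero $\eta$-part, so the Borel measure $[\Phi]_\lambda(B(r),\cdot)$ is supported on $\partial B(r)$. The stabilizer of the centre of $B(r)$ in $\G_2$ is $\SU(3)$, acting transitively on $\partial B(r)\cong S^5$, so the measure is $\SU(3)$-invariant and therefore a scalar multiple of the round volume. It suffices to show the scalar is zero, which by the same symmetry reduces to showing that the pullback of the defining $5$-form to $N(B(r))$ vanishes at any one boundary point.

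Setting $\rho=\sqrt\lambda\, r$, parametrize $N(B(r))$ by $v\in S^5\subset\langle e_2,\dots,e_7\rangle$ via $q(v)=\lambda^{-1/2}(\cos\rho\, e_1+\sin\rho\, v)$ with outward normal $\xi(v)=-\sin\rho\, e_1+\cos\rho\, v$, and lift to $\bar F\colon S^5\to\mathcal F_{\SU(3)}$ with $b_1=\xi$. At the test point $v=e_2$ choose $b_2=e_4$ and $b_3=-\sin\rho\, e_5+\cos\rho\, e_6$; the octonion multiplication table then yields $Jb_1=e_3$, $Jb_2=\cos\rho\, e_5+\sin\rho\, e_6$, $Jb_3=e_7$ and $\phi(b_1,b_2,b_3)=1$. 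Since $\partial B(r)$ is a geodesic sphere, $\nabla_t b_1|_{t=0}=\cos\rho\,\dot\gamma$ for any test curve $\gamma$ in $S^5$, while $d(\pi\circ\bar F)=\lambda^{-1/2}\sin\rho\,\dot\gamma$, so Lemma~\ref{lemma pullback frame bundle} gives
\begin{equation*}
\bar F^*\alpha=0,\qquad \bar F^*\cf_{i,1}=\sqrt\lambda\cot\rho\cdot\bar F^*\tf_i\quad(i\in\{\bar1,2,\bar2,3,\bar3\}).
\end{equation*}
The first identity is the Legendrian property of normal cycles; the second reflects that the second fundamental form of $\partial B(r)\subset\S^6$ is $\sqrt\lambda\cot\rho$ times the identity.

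Substituting these into the definitions \eqref{def_theta0}--\eqref{def_chi2} collapses the pullbacks to
\begin{equation*}
\gamma=\sqrt\lambda\cot\rho\cdot\beta,\ \theta_0=\lambda\cot^2\rho\cdot\theta_2,\ \theta_1=2\sqrt\lambda\cot\rho\cdot\theta_2,\ \theta_s=0,
\end{equation*}
\begin{equation*}
\chi_0=\lambda\cot^2\rho\cdot\chi_2,\qquad \chi_1=2\sqrt\lambda\cot\rho\cdot\chi_2.
\end{equation*}
For $N_{1,0}\propto \Gamma_{1,0}-B_{1,0}$ both summands reduce to $c_{1,0}\lambda^2\cot^4\rho\cdot\beta\wedge\theta_2^2$, and the difference vanishes; for $\Delta_{3,0}-\tfrac{2}{3}\Delta_{3,1}$ the substitution produces $(4c_{3,0}-\tfrac{2}{3}c_{3,1})\lambda\cot^2\rho\cdot\beta\wedge\theta_2^2=0$, because $c_{3,0}=1/(8\pi)$ and $c_{3,1}=3/(4\pi)$.

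For $\Psi_2$ and $\Phi_3$, the reductions yield scalar multiples of $\beta\wedge\theta_2\wedge\chi_2$ and $\beta\wedge\chi_2\wedge\chi_2$ respectively. These factors live on the four-dimensional subspace $\langle b_2,Jb_2,b_3,Jb_3\rangle\cong\C^2$, on which $\theta_2$ is the standard K\"ahler form (bidegree $(1,1)$) and $\chi_2$ is the holomorphic volume form (bidegree $(2,0)$); the products then have bidegrees $(3,1)$ and $(4,0)$ and so vanish on a complex surface. The only step requiring nontrivial bookkeeping is the identity $\bar F^*\cf_{i,1}=\sqrt\lambda\cot\rho\cdot\bar F^*\tf_i$, which comes from the frame differentiation described above; everything else is substitution plus a bidegree argument on $\C^2$.
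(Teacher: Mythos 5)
Your proof is correct, and it takes a genuinely different route from the paper's. The paper first reduces the statement about curvature measures to one about their globalizations (via transitivity of the stabilizer on $\partial B(r)$), proves the $\lambda=0$ case abstractly using the derivation operator $\Lambda$ of Alesker's hard Lefschetz theory ($\Lambda\phi_3=0$ by weight reasons, $\Lambda^2(\mu_{3,0}-\tfrac23\mu_{3,1})=0$ by a lemma of Bernig--Fu, so the ball functions are polynomials in $r$ of degree $\leq 1$, yet must be proportional to $r^3$ by homogeneity), and then transfers to $\lambda>0$ using total umbilicity of $\partial B(r)$. You instead use umbilicity directly, at the level of forms: the relation $\bar F^*\cf_{i,1}=\sqrt\lambda\cot\rho\cdot\bar F^*\tf_i$ collapses every invariant form on the normal cycle of the ball to an expression in $\beta$, $\theta_2$, $\chi_2$ alone, after which the four vanishings follow from an explicit cancellation of constants (for $N_{1,0}$ and $\Delta_{3,0}-\tfrac23\Delta_{3,1}$) and a bidegree count on $\C^2$ (for $\Psi_2$ and $\Phi_3$); I checked the frame at your test point, the proportionality constant, the substitutions, and the values $c_{3,0}=1/(8\pi)$, $c_{3,1}=3/(4\pi)$, and all are right. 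Your argument is more computational but entirely self-contained, handles all $\lambda\geq0$ uniformly, and avoids both the hard Lefschetz input and the $\lambda\to0$ transfer; the paper's argument is shorter on the page but leans on more machinery. One small presentational point: when you invoke $\SU(3)$-invariance to reduce to a single point, it is cleaner to say that the pullback of the defining $5$-form to $N(B(r))\cong S^5$ is an invariant form on a homogeneous space and hence vanishes identically once it vanishes at one point — which is exactly what your computation shows.
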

\begin{proof}Since $\G_2$ acts transitively on $\partial B(r)$, it is enough to prove that the globalizations $[N_{1,0}]_\lambda$, $[\Psi_2]_\lambda$, $[\Phi_3]_\lambda$ and $[\Delta_{3,0}-\frac23\Delta_{3,1}]_\lambda$ 
vanish on $B(r)$. Let us show this first for $\lambda=0$. In this case $N_{1,0},\Psi_2$ fulfill the statement trivially as they globalize to 0. 
To deal with $\phi_3$ and $\mu_{3,0}-\frac23\mu_{3,1}$ we consider the derivation operator $\Lambda\colon \Val_k\rightarrow \Val_{k-1}$ introduced in \cite{alesker_hard_lefschetz}. 
Since there are no elements of weight 2 in $\Val_2^{\SU(3)}$ we must have $\Lambda\phi_3=0$. By \cite[Lemma 5.2]{hig} we have $\Lambda^2(\mu_{3,0}-\frac23\mu_{3,1})=0$. 
We deduce that $\phi_3(B(r))$ and $(\mu_{3,0}-\frac23\mu_{3,1})(B(r))$ are polynomials in $r$ of respective degrees 0 and 1 at most. On the other hand, since $\phi_3, \mu_{3,0},\mu_{3,1}$ are homogeneous of degree 3, these polynomials must be proportional to $r^3$. This proves the statement for $\lambda=0$.

For a general $\lambda$ let us note that $\partial B(r)$ is totally umbilical. Hence, for any curvature measure $\Psi\in \Curv_k^{\SU(3)}$  with $k<6$ we have
\[
 [\Psi]_\lambda(B(r))=[\Psi]_0(B_0(r))
\]
where $B_0(r)$ is any ball in $\C^3$ with the same normal curvature as $B_r\subset\S^6$. Thus, the lemma follows from the case $\lambda=0$.
\end{proof}

\begin{Proposition}\label{prop_ker_glob} The kernel of the globalization map $\glob_\lambda\colon \Curv^{\mathrm{SU}(3)}\rightarrow \mathcal \calV_\lambda^6$ is given by the following relations:
 \begin{align*}
 [N_{2,0}]_\lambda & = \frac{\lambda}{2\pi}\left [\Delta_{4,1} - 4 \Delta_{4,2} \right]_\lambda\\
  [\Phi_2]_\lambda & = -\frac{\lambda}{2\pi} \left [\Delta_{4,1} - 4 \Delta_{4,2} \right]_\lambda\\
  [\Psi_3]_\lambda & = -\frac{\sqrt\lambda}{2}\left [\Delta_{4,1} - 4 \Delta_{4,2} \right]_\lambda \\
   [N_{1,0}]_\lambda&=\frac{3\lambda}{2\pi}\left[\Delta_{3,0}-\frac{2}{3}\Delta_{3,1}+\re\Phi_3\right]_\lambda\\
    [\Psi_2]_\lambda & = -\frac{2\sqrt\lambda}{\pi} \left [\Delta_{3,0} -\frac{2}{3}\Delta_{3,1} + \Phi_3 \right]_\lambda\\
     [N_{3,1}]_\lambda&=0
 \end{align*}
In particular $N_{3,1},\im\Phi_2,\im\Psi_3, N_{2,0}+\Phi_2\in\ker\glob_\lambda$ for all $\lambda$. 
\end{Proposition}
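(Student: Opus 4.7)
The plan is to establish each of the six displayed relations in turn; the ``in particular'' assertions will then follow immediately by taking imaginary parts of (or summing) the first three relations. The case $[N_{3,1}]_\lambda=0$ was already disposed of as a corollary earlier in this subsection, using Lemma~\ref{lemma_transfer}(i) with a totally geodesic $\S^5$.

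For each remaining $\Psi\in\{N_{2,0},\Phi_2,\Psi_3,N_{1,0},\Psi_2\}$, the first step is to confine $[\Psi]_\lambda$ to a small subspace of $\calV_\lambda^6$. Since $\Psi$ globalizes to zero in the flat case (cf.~\cite{bernig_sun}), Corollary~\ref{coro_diagram} forces $[\Psi]_\lambda\in\calV_{k+1}(\S^6)^{\G_2}$, where $k=\deg\Psi$. The Euler--Verdier involution (Lemma~\ref{lemma_euler_verdier}) commutes with globalization and restricts $[\Psi]_\lambda$ to the appropriate $\sigma$-eigenspace, while for the real curvature measures $N_{k,q}$ the antipodal symmetry of Lemma~\ref{lemma_conjugation} equates the $\phi_3^\lambda$- and $\overline{\phi_3^\lambda}$-coefficients. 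The surviving ansatz involves three unknowns in the degree-$4$ cases ($N_{2,0},\Phi_2,\Psi_3$) and at most five in the degree-$3$ cases ($N_{1,0},\Psi_2$).

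The unknowns are then determined by evaluation on carefully chosen test submanifolds. Lemma~\ref{lemma_transfer}(i) applied with a totally geodesic $\S^4\subset\S^6$ gives $[\Psi]_\lambda(\S^4)=0$, and combined with the values of $[\Delta_{4,q}]_\lambda(\S^4)$ from Lemma~\ref{lemma_lagrangian_restriction} forces the $\mu_{4,q}^\lambda$-coefficients to appear in the combination $\Delta_{4,1}-4\Delta_{4,2}$. Evaluation on the closed manifold $\S^6$ exploits that every $[\omega,0]$-type curvature measure vanishes there, while $\mu_{6,3}^\lambda(\S^6)=\vol(\S^6)$; this eliminates the $\mu_{6,3}^\lambda$-coefficient in the degree-$4$ relations. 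The overall scale is fixed by evaluation on the $4$-ball $B^4(r)\subset\S^4$, using Lemma~\ref{lemma_balls} for the left-hand sides and Lemma~\ref{lemma_lagrangian_restriction} for $[\Delta_{4,1}-4\Delta_{4,2}]_\lambda(B^4(r))$. Finally, for $[N_{1,0}]_\lambda$ and $[\Psi_2]_\lambda$, evaluation on a $6$-ball $B^6(r)\subset\S^6$ via Lemma~\ref{lemma_6-balls} (where $N_{1,0}$, $\Psi_2$, $\Phi_3$ and $\Delta_{3,0}-\tfrac{2}{3}\Delta_{3,1}$ all vanish) yields a polynomial identity in $r$; the linear independence of $[\Delta_{3,1}]_\lambda(B^6(r))$ and $[\Delta_{5,2}]_\lambda(B^6(r))$ as functions of $r$ then kills the $\mu_{5,2}^\lambda$-coefficient and pins down the combination $\Delta_{3,0}-\tfrac{2}{3}\Delta_{3,1}$ on the right-hand side.

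The main obstacle is the separation of the $\phi_3^\lambda$- and $\overline{\phi_3^\lambda}$-coefficients in $[\Psi_2]_\lambda$, since $[\Phi_3]_\lambda$ takes real values on all test submanifolds listed above. To resolve this, note that $[\im\Psi_2]_\lambda$ is real and antisymmetric under the antipodal map $A^*$, while the real $A^*$-antisymmetric subspace of $\calV_\lambda^6$ is spanned by $\im\phi_3^\lambda$; hence $[\im\Psi_2]_\lambda=C\,\im\phi_3^\lambda$ for some real constant $C$. The value of $C$ is then computed by evaluating both sides on a small geodesic $3$-ball whose tangent $3$-plane $E\subset T_p\S^6\cong\C^3$ has K\"ahler angle $\theta\in(0,\pi/2)$ and phase outside $\{0,\pi/2\}$, so that $\Upsilon^2(E)\in\C$ has nonzero imaginary part and Lemma~\ref{lemma_transfer}(ii) produces the missing independent constraint, singling out $\Phi_3$ rather than $\overline{\Phi_3}$ in the stated expression.
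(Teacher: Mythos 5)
Most of your argument runs exactly parallel to the paper's proof: the reduction of each $[\Psi]_\lambda$ to a short ansatz via the vanishing of the flat globalization, Corollary~\ref{coro_diagram}, the Euler--Verdier involution (Lemma~\ref{lemma_euler_verdier}) and, for the real measures, Lemma~\ref{lemma_conjugation}; and then the determination of the coefficients by evaluating on a totally geodesic $\S^4$, on $\S^6$ itself, and on the geodesic balls of Lemmas~\ref{lemma_lagrangian_restriction}, \ref{lemma_balls} and \ref{lemma_6-balls}. All of that is correct and is what the paper does (the paper kills the $\mu_{5,2}^\lambda$-coefficient by evaluating on a totally geodesic $\S^5$ rather than by your linear-independence argument on $6$-balls, but both work). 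The one point where you genuinely diverge is the crux: separating the $\phi_3^\lambda$- from the $\overline{\phi_3^\lambda}$-coefficient in $[\Psi_2]_\lambda$, which none of the listed evaluations can detect because $[\Phi_3]_\lambda$ and $[\overline{\Phi_3}]_\lambda$ take the same (real or zero) values on every one of those test bodies. The paper resolves this structurally: $\Psi_2$ has weight $1$, and the weight bookkeeping for the Rumin differential in Proposition~\ref{prop_rumin_weight} rules out a weight $-2$ component, forcing the $\overline{\Phi_3}$-coefficient to vanish.

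Your substitute for that step has a gap. The reduction $[\im\Psi_2]_\lambda=C\,\im\phi_3^\lambda$ via the $A^*$-eigenspace decomposition is sound, but the evaluation that is supposed to determine $C$ is neither carried out nor correctly justified: Lemma~\ref{lemma_transfer}(ii) computes $\Psi(P,\cdot)$ by a Klain-function integral only when $\deg\Psi=\dim P$, whereas $\im\Psi_2\in\Curv_2$ and your test body $P$ is $3$-dimensional. For such a $P$ (a geodesic ball in a totally geodesic $3$-sphere with a tangent plane of non-real $\Upsilon^2$), part~(i) of that lemma only tells you that $\im\Psi_2(P,\cdot)$ is concentrated on $\partial P$; computing $[\im\Psi_2]_\lambda(P)$ therefore requires parametrizing the normal cycle over $\partial P$ and integrating the invariant $5$-form explicitly --- a computation of the same nature as, but not contained in, Lemma~\ref{lemma_balls}, and one that is precisely the ``second-order'' contribution that the Rumin differential encodes. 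Until that integral is actually performed (or replaced by the weight argument of Proposition~\ref{prop_rumin_weight}), the choice of $\Phi_3$ rather than $\overline{\Phi_3}$ in the fifth displayed relation, and hence that relation itself, remains unproved.
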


\begin{proof} From  $[N_{2,0}]_0=[\Phi_2]_0=[\Psi_3]_0=0$, Corollary \ref{coro_diagram}, Lemma \ref{lemma_euler_verdier},  Lemma~\ref{lemma_transfer} 
and the first part of Lemma~\ref{lemma_lagrangian_restriction} we conclude that  
$  [N_{2,0}]_\lambda$, $[\Phi_2]_\lambda$,  $[\Psi_3]_\lambda$, and $\left [\Delta_{4,1} - 4 \Delta_{4,2} \right]_\lambda$ must be proportional.
From Lemma~\ref{lemma_balls} and the second part of Lemma~\ref{lemma_lagrangian_restriction} we get the proportionality factors.

From  $[\Psi_{2}]_0=0$,  Euler-Verdier involution,  and Lemma~\ref{lemma_transfer} we conclude
\begin{align*}
 [\Psi_2]_\lambda&=a[\Delta_{3,0}]_\lambda+b[\Delta_{31}]_\lambda+c[\Phi_3]_\lambda+d[\overline{\Phi_3}]_\lambda.
\end{align*}
Since the Rumin differential $D$ must agree on both sides, and looking at the weight
of each curvature measure, it follows from Proposition~\ref{prop_rumin_weight} that $d=0$.
By Lemma \ref{lemma_balls}, $b=\frac{4\sqrt\lambda}{3\pi}$ and $a-c=0$.
Finally $2a+3b=0$ by  Lemma \ref{lemma_6-balls}.

Arguing as before, using $[N_{1,0}]_0=0$, Euler-Verdier involution,  Lemmas~\ref{lemma_transfer}, \ref{lemma_balls}, and \ref{lemma_6-balls}, we obtain
\[
 [N_{1,0}]_\lambda=\frac{3\lambda}{2\pi}[\Delta_{3,0}]_\lambda-\frac{\lambda}{\pi}[\Delta_{3,1}]_\lambda+c [\Phi_3]_\lambda+d[\overline{\Phi_3}]_\lambda
\]
with $c+d=\frac{3\lambda}{2\pi}$. Finally, Lemma \ref{lemma_conjugation} implies $c=d$.  This completes the proof.
\end{proof}

\subsection{Local kinematic formulas}

The local kinematic formulas for the group  $\mathrm U (n)$ were recently discovered in \cite{bfs}. Our results on the integral 
geometry of $\S^6$ are enough to completely determine  the local kinematic formulas  for $\SU(3)$. In the proof below, we will use the local kinematic formulas
\begin{align}\label{eq local Delta0}
K_{\mathrm U(3)} &(\Delta_{0,0})= 2\Delta_{0, 0}\kfdot\Delta_{6, 3}+\frac{32}{15\pi}\Delta_{1, 0}\kfdot\Delta_{5, 2}+\frac{5}{12}\Delta_{2, 0}\kfdot\Delta_{4, 1}+\frac13\Delta_{2, 0}\kfdot\Delta_{4, 2}\notag\\ 
&+\frac13\Delta_{2, 1}\kfdot\Delta_{4, 1}+\frac23\Delta_{2, 1}\kfdot\Delta_{4, 2}+\frac1{24}N_{2, 0}\kfdot\Delta_{4, 1}-\frac16 N_{2, 0}\kfdot\Delta_{4, 2}\\ \notag 
& +\frac{2}{3\pi}\Delta_{3, 0}\kfdot\Delta_{3,0}+\frac{8}{9\pi}\Delta_{3, 0}\kfdot\Delta_{3, 1}+\frac{16}{27\pi}\Delta_{3, 1}\kfdot\Delta_{3,1}\\ \notag&-\frac{32}{675\pi} N_{3, 1}\kfdot N_{3,1}+\frac{8}{45\pi}\Delta_{3, 0}\kfdot N_{3, 1}-\frac{16}{135\pi}\Delta_{3, 1}\kfdot N_{3, 1}\\
K_{\mathrm U(3)} &(\Delta_{2,0}+\Delta_{2,1}) =   2 \Delta_{6,3}\kfdot (\Delta_{2,0}+\Delta_{2,1}) \label{eq local Delta2} 
+ \frac{64}{15\pi } \Delta_{5,2}\kfdot  (\Delta_{3,0}+\Delta_{3,1}) \\
&+ \frac{29}{48} \Delta_{4,1}\kfdot \Delta_{4,1}  + \frac 76 \Delta_{4,1}\kfdot \Delta_{4,2} \notag
+ \frac 23  \Delta_{4,2}\kfdot \Delta_{4,2}\\
 K_{\mathrm U(3)}&(\Delta_{3,0} - \frac23\Delta_{3,1} )   = 2 \Delta_{6,3}\kfdot (\Delta_{3,0} -\frac23 \Delta_{3,1} )  + \frac{4}{15} \Delta_{5,2}\kfdot ( \Delta_{4,1} - 4 \Delta_{4,2})
 \label{eq local Delta3}
\end{align}

\begin{Theorem} \label{Theorem_local_kinematic_SU3}  The local kinematic formulas for $\SU(3)$ are given by
  \begin{align} K_{\SU(3)}(\Delta_{0,0} ) & = K_{\mathrm U(3)}(\Delta_{0,0} )    -\frac{2}{15\pi}  \Phi_3\kfdot \overline{\Phi}_3  
 -\frac{1}{ 8\pi}\Psi_3\kfdot \overline{\Psi}_3 \notag\\
  K_{\SU(3)} (\Phi_2) & = 2\Phi_2 \kfdot\Delta_{6,3} \notag\\
   K_{\SU(3)} (\Psi_2) & = 2 \Psi_2\kfdot\Delta_{6,3} + \frac{ 16}{15\pi} \Psi_3\kfdot \Delta_{5,2} \notag\\
   K_{\SU(3)} (\Phi_3) & = 2\Phi_3\kfdot \Delta_{6,3} \label{eq local Phi_2}\\
   K_{\SU(3)} (\Psi_3) & = 2 \Psi_3\kfdot\Delta_{6,3}  \label{eq local Psi_2}
 \end{align}
 and 
 \begin{equation}\label{SU3_local}K_{\SU(3)}  = K_{\mathrm U(3)}\qquad \text{on} \ \Curv_i^{\mathrm U(3)}\end{equation}
 for $1\leq i\leq 6$.

\end{Theorem}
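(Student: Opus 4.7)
The proof will combine the weight grading on $\Curv^{\SU(3)}$ coming from the inclusion $\SU(3)\subset\mathrm U(3)$ with the globalization map, the module structure, and the principal kinematic formula on $\S^6_\lambda$ (Theorem~\ref{thm_pkf}). The overall strategy is to reduce everything to the already-computed $\mathrm U(3)$-formulas of \cite{bfs} plus a small list of low-degree extra contributions, which we then pin down by matching against Theorem~\ref{thm_pkf} at several values of $\lambda\geq 0$ and using the explicit kernel of globalization (Proposition~\ref{prop_ker_glob}).

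I would begin by proving \eqref{SU3_local}. Since $\mathrm U(3)/\SU(3)\cong U(1)$, decomposing the Haar measure gives
$$K_{\mathrm U(3)}(\Psi)(A,U,B,V)=\int_{U(1)}K_{\SU(3)}(\Psi)(A,U,dB,dV)\,dd,$$
so $K_{\mathrm U(3)}(\Psi)$ is the projection of $K_{\SU(3)}(\Psi)$ onto its bi-weight $(0,0)$ component. Since $K_{\SU(3)}$ preserves total weight, $K_{\SU(3)}(\Psi)$ for $\Psi\in\Curv_i^{\mathrm U(3)}$ splits into bi-weights $(w,-w)$. Components with $w\ne 0$ must take the form $\Phi_j\otimes\overline\Phi_k$ or $\Psi_j\otimes\overline\Psi_k$, whose total degree is at most $6$ since $j,k\in\{2,3\}$; this forces $i\leq 0$. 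Consequently $K_{\SU(3)}=K_{\mathrm U(3)}$ on $\Curv_i^{\mathrm U(3)}$ for $1\leq i\leq 6$. The same bi-weight/bi-degree enumeration, together with cosymmetry, restricts $K(\Phi_3),K(\Psi_3),K(\Phi_2),K(\Psi_2)$ to the stated shapes with one or two free coefficients, and shows that the only allowed new terms in $K(\Delta_{0,0})$ beyond $K_{\mathrm U(3)}(\Delta_{0,0})$ are $\Phi_3\kfdot\overline\Phi_3$ and $\Psi_3\kfdot\overline\Psi_3$.

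The remaining free coefficients are then determined by applying $(\glob_\lambda\otimes\glob_\lambda)$ and matching with Theorem~\ref{thm_pkf}. At $\lambda=0$ only the weight-nonzero components of first-factor-degree three survive globalization: the coefficient of $\Phi_3\kfdot\overline\Phi_3$ in $K(\Delta_{0,0})$ is read off from the coefficient of $\phi_3\kfdot\overline{\phi_3}$ in Theorem~\ref{thm_pkf}, giving $-2/(15\pi)$, and the identity $k(\phi_3)=(\phi_3\otimes\chi)\cdot k(\chi)$ computed from Theorem~\ref{first_part} forces $K(\Phi_3)=2\Phi_3\kfdot\Delta_{6,3}$. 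For the coefficients that vanish under globalization at $\lambda=0$ (in particular the one of $\Psi_3\kfdot\overline\Psi_3$ and those of $K(\Phi_2),K(\Psi_2),K(\Psi_3)$), I globalize at $\lambda>0$: Proposition~\ref{prop_ker_glob} gives $[\Psi_3]_\lambda,[\Phi_2]_\lambda,[\Psi_2]_\lambda,[N_{1,0}]_\lambda,[N_{2,0}]_\lambda$ as explicit multiples of $[\Delta_{k,q}]_\lambda$, so these previously hidden coefficients now contribute to $k_{\G_2}(\chi)$ and can be read off by comparing the $\lambda$-dependent coefficients of $\mu_{k,q}^\lambda\kfdot\mu_{l,r}^\lambda$ in Theorem~\ref{thm_pkf}.

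The main obstacle is the disentangling: at $\lambda>0$ several distinct curvature measures ($\Phi_2,\overline\Phi_2,\Psi_3,\overline\Psi_3,N_{2,0}$) globalize to proportional multiples of $[\Delta_{4,1}-4\Delta_{4,2}]_\lambda$, so several unknown coefficients contribute jointly to the same block of $k_{\G_2}(\chi)$. I resolve this by combining three independent sources of equations: (a) the multiplicativity $K(\mu\cdot\Psi)=(\mu\otimes\chi)\cdot K(\Psi)$ applied with $\mu\in\{t,u\}$, which relates $K(\Phi_3),K(\Psi_3)$ to $K(\Phi_2),K(\Psi_2)$ via the module action of Proposition~\ref{prop_t_action}; (b) the Euler-Verdier decomposition of Lemma~\ref{lemma_euler_verdier}, which forces certain mixed terms to vanish by parity; and (c) the reality statement $[\overline\Psi]_\lambda=\overline{[\Psi]_\lambda}$ together with Lemma~\ref{lemma_conjugation}, which pairs conjugate contributions. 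The resulting overdetermined linear system admits the stated values as its unique solution.
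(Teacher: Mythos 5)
Your proposal is correct and follows essentially the same route as the paper: the Fubini relation between $K_{\mathrm U(3)}$ and $K_{\SU(3)}$ combined with degree/weight bookkeeping to pin down the possible shapes, then determination of the remaining coefficients by globalizing at $\lambda=0$ and at $\lambda>0$ against the principal kinematic formula of Theorem~\ref{thm_pkf} using the kernel relations of Proposition~\ref{prop_ker_glob}. The only difference is cosmetic: the paper disentangles the $\lambda>0$ contributions by isolating specific monomials ($\mu_{4,2}^\lambda\kfdot\mu_{4,2}^\lambda$ for the $\Psi_3\kfdot\overline\Psi_3$ coefficient, $\mu_{4,1}^\lambda\kfdot\mu_{5,2}^\lambda$ for $K(\Psi_2)$, $K(\Phi_2)$) in which only one unknown survives, rather than assembling the larger overdetermined system you describe.
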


\begin{proof}
Applying the  Fubini theorem for integration on homogeneous spaces
$$\int_G f\;  dg= \int_{G/H}\int_H  f(xh) \, dh\, dx$$
 to $G=\mathrm U(3)$ and $H=S^1$ yields
\begin{equation}\label{eq local kinematic SU3} K_{\mathrm U(3)}(m)(A,U; B,V) = \int_{S^1}  K_{\SU(3)}(m)(A,U; \zeta B, \zeta V)\;d\zeta.\end{equation}

 Let $\{m_j\} $ be some basis of $\Curv_{\SU(3)}$, where each $m_j$ has pure degree of homogeneity and pure weight. 
 Let also $m$ be a curvature measures of pure 
 degree and weight. If 
 $$K_{\SU(3)} (m) = \sum_{i,j} c^m_{ij} m_i\otimes m_j,$$
 then $c^m_{ij}\neq 0$
 if and only if 
 $$\deg(m)+ 6 = \deg(m_i)+\deg (m_j)$$
 and 
 $$\operatorname{weight}(m)= \operatorname{weight}(m_i) + \operatorname{weight}(m_j).$$
 Since curvature measures of non-zero weight appear only in degrees two and three, this together with \eqref{eq local kinematic SU3} implies \eqref{SU3_local} and
$$K_{\SU(3)}(\Delta_{0,0} ) = K_{\mathrm U(3)}(\Delta_{0,0} )   +a\,  \Phi_3\kfdot \overline{\Phi}_3  
 +b\, \Psi_3\kfdot \overline{\Psi}_3$$
for some constants $a,b$. 
Globalizing in $\C^3$, we get $k_{\SU(3)}(\chi)$ and comparing with  \cite[Theorem 1.6]{bernig_sun}, we find $a= -\frac{2}{15\pi}$. 

By \eqref{eq Euler characteristic}, globalizing 
\[
 K_{\SU(3)}(\Delta_{0,0}+\frac{\lambda}{2\pi}(\Delta_{2,0}+\Delta_{2,1})+\frac{3\lambda^2}{4\pi^2}(\Delta_{4,1}+\Delta_{4,2})+ \frac{15\lambda^3}{8\pi^3}\Delta_{6,3})
\]
in $\S^6$ yields $k_{\G_2}(\chi)$. Using \eqref{eq local Delta0}, \eqref{eq local Delta2} and Proposition~\ref{prop_ker_glob}, we see  $\mu_{4,2}^\lambda\kfdot \mu_{4,2}^\lambda$ appearing here and in 
Proposition \ref{thm_pkf}. Comparing coefficients yields $ b= -\frac{1}{8\pi}$.

 By degree and weight reasons, we obtain \eqref{eq local Phi_2}, \eqref{eq local Psi_2}, and 
 \begin{align*}
  K_{\SU(3)} (\Phi_2) & = 2 \Phi_2\kfdot\Delta_{6,3} +c\, \Phi_3\kfdot\Delta_{5,2} \\
  K_{\SU(3)} (\Psi_2) & = 2 \Psi_2\kfdot\Delta_{6,3} +d\, \Psi_3\kfdot\Delta_{5,2} 
 \end{align*}
 To find the constant $d$, note that by Proposition~\ref{prop_ker_glob}, the images of  
 $\Psi_2$ and  $-\frac{2\sqrt\lambda}{\pi}   (\Delta_{30} -\frac{2}{3}\Delta_{31} + \Phi_3 )$ under 
 $\glob_\lambda\otimes \glob_\lambda \circ K_{\SU(3)}$ coincide. Using \eqref{eq local Delta3}, \eqref{eq local Phi_2}, and Proposition~\ref{prop_ker_glob}, 
 we may compare the coefficient of $\mu^\lambda_{4,1}\kfdot \mu^\lambda_{5,2}$  and find
 $$ 
 - \frac{d\sqrt\lambda }{2}  = -\frac{8\sqrt\lambda}{15\pi}.
 $$
 The same argument shows  $c=0$. 
\end{proof}

\section{Integral geometry of $S^7$ under $\mathrm{Spin}(7)$}

For $\lambda>0$ we  denote  by $\S^7$ the sphere of radius $\lambda^{-1/2}$ centered at the origin of $\O$. For $\lambda=0$ we set $S_0^7=\im \O=T_{e_0} S^7$.
Exploiting  that the restriction of invariant curvature measures on $\S^7$ to $\S^6$ is
essentially injective (Corollary~\ref{Corollary_invariant_restriction}) we give in this section
a thorough account of the integral geometry of $\S^7$ under the action of $\Spin(7)$. 
Our first main result is the second part of Theorem~\ref{Theorem_isomorphism},
which states that $\calV^7_\lambda:=\calV(\S^7)^{\Spin(7)}$ and $\calV^7_0:=\Val(\im \O)^{\mathrm G_2}$ are isomorphic as filtered algebras. 
Our second main result is Theorem~\ref{Theorem_local_kinematic_formulas_G2}, which establishes
 in explicit form the full array of local kinematic formulas for $\mathrm G_2$.

\subsection{Differential forms}

A $3$-form $\sigma\in \largewedge^3 V^*$ on a $7$-dimensional real vector space $V$ is called non-degenerate 
if for each pair $u,v\in V$ of linearly independent vectors there exists $w\in V$ such that $\sigma(u,v,w)\neq 0$. By \cite{salamon_walpuski}*{Remark 3.10},
each non-degenerate form determines a euclidean inner product and an orientation on $V$ (and hence a volume form $\vol_g\in \largewedge^7V^*$) such that 
$$(\iota_u \sigma)\wedge (\iota_v \sigma)\wedge \sigma = 6 g(u,v)  \vol_g \qquad \text{for } u,v\in V.$$
The standard associative $3$-form \eqref{eq associative 3-form} is non-degenerate and induces the standard euclidean inner product and orientation on $\im\O$. 
For every non-degenerate $\sigma$ the group $\{g\in \mathrm{GL}(V)\colon g^*\sigma = \sigma\}$ is isomorphic to $\mathrm G_2$, see, e.g., Theorem~3.2 of \cite{salamon_walpuski}.

A $\G_2$-structure on a $7$-dimensional smooth manifold $M$ is a $3$-form $\sigma\in \Omega^3(M)$ such that $\sigma_p \in \largewedge ^3 T^*_pM$ is non-degenerate for each $p\in M$. 
The  Cayley calibration $\Phi \in \largewedge^4 \O^*$ defines via
$$ \sigma_p=\sqrt{\lambda} \,\iota_p \Phi, \qquad p\in S^7_\lambda,$$
a $\G_2$-structure on $S^7_\lambda$ for $\lambda>0$. For $\lambda=0$ we take $\sigma=\phi$.
The metric and orientation induced by $\sigma$
are the standard metric and orientation on $S^7_\lambda\subset \O$. This may be seen by evaluating $\sigma$ at the representative point $\lambda^{-1/2} e_0$ using \eqref{eq Cayley associative}.
By the same token, $* \sigma = \Phi|_{S^7_\lambda}$ and as a consequence
\begin{equation}\label{eq d sigma}d\sigma = 4 \sqrt{\lambda}*\! \sigma.
\end{equation}

Let $\mathcal{F}_{\G_2}\subset \mathcal{F}_{\mathrm O(7)} \S^7$ be the bundle whose fiber over $p\in \S^7$ consists of orthonormal bases
$u_1,\ldots, u_7$ 
of $T_p\S^7$ satisfying
$$\sigma(u_i,u_j,u_k)= \varepsilon_{ijk},\qquad  1\leq i,j,k\leq 7$$
where the  $\varepsilon$ symbol is skew-symmetric in either three or four indices and is uniquely determined by  
\begin{align*}\phi &= \frac 1 6 \varepsilon_{ijk}\, e^i\wedge e^j \wedge e^k\\
 \psi  &=  \frac{ 1}{ 24} \varepsilon_{ijkl}\, e^i\wedge e^j \wedge e^k\wedge e^l.
\end{align*}  
 Clearly, $\mathcal{F}_{\G_2}$ is a principal bundle with structure group $\G_2\subset \mathrm O(7)$.
 We denote the restrictions of the solder $1$-form $\omega$  and the connection $1$-form $\varphi$  to $\mathcal{F}_{\G_2}$  by the same symbols. 
Let $\SU(3)\subset \G_2$ denote  the stabilizer of the first basis vector and consider in $\mathcal{F}_{\G_2}$ the $1$-form 
$\alpha=\omega_1$,
the $2$-forms 
\begin{align*} 
 \theta_0 &  = \frac 1 2 \varepsilon_{1jk}\, \varphi_{j1} \wedge \varphi_{k1}\\
 \theta_1 & = \varepsilon_{1jk}\,\omega_{j} \wedge \varphi_{k1}\\
 \theta_2 & = \frac 1 2 \varepsilon_{1jk}\, \omega_{j} \wedge \omega_{k}\\
 \theta_s & =\omega_i \wedge \varphi_{i1}
\end{align*}
and the $3$-forms 
\begin{align*}
 \chi_0 & =  \frac 1 6 \left( \varepsilon_{ijk}\,  \varphi_{i1}\wedge \varphi_{j1} \wedge \varphi_{k1}+ \sqrt{-1} \varepsilon_{1ijk}\,  \varphi_{i1}\wedge \varphi_{j1} \wedge \varphi_{k1} \right)\\
 \chi_1 & =  \frac 1 2  \left(\varepsilon_{ijk}\, \theta_{i}\wedge \varphi_{j1} \wedge \varphi_{k1}+ \sqrt{-1} \varepsilon_{1ijk}\, \theta_{i}\wedge \varphi_{j1} \wedge \varphi_{k1} \right)\\
  \chi_2 & =  \frac 1 2  \left(\varepsilon_{ijk}\, \theta_{i}\wedge \theta_{j} \wedge \varphi_{k1}+ \sqrt{-1}  \varepsilon_{1ijk}\, \theta_{i}\wedge \theta_{j} \wedge \varphi_{k1} \right)\\
  \chi_3 & =  \frac 1 6  \left(\varepsilon_{ijk}\, \theta_{i}\wedge \theta_{j} \wedge \theta_{k} + \sqrt{-1}  \varepsilon_{1	ijk}\, \theta_{i}\wedge \theta_{j} \wedge \theta_{k} \right)
\end{align*}
where repeated indices are summed over from $2$ to $7$. By \eqref{eq rg omega} and \eqref{eq rg varphi}, these forms are $\SU(3)$-invariant.
They are also horizontal for the bundle $\mathcal{F}_{\G_2}\to S\S^7$ given by projection to the first basis vector. Hence they descend to the sphere bundle of $\S^7$. 
Again we will make no notational distinction between the forms on the sphere bundle and the corresponding forms on the frame bundle.

\begin{Proposition}\label{prop_basic_forms_span}
 The algebra of $\Spin(7)$-invariant forms on the sphere bundle of $\S^7$ is generated by the forms $\alpha, \theta_0,\theta_1,\theta_2,\theta_s$, and $\chi_i, \overline\chi_i$ for $i=0,\ldots,3$.
\end{Proposition}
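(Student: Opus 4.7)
The plan is to mirror the analogous argument used earlier for the $\G_2$-invariant forms on $S\S^6$. First I would verify $\Spin(7)$-invariance. By construction the listed forms are built from the fundamental $1$-forms $\omega_i$ and $\varphi_{j1}$ on $\mathcal F_{\G_2}$ using only the $\SU(3)$-invariant symbols $\varepsilon_{ijk}$ and $\varepsilon_{1ijk}$, hence they are $\SU(3)$-invariant by virtue of \eqref{eq rg omega} and \eqref{eq rg varphi}. Being horizontal with respect to the principal $\SU(3)$-bundle $\mathcal F_{\G_2}\to S\S^7$ given by projection onto the first basis vector, they descend to well-defined forms on the sphere bundle, and $\Spin(7)$-invariance follows from the $\G_2$-equivariance of $\mathcal F_{\G_2}$.

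For the spanning statement, the key point is that the Riemannian isotropic spaces $(\S^7,\Spin(7))$ and $(\im\O,\overline{\G_2})$ share the same isotropy group $\SU(3)$ at a point of the sphere bundle: by Proposition~\ref{prop_transitive}, $\Spin(7)$ acts transitively on $\S^7$ with stabilizer $\G_2$, which in turn acts transitively on each unit tangent sphere with stabilizer $\SU(3)$, and the same occurs in the flat case. Consequently, by transitivity of the actions on the sphere bundles, in both cases the algebra of invariant forms identifies canonically with the $\SU(3)$-invariant subalgebra of the exterior algebra on a single $13$-dimensional cotangent space. The proposition would then follow from the description of translation and $\G_2$-invariant differential forms on $S\im\O$ given in \cite{bernig_g2}, exactly as the analogous spanning result for $S\S^6$ was obtained from \cite{bernig_sun}.

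The mildly delicate step is the comparison of the generators across the two geometries: on the sphere one has non-trivial curvature contributions $\Phi_{ij}=\lambda\omega_i\wedge\omega_j$ that are absent in the flat case, and consequently the structure equations for $\alpha,\theta_i,\chi_j$ on $S\S^7$ will differ from those on $S\im\O$ by curvature terms of the type appearing in Proposition~\ref{differential} for $S\S^6$. This, however, only affects the exterior differentials of the generators, not the count of pointwise invariants, so the spanning assertion carries over without modification from Bernig's flat-case analysis.
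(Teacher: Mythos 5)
Your proposal is correct and follows essentially the same route as the paper: evaluating at a point of the sphere bundle, whose stabilizer is $\SU(3)$, identifies the $\Spin(7)$-invariant forms with the $\SU(3)$-invariants of the exterior algebra of the $13$-dimensional cotangent space $\R\oplus\C^3\oplus\C^3$, and the generators of that invariant algebra are supplied by Bernig's flat-case classification (the paper cites \cite{bernig_sun}*{Lemma~3.3} directly rather than \cite{bernig_g2}, but the latter rests on the former, so this is the same input). The one step you gloss over is the pointwise verification that the specific forms $\theta_i,\chi_j$ defined via the $\varepsilon$ symbols are mapped onto multiples of Bernig's generators $\Theta_i,\Xi_i$ — this, rather than the curvature corrections to the structure equations (which indeed play no role in a pointwise statement), is the actual content of the "comparison of generators", though it is a routine computation using \eqref{eq associative 3-form}.
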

\begin{proof}
 Since the stabilizer of  $\xi\in S\S^7$ is $\SU(3)$ we have $T_\xi^* S\S^7 \cong   \R \oplus \C^3\oplus \C^3$  as $\SU(3)$ representations. Hence evaluation 
 at $\xi$ yields an isomorphism
 \begin{equation}\label{eq_iso_invariant_elements}\Omega^k(S\S^7)^{\Spin(7)}\cong \left( \largewedge^k (V^* \oplus V^*) \otimes \C\right)^{\SU(3)} \oplus \left( \largewedge^{k-1} (V^* \oplus V^*) \otimes \C\right)^{\SU(3)}\end{equation}
 where $V=\C^3$ considered as a real vector space. 
The invariant forms introduced above are mapped onto the generators of the algebra $\left( \largewedge (V^* \oplus V^*) \otimes \C\right)^{\SU(3)}$ determined by
Bernig \cite{bernig_sun}*{Lemma~3.3}. Indeed, using \eqref{eq associative 3-form}, its not hard to see that the $\theta_i$ and $\chi_i$
are mapped onto multiples of  the forms $\Theta_i$ and $\Xi_i$ from \cite{bernig_sun}. This concludes the proof.
\end{proof}

\begin{Lemma} \label{lemma_relations_S7}
 $\chi_i\wedge \chi_j=0$ unless $i+j=3$ and the forms $\chi_i\wedge \chi_{3-i}$ for $i=0,\ldots,3$ are all proportional. Moreover
 the forms $\chi_i \wedge \overline\chi_j$ are expressible solely in terms of $\theta_0,\theta_1,\theta_2$, and $\theta_s$.
\end{Lemma}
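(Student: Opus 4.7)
My plan is to use Proposition~\ref{prop_basic_forms_span} to reduce the statement to a question about $\SU(3)$-invariant exterior forms on $V\oplus V$, with $V=\C^3$ the standard representation. Under the evaluation-at-$\xi$ isomorphism in the proof of that proposition, each $\chi_i$ becomes a complex $(3,0)$-form on $V\oplus V$ of \emph{bidegree} $(i,3-i)$ with respect to the two $V$-summands: $i$ holomorphic factors come from the $\omega$-summand and $3-i$ from the $\varphi$-summand. This identifies the $\chi_i$ (up to nonzero scalars) with the analogous forms in Bernig's setting \cite{bernig_sun}, and similarly matches $\theta_0,\theta_1,\theta_2,\theta_s$ with the $\SU(3)$-invariant $(1,1)$-generators there.

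The first claim then follows by a bidegree count. The wedge $\chi_i\wedge \chi_j$ is a $(6,0)$-form of bidegree $(i+j,6-i-j)$ on $V\oplus V$; since $\dim_\C V=3$, the holomorphic exterior powers of each $V$-summand vanish above degree three, forcing $i+j\le 3$ and $6-i-j\le 3$. Hence $\chi_i\wedge \chi_j=0$ unless $i+j=3$. When $i+j=3$, the product lies in the one-dimensional complex line $\largewedge^{3}V^{*(1,0)}\otimes \largewedge^{3}V^{*(1,0)}$, yielding the pairwise proportionality of $\chi_0\wedge\chi_3$, $\chi_1\wedge\chi_2$, $\chi_2\wedge\chi_1$, and $\chi_3\wedge\chi_0$.

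For the second claim, each $\chi_i\wedge \overline{\chi_j}$ is an $\SU(3)$-invariant form of complex type $(3,3)$ on $V\oplus V$. By the Gram determinant identity $|\det(v_1,v_2,v_3)|^2 = \det(\langle v_i,v_j\rangle)$ applied to the three holomorphic factors of $\chi_i$ paired with the three anti-holomorphic factors of $\overline{\chi_j}$, such a form expresses as a cubic polynomial in the real $\SU(3)$-invariant $(1,1)$-forms on $V\oplus V$; under our identification these are precisely $\theta_0,\theta_1,\theta_2,\theta_s$.

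The main obstacle is the last step: to verify that $\theta_0,\theta_1,\theta_2,\theta_s$ exhaust the real $\SU(3)$-invariant $(1,1)$-forms on $V\oplus V$. A dimension count suffices, since the $\SU(3)$-invariant sesquilinear forms on $V\oplus V$ form a four-dimensional real space (two ``diagonal'' Hermitian pairings, plus the real and imaginary parts of the off-diagonal pairing), matching the number of $\theta$-generators. Alternatively, one can invoke the corresponding description of $\SU(n)$-invariants in \cite{bernig_sun}*{Lemma 3.3} specialized to $n=3$.
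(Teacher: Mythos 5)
Your reduction via Proposition~\ref{prop_basic_forms_span} and the evaluation isomorphism \eqref{eq_iso_invariant_elements} is exactly the paper's route; the paper's entire proof consists of this reduction followed by a citation of the corresponding relations in \cite{bernig_sun}, which is also your stated fallback. Your bidegree argument for the first assertion is correct and complete: each $\chi_i$ has exactly $i$ solder-form and $3-i$ connection-form factors, so $\chi_i\wedge\chi_j$ vanishes once either summand of $V\oplus V$ would have to contribute more than three holomorphic factors, and for $i+j=3$ the product lands in the one-dimensional line $\largewedge^3 V^{*}\otimes\largewedge^3 V^{*}$ (top holomorphic powers of the two summands), giving proportionality. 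This is a nice self-contained elaboration of what the citation hides.

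The second assertion is where your primary argument has a gap. The Gram/determinant identity does let you write $\chi_i\wedge\overline{\chi_j}$ as a $3\times 3$ determinant whose entries are $(1,1)$-forms of the shape $a\wedge\bar b$ with $a,b$ holomorphic $1$-forms on $V\oplus V$; but these individual entries are \emph{not} invariant, so the identity does not exhibit the product as a polynomial in $\theta_0,\theta_1,\theta_2,\theta_s$. Your dimension count only shows that the invariant $(1,1)$-forms are spanned by the $\theta$'s; it does not show that the specific degree-six invariant you are holding is generated by them, which is the actual content of the claim. The clean repair is representation-theoretic: since $g^*\chi_i=\det(g)\,\chi_i$ for $g\in\mathrm U(3)$, the form $\chi_i\wedge\overline{\chi_j}$ has weight zero and is therefore $\mathrm U(3)$-invariant, and by the first fundamental theorem for $\mathrm U(3)$ acting diagonally on $V\oplus V$ the whole invariant exterior algebra $\left(\largewedge(V^*\oplus V^*)\otimes\C\right)^{\mathrm U(3)}$ — not merely its degree-two part — is generated by the four invariant $(1,1)$-forms. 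Alternatively, your fallback of citing \cite{bernig_sun}*{Lemma 3.3} together with the explicit relations proved there is precisely what the paper does and suffices.
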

\begin{proof}
 In the light of \eqref{eq_iso_invariant_elements} this an immediate consequence of the corresponding statement for $\left( \largewedge (V^* \oplus V^*) \otimes \C\right)^{\SU(n)}$ proved in \cite{bernig_sun}.
\end{proof}

\subsection{Curvature measures}
Let $\calC^7_\lambda$ denote the space of invariant curvature measures on $\S^7$. 
We introduce now  a basis of the space  $\calC^7_\lambda\cong \Curv^{\G_2}$. 

\begin{Definition}
 
 For $\max(0,k-3)\leq p\leq k/2\leq 3 $ define $\Theta_{k,p}\in \Curv^{\G_2}$ by
 $$\Theta_{k,p} = \frac{1}{(3+p-k)!(k-2p)!p! \alpha_{6-k}} [ \theta_0^{3+p-k}\theta_1^{k-2p} \theta_2^p, 0]$$
 where $\alpha_k = \vol(S^k)$, and put 
 $$\Phi=\frac{-i}{2\pi^2} [ \chi_0\wedge \chi_3,0].$$
\end{Definition}

Put $\Delta_{k} = \sum_{p} \Theta_{k,p}$ for $k\leq 6$, $\Delta_7(A,U)=\vol_7(A\cap U)$, and 
$$N_2= -\Theta_{2,0} + 4\Theta_{2,1},\quad N_3= 4\Theta_{3,0} - \frac{8}{3}\Theta_{3,1}, \quad N_4= -\Theta_{4,1} + 4\Theta_{4,2}.$$
By Proposition \ref{prop_basic_forms_span} and Lemma \ref{lemma_relations_S7}, the following family spans $\Curv^{\G_2}$:  
\begin{equation}\label{basis_curv_g2}
\Delta_0,\quad
\Delta_1,\quad
\Delta_2,N_2,\quad
\Delta_3, N_3,\Phi,\overline\Phi,\quad
\Delta_4,N_4,\quad
\Delta_5,\quad
\Delta_6,\quad
\Delta_7 
\end{equation}
By Lemma~\ref{restriction_spheres} below, these curvature measures actually form a basis of $\Curv^{\G_2}$. 

If $\iota \colon N\to M$ is a totally geodesic isometric immersion of riemannian manifolds then, by Proposition~\ref{prop filtration transfer}, 
$ \tau\circ \iota^*  = \iota^* \circ \tau$.
 Thus, Corollary~\ref{Corollary_invariant_restriction} implies that the restriction of $\Spin(7)$-invariant curvature measures on $\S^7$ to $\S^6=\S^7\cap \im \O$ is essentially injective. 
 We compute this map now explicitly.

\begin{Lemma}\label{restriction_spheres}
 Let $\iota\colon\S^6 \to\S^7$ be the inclusion. The restriction map $\iota^*\colon \calC^7_\lambda\to \calC^6_\lambda$  is given by
 \begin{align*}
\iota^*(\Delta_7)&=0,\\
\iota^*(\Delta_k)&=\sum_q\Delta_{k,q},\quad k<7\\
\iota^*(N_2)&=N_{2,0}-2\re\Phi_2\\
\iota^*(N_3)&=\frac32\Delta_{3,0}-\Delta_{3,1}-\frac53 N_{3,1}-\frac52\re\Phi_3,\\
\iota^*(N_4)&=-\Delta_{4,1}+4\Delta_{4,2},\\
\iota^*(\re\Phi)&=\frac{3}{8} \Delta_{3,0}-\frac{1}{4} \Delta_{3,1}+\frac{1}{4}N_{3,1} - \frac58 \operatorname{Re} \Phi_{3},\\
\iota^*(\im\Phi) & =\frac{8}{3\pi} \operatorname{Re} \Psi_3.\\
 \end{align*}
\end{Lemma}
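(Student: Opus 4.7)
Plan.

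The proof rests on an explicit computation of the pull-back of the defining differential forms under $\iota$. Fix a base point $p \in \S^6 \subset \S^7$, say $p = e_1/\sqrt{\lambda}$, and choose a $\Spin(7)$-frame $(u_1, \ldots, u_7)$ at $p$ adapted to the inclusion, meaning that $u_7$ is the unit normal to $\S^6$ in $\S^7$ (i.e.\ $u_7 = \pm e_0$) and $(u_1, \ldots, u_6)$ is an orthonormal basis of $T_p\S^6$. The constraint $\sigma_p(u_i, u_j, u_k) = \varepsilon_{ijk}$, combined with the decomposition $\Phi = e^0 \wedge \phi + \psi$ of the Cayley calibration from \eqref{eq Cayley associative}, determines how the $u_i$ sit inside $\im\O$; an appropriate relabelling identifies $(u_1, u_2, \ldots, u_6)$ with an $\SU(3)$-frame $(v, Jv, w_1, Jw_1, w_2, Jw_2)$ of $T_p\S^6$ in the sense of Section~4.1, where $J$ is the almost complex structure given by octonionic multiplication by $p$.

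For $\iota^*\Delta_7 = 0$ the claim is trivial since $\Delta_7 = \vol_7$ while $\dim \S^6 = 6$. For $\Delta_k$ with $k \leq 6$, the defining forms of both $\Delta_k$ on $\S^7$ and $\sum_q \Delta_{k,q}$ on $\S^6$ are polynomials in the respective $2$-forms $\theta_i$ with no $\chi$-contribution. Since $\S^6 \subset \S^7$ is totally geodesic, Lemma~\ref{lemma pullback frame bundle} gives that $\omega_7^{(7)}$ and the connection forms $\varphi_{i7}^{(7)}$ pull back to zero on $S\S^6$; a direct expansion of the $\theta_i^{(7)}$ in the remaining basic forms, compared with the definitions of the $\theta_j^{(6)}$ on $\S^6$, yields $\iota^*\Delta_k = \sum_q \Delta_{k,q}$.

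For the remaining identities we perform the same pull-back computation for the invariant forms $\theta_i^{(7)}, \theta_s^{(7)}$, and $\chi_j^{(7)}$. The key observation is that $\chi_3^{(7)}$, whose imaginary part is built from the coassociative $4$-form $\psi$, contributes \emph{both} $\chi^{(6)}$- and $\theta^{(6)}$-components after restriction via \eqref{eq Cayley associative}; this is ultimately the reason that $\iota^*N_3$ and $\iota^*\re\Phi$ mix contributions from $\Phi_3$ with contributions from the $\Delta_{3,q}$ and $N_{3,1}$, while $\iota^*\im\Phi$ produces $\re\Psi_3$. Substituting the resulting pull-back expressions into the definitions of $N_2, N_3, N_4$ and of $\Phi = -\tfrac{i}{2\pi^2}[\chi_0^{(7)} \wedge \chi_3^{(7)}, 0]$, and re-expressing the resulting $\SU(3)$-invariant $5$-forms in the basis of $\calC^6_\lambda$ described in Section~4.2, produces the stated formulas.

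The main technical obstacle is the bookkeeping for $\iota^*\Phi$: the $6$-form $\chi_0^{(7)} \wedge \chi_3^{(7)}$ must be reduced modulo the contact form on $S\S^7$ before being restricted, and then identified as an explicit linear combination of the $5$-forms underlying $\Delta_{3,0}, \Delta_{3,1}, N_{3,1}, \re \Phi_3$, and $\re \Psi_3$. Once the answer has been obtained, the coefficients can be independently verified by evaluating both sides on test sets and using Lemma~\ref{lemma_transfer}(i) to annihilate contributions of inappropriate degree, for instance on totally geodesic subspheres of $\S^6$ of various dimensions and on Lagrangian submanifolds as in the proof of Corollary~\ref{cor application G2}.
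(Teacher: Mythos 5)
There is a genuine gap at the core of your argument: you compute $\iota^*$ on curvature measures by \emph{restricting} the defining invariant forms from $S\S^7$ to $S\S^6$ (after killing $\omega_7$ and $\varphi_{i7}$, or "reducing modulo the contact form before restricting"). This cannot work: the forms defining curvature measures on $\S^7$ are $6$-forms on the $13$-dimensional $S\S^7$, while those on $\S^6$ must be $5$-forms on the $11$-dimensional $S\S^6$, so no restriction of forms produces an object of the right degree. Geometrically, your picture omits the covectors at points of $\S^6$ that have a nonzero component along the normal direction $e_0$; these make up most of $N(\iota(A))\subset S\S^7$ for $A\subset \S^6$ and cannot be discarded. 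The correct mechanism, following Alesker (\cite{valmfdsIG}) and used in the paper, is that $\iota^*[\omega,\eta]=[p_*\omega,\pi_*\omega]$, where $p$ is the submersion $S\S^7|_{\S^6}\setminus N(\S^6)\to S\S^6$, $p(x,\cos(t)e_0+\sin(t)\xi)=(x,\xi)$, with one-dimensional fibers parametrized by $t\in(0,\pi)$. One pulls the forms back under a lift of $(u,t)\mapsto \cos(t)e_0+\sin(t)u$ to the $\G_2$-frame bundle (here your adapted-frame setup and Lemma~\ref{lemma pullback frame bundle} do enter, but the pulled-back connection forms are \emph{not} zero --- e.g.\ $\tilde f^*\varphi_{21}=-dt$) and then integrates over $t$. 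It is exactly this fiber integration that lowers the degree by one and produces the nontrivial coefficients in the statement, such as the $\tfrac{8}{3\pi}$ in $\iota^*(\im\Phi)=\tfrac{8}{3\pi}\re\Psi_3$, which arise from integrals of powers of $\sin t$ and $\cos t$ over $(0,\pi)$; a restriction-of-forms computation has no source for such factors.

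Your qualitative observations are sound --- $\iota^*\Delta_7=0$ is indeed trivial, the mixing of $\chi$- and $\theta$-components of $\chi_3$ via $\Phi=e^0\wedge\phi+\psi$ is the right explanation for why $\Phi_3$, $N_{3,1}$ and the $\Delta_{3,q}$ all appear in $\iota^*N_3$ and $\iota^*\re\Phi$, and cross-checking coefficients on test sets is a legitimate sanity check --- but even the identity $\iota^*(\Delta_k)=\sum_q\Delta_{k,q}$ is a fiber-integration (Cauchy--Kubota type) statement rather than a restriction identity, so the proof as proposed does not go through without replacing the restriction step by the fiber integration $p_*$.
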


  \begin{proof}
 As shown by Alesker in \cite{valmfdsIG}, the pull-back $\iota^*$ of a curvature measure $[\omega,\phi]$ 
 on $Y=\S^7$ to $X=\S^6$   is $[p_*\omega,\pi_*\omega]$, where $\pi_*,p_*$ denote 
 respectively integration along the fibers of $\pi\colon N(\S^6)\to \S^6$ and of the 
 submersion $p\colon E\to S\S^6$, where $E$ is a compact manifold with boundary whose 
 interior is $E^\circ=S\S^7|_X\setminus N(\S^6)$ and $p(x,\cos(t) e_0+\sin(t)\xi)=(x,\xi)$ if $\xi\in S_xX$. 
 Now $f\colon SX \times (0,\pi)\to  E^\circ$, given by $f(u,t)= \cos(t) e_0+\sin(t) u$ is a
 local trivialization for $p|_{E^\circ}$. To compute the fiber integral, we have to compute the pullback under $f$ and integrate with respect to $t$. 
 By invariance, it is enough to do this at the point $u=(\lambda^{-1/2} e_1,e_2)$. Choose some local 
 lift of $f$ to a map $\tilde f$ with values in $\mathcal{F}_{\G_2}(Y)$ such that $\tilde f_1 =f $. We can choose $\tilde f$ so that 
 at the point $(\lambda^{-1/2} e_1,e_2,t)$
 \begin{align*} 
  \tilde f_1 &= \sin(t) e_2 - \cos(t) e_0\\
  \tilde f_2 &= \cos(t) e_2 + \sin(t) e_0\\
  \tilde f_3 &= e_3\\
  \tilde f_4 &= e_4\\
  \tilde f_5 &= -\sin(t) e_7 + \cos(t)e_5\\
  \tilde f_6 &= -\cos(t) e_7 - \sin(t)e_5\\
  \tilde f_7 &= e_6
 \end{align*}
From Lemma~\ref{lemma pullback frame bundle} we obtain
\begin{align*}
\tilde f^* \varphi_{21} & = -dt\\
\tilde f^* \varphi_{31} & = \sin(t) \varphi_{\bar11}\\
\tilde f^* \varphi_{41} & = \sin(t) \varphi_{21}\\
\tilde f^* \varphi_{51} & = \sin(t)( -\sin(t) \varphi_{\bar31} + \cos(t) \varphi_{\bar21})\\
\tilde f^* \varphi_{61} & = \sin(t)( -\cos(t) \varphi_{\bar31} - \sin(t) \varphi_{\bar21})\\
\tilde f^* \varphi_{71} & = \sin(t) \varphi_{31}
\end{align*}
and 
\begin{align*}
\tilde f^* \omega_{2} & = \cos(t)\omega_1\\
\tilde f^* \omega_{3} & = \omega_{\bar1}\\
\tilde f^* \omega_{4} & = \omega_{2}\\
\tilde f^* \omega_{5} & = -\sin(t) \omega_{\bar3}+ \cos(t) \omega_{\bar2}\\
\tilde f^* \omega_{6} & = -\cos(t) \omega_{\bar3}- \sin(t) \omega_{\bar2}\\
\tilde f^* \omega_{7} & = \omega_{3}
\end{align*}
Here the solder and connection forms on the right-hand side refer to the complex basis $e_2,\ldots, e_7$ of $T_{\lambda^{-1/2} e_1}\S^6$. The lemma follows now from direct 
computation. Let us compute for instance the restriction of 
\[
 \im\Phi=\frac{1}{2\pi^2}[\chi_{0,I}\wedge\chi_{3,I}-\chi_{0,R}\wedge\chi_{3,R},0].
\]
Modulo $\omega_1=\alpha$, and modulo terms without a $dt$ factor,  we have
\begin{align*}
 \tilde f^*(\chi_{0,I}\wedge\chi_{3,I})&\equiv\sin^2(t)dt\wedge(-\varphi_{\bar 21}\wedge\varphi_{\bar31}+\varphi_{21}\wedge\varphi_{3 1})\wedge\omega_{\bar 1}\wedge\\
 &\wedge[(-\sin(t)\omega_{\bar3}+\cos(t)\omega_{\bar 2})\wedge\omega_{3} -\omega_2\wedge(-\cos(t)\omega_{\bar3}-\sin(t)\omega_{\bar 2})]\\
 \tilde f^*(\chi_{0,R}\wedge\chi_{3,R})&\equiv \sin^2(t)dt\wedge\omega_{\bar 1}\wedge(-\omega_2\wedge\omega_{3}+\omega_{\bar2}\wedge\omega_{\bar3})\\
 \wedge[\varphi_{21}\wedge&(\cos(t)\varphi_{\bar31}+\sin(t)\varphi_{\bar 21})+(-\sin(t)\varphi_{\bar31}+\cos(t)\varphi_{\bar21})\wedge\varphi_{31})].
\end{align*}
Integrating for $t\in(0,\pi)$ we get
\begin{align*}
 p_*(\chi_{0,I}\wedge\chi_{3,I})&=\frac43\omega_{\bar 1}\wedge(\varphi_{21}\wedge\varphi_{31}-\varphi_{\bar21}\wedge\varphi_{\bar31}) \wedge(\omega_2\wedge\omega_{\bar2}+\omega_3\wedge\omega_{\bar3})\\
 p_*(\chi_{0,R}\wedge\chi_{3,R})&=-\frac43\omega_{\bar 1}\wedge(\varphi_{21}\wedge\varphi_{\bar 21}+\varphi_{31}\wedge\varphi_{\bar31})\wedge(\omega_2\wedge\omega_{3}-\omega_{\bar2}\wedge\omega_{\bar3}).
\end{align*}
It is straightforward to check that $p_*(\chi_{0,I}\wedge\chi_{0,I}-\chi_{0,R}\wedge\chi_{0,R})=\frac43 \beta\wedge\theta_1\wedge\chi_{1,R}$, 
and thus $\iota^*(\im\Phi)=\frac{8}{3\pi}\re\Psi_3$.
The other relations follow similarly.
\end{proof}

\begin{Lemma}\label{lemma_Euler_Verdier_S7} The Euler-Verdier involution on $\Curv^{\G_2}$ is given by 
$$\sigma \Theta_{kp} = (-1)^k \Theta_{kp}$$
 and 
$$ \sigma \Phi  =  -\overline\Phi $$
In particular, $\im\Phi\in \Curv_3^-$.
\end{Lemma}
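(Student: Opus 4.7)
The plan is to follow the same strategy as in the proof of Lemma~\ref{lemma_euler_verdier}: exhibit an explicit lift of the fiberwise antipodal map $a\colon S\S^7\to S\S^7$ to the frame bundle $\mathcal{F}_{\G_2}$, compute its action on the basic invariant forms of Proposition~\ref{prop_basic_forms_span}, and then read off the signs from the definition $\sigma[\omega,\eta]=(-1)^7[a^*\omega,\eta]$. The lift I would take is $\tilde a(p;u_1,\ldots,u_7)=(p;\epsilon_1 u_1,\ldots,\epsilon_7 u_7)$ with $\epsilon_i:=(-1)^i$. Since $\epsilon_1=-1$, this does cover the antipodal map on $S\S^7$, and it preserves $\mathcal{F}_{\G_2}$ because $\epsilon_i\epsilon_j\epsilon_k=1$ for every nonvanishing $\varepsilon_{ijk}$; this reduces to direct inspection of the seven monomials of $\phi$ in \eqref{eq associative 3-form}. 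From \eqref{eq rg omega}--\eqref{eq rg varphi} one then obtains $\tilde a^*\omega_i=\epsilon_i\omega_i$ and $\tilde a^*\varphi_{i,1}=-\epsilon_i\varphi_{i,1}$.

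Next I would compute the action of $a^*$ on the basic $2$-forms. A short case check using the three associative triples $\{1,2,3\},\{1,4,5\},\{1,6,7\}$ through $1$ gives
$$a^*\theta_0=-\theta_0,\quad a^*\theta_1=\theta_1,\quad a^*\theta_2=-\theta_2,\quad a^*\theta_s=-\theta_s;$$
for $\theta_0$ and $\theta_2$ each surviving monomial contributes a factor $\epsilon_j\epsilon_k=-1$, for $\theta_1$ the factor $\epsilon_j\epsilon_k\epsilon_1$ equals $+1$, and for $\theta_s$ one has $\epsilon_i^2\epsilon_1=-1$. Consequently
$$a^*\bigl(\theta_0^{3+p-k}\theta_1^{k-2p}\theta_2^p\bigr)=(-1)^{3-k}\theta_0^{3+p-k}\theta_1^{k-2p}\theta_2^p,$$
and multiplying by the sign $(-1)^7$ coming from $\dim\S^7=7$ yields $\sigma\Theta_{k,p}=(-1)^k\Theta_{k,p}$, which is the first identity.

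For the second identity I would compute $a^*\chi_0$ and $a^*\chi_3$ by the same bookkeeping. The real part of $\chi_0$ runs over the four associative triples $\{2,4,6\},\{2,5,7\},\{3,4,7\},\{3,5,6\}$ not containing $1$, on which $\epsilon_i\epsilon_j\epsilon_k=1$, while the three connection factors produce an overall $\epsilon_1^3=-1$; the imaginary part runs over the four coassociative triples $\{3,5,7\},\{3,4,6\},\{2,5,6\},\{2,4,7\}$ through $1$, on which $\epsilon_i\epsilon_j\epsilon_k=-1$, so after multiplying by $\epsilon_1^3=-1$ the imaginary part is preserved. This gives $a^*\chi_0=-\overline{\chi_0}$, and since the monomials of $\chi_3$ involve only solder forms the same computation (without the extra $\epsilon_1^3$) gives $a^*\chi_3=\overline{\chi_3}$. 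Hence $a^*(\chi_0\wedge\chi_3)=-\overline{\chi_0\wedge\chi_3}$, and together with the purely imaginary constant $-i/(2\pi^2)$ in the definition of $\Phi$ this produces $\sigma\Phi=-\overline{\Phi}$. Finally $\sigma\im\Phi=\im\Phi$ combined with the general relation $\sigma\Psi=(-1)^k\varepsilon\Psi$ on $\Curv_k^\varepsilon$ forces $\varepsilon=-1$, so $\im\Phi\in\Curv_3^-$. The whole argument is purely combinatorial once the lift has been chosen; the only delicate point is the uniform bookkeeping of signs across the two orbits of triples under the symmetries of the associative three-form.
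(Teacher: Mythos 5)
Your proof is correct and follows essentially the same route as the paper: the same lift $u_i\mapsto(-1)^iu_i$ of the antipodal map (which preserves $\mathcal F_{\G_2}$ because $i+j+k$ is even whenever $\varepsilon_{ijk}\neq0$), the same sign bookkeeping on $\theta_i$ and $\chi_i$, and the same conclusion $a^*\chi_i=(-1)^{i+1}\overline{\chi_i}$. The only difference is presentational: the paper phrases the case checks as parity statements about the nonvanishing $\varepsilon$ symbols rather than enumerating the associative and coassociative triples, but the content is identical.
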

\begin{proof}
 Since $\varepsilon_{ijk}\neq 0$ if and only if $i+j+k$ is even,  the assignment 
$$ a (e_i) = (-1)^i e_i, \qquad i=1,\ldots, 7,$$
defines a lift of  the fiberwise antipodal map $a\colon SM\to SM$
to the frame bundle $\mathcal{F}_{\G_2} M$ of a manifold with a $\G_2$-structure. Hence
 $$  a^* \omega_i = (-1)^{i} \omega_i\quad\text{and}\quad   a^* \varphi_{ij} = (-1)^{i+j} \varphi_{ij}$$
 for $i,j=1,\ldots, 7$.
As an immediate consequence,
 $$a^*\theta_i = (-1)^{i+1} \theta_i, \qquad i=0,1,2.$$
Since $\varepsilon_{1ijk}\neq 0$ if and only if $i+j+k$ is odd, we also have 
$$a^* \chi_i = (-1)^{i+1} \overline{\chi_i}, \qquad i=0,1,2,3.$$
\end{proof}

\subsection{The algebra of invariant valuations}
A geometric description of the space of translation-invariant and $\G_2$-invariant valuations was obtained by Bernig \cite{bernig_g2}. 
The space $\Val^{\G_2}$ is $10$-dimensional and consists of even valuations. It is spanned by the intrinsic volumes and two new valuations $\nu_3$ and $\nu_4$ 
of degree $3$ and $4$. In terms of Klain functions, 
$$\Kl_{\nu_3}(E) = |\phi|_E|^2\qquad \text{and}\qquad \Kl_{\nu_4}(F) = |\psi|_F|^2,$$
where the norms are understood with respect to the euclidean inner product determined by $\phi$. 
The valuations
\begin{align*}
 \nu_3' & = 5\nu_3- \mu_3\\
  \nu_4' & = 5\nu_4- \mu_4
\end{align*}
belong to non-trivial irreducible $\SO(7)$-subrepresentations of $\Val_k$. 

\begin{Lemma}\label{restriction_val}
Let $i\colon \C^3\cong T_p\S^6\longrightarrow T_pS^7\cong \R^7$ be induced by the inclusion $\S^6\subset\S^7$. 
The corresponding restriction map $i^*\colon \Val^{\G_2}\rightarrow \Val^{\SU(3)}$  is given by
 \begin{align*}
 i^*(\mu_k)&=\sum_q \mu_{k,q}\\
    i^*(\nu_3') & =  \frac 3 2 \mu_{3,0}-\mu_{3,1} -\frac52 \re \phi_3\\
    i^*(\nu_4')& = -\mu_{4,1}+4\mu_{4,2}.
\end{align*}
\end{Lemma}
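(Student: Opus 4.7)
Since $\mu_k$, $\nu_3$, and $\nu_4$ all lie in $\Val^+$, they are determined by their Klain functions, and the Klain function of $i^*\psi$ on $E \in \Gr_k(\C^3)$ equals $\Kl_\psi(iE)$. So the strategy in each case is to restrict the appropriate invariant form to $\C^3 \subset \im\O$, compute the Klain function, and match it against the basis $\{\mu_{k,q}, \re\phi_3, \im\phi_3\}$ of $\Val_k^{\SU(3)}$ (see \cite{bernig_sun}).

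For $\mu_k$ the identity $i^*\mu_k = \sum_q \mu_{k,q}$ is immediate from the fact that the $k$-th intrinsic volume is a Riemannian invariant which is preserved under isometric embeddings: $i^*\mu_k^{(\im\O)} = \mu_k^{(\C^3)}$, and in the Bernig decomposition $\mu_k^{(\C^3)} = \sum_q \mu_{k,q}$.

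For $\nu_3$ and $\nu_4$ the first step is to restrict the calibrations $\phi$ and $\psi=*\phi$ to $\C^3 = \spann(e_2,\ldots,e_7) \subset \im\O$ equipped with the complex structure $J = e_1\cdot$. A direct computation using \eqref{eq associative 3-form} and $\Upsilon = (e^2 + ie^3)\wedge(e^4 + ie^5)\wedge(e^6 + ie^7)$ shows
\[
\phi|_{\C^3} = \re\Upsilon, \qquad \psi|_{\C^3} = \tfrac{1}{2}\,\omega\wedge\omega.
\]
For $E \in \Gr_3(\C^3)$, write $\Upsilon|_E = z\,\vol_E$ with $z \in \C$; then $\phi|_E = (\re z)\vol_E$, so
\[
\Kl_{\nu_3}(E) = (\re z)^2 = \tfrac{1}{2}|z|^2 + \tfrac{1}{2}\re(z^2).
\]
By \eqref{kahler_angle} one has $|z|^2 = \sin^2\theta = \Kl_{\mu_{3,0}}(E)$ (cf.\ \cite{hig}*{(35)}), while $\re(z^2)$ is a sign times $\Kl_{\re\phi_3}(E)$ by \eqref{klain_phi}. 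With the convention fixed there, this yields $i^*\nu_3 = \tfrac{1}{2}\mu_{3,0} - \tfrac{1}{2}\re\phi_3$, and subtracting $i^*\mu_3 = \mu_{3,0} + \mu_{3,1}$ gives the stated formula for $\nu_3'$. For $F \in \Gr_4(\C^3)$ the identity $\psi|_F = \tfrac{1}{2}\omega^2|_F$ shows $\Kl_{\nu_4}(F)$ depends only on $\omega|_F$, so since $\dim \Val_4^{\SU(3)} = 2$ it is enough to evaluate both sides on two test planes. On a complex 4-plane such as $F = \spann(e_2,e_3,e_4,e_5)$ one has $\omega^2|_F/2 = e^{2345}$, hence $\nu_4(F) = 1$, $\mu_{4,1}(F) = 0$, $\mu_{4,2}(F) = 1$; on a 4-plane with a single complex line, e.g.\ $F = \spann(e_2,e_3,e_4,e_6)$, one has $\omega|_F = e^{23}$ with vanishing square, so $\nu_4(F) = 0$, $\mu_{4,1}(F) = 1$, $\mu_{4,2}(F) = 0$. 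This forces $i^*\nu_4 = \mu_{4,2}$ and hence $i^*\nu_4' = -\mu_{4,1} + 4\mu_{4,2}$.

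\textbf{Main obstacle.} The only delicate point is keeping track of the sign in the $\re\phi_3$ term, which depends on the orientation convention for $\Upsilon$ encoded in \eqref{klain_phi}; this is precisely the normalization the authors correct relative to \cite{bernig_sun}. Once the sign is fixed consistently, the proof reduces to the routine linear algebra above.
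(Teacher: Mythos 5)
Your strategy via Klain functions is a legitimate alternative to the paper's proof (which simply quotes Bernig's computation of the restriction in \cite{bernig_g2}*{Lemma 5.1} and corrects it by a unitary twist), and your treatment of $\mu_k$ and $\nu_4'$ is correct. But there is a genuine gap in the $\nu_3'$ computation, and it sits exactly at the point your "main obstacle" paragraph tries to wave away. When you "restrict $\phi$ to $\C^3=\spann(e_2,\ldots,e_7)\subset\im\O$" you are computing the pull-back along the linear isometry $j\colon\C^3\hookrightarrow\im\O$ which satisfies $j^*\phi=\re\Upsilon$. This $j$ is \emph{not} the map $i$ of the lemma. The map $i$ is induced by the inclusion of spheres, and the $\G_2$-structure on $T_p\S^7$ at $p\in\S^6$ is $\sigma_p=\sqrt\lambda\,\iota_p\Phi$, whose restriction to $T_p\S^6$ is $\iota_p\psi|_{T_p\S^6}=\mp\im\Upsilon$ by \eqref{eq Cayley associative}; so $i^*\sigma=\pm\im\det_\C$ while $j^*\sigma=\re\det_\C$. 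The two embeddings differ by precomposition with $s\in\mathrm U(3)$ with $\det_\C s=\sqrt{-1}$, which acts trivially on the $\mathrm U(3)$-invariant part (hence $\mu_k$ and $\nu_4'$ come out right either way) but sends $\Kl_{\phi_3}$ to $-\Kl_{\phi_3}$.

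Concretely, with the paper's convention \eqref{klain_phi} one has $\Kl_{\re\phi_3}(E)=\re(z^2)$ on the nose, so your identity $(\re z)^2=\tfrac12|z|^2+\tfrac12\re(z^2)$ gives $j^*\nu_3=\tfrac12\mu_{3,0}+\tfrac12\re\phi_3$ and hence $j^*\nu_3'=\tfrac32\mu_{3,0}-\mu_{3,1}+\tfrac52\re\phi_3$ — the opposite sign from the statement. The correct computation for $i$ uses $(\im z)^2=\tfrac12|z|^2-\tfrac12\re(z^2)$, which produces the stated $-\tfrac52\re\phi_3$. This is not a matter of "orientation convention for $\Upsilon$": the sign is forced, e.g., by consistency with $\iota^*(N_3)$ in Lemma~\ref{restriction_spheres} under globalization. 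So to repair your argument you must either compute $i^*\sigma$ honestly (obtaining $\pm\im\Upsilon$) or insert the twist by $s$ as the paper does.
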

\begin{proof} In \cite[Lemma 5.1]{bernig_g2},
the restriction $j^*\colon \Val^{\G_2}\rightarrow \Val^{\SU(3)}$ was computed for a linear isometry $j\colon \C^3\rightarrow \R^7$ with
$j^*\sigma=\re\det_\C$. Since $i^*\sigma=\im\det_\C$, we may take $j=i\circ s^{-1}$ where $s$ is any $\C$-linear isometry of $\C^3$ with $\det s=\sqrt{-1}$.  
Since $\mathrm{Kl}_{s^*\phi_3}=-\mathrm{Kl}_{\phi_3}$, comparing with \cite[Lemma 5.1]{bernig_g2} and \cite[(36)]{hig} yields the result. 
\end{proof}

\begin{Corollary}\label{prop glob G2} For $\lambda=0$ 
\begin{equation}\label{eq_glob_intrinsic_volumes}\mu_k= \glob(\Delta_k),\qquad k=0,\ldots,6\end{equation}
and 
$$\nu_l'=\glob(N_l),\qquad l=3,4.$$
\end{Corollary}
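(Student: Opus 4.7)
The strategy is to exploit that globalization commutes with the pullback $i^*\colon \Val^{\G_2}\to \Val^{\SU(3)}$ induced by the linear inclusion $i\colon \C^3\hookrightarrow \R^7$, i.e.\ the $\lambda=0$ case of $\iota\colon \S^6\hookrightarrow \S^7$. Setting $f\equiv 1$ in the defining relation \eqref{pullback} of curvature-measure pullback immediately yields $\glob\circ i^*=i^*\circ\glob$.

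First I would identify the kernel of $i^*\colon \Val^{\G_2}\to\Val^{\SU(3)}$. Lemma~\ref{restriction_val}, together with the elementary facts $i^*\mu_k=\mu_k$ for $k\leq 6$ and $i^*\mu_7=0$, gives the images of all ten basis elements $\mu_0,\ldots,\mu_7,\nu_3',\nu_4'$. Organized by degree, the nine non-zero images are manifestly linearly independent in $\Val^{\SU(3)}$: in degree $3$ the presence of $\re\phi_3$ in $i^*\nu_3'$ separates it from $i^*\mu_3=\mu_{3,0}+\mu_{3,1}$, and in degree $4$ the pair $\mu_{4,1}+\mu_{4,2}$, $-\mu_{4,1}+4\mu_{4,2}$ spans a $2$-dimensional subspace; the remaining degrees $0,1,2,5,6$ are trivially one-dimensional. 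Since $\dim\Val^{\G_2}=10$ by \cite{bernig_g2}, this forces $\ker i^*=\R\mu_7$.

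For the $\Delta_k$ identities, apply $\glob$ to $i^*\Delta_k=\sum_q\Delta_{k,q}$ (Lemma~\ref{restriction_spheres} at $\lambda=0$) and use $\mu_{k,q}=\glob(\Delta_{k,q})$ in $\C^3$ to obtain
$$i^*\glob(\Delta_k)=\glob(i^*\Delta_k)=\sum_q\mu_{k,q}=\mu_k=i^*\mu_k.$$
Hence $\glob(\Delta_k)-\mu_k\in\ker i^*=\R\mu_7$; since both sides are of pure homogeneity $k\leq 6$, the difference cannot contain a multiple of $\mu_7$ and must vanish.

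For the $N_l$ identities, the argument is parallel. From Lemma~\ref{restriction_spheres},
$$i^*N_3=\tfrac{3}{2}\Delta_{3,0}-\Delta_{3,1}-\tfrac{5}{3}N_{3,1}-\tfrac{5}{2}\re\Phi_3,\qquad i^*N_4=-\Delta_{4,1}+4\Delta_{4,2}.$$
Globalizing in $\C^3$ and using $\glob N_{3,1}=0$, $\glob\Phi_3=\phi_3$, $\glob\Delta_{k,q}=\mu_{k,q}$ produces exactly $i^*\nu_3'$ and $i^*\nu_4'$ respectively, by Lemma~\ref{restriction_val}. Therefore $\glob(N_l)-\nu_l'\in\R\mu_7$, and since this difference has pure homogeneity $l\in\{3,4\}$, it vanishes. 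The only non-routine step throughout is the kernel computation for $i^*$, which reduces to a small dimension count in each graded component.
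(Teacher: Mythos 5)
Your proposal is correct and follows essentially the same route as the paper: both rest on the commutation $\glob\circ i^*=i^*\circ\glob$ together with the injectivity of $i^*$ on $\Val_k^{\G_2}$ for $k<7$, and then verify the claimed identities after restriction using Lemmas~\ref{restriction_spheres} and~\ref{restriction_val}. The only difference is that you derive the injectivity (equivalently $\ker i^*=\C\mu_7$) by an explicit degree-by-degree dimension count, whereas the paper simply invokes this known fact.
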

\begin{proof}
Since $i^*$ is injective on $\Val_k^{\G_2}$ if $k<7$, and $i^*\circ\glob=\glob\circ \iota^*$, it suffices to check
$$\glob\circ \iota^*( \Delta_k)=\iota^*(\mu_k) \qquad \text{and} \qquad \glob\circ \iota^*( N_l)=\iota^*(\nu_l')$$
which is immediate from Lemmas \ref{restriction_spheres} and \ref{restriction_val}.
\end{proof}

Let $\mu_k^\lambda=[\Delta_k]_\lambda$, $\nu_3^\lambda=[N_3]_\lambda$ and $\nu_4^\lambda=[N_4]_\lambda$
(note that $\nu_3^0=\nu_3'$ and $\nu_4^0=\nu_4'$). By Lemma \ref{lemma_basis} and \cite{bernig_g2}, the valuations  
\[
\mu_0^\lambda,\ldots,\mu_7^\lambda,\nu_3^\lambda,\nu_4^\lambda
\]
constitute a basis of $\calV^7_\lambda$.

Next we determine the algebra structure on $\calV^7_\lambda$.
Put
\[
 \varphi=\frac{2}{\sqrt{\lambda}}\frac{1}{\vol(\S^7)}  k(\chi)(\, \cdot\, ,\S^6),
\]
where $\S^6\subset\S^7$ is a totally geodesic hypersurface. Recall that the formulas \eqref{eq tk_lambda} and \eqref{eq phik} hold.

\begin{Lemma} \label{lemma_action_of_phi_S7} $\varphi\cdot \nu_3^\lambda = -\frac38\nu_4^\lambda$ and $\varphi\cdot \nu_4^\lambda=0$. 
\end{Lemma}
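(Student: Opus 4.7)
The plan is to reduce to the flat algebra $\Val^{\G_2}(\R^7)$ treated by Bernig \cite{bernig_g2}. Since $\nu_j^\lambda = [N_j]_\lambda = \glob_{\S^7}(N_j)$ by definition and globalization commutes with the module action, it suffices to compute $\varphi \cdot N_j$ in $\calC^7_\lambda \cong \Curv^{\G_2}$ and globalize. The definition of $\varphi$ together with $\mu_{\S^6} = \vol(\S^6)^{-1} k(\chi)(\,\cdot\,,\S^6)$ gives $\varphi = \tfrac{2\vol(\S^6)}{\sqrt{\lambda}\,\vol(\S^7)}\,\mu_{\S^6}$, and then Proposition~\ref{prop_geodesic}, applied with $P=\S^6$ and $P_o=\im\O\subset T_o\S^7$, converts the problem into computing the flat module products $\mu_{P_o}\cdot N_3$ and $\mu_{P_o}\cdot N_4$ in $\Curv^{\G_2}$.

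For the next step, since $\G_2\subset\SO(7)$ acts transitively on hyperplanes in $\R^7$ and $\Val_1^{\G_2}(\R^7)=\C\mu_1$ is one-dimensional, the flat Crofton valuation $\mu_{P_o}$ is a scalar multiple of $\mu_1$, with the constant determined by evaluation on the unit ball. The globalization map $\glob_0\colon\Curv_k^{\G_2}\to\Val_k^{\G_2}(\R^7)$ is injective for the degrees $k=4,5$ of interest: $\Curv_5^{\G_2}=\C\Delta_5$ and $\Val_5^{\G_2}=\C\mu_5$ are one-dimensional, and the images $\glob_0(\Delta_4)=\mu_4$ and $\glob_0(N_4)=\nu_4'$ are linearly independent in $\Val_4^{\G_2}$ by Corollary~\ref{prop glob G2}. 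Thus each $\mu_1\cdot N_j$ is uniquely determined in $\Curv^{\G_2}$ by its flat globalization $\mu_1\cdot\nu_j'\in\Val^{\G_2}(\R^7)$.

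Finally I read off these two products from Bernig's algebra. Multiplication by the $\SO(7)$-invariant valuation $\mu_1$ is $\SO(7)$-equivariant, and $\nu_3'$, $\nu_4'$ lie by construction in $\SO(7)$-subrepresentations of $\Val_3$, $\Val_4$ complementary to $\C\mu_3$, $\C\mu_4$. Since $\Val_5^{\G_2}=\C\mu_5$ is $\SO(7)$-trivial, the image of $\nu_4'$ under $\mu_1\cdot$ must vanish, giving $\mu_1\cdot\nu_4'=0$ and hence $\varphi\cdot\nu_4^\lambda=0$. The same equivariance kills the $\mu_4$-component of $\mu_1\cdot\nu_3'$, so $\mu_1\cdot\nu_3' = c\,\nu_4'$ for the explicit constant $c$ from Bernig's relations; globalizing in $\S^7$ and combining the scalars from the previous steps yields $\varphi\cdot\nu_3^\lambda = -\tfrac{3}{8}\nu_4^\lambda$. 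I expect the main obstacle to be the careful bookkeeping of all three constants — the Crofton ratio between $\mu_{P_o}$ and $\mu_1$, the sphere-volume prefactor in $\varphi$, and Bernig's $c$ — so that they combine to precisely $-\tfrac{3}{8}$.
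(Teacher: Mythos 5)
Your argument is correct and is essentially the paper's proof: the flat identities from Bernig's $\G_2$ paper, lifted to $\Curv^{\G_2}$ by the injectivity of $\glob_0$ in degrees $4$ and $5$, and transferred to $\S^7$ via Proposition~\ref{prop_geodesic}; the $\SO(7)$-equivariance remark is a nice (if optional) justification of the shape of the flat products, whose constants the paper likewise takes from \cite{bernig_g2}. Only note the small slip that $P_o=T_o\S^6$ is a hyperplane in $T_o\S^7$, not all of $\im\O$.
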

\begin{proof}For $\lambda=0$, the statement was proved in \cite{bernig_g2}.
A dimension count shows that $\glob_0\colon \Curv_4^{\G_2}\to \Val_4^{\G_2}$ is injective. Hence $\varphi\cdot N_3=-\frac38 N_4$ and
$\varphi\cdot N_4=0$ in $S_0^7$. By Proposition \ref{prop_geodesic}, this holds also in $\S^7$ for $\lambda>0$, and the claim follows. 
\end{proof}

The following proves the second part of Theorem~\ref{Theorem_isomorphism}.
\begin{Theorem} The filtered $\C$-algebra $\calV^7_\lambda$ is   isomorphic to 
$$\C[t,u]/(t^2u,u^2+t^6)$$ where the generators $t,u$ have filtration $1$ and  $3$ respectively. 
\end{Theorem}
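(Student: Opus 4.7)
The plan is to follow the strategy of Theorem~\ref{first_part}: choose filtered generators $t_\lambda\in\calV_1(\S^7)$ and $u_\lambda\in\calV_3(\S^7)$, verify that the two defining relations of Bernig's flat algebra $\Val^{\G_2}\cong\C[t,u]/(t^2u,\,u^2+t^6)$ already hold in $\calV^7_\lambda$, and conclude by a filtration/dimension argument using Theorem~\ref{graded_iso}. First I would set $t_\lambda:=\frac{2}{\pi}\mu_1^\lambda$ and take $u_\lambda$ to be the scalar multiple of $\nu_3^\lambda$ that realizes Bernig's normalization $u_0^2+t_0^6=0$ in $\Val^{\G_2}$ for $\lambda=0$. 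By construction $t_\lambda\in\calV_1\setminus\calV_2$ and $u_\lambda\in\calV_3\setminus\calV_4$.

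To verify $t_\lambda^2 u_\lambda=0$, I would combine Lemma~\ref{lemma_action_of_phi_S7} with the spherical expansion \eqref{eq tk_lambda}. Since $\varphi\cdot\nu_4^\lambda=0$, we get $\varphi^{2k}\cdot u_\lambda=0$ for every $k\geq1$. But \eqref{eq tk_lambda} gives $t_\lambda^2=\varphi^2(1-\lambda\varphi^2/4)^{-1}$, a polynomial in $\varphi^2$ whose lowest-order term is $\varphi^2$; each summand therefore annihilates $u_\lambda$, so $t_\lambda^2 u_\lambda=0$.

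To establish $u_\lambda^2+t_\lambda^6=0$ I would proceed in two stages. By Theorem~\ref{graded_iso} the associated graded of $\calV^7_\lambda$ is isomorphic to $\Val^{\G_2}$, so Bernig's flat relation forces $u_\lambda^2+t_\lambda^6\in\calV_7=\C\mu_7^\lambda$. To pin down the remaining scalar I would use the Euler-Verdier involution: Lemma~\ref{lemma_Euler_Verdier_S7}, together with the fact that $\sigma$ commutes with globalization and with the Alesker product, yields $\sigma t_\lambda=-t_\lambda$, $\sigma u_\lambda=-u_\lambda$, and $\sigma\mu_7^\lambda=-\mu_7^\lambda$. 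Hence $u_\lambda^2+t_\lambda^6$ is $\sigma$-fixed while $\mu_7^\lambda$ is $\sigma$-anti-fixed, which forces $u_\lambda^2+t_\lambda^6=0$.

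Finally, the induced homomorphism $\Psi\colon\C[t,u]/(t^2u,\,u^2+t^6)\to\calV^7_\lambda$ sending $t\mapsto t_\lambda$, $u\mapsto u_\lambda$ is filtration preserving, and by Theorem~\ref{graded_iso} its associated graded is Bernig's isomorphism; since both algebras have dimension~$10$, $\Psi$ is itself an isomorphism. The main obstacle I expect is precisely the second step: without the Euler-Verdier argument one only knows $u_\lambda^2+t_\lambda^6\in\calV_7$, and the parity-based identification of the remaining scalar as zero is the essential new ingredient beyond the flat case.
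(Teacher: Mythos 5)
Your proposal is correct and follows essentially the same route as the paper: the relation $t_\lambda^2u_\lambda=0$ from Lemma~\ref{lemma_action_of_phi_S7} together with \eqref{eq tk_lambda}, the reduction of $u_\lambda^2+t_\lambda^6$ to a multiple of the volume via Theorem~\ref{graded_iso}, and the Euler--Verdier parity argument to kill that multiple, followed by a dimension count. The only cosmetic difference is that the paper writes the unknown multiple as $\pd(u_\lambda,u_\lambda)\vol$ before applying the same involution argument.
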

\begin{proof}For $\lambda=0$ this is proved in \cite{bernig_g2}. 
 By Lemma \ref{lemma_action_of_phi_S7} and \eqref{eq tk_lambda},
 the valuations  $t_\lambda=\frac{2}{\pi}\left.\mu_1\right|_{\S^7}$, and $u_\lambda=\frac{2}{\pi^2}\nu_3^\lambda$
 generate the algebra $\calV^7_\lambda$. By the same token, the  relation $t^2_\lambda u_\lambda=0$ holds. 
Since the relation $u^2_\lambda+t^6_\lambda=0$  holds for $\lambda=0$, Theorem~\ref{graded_iso} and Corollary~\ref{coro_diagram} imply
\[
u^2_\lambda+t^6_\lambda=c\vol
\]
where $c=\pd(u_\lambda,u_\lambda)$. By Lemma \ref{lemma_Euler_Verdier_S7} we have  $\sigma(u_\lambda)=- u_\lambda$ and $\sigma(t_\lambda)=-t_\lambda$.
It follows that $u^2_\lambda+t^6_\lambda$ belongs to the $+1$-eigenspace of $\sigma$. Since $\sigma(\vol)=-\vol$, this shows $c=0$.
Comparing dimensions we see that the map  $t\mapsto t_\lambda$, $u\mapsto u_\lambda$ from
$ \C[t,u]/(t^2u,u^2+t^6)$ to  $\calV^7_\lambda$  is an isomorphism.
\end{proof}
 It is worth mentioning that we have constructed an isomorphism between $\calV^7_\lambda$ and $\Val^{\Spin(7)}$ preserving filtration
 and also Euler-Verdier involution.

We also describe the kernel of the globalization map $\glob_\lambda\colon \Curv^{\G_2} \cong \calC^7_\lambda\to \calV^7_\lambda$.

\begin{Proposition}\label{prop_ker_glob_s7}
 \begin{align*}
  [N_2+\frac{3\lambda}{2\pi}N_4]_\lambda&=0\\
  [ \im \Phi - \frac{4\sqrt{\lambda}}{3\pi} N_4]_\lambda &=0\\
  [N_3-4\re \Phi]_\lambda&=0
 \end{align*}
\end{Proposition}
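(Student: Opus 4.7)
The plan is to adapt the strategy of Proposition~\ref{prop_ker_glob}: apply the restriction $\iota^*\colon \calV_\lambda^7\to\calV_\lambda^6$ along a totally geodesic $\S^6$ to each of the three LHSs, reduce to vanishing statements already contained in Proposition~\ref{prop_ker_glob}, and then exploit Euler-Verdier parity together with the injectivity of $\iota^*$ on $\Val_k^{\G_2}$ for $k<7$ (which follows at once from Lemma~\ref{restriction_val}) to conclude.

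The first step is to verify the vanishing at $\lambda=0$. From Lemma~\ref{restriction_spheres} one computes $\iota^*N_2 = N_{2,0}-2\re\Phi_2$, $\iota^*\im\Phi = \tfrac{8}{3\pi}\re\Psi_3$, and $\iota^*(N_3-4\re\Phi) = -\tfrac{8}{3}N_{3,1}$; each of these curvature measures globalizes to zero in $\C^3$ by Proposition~\ref{prop_ker_glob}. Combined with the injectivity of $\iota^*$ on $\Val_k^{\G_2}$ for $k=2,3$, this forces $[N_2]_0 = [\im\Phi]_0 = [N_3-4\re\Phi]_0 = 0$. By Corollary~\ref{coro_diagram} the $\lambda>0$ globalizations therefore sit one filtration step deeper, and the three LHSs belong to $\calV_3$, $\calV_4$, and $\calV_4$ respectively.

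Next I would plug Proposition~\ref{prop_ker_glob} back in to show that $\iota^*$ of each LHS vanishes in $\calV_\lambda^6$. For the first, $\iota^*$ of the LHS equals $[N_{2,0}-2\re\Phi_2]_\lambda -\tfrac{3\lambda}{2\pi}[\Delta_{4,1}-4\Delta_{4,2}]_\lambda = \bigl(\tfrac{\lambda}{2\pi}+\tfrac{2\lambda}{2\pi}-\tfrac{3\lambda}{2\pi}\bigr)[\Delta_{4,1}-4\Delta_{4,2}]_\lambda = 0$. The computation for the second relation is analogous, using $[\Psi_3]_\lambda = -\tfrac{\sqrt\lambda}{2}[\Delta_{4,1}-4\Delta_{4,2}]_\lambda$; the third reduces to $-\tfrac{8}{3}[N_{3,1}]_\lambda=0$. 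Since $\sigma$ acts by $(-1)^k$ on $\mu_k^\lambda$ and on $\nu_k^\lambda$ by Lemma~\ref{lemma_Euler_Verdier_S7}, within $\calV_\lambda^7$ the $\sigma$-even elements in filtration $\geq 3$ are spanned by $\mu_4^\lambda,\mu_6^\lambda,\nu_4^\lambda$ and the $\sigma$-odd elements in filtration $\geq 4$ are spanned by $\mu_5^\lambda,\mu_7^\lambda$. The first two LHSs are $\sigma$-even and the third is $\sigma$-odd. The linear independence of $\iota^*\mu_4^\lambda$, $\iota^*\nu_4^\lambda$, $\iota^*\mu_5^\lambda$, $\iota^*\mu_6^\lambda$ in $\calV_\lambda^6$, together with $\iota^*\mu_7^\lambda=0$, then immediately closes cases~1 and 2 and reduces case~3 to $[N_3-4\re\Phi]_\lambda = b\,\mu_7^\lambda$ for some $b\in\C$.

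The only remaining obstacle is to show $b=0$; here $\mu_7^\lambda$ has the same Euler-Verdier parity as the LHS, so the preceding invariants do not distinguish them. The plan is to evaluate both sides on the full sphere $\S^7$: since $N_3$ and $\re\Phi$ are both represented by differential forms of the type $[\omega,0]$ with vanishing volume-form part, and since $\S^7$ has empty boundary so that $N(\S^7)=\emptyset$, the curvature measure $(N_3-4\re\Phi)(\S^7,\S^7)$ vanishes. Therefore $b\cdot\vol(\S^7) = [N_3-4\re\Phi]_\lambda(\S^7) = 0$, which forces $b=0$.
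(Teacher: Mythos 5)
Your proposal is correct and follows essentially the same route as the paper: restrict along $\iota\colon\S^6\hookrightarrow\S^7$, use Lemma~\ref{restriction_spheres} together with Proposition~\ref{prop_ker_glob} to see that each restriction globalizes to zero in $\calV_\lambda^6$, conclude that each left-hand side lies in $\ker r=\spann\{\mu_7^\lambda\}$, and kill the remaining coefficient. The only (harmless) variation is that you dispose of the first two cases by Euler--Verdier parity against the $\sigma$-odd $\mu_7^\lambda$, whereas the paper evaluates all three on $\S^7$ exactly as you do for the third.
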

\begin{proof}
Consider the restriction $r\colon \calV^7_\lambda\to\calV^6_\lambda$. Since $r\circ\glob_\lambda=\glob_\lambda\circ \iota^*$, it follows from Lemma \ref{restriction_spheres}
that $r$ is injective on the subspace spanned by $\mu_0^\lambda,\ldots, \mu_6^\lambda,\nu_3^\lambda,\nu_4^\lambda$.  Hence, $\ker r$ is spanned by $\mu_7^\lambda$. 

Let  $m=N_2+\frac{3\lambda}{2\pi}N_4$. One checks using Lemma \ref{restriction_spheres} and Proposition \ref{prop_ker_glob} that 
the restriction $\iota^*m$ belongs to the kernel of globalization in $\S^6$. Hence, $[m]_\lambda$ is a multiple of $\mu^7_\lambda$. Since $[m]_\lambda(\S^7)=0$ the first equality in the statement is proved. 
The same argument applies also to the other two equalities. 
\end{proof}

In particular, as it was the case in $\Curv^{\SU(3)}$, there is an element of $\Curv^{\G_2}$ which globalizes to zero for every $\lambda$.

\subsection{Global kinematic formulas}

By the fundamental theorem of algebraic integral geometry \eqref{ftaig}, the full array of global kinematic formulas in $\S^7$ under $\Spin(7)$
can be deduced from the  structure of the algebra of invariant valuations. Parallel to the case of $\S^6$ we illustrate this by
providing an explicit expression for the principal kinematic formula. We apply this result to bound the mean intersection number of
associative submanifolds  and arbitrary $4$-dimensional submanifolds of $\S^7$.

Let $k_{\SO(8)}$ denote the kinematic operator in $(\S^7, \SO(8))$. 
\begin{Theorem} The principal kinematic formula in $\S^7$ is given by
$$k_{\Spin(7)}(\chi) = k_{\SO(8)}(\chi) + \frac{1}{256}\nu_3^\lambda\kfdot\nu_4^\lambda.$$
\end{Theorem}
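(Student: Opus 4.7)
By the fundamental theorem of algebraic integral geometry \eqref{ftaig}, $k_{\Spin(7)}(\chi)=\pd^{-1}$, so the plan is to compute the Alesker--Poincar\'e pairing on $\calV^7_\lambda$ and invert. Using the algebra presentation $\calV^7_\lambda\cong\C[t,u]/(t^2u,u^2+t^6)$ with $t=t_\lambda$ and $u=u_\lambda=\tfrac{2}{\pi^2}\nu_3^\lambda$, I would split $\calV^7_\lambda=W\oplus W'$, where $W=\spann\{1,t_\lambda,\ldots,t_\lambda^7\}$ is the $8$-dimensional subalgebra spanned by the intrinsic volumes $\mu_0^\lambda,\ldots,\mu_7^\lambda$, and $W'=\spann\{u_\lambda,t_\lambda u_\lambda\}$.

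The first key point is that $\pd$ is block-diagonal with respect to this splitting. The relation $t^2u=0$ forces $t_\lambda^j u_\lambda=0$ for $j\geq 2$, so $W\cdot W'\subseteq W'$ and $W'\cdot W'\subseteq W$. Since $\vol_{\S^7}$ lies in $W$ (by the spherical recasting below, $t_\lambda^7$ is proportional to $\mu_7^\lambda$) and $\pd(\alpha,\beta)$ extracts the $\vol$-component of $\alpha\cdot\beta$, it follows that $\pd(W,W')=0$. Moreover, $\pd|_W$ agrees with the Alesker--Poincar\'e pairing on $\SO(8)$-invariant valuations, so the $W$-block inverts to $k_{\SO(8)}(\chi)$.

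For the $W'$-block, the Euler--Verdier involution satisfies $\sigma u_\lambda=-u_\lambda$ and $\sigma(t_\lambda u_\lambda)=t_\lambda u_\lambda$ (Lemma~\ref{lemma_Euler_Verdier_S7}), whence $\pd(\nu_3^\lambda,\nu_3^\lambda)=\pd(\nu_4^\lambda,\nu_4^\lambda)=0$ since on the odd-dimensional closed manifold $\S^7$ any $\sigma$-even valuation vanishes when evaluated on $\S^7$. For the only remaining entry $\pd(\nu_3^\lambda,\nu_4^\lambda)$, I would first identify $\nu_4^\lambda$ in the new basis: Lemma~\ref{lemma_action_of_phi_S7} gives $\varphi\cdot\nu_3^\lambda=-\tfrac{3}{8}\nu_4^\lambda$, and from \eqref{eq tk_lambda}--\eqref{eq phik} one has $\varphi=t_\lambda+O(t_\lambda^3)$; since higher powers of $t_\lambda$ annihilate $u_\lambda$, this yields $\nu_4^\lambda=-\tfrac{4\pi^2}{3}t_\lambda u_\lambda$. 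Hence
\[
\nu_3^\lambda\cdot\nu_4^\lambda=-\frac{2\pi^4}{3}\,t_\lambda u_\lambda^2=\frac{2\pi^4}{3}\,t_\lambda^7=512\,\vol_{\S^7},
\]
using $u_\lambda^2=-t_\lambda^6$ together with the relation $t_\lambda^7=\tfrac{768}{\pi^4}\vol_{\S^7}$, which follows from \eqref{eq tk_lambda}--\eqref{eq phik} and $\omega_7=\tfrac{16\pi^3}{105}$. Therefore $\pd(\nu_3^\lambda,\nu_4^\lambda)=512$.

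Inverting the antidiagonal $2\times 2$ block in $W'$ contributes $\tfrac{2}{512}\,\nu_3^\lambda\kfdot\nu_4^\lambda=\tfrac{1}{256}\,\nu_3^\lambda\kfdot\nu_4^\lambda$ to $k(\chi)$; adding the $W$-contribution $k_{\SO(8)}(\chi)$ gives the stated formula. The main conceptual point is the block-diagonality $\pd(W,W')=0$, which is where the algebraic structure $t^2u=0$ does the work; once this is in place, both blocks are inverted trivially and the numerical constant $\tfrac{1}{256}$ drops out of the computation above.
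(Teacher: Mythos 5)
Your argument is correct and follows essentially the same route as the paper: invert the Alesker--Poincar\'e pairing, which is block-diagonal with respect to the splitting of $\calV^7_\lambda$ into the $\SO(8)$-invariant valuations and $\spann\{\nu_3^\lambda,\nu_4^\lambda\}$, the only nonzero entry of the second block being $\pd(\nu_3^\lambda,\nu_4^\lambda)=512$. The sole (inessential) difference is that you derive $\nu_3^\lambda\cdot\nu_4^\lambda=512\vol$ directly from the algebra presentation together with the normalization $t_\lambda^7=\frac{768}{\pi^4}\vol$, whereas the paper transfers this identity from Bernig's flat-space computation via the graded isomorphism of Theorem~\ref{graded_iso}.
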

\begin{proof}
Note that Lemma~\ref{lemma_action_of_phi_S7} implies 
\[
 \pd(\nu_i^\lambda,\mu )=0,\qquad i=3,4,\quad \mu\in\calV(\S^7)^{\SO(8)}.
\]
Moreover, since 
\begin{equation}\label{eq pd nu}\nu_3^\lambda\cdot\nu_4^\lambda=512\vol
\end{equation}
holds for $\lambda=0$ by \cite{bernig_g2}, it also holds for any $\lambda>0$ by Theorem \ref{graded_iso} and Corollary \ref{coro_diagram}.
Since $k(\chi)=\pd^{-1}$ as an element of $\calV^7_\lambda\otimes\calV^7_\lambda=\operatorname{Hom}(\calV^{7*}_\lambda,\calV^7_\lambda)$, the claim follows.
\end{proof}

An oriented 3-submanifold $L\subset\S^7$ is called \emph{associative} if $\sigma|_L=\vol_L$, i.e.\ if $L$ is calibrated with respect to $\sigma$. 
Note that coassociative submanifolds of $S^7$, i.e.\ oriented 4-submanifolds $C\subset\S^7$ with $*\sigma|_C=\vol_C$, do not exist. Indeed, $*\sigma|_C=\vol_C$ 
is equivalent to $\sigma|_C=0$. Thus  $d\sigma$ would vanish identically on $C$ which would contradict  \eqref{eq d sigma}.

\begin{Corollary}\label{cor application Spin}
 Let $L\subset S^7$ be a  compact associative submanifold.
 For every  $4$-dimensional  compact submanifold with boundary $M\subset S^7$ the 
 following inequalities hold:
\begin{equation}\label{eq_application_Spin}\frac{15}{128}\vol_4(M)\vol_3(L)\leq  \int_{\Spin(7)} \#( M\cap gL)\, dg<\frac{5}{32}\vol_4(M)\vol_3(L).\end{equation}
 Equality holds on the left if and only if each tangent space of $M$ contains an associative subspace.
\end{Corollary}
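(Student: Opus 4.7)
The strategy is to apply the principal kinematic formula for $\Spin(7)$ just established to the pair $(M,L)$ and then simplify each term using the associative hypothesis on $L$ together with Lemma~\ref{lemma_transfer}. Since $\nu_4^\lambda\in\calV_4$ and $\dim L=3$, Lemma~\ref{lemma_transfer}(ii) gives $\nu_4^\lambda(L)=0$, so the symmetric product contributes only $\tfrac{1}{512}\nu_3^\lambda(L)\nu_4^\lambda(M)$ on the pair $(M,L)$. For $L$ associative, $|\sigma|_{T_xL}|=1$ by definition, and combining this with the Klain function identity $\Kl_{\nu_3'}(E)=5|\sigma|_E|^2-1$ together with Lemma~\ref{lemma_transfer}(ii) yields $\nu_3^\lambda(L)=4\vol(L)$; likewise $\mu_4^\lambda(L)=0$ and $\mu_3^\lambda(L)=\vol(L)$.

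The $\SO(8)$ part of $k_{\Spin(7)}(\chi)$ reduces by dimension to a constant times $\vol(M)\vol(L)$, whose value is most easily extracted from the test pair $(M,L)=(\S^4,\S^3)$ of totally geodesic subspheres: the intersection $\S^4\cap g\S^3$ is a pair of antipodal points for generic $g\in\SO(8)$, so $k_{\SO(8)}(\chi)(\S^4,\S^3)=2\vol(\S^7)$, and dividing by $\vol(\S^4)\vol(\S^3)$ yields the coefficient $\tfrac{1}{8}$. Writing $\nu_4^\lambda(M)=\int_M\Kl_{\nu_4'}(T_xM)\,dx$ via Lemma~\ref{lemma_transfer}(ii) with $\Kl_{\nu_4'}(F)=5|\psi|_F|^2-1$, one arrives at
\[
\int_{\Spin(7)}\#(M\cap gL)\,dg=\frac{15}{128}\vol(M)\vol(L)+\frac{5}{128}\vol(L)\int_M|\psi|_{T_xM}|^2\,dx.
\]

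The stated bounds then follow from $|\psi|_F|^2\in[0,1]$ for every $4$-plane $F$ in a $7$-dimensional $G_2$-representation. The main technical point is the characterization of the equality case on the left: using the Hodge duality $|\psi|_F|^2=|\sigma|_{F^\perp}|^2$ together with a dimension count on the $G_2$-orbits of $\Gr_3(\R^7)$ (supplemented by the explicit observation that the three linear equations $\sigma(e_i,e_j,\cdot)=0$ for a basis $e_1,e_2,e_3$ of $F^\perp$ cut out a unique line extending $F^\perp$ to a coassociative $4$-plane), one sees that $|\psi|_F|^2=0$ is equivalent to $F$ containing an associative $3$-plane. Finally, the strict upper inequality holds because $|\psi|_{T_xM}|^2=1$ on an open subset of $M$ would force $\sigma|_M=0$ there and hence $d\sigma|_M=0$; but identity~\eqref{eq d sigma} combined with $*\sigma|_M=\psi|_M=\vol_M\neq 0$ on a coassociative piece yields a contradiction.
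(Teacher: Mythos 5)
Your proof is correct and follows the same overall strategy as the paper: evaluate the principal kinematic formula on $(M,L)$, kill all but the degree-$(4,3)$ terms by Lemma~\ref{lemma_transfer}, convert $\nu_3^\lambda(L)=4\vol(L)$ and $\nu_4^\lambda(M)=\int_M(5|\psi|_{T_xM}|^2-1)\,dx$ via Klain functions, and bound $\int_M|\psi|^2$ between $0$ and $\vol(M)$; your intermediate identity agrees exactly with the paper's displayed computation, and your strictness argument for the upper bound is the same appeal to \eqref{eq d sigma}. Two sub-arguments differ. First, you extract the coefficient $\tfrac18$ from the test pair $(\S^4,\S^3)$ of totally geodesic subspheres rather than quoting the classical spherical kinematic formula; this is a valid and pleasantly self-contained normalization check (using $\vol(\S^4)\vol(\S^3)=16\vol(\S^7)$). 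Second, for the equality case the paper isolates the statement ``$\psi|_F=0$ iff $F$ contains an associative $3$-plane'' as Lemma~\ref{lemma assoc} and proves it via the triple cross product and the associator equality, whereas you argue through Hodge duality $|\psi|_F|=|\phi|_{F^\perp}|$ and the fact that the three linear conditions $\phi(e_i,e_j,\cdot)=0$ always admit a nonzero solution $v\in F$, producing a coassociative $4$-plane $F^\perp\oplus\R v$ whose orthogonal complement is an associative $3$-plane inside $F$. This works (and the duality also gives the converse direction for free), though note that only the \emph{existence} of such a $v$ matters — three linear equations on a four-dimensional space always have a nontrivial solution — so the claimed uniqueness of the line, and the vague appeal to a dimension count on $\mathrm G_2$-orbits of $\Gr_3(\R^7)$, are unnecessary and should be dropped.
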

\begin{proof}
 For any compact $4$-submanifold with boundary $M$, we have
\begin{align*}
 k_{\Spin(7)}(\chi)(L,M)&=\frac{1}{8}\mu^\lambda_4(M)\mu^\lambda_3(L)+\frac{1}{512}\nu_4^\lambda(M)\nu_3^\lambda(L)\\
 &=\vol_3(L)\left(\frac18 \vol_4(M)+\frac{1}{128}\int_M (5|*\!\sigma|_{T_xM}|^2-1) dx\right).
\end{align*}
Therefore 
\[
 \frac{15}{128}\vol_4(M)\vol_3(L)\leq k_{\Spin(7)}(\chi)(L,M)<\frac{5}{32}\vol_4(M)\vol_3(L)
\]
Equality on the left occurs if and only if  $*\sigma$ vanishes identically on $M$. By Lemma~\ref{lemma assoc} below this happens if and only if 
each tangent space of $M$ contains an associative subspace. The right-hand side inequality is strict since coassociative submanifolds do not exist.
\end{proof}

\begin{Lemma} \label{lemma assoc} Let $V\subset \im \O$ be a $4$-dimensional linear subspace. Then $\psi|_V=0$ if and only if $V$ contains an associative subspace.
\end{Lemma}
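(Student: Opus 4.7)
The plan is to reformulate the lemma via Hodge duality on the 7-dimensional euclidean space $\im\O$ and reduce it to a purely linear-algebraic claim about 3-planes. Since $\psi = *\phi$, the standard Hodge identity in dimension 7 gives, for any compatibly oriented orthonormal bases $e_1,\ldots,e_4$ of $V$ and $v_1,v_2,v_3$ of $W := V^\perp$,
\[
\psi(e_1,e_2,e_3,e_4) = \phi(v_1,v_2,v_3).
\]
Hence $\psi|_V = 0$ if and only if $\phi|_W = 0$. Moreover, recalling that a 4-plane $C\subset\im\O$ is coassociative precisely when $\phi|_C = 0$, and that the orthogonal complement of an associative 3-plane is a coassociative 4-plane (and vice versa), the condition ``$V$ contains an associative 3-plane $E$'' translates into ``$V^\perp\subset E^\perp$ for some coassociative 4-plane $E^\perp$'', i.e.\ $W$ is contained in a coassociative 4-plane.

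Thus it suffices to prove the following claim about 3-planes $W\subset\im\O$: $\phi|_W = 0$ if and only if $W$ lies in a coassociative 4-plane $C$. The direction ($\Leftarrow$) is immediate by restriction. For ($\Rightarrow$), assuming $\phi|_W = 0$, I would construct such a $C$ by adjoining to $W$ a suitable unit vector $u\in W^\perp$. Consider the linear map
\[
\alpha\colon W^\perp \longrightarrow \largewedge^2 W^*,\qquad u\longmapsto \phi(\cdot,\cdot,u)\bigr|_{W\times W}.
\]
Its domain is 4-dimensional and its codomain is 3-dimensional, so $\ker\alpha$ has dimension at least one. Pick any nonzero $u\in\ker\alpha$; then $u\notin W$, so $C := W\oplus \R u$ is genuinely 4-dimensional. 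Expanding $\phi$ on $C$ by multilinearity and using the alternating property, a general triple of vectors in $C$ contributes only the terms of the form $\phi(w_1,w_2,w_3)$ (vanishing by hypothesis) and $\phi(w_i,w_j,u)$ (vanishing by the choice of $u$). Therefore $\phi|_C = 0$, and $C$ is coassociative as required.

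The main conceptual step is the Hodge duality reformulation; once the problem is phrased in terms of the 3-plane $W = V^\perp$, the existence of the required coassociative extension is an elementary dimension count. The only subtlety I anticipate is fixing sign conventions so that the orientations in the Hodge identity are compatible, which is routine once a basis of $\im\O$ is chosen, and in any case only the vanishing statement is needed for the equivalence.
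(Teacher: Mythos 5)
Your proof is correct, but it takes a genuinely different route from the paper's. You dualize: via the Hodge identity $\psi(e_1,\dots,e_4)=\pm\phi(v_1,v_2,v_3)$ you pass to the $3$-plane $W=V^\perp$, extend $W$ to a $4$-plane $C$ with $\phi|_C=0$ by a kernel argument for the map $W^\perp\to\largewedge^2W^*$ (a $4\to3$ dimension count), invoke the Harvey--Lawson characterization that $\phi|_C=0$ is equivalent to $C$ being coassociative, and take $C^\perp\subset V$ as the desired associative subspace. The paper instead stays on $V$: from $\psi|_V=0$ it gets $\Phi|_V=0$, hence a linear map $\largewedge^3V\to V^\perp\cap\im\O$ (again $4\to3$) sending $x\wedge y\wedge z$ to $\im(x\times y\times z)$; a nonzero kernel element is automatically decomposable since $\dim V=4$, and the associator equality $|x\times y\times z|=|x\wedge y\wedge z|$ then shows its span is associative. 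The two arguments are essentially Hodge-dual to one another: each rests on the same dimension count and on one nontrivial octonionic identity (your coassociative characterization versus the paper's associator equality). Your version has the advantage of producing the coassociative extension of $W$ very transparently and of relying only on a fact the paper already uses elsewhere (the equivalence $*\sigma|_C=\vol_C\Leftrightarrow\sigma|_C=0$ in the discussion of coassociative submanifolds); the paper's version is slightly more self-contained in that it exhibits the associative subspace of $V$ directly as the span of a decomposable kernel element. Both are complete; just make sure to cite the coassociative characterization (Harvey--Lawson or \cite{harvey}) explicitly, since it is the one substantive input you do not prove.
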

 \begin{proof} It is easy to see that $\psi|_V=0$ if $V$ contains an associative subspace. 
 To prove  the converse, note that by \eqref{eq Cayley associative} we have $\Phi|_V=0$, hence $\im(x\times y\times z)\in V^\perp$ for all $x,y,z\in V$. The induced linear map 
  $\largewedge^3V \to V^\perp$ has by dimensional reasons a non-trivial kernel. Since every element in $\largewedge^3V$ is decomposable, 
  there exist $x,y,z\in V$ with $x\wedge y\wedge z \neq 0$ and $\im(x\times y\times z)=0$. Let $U$ be the subspace spanned by these elements.
  Since $|\phi|_U|^2=1$ by the associator equality (cf. \cite{harvey}*{Thm. 7.104}), we have reached the desired conclusion.
 \end{proof}

Note that there are many $4$-dimensional submanifolds $M\subset S^7$ fulfilling the equality conditions in \eqref{eq_application_Spin}. For instance, one may take 
an associative submanifold $L$ in $S^7$ and a smooth family $\{g_t\}\subset \Spin(7)$ such that $M=\bigcup_t g_t(L)\subset S^7$ is a $4$-dimensional 
 compact submanifold with boundary. Since $M$ is foliated by associative submanifolds, every tangent space $T_pM$ contains an associative subspace.

An argument parallel to the one given for \eqref{eq_application_G2} shows that the upper bound in \eqref{eq_application_Spin} is optimal.

\subsection{Local kinematic formulas}
Finally, we obtain the local kinematic formulas under $\Spin(7)$.

 \begin{Lemma} The $\Val^{G_2}$-module structure of $\Curv^{G_2}$ is given by 
  \begin{align*}
  &t\cdot \Delta_j=\frac{2\omega_{j-1}}{\omega_j}\Delta_{j+1}&& \nu_3'\cdot \Delta_0   = 4 \re \Phi\\
  &t\cdot N_2  = \frac{2}{5\pi}(  4\re \Phi  - N_3)&& \nu_3'\cdot \Delta_1   = -\frac{3\pi}{16} N_4\\
  &t\cdot N_3  = -\frac{3}{8} N_4&&\nu_3'\cdot N_3=-30\pi\Delta_6\\
  &t\cdot \Phi  = t\cdot\overline\Phi= -\frac{3}{32} N_4&&\nu_3'\cdot \Phi=\nu_3'\cdot\overline\Phi=-\frac{15\pi}{2}\Delta_6\\
   &&  &\nu_3'\cdot N_4=512 \Delta_7.
 \end{align*}The rest of products of $t,\nu_3'$ with elements of the basis \eqref{basis_curv_g2} vanish.
 \end{Lemma}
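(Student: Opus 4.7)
The plan is to reduce each product $\psi\cdot\Theta$ with $\psi\in\{t,\nu_3'\}$ to a finite-dimensional linear-algebra problem in the target $\Curv_{\deg\psi+\deg\Theta}^{G_2}$, whose dimension is at most four by the basis \eqref{basis_curv_g2}. Degree alone disposes of most of the vanishing claims: since $\Curv_{>7}^{G_2}=0$, one automatically has $\nu_3'\cdot\Theta=0$ for any $\Theta$ with $\deg\Theta\geq 5$, and $t\cdot\Theta=0$ whenever $\deg\Theta=7$. Euler--Verdier parity, under $\sigma(\psi\cdot\Theta)=\sigma\psi\cdot\sigma\Theta$ combined with Lemma \ref{lemma_Euler_Verdier_S7}, then cuts the dimension of the target still further and already gives the qualitative identity $t\cdot\Phi=t\cdot\overline\Phi$: indeed $t\cdot\im\Phi\in\Curv_4^{G_2}=\spann\{\Delta_4,N_4\}$ is forced to vanish because $\sigma(t\cdot\im\Phi)=-t\cdot\im\Phi$ while $\sigma$ acts trivially on $\Curv_4^{G_2}$.

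First I would dispose of the $\SO(7)$-equivariant products $t\cdot\Delta_j=\frac{2\omega_{j-1}}{\omega_j}\Delta_{j+1}$: each $\Delta_j$ is $\SO(7)$-invariant and the unique (up to scale) $\SO(7)$-invariant element of $\Curv_{j+1}$ is $\Delta_{j+1}$, so this is the classical Crofton-type multiplication recorded in \cite{fu_bcn}. Next, for the products landing in one-dimensional targets ($\Curv_6^{G_2}=\R\Delta_6$ and $\Curv_7^{G_2}=\R\Delta_7$), globalization $\glob_0(\psi\cdot\Theta)=\psi\cdot\glob_0\Theta$ combined with the algebra structure of $\Val^{G_2}$ from \cite{bernig_g2} pins down the single remaining coefficient: $\nu_3'\cdot N_4=512\Delta_7$ reduces to Bernig's identity $\nu_3'\cdot\nu_4'=512\vol$ already recorded in \eqref{eq pd nu}, while $\nu_3'\cdot N_3$ and $\nu_3'\cdot\Phi$ both reduce to multiples of $(\nu_3')^2$ in $\Val^{G_2}$, using $\glob_0(\re\Phi)=\tfrac14\nu_3'$ from Proposition \ref{prop_ker_glob_s7} at $\lambda=0$.

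The heart of the argument is the remaining products $t\cdot N_2$, $t\cdot N_3$, $t\cdot\Phi$, $\nu_3'\cdot\Delta_0$, and $\nu_3'\cdot\Delta_1$, which land in $\Curv_3^{G_2}$ or $\Curv_4^{G_2}$. Here I would exploit the restriction $\iota^*\colon\Curv^{G_2}\to\Curv^{\SU(3)}$ to the totally geodesic $\S^6$: by \eqref{pullback_module}, $\iota^*(\psi\cdot\Theta)=\iota^*\psi\cdot\iota^*\Theta$, and the restrictions on the right-hand side are explicit via Lemmas \ref{restriction_val} and \ref{restriction_spheres}. By the $G_2$-analog of Corollary \ref{Corollary_invariant_restriction} the kernel of $\iota^*$ sits in $\R\Delta_7$, so $\iota^*$ is injective on $\Curv_{\leq 6}^{G_2}$ and each product is determined by its image in $\Curv^{\SU(3)}$. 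Combining this with the parity and globalization constraints above eliminates all spurious components and fixes every coefficient.

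The main obstacle is precisely this last step: the $\Val^{\SU(3)}$-module products on $\Curv^{\SU(3)}$ involving the weight-nonzero generators $\Psi_i$ and $\Phi_i$ are not directly tabulated in earlier work. They must be extracted from the semi-local kinematic operator $\bar k_{\SU(3)}$ via the multiplicativity relation \eqref{semi_local_multiplicative} applied to Theorem \ref{Theorem_local_kinematic_SU3}; tracking the weight components and the real/imaginary parts through the restriction formulas of Lemma \ref{restriction_spheres} is the principal technical challenge, though once it is carried out the coefficients $\frac{2}{5\pi}(4\re\Phi-N_3)$, $-\frac{3}{8}N_4$, $-\frac{3}{32}N_4$, $4\re\Phi$, and $-\frac{3\pi}{16}N_4$ emerge by direct comparison.
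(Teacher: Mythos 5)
Your proposal is correct and follows essentially the same route as the paper: the classical formula for $t\cdot \Delta_j$, degree and Euler--Verdier parity constraints, injectivity of $\glob_0$ on $\Curv_k^{\mathrm G_2}$ for $k\neq 2,3$ combined with the known algebra structure of $\Val^{\mathrm G_2}$, and restriction to $\C^3$ for the products landing in $\Curv_3^{\mathrm G_2}$. The only differences are cosmetic: the paper sources the restricted products from the $\Val^{\mathrm U(n)}$-module structure of $\Curv^{\mathrm U(n)}$ in Section~5 of \cite{bfs}, whereas you propose to extract the needed $\Curv^{\SU(3)}$-products (including those with the weight-nonzero generators) from Theorem~\ref{Theorem_local_kinematic_SU3}, and you also invoke restriction for some degree-$4$ targets where globalization alone already suffices.
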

\begin{proof}
 The first relation is well-known. The products $t\cdot N_2$ and $\nu_3'\cdot \Delta_0$ can be deduced through restriction to $\C^3$, using the $\Val^{U(n)}$-module structure of $\Curv^{U(n)}$ which was determined in \cite[Section 5]{bfs}. 
  The rest of relations can be deduced from globalization since $\glob$ is injective on $\Curv_k$ for $k\neq 2,3$.
\end{proof}

\begin{Theorem}\label{Theorem_local_kinematic_formulas_G2} The local kinematic formulas for $\G_2$ are given by
 \begin{align*}K_{\G_2}(\Delta_0)&  = K_{\SO(7)}(\Delta_0) + \frac{1}{2^6}  \re \Phi\kfdot N_4 \\
   K_{\G_2}(\Delta_1) & = K_{\SO(7)}(\Delta_1)  -\frac{3\pi}{2^{13}} N_4\kfdot N_4 \\
 K_{\G_2}(\Delta_i)&  = K_{\SO(7)}(\Delta_i)\\
 K_{\G_2}(N_2)&= 2\Delta_7\kfdot N_2+\frac{1}{8}\Delta_6\kfdot (4\re \Phi-N_3)\\
 K_{\G_2}(N_3)&= 2\Delta_7\kfdot N_3-\frac{15\pi}{128} \Delta_6\kfdot N_4\\
 K_{\G_2}(N_4)&= 2\Delta_7\kfdot N_4\\
 K_{\G_2}(\re \Phi)&= 2\Delta_7\kfdot\re \Phi-\frac{15\pi}{512}\Delta_6\kfdot N_4\\
 K_{\G_2}(\im \Phi)&={2\Delta_7\kfdot \im \Phi}.
\end{align*}
where $a\kfdot b=\frac12(a\otimes b+b\otimes a)$.
\end{Theorem}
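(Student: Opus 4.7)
The plan is to compute $K_{\G_2}$ via its two semi-local projections $(\glob\otimes\id)K_{\G_2}$ and $(\id\otimes\glob)K_{\G_2}$, and then to observe that the ``double kernel'' of globalization vanishes in the relevant bidegrees, so that these two projections determine $K_{\G_2}$ uniquely. Both projections are computable from the global principal kinematic formula $k_{\Spin(7)}(\chi)$ in $\S^7$ (established just above) together with the $\Val^{\G_2}$-module structure of $\Curv^{\G_2}$ tabulated in the preceding lemma.

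The key identity, \eqref{semi_local_multiplicative} with $\mu=\chi$, reads for any $\Psi\in\Curv^{\G_2}$
\[
\bar k_{\G_2}(\Psi)=(\glob\otimes\id)K_{\G_2}(\Psi)=k_{\Spin(7)}(\chi)\cdot(\chi\otimes\Psi).
\]
Writing $k_{\Spin(7)}(\chi)=\sum c_{ij}\,\mu_i\kfdot\mu_j$ with $\mu_i\in\Val^{\G_2}$, this unwinds as $\sum c_{ij}\,\mu_i\otimes(\mu_j\cdot\Psi)$, an explicit element of $\Val^{\G_2}\otimes\Curv^{\G_2}$. The products $\mu_j\cdot\Psi$ follow from the preceding lemma by iterated multiplication by $t$ and $\nu_3'$, subject to the algebra relations in $\calV^7_\lambda\cong\C[t,u]/(t^2u,u^2+t^6)$. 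By the symmetry of $K_{\G_2}$, swapping tensor factors then yields $(\id\otimes\glob)K_{\G_2}(\Psi)$.

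The crucial observation is the vanishing of the ``double kernel'' at the relevant bidegrees. By Proposition~\ref{prop_ker_glob_s7}, the pure-degree-$i$ kernel of $\glob_\lambda$ on $\Curv^{\G_2}$ vanishes for every $i\geq 4$ and is of dimension at most two for $i\in\{2,3\}$. Hence any pure-bidegree summand of $\ker(\glob_\lambda)\otimes\ker(\glob_\lambda)$ has total bidegree at most $6$; since $K_{\G_2}(\Psi)$ for $\Psi\in\Curv_k^{\G_2}$ lives in bidegree $k+7\geq 7$, the double kernel contributes nothing, and the two semi-local projections determine $K_{\G_2}$ uniquely.

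The main obstacle is the resulting explicit bookkeeping. Extending the module table of the preceding lemma to all products $\mu_j\cdot\Psi$ with $\mu_j$ a basis element of $\Val^{\G_2}$ and $\Psi$ a basis element of $\Curv^{\G_2}$, and then inverting $\glob\otimes\id$ using the explicit kernel description of Proposition~\ref{prop_ker_glob_s7}, is laborious but direct. One structural consequence is already visible from the algebra: since $t^2\cdot\nu_3'=0$ in $\calV^7_\lambda$, no $\nu_3'$-driven correction can propagate beyond the first power of $t$, which explains why $K_{\G_2}(\Delta_j)=K_{\SO(7)}(\Delta_j)$ for all $j\geq 2$ in the stated formulas.
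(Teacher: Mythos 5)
Your proposal is correct and follows essentially the same route as the paper: one computes the semi-local operator $\bar k_{\G_2}(\Psi)=k_{\Spin(7)}(\chi)\cdot(\chi\otimes\Psi)$ from the module structure, and then uses that $\ker\glob_0$ is concentrated in degrees $2$ and $3$ while $K(\Psi)$ lives in total degree $\geq 7$, so the symmetric tensor $K(\Psi)$ is determined by its image under $\glob\otimes\mathrm{id}$ — your two-projection/double-kernel argument is the same as the paper's injectivity of $\glob\otimes\mathrm{id}$ on $W\cap\Sym^2\Curv$. The only difference is organizational: the paper shortcuts part of the bookkeeping by obtaining $K(\Delta_i)$, $K(N_4)$ and $K(\re\Phi)$ from $K(\Delta_0)$, $K(N_3)$ and $K(N_2)$ via multiplicativity, rather than computing every semi-local formula from scratch.
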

\begin{proof}
 We determine the semi-local kinematic formulas first. Let us write $K=K_{\mathrm G_2}$, $\bar k= \bar k _{\mathrm G_2}$ for brevity.
 Using the previous lemma we obtain
 \begin{align*}\bar k(\Delta_0) &  
  = k(\chi) \cdot (\chi \otimes \Delta_0) = \bar k_{\SO(7)}(\Delta_0) + \frac{1}{2^9} \nu_3' \otimes N_4+\frac{1}{2^7} \nu_4' \otimes \re \Phi.
 \end{align*}

 Recall that 
 $$(\glob\otimes \operatorname{id}) \circ K = \bar k,$$
 and  $K(\Delta_0)$ is conatined in $W:=\bigoplus_i \Curv_i\otimes\Curv_{7-i}$. By Proposition \ref{prop_ker_glob_s7} (with $\lambda=0$), 
 the kernel of $\glob\otimes \operatorname{id}$ restricted to $W$ is contained in $\bigoplus_{i=2,3}\Curv_i\otimes\Curv_{7-i}$. 
 It follows that $\glob\otimes\operatorname{id}$ is injective on $W\cap\mathrm{Sym}^2\Curv$, where $K(\Delta_0)$ belongs. This proves the first stated relation. 
  For the second and third lines, we use 
 \[
  K(\Delta_i)=(\mu_i\otimes \chi)\cdot K(\Delta_0).
 \]

 Similarly,  
\begin{align*}
 \bar k(N_2)&=\mu_7\otimes N_2 +\frac{1}{16} \mu_6\otimes(4\re \Phi-N_3)\\
 \bar k(N_3)&=\mu_7\otimes N_3-\frac{15\pi}{256}\mu_6\otimes N_4-\frac{15\pi}{256}\nu_4'\otimes \Delta_6+\nu_3'\otimes\Delta_7\\
 \bar k(\im \Phi)&=\mu_7\otimes \im \Phi
\end{align*}
which gives $K(N_2), K(N_3), K(\im \Phi)$. The remaining relations follow for instance from
\[
 K(N_4)=-\frac83(t\otimes \chi)\cdot K(N_3),\qquad K(\re \Phi)=\frac{5\pi}8 (t\otimes \chi)\cdot K(N_2)+\frac14 K(N_3).
\]
\end{proof}

\begin{bibdiv}
\begin{biblist}
	
 \setlength\itemsep{.15cm}

\bib{ags}{article}{
   author={Abardia, Judit},
   author={Gallego, Eduardo},
   author={Solanes, Gil},
   title={The Gauss-Bonnet theorem and Crofton-type formulas in complex
   space forms},
   journal={Israel J. Math.},
   volume={187},
   date={2012},
   pages={287--315},
   issn={0021-2172},
   review={\MR{2891707}},
   review={Zbl 1262.53066},   
   doi={10.1007/s11856-011-0083-8},
}

\bib{abardia_wannerer}{article}{
   author={Abardia, Judit},
   author={Wannerer, Thomas},
   title={Aleksandrov-Fenchel inequalities for unitary valuations of degree
   2 and 3},
   journal={Calc. Var. Partial Differential Equations},
   volume={54},
   date={2015},
   number={2},
   pages={1767--1791},
   issn={0944-2669},
   review={\MR{3396432}},
   review={Zbl 1330.52012},
   doi={10.1007/s00526-015-0843-0},
}

\bib{alesker_irred}{article}{
   author={Alesker, Semyon},
   title={Description of translation invariant valuations on convex sets
   with solution of P. McMullen's conjecture},
   journal={Geom. Funct. Anal.},
   volume={11},
   date={2001},
   number={2},
   pages={244--272},
   issn={1016-443X},
   review={\MR{1837364}},
   review={Zbl 0995.52001},
   doi={10.1007/PL00001675},
}

\bib{alesker_hard_lefschetz}{article}{
   author={Alesker, Semyon},
   title={Hard Lefschetz theorem for valuations, complex integral geometry,
   and unitarily invariant valuations},
   journal={J. Differential Geom.},
   volume={63},
   date={2003},
   number={1},
   pages={63--95},
   issn={0022-040X},
   review={\MR{2015260}},
   review={Zbl 1073.52004},   
}

\bib{alesker_product}{article}{
   author={Alesker, Semyon},
   title={The multiplicative structure on continuous polynomial valuations},
   journal={Geom. Funct. Anal.},
   volume={14},
   date={2004},
   number={1},
   pages={1--26},
   issn={1016-443X},
   review={\MR{2053598}},
   review={Zbl 1072.52011},
   doi={10.1007/s00039-004-0450-2},
}

\bib{valmfdsI}{article}{
   author={Alesker, Semyon},
   title={Theory of valuations on manifolds. I. Linear spaces},
   journal={Israel J. Math.},
   volume={156},
   date={2006},
   pages={311--339},
   issn={0021-2172},
   review={\MR{2282381}},
   review={Zbl 1132.52017},
   doi={10.1007/BF02773837},
}

\bib{valmfdsII}{article}{
   author={Alesker, Semyon},
   title={Theory of valuations on manifolds. II},
   journal={Adv. Math.},
   volume={207},
   date={2006},
   number={1},
   pages={420--454},
   issn={0001-8708},
   review={\MR{2264077}},
   review={Zbl 1117.52016},   
   doi={10.1016/j.aim.2005.11.015},
}

\bib{valmfdsIV}{article}{
   author={Alesker, Semyon},
   title={Theory of valuations on manifolds. IV. New properties of the
   multiplicative structure},
   conference={
      title={Geometric aspects of functional analysis},
   },
   book={
      series={Lecture Notes in Math.},
      volume={1910},
      publisher={Springer, Berlin},
   },
   date={2007},
   pages={1--44},
   review={\MR{2347038}},
   review={Zbl 1127.52016},
}

\bib{valmfdsIII}{article}{
   author={Alesker, Semyon},
   author={Fu, Joseph H. G.},
   title={Theory of valuations on manifolds. III. Multiplicative structure
   in the general case},
   journal={Trans. Amer. Math. Soc.},
   volume={360},
   date={2008},
   number={4},
   pages={1951--1981},
   issn={0002-9947},
   review={\MR{2366970}},
   review={Zbl 1130.52008},   
   doi={10.1090/S0002-9947-07-04489-3},
}

\bib{valmfdsIG}{article}{
   author={Alesker, Semyon},
   title={Valuations on manifolds and integral geometry},
   journal={Geom. Funct. Anal.},
   volume={20},
   date={2010},
   number={5},
   pages={1073--1143},
   issn={1016-443X},
   review={\MR{2746948}},
   review={Zbl 1213.52013},   
   doi={10.1007/s00039-010-0088-1},
}

\bib{abs}{article}{
   author={Alesker, Semyon},
   author={Bernig, Andreas},
   author={Schuster, Franz E.},
   title={Harmonic analysis of translation invariant valuations},
   journal={Geom. Funct. Anal.},
   volume={21},
   date={2011},
   number={4},
   pages={751--773},
   issn={1016-443X},
   review={\MR{2827009}},
   review={Zbl 1228.53088},
   doi={10.1007/s00039-011-0125-8},
}

\bib{alesker.bernstein}{article}{
   author={Alesker, Semyon},
   author={Bernstein, Joseph},
   title={Range characterization of the cosine transform on higher
   Grassmannians},
   journal={Adv. Math.},
   volume={184},
   date={2004},
   number={2},
   pages={367--379},
   issn={0001-8708},
   review={\MR{2054020}},
   review={Zbl 1059.22013},   
   doi={10.1016/S0001-8708(03)00149-X},
}

\bib{baez}{article}{
   author={Baez, John C.},
   title={The octonions},
   journal={Bull. Amer. Math. Soc.},
   volume={39},
   date={2002},
   number={2},
   pages={145--205},
   issn={0273-0979},
   review={\MR{1886087}},
   review={Zbl 1026.17001},   
   doi={10.1090/S0273-0979-01-00934-X},
}

\bib{bernig_sun}{article}{
   author={Bernig, Andreas},
   title={A Hadwiger-type theorem for the special unitary group},
   journal={Geom. Funct. Anal.},
   volume={19},
   date={2009},
   number={2},
   pages={356--372},
   issn={1016-443X},
   review={\MR{2545241}},
   review={Zbl 1180.53076},   
   doi={10.1007/s00039-009-0008-4},
}
	
\bib{bernig_product}{article}{
   author={Bernig, Andreas},
   title={A product formula for valuations on manifolds with applications to
   the integral geometry of the quaternionic line},
   journal={Comment. Math. Helv.},
   volume={84},
   date={2009},
   number={1},
   pages={1--19},
   issn={0010-2571},
   review={\MR{2466073}},
   review={Zbl 1162.53055},
   doi={10.4171/CMH/150},
}

\bib{bernig_g2}{article}{
   author={Bernig, Andreas},
   title={Integral geometry under $G_2$ and ${\rm Spin}(7)$},
   journal={Israel J. Math.},
   volume={184},
   date={2011},
   pages={301--316},
   issn={0021-2172},
   review={\MR{2823979}},
   review={Zbl 1262.53067},   
   doi={10.1007/s11856-011-0069-6},
}
\bib{bernig_brocker}{article}{
author={Bernig, A.},
author={Br\"ocker, L.},
title={Valuations on manifolds and Rumin cohomology},
journal={J. Diff. Geom.},
volume={75},
date={2007},
pages={433--457},
review={\MR{2301452}},
 review={Zbl 1117.58005},
}		

\bib{bernig_faifman}{article}{
   author={Bernig, Andreas},
   author={Faifman, Dmitry},
   title={Valuation theory of indefinite orthogonal groups},
   journal={J. Funct. Anal.},
   volume={273},
   date={2017},
   number={6},
   pages={2167--2247},
   issn={0022-1236},
   review={\MR{3669033}},
   review={Zbl 1373.52017},
   doi={10.1016/j.jfa.2017.06.005},
}

\bib{bernig_fu_convolution}{article}{
   author={Bernig, Andreas},
   author={Fu, Joseph H. G.},
   title={Convolution of convex valuations},
   journal={Geom. Dedicata},
   volume={123},
   date={2006},
   pages={153--169},
   issn={0046-5755},
   review={\MR{2299731}},
   review={Zbl 1117.53054},
}

\bib{hig}{article}{
   author={Bernig, A.},
   author={Fu, J. H. G.},
   title={Hermitian integral geometry},
   journal={Ann. of Math. (2)},
   volume={173},
   date={2011},
   pages={907--945},
   review={\MR{2776365}},
   review={Zbl 1230.52014},
}

\bib{bfs}{article}{
   author={Bernig, A.},
   author={Fu, J. H. G.},
   author={Solanes, G.},
   title={Integral geometry of complex space forms},
   journal={Geom. Funct. Anal.},
   volume={24},
   date={2014},
   number={2},
   pages={403--492},
   issn={1016-443X},
   review={\MR{3192033}},
   review={Zbl 1298.53074},
   doi={10.1007/s00039-014-0251-1},
}

\bib{bernig_hug}{article}{
   author={Bernig, Andreas},
   author={Hug, Daniel},
   title={Kinematic formulas for tensor valuations},
   journal={J. Reine Angew. Math.},
   volume={736},
   date={2018},
   pages={141--191},
   issn={0075-4102},
   review={\MR{3769988}},
   review={Zbl 06847720},
   doi={10.1515/crelle-2015-0023},
}

\bib{bernig_solanes}{article}{
  author={Bernig, A.},
  author={Solanes, G.},
  title={Kinematic formulas on the quaternionic plane},
  journal={Proc. London. Math. Soc.},
  volume={115},
  date={2017},
  number={4},
  pages={725--762},
  review={\MR{3716941}},
  review={Zbl 06797295},
}

\bib{bernig_voide}{article}{
   author={Bernig, Andreas},
   author={Voide, Floriane},
   title={Spin-invariant valuations on the octonionic plane},
   journal={Israel J. Math.},
   volume={214},
   date={2016},
   number={2},
   pages={831--855},
   issn={0021-2172},
   review={\MR{3544703}},
   review={Zbl 1347.53065},
   doi={10.1007/s11856-016-1363-0},
}

\bib{bishop_crittenden}{book}{
   author={Bishop, Richard L.},
   author={Crittenden, Richard J.},
   title={Geometry of manifolds},
   series={Pure and Applied Mathematics, Vol. XV},
   publisher={Academic Press, New York-London},
   date={1964},
   pages={ix+273},
   review={\MR{0169148}},
   review={Zbl 0132.16003},   
}

\bib{complex_curves}{article}{
   author={Bolton, John},
   author={Vrancken, Luc},
   author={Woodward, Lyndon M.},
   title={On almost complex curves in the nearly K\"{a}hler $6$-sphere},
   journal={Quart. J. Math. Oxford Ser. (2)},
   volume={45},
   date={1994},
   number={180},
   pages={407--427},
   issn={0033-5606},
   review={\MR{1315456}},
   review={Zbl 0861.53061},   
   doi={10.1093/qmath/45.4.407},
}
		
\bib{ludwig_boroczky}{article}{
   author={B\"{o}r\"{o}czky, K\'{a}roly J.},
   author={Ludwig, Monika},
   title={Valuations on lattice polytopes},
   conference={
      title={Tensor valuations and their applications in stochastic geometry
      and imaging},
   },
   book={
      series={Lecture Notes in Math.},
      volume={2177},
      publisher={Springer, Cham},
   },
   date={2017},
   pages={213--234},
   review={\MR{3702374}},
   review={Zbl 1376.52022},
}

\bib{Bryany05}{article}{
   author={Bryant, Robert L.},
   title={Some remarks on $G_2$-structures},
  conference={
      title={Proceedings of G\"okova Geometry-Topology Conference 2005},
   },
   book={
      publisher={G\"okova Geometry/Topology Conference (GGT), G\"okova},
   },
   date={2006},
   pages={75--109},
   review={\MR{2282011}},
   review={Zbl 1115.53018},   
}

\bib{faifman}{article}{
   author={Faifman, Dmitry},
   title={Crofton formulas and indefinite signature},
   journal={Geom. Funct. Anal.},
   volume={27},
   date={2017},
   number={3},
   pages={489--540},
   issn={1016-443X},
   review={\MR{3655955}},
   review={Zbl 1368.53051},   
   doi={10.1007/s00039-017-0406-y},
}

\bib{fu_indiana}{article}{
   author={Fu, Joseph H. G.},
   title={Kinematic formulas in integral geometry},
   journal={Indiana Univ. Math. J.},
   volume={39},
   date={1990},
   number={4},
   pages={1115--1154},
   issn={0022-2518},
   review={\MR{1087187}},
   review={Zbl 0703.53059},   
   doi={10.1512/iumj.1990.39.39052},
}

\bib{fu_unitary}{article}{
   author={Fu, Joseph H. G.},
   title={Structure of the unitary valuation algebra},
   journal={J. Differential Geom.},
   volume={72},
   date={2006},
   number={3},
   pages={509--533},
   issn={0022-040X},
   review={\MR{2219942}},
   review={Zbl 1096.52003},   
}

\bib{fu_bcn}{article}{
   author={Fu, Joseph H. G.},
   title={Algebraic integral geometry},
   conference={
      title={Integral geometry and valuations},
   },
   book={
      series={Adv. Courses Math. CRM Barcelona},
      publisher={Birkh\"auser/Springer, Basel},
   },
   date={2014},
   pages={47--112},
   review={\MR{3330875}},
   review={Zbl 1344.53057},

}

\bib{fu15}{article}{
   author={Fu, Joseph H. G.},
   title={Intersection theory and the Alesker product},
   journal={Indiana Univ. Math. J.},
   volume={65},
   date={2016},
   number={4},
   pages={1347--1371},
   issn={0022-2518},
   review={\MR{3549204}},
   review={Zbl 1362.52008},   
   doi={10.1512/iumj.2016.65.5846},
}

\bib{fu_pokorny_rataj}{article}{
   author={Fu, Joseph H. G.},
   author={Pokorn\'{y}, Du\v{s}an},
   author={Rataj, Jan},
   title={Kinematic formulas for sets defined by differences of convex
   functions},
   journal={Adv. Math.},
   volume={311},
   date={2017},
   pages={796--832},
   issn={0001-8708},
   review={\MR{3628231}},
   review={Zbl 06766557},   
   doi={10.1016/j.aim.2017.03.003},
}
\bib{haberl_schuster}{article}{
   author={Haberl, Christoph},
   author={Schuster, Franz E.},
   title={General $L_p$ affine isoperimetric inequalities},
   journal={J. Differential Geom.},
   volume={83},
   date={2009},
   number={1},
   pages={1--26},
   issn={0022-040X},
   review={\MR{2545028}},
   review={Zbl 1185.52005},
}

\bib{harvey_lawson}{article}{
   author={Harvey, Reese},
   author={Lawson, H. Blaine, Jr.},
   title={Calibrated geometries},
   journal={Acta Math.},
   volume={148},
   date={1982},
   pages={47--157},
   issn={0001-5962},
   review={\MR{666108}},
   review={Zbl 0584.53021},
   doi={10.1007/BF02392726},
}

\bib{harvey}{book}{
   author={Harvey, Reese},
   title={Spinors and calibrations},
   series={Perspectives in Mathematics},
   volume={9},
   publisher={Academic Press, Inc., Boston, MA},
   date={1990},
   pages={xiv+323},
   isbn={0-12-329650-1},
   review={\MR{1045637}},
   review={Zbl 0694.53002},
}

\bib{howard}{article}{
   author={Howard, Ralph},
   title={The kinematic formula in Riemannian homogeneous spaces},
   journal={Mem. Amer. Math. Soc.},
   volume={106},
   date={1993},
   number={509},
   pages={vi+69},
   issn={0065-9266},
   review={\MR{1169230}},
   review={Zbl 0810.53057},   
   doi={10.1090/memo/0509},
}

\bib{klain_short}{article}{
   author={Klain, Daniel A.},
   title={A short proof of Hadwiger's characterization theorem},
   journal={Mathematika},
   volume={42},
   date={1995},
   number={2},
   pages={329--339},
   issn={0025-5793},
   review={\MR{1376731}},
   review={Zbl 0835.52010},
   doi={10.1112/S0025579300014625},
}

\bib{klain_even}{article}{
   author={Klain, Daniel A.},
   title={Even valuations on convex bodies},
   journal={Trans. Amer. Math. Soc.},
   volume={352},
   date={2000},
   number={1},
   pages={71--93},
   issn={0002-9947},
   review={\MR{1487620}},
   review={Zbl 0940.52002},
   doi={10.1090/S0002-9947-99-02240-0},
}

\bib{klain-rota}{book}{
   author={Klain, Daniel A.},
   author={Rota, Gian-Carlo},
   title={Introduction to geometric probability},
   series={Lezioni Lincee. [Lincei Lectures]},
   publisher={Cambridge University Press, Cambridge},
   date={1997},
   pages={xiv+178},
   isbn={0-521-59362-X},
   isbn={0-521-59654-8},
   review={\MR{1608265}},
   review={Zbl 0896.60004},   
}

\bib{ludwig_reitzner}{article}{
   author={Ludwig, Monika},
   author={Reitzner, Matthias},
   title={A classification of ${\rm SL}(n)$ invariant valuations},
   journal={Ann. of Math. (2)},
   volume={172},
   date={2010},
   number={2},
   pages={1219--1267},
   issn={0003-486X},
   review={\MR{2680490}},
   review={Zbl 1223.52007},
   doi={10.4007/annals.2010.172.1223},
}

\bib{ludwig_silverstein}{article}{
   author={Ludwig, Monika},
   author={Silverstein, Laura},
   title={Tensor valuations on lattice polytopes},
   journal={Adv. Math.},
   volume={319},
   date={2017},
   pages={76--110},
   issn={0001-8708},
   review={\MR{3695869}},
   review={Zbl 1390.52023},
   doi={10.1016/j.aim.2017.08.015},
}

\bib{mcmullen}{article}{
   author={McMullen, P.},
   title={Valuations and Euler-type relations on certain classes of convex
   polytopes},
   journal={Proc. London Math. Soc. (3)},
   volume={35},
   date={1977},
   number={1},
   pages={113--135},
   issn={0024-6115},
   review={\MR{0448239}},
   review={Zbl 0353.52001},
   doi={10.1112/plms/s3-35.1.113},
}
	
\bib{mcmullen_lattice}{article}{
   author={McMullen, Peter},
   title={Valuations on lattice polytopes},
   journal={Adv. Math.},
   volume={220},
   date={2009},
   number={1},
   pages={303--323},
   issn={0001-8708},
   review={\MR{2462842}},
   review={Zbl 1161.52010},
   doi={10.1016/j.aim.2008.09.004},
}

\bib{prz}{article}{
    author={Pokorn\'y, Du\v{s}an},
    author={Rataj, Jan},
    author={Zaj\'{\i}\v{c}ek, Lud\v{e}k},    
    title={On the structure of WDC sets},
    eprint={arXiv:1703.09520},
}

\bib{rumin}{article}{
   author={Rumin, Michel},
   title={Formes diff\'erentielles sur les vari\'et\'es de contact},
   language={French},
   journal={J. Differential Geom.},
   volume={39},
   date={1994},
   number={2},
   pages={281--330},
   issn={0022-040X},
   review={\MR{1267892}},
   review={Zbl 0973.53524},
}

\bib{salamon_walpuski}{article}{
   author={Salamon, Dietmar A.},
   author={Walpuski, Thomas},
   title={Notes on the octonions},
   conference={
      title={Proceedings of the G\"{o}kova Geometry-Topology Conference 2016},
   },
   book={
      publisher={G\"{o}kova Geometry/Topology Conference (GGT), G\"{o}kova},
   },
   date={2017},
   pages={1--85},
   review={\MR{3676083}},
   review={Zbl 06810387},
}

\bib{schneider}{article}{
   author={Schneider, Rolf},
   title={Simple valuations on convex bodies},
   journal={Mathematika},
   volume={43},
   date={1996},
   number={1},
   pages={32--39},
   issn={0025-5793},
   review={\MR{1401706}},
   review={Zbl 0864.52009},
   doi={10.1112/S0025579300011578},
}

\bib{SchneiderBM}{book}{
   author={Schneider, Rolf},
   title={Convex bodies: the Brunn-Minkowski theory},
   series={Encyclopedia of Mathematics and its Applications},
   volume={151},
   edition={Second expanded edition},
   publisher={Cambridge University Press, Cambridge},
   date={2014},
   pages={xxii+736},
   isbn={978-1-107-60101-7},
   review={\MR{3155183}},
   review={Zbl 1287.52001},
}

\bib{schuster_crofton}{article}{
   author={Schuster, Franz E.},
   title={Crofton measures and Minkowski valuations},
   journal={Duke Math. J.},
   volume={154},
   date={2010},
   number={1},
   pages={1--30},
   issn={0012-7094},
   review={\MR{2668553}},
   review={Zbl 1205.52004},
   doi={10.1215/00127094-2010-033},
}

\bib{tits}{article}{
   author={Tits, J.},
   title={Sur certaines classes d'espaces homog\`enes de groupes de Lie},
   language={French},
   journal={Acad. Roy. Belg. Cl. Sci. M\'em. Coll. in $8^\circ$},
   volume={29},
   date={1955},
   number={3},
   pages={268},
   review={\MR{0076286}},
   review={Zbl 0067.12301},
}

\bib{wannerer_module}{article}{
   author={Wannerer, Thomas},
   title={The module of unitarily invariant area measures},
   journal={J. Differential Geom.},
   volume={96},
   date={2014},
   number={1},
   pages={141--182},
   issn={0022-040X},
   review={\MR{3161388}},
   review={Zbl 1296.53149},
}

\bib{wannerer_area}{article}{
   author={Wannerer, Thomas},
   title={Integral geometry of unitary area measures},
   journal={Adv. Math.},
   volume={263},
   date={2014},
   pages={1--44},
   issn={0001-8708},
   review={\MR{3239133}},
   review={Zbl 1296.53150},
   doi={10.1016/j.aim.2014.06.005},
}

\bib{wang}{article}{
   author={Wang, Hsien-Chung},
   title={Two-point homogeneous spaces},
   journal={Ann. of Math. (2)},
   volume={55},
   date={1952},
   pages={177--191},
   issn={0003-486X},
   review={\MR{0047345}},
   review={Zbl 0048.40503},
   doi={10.2307/1969427},
}
\end{biblist}
\end{bibdiv}

\end{document}